\documentclass[a4paper]{article}
\usepackage[utf8]{inputenc}
\usepackage{geometry}
\geometry{left=3cm,right=3cm,top=2.5cm} 
\usepackage[T1]{fontenc}
\usepackage{lmodern}
\usepackage{array}
\usepackage{graphicx}
\usepackage{amssymb, amsfonts, amsthm, amsmath}
\usepackage[english]{babel}
\usepackage{url}

\theoremstyle{plain}
\newtheorem{theorem}{Theorem}[section]
\newtheorem{corollary}[theorem]{Corollary}
\newtheorem{lemma}[theorem]{Lemma}
\newtheorem{proposition}[theorem]{Proposition}

\theoremstyle{definition}

\theoremstyle{remark}
\newtheorem{remark}[theorem]{Remark}

\newcommand{\N}{\mathbb{N}}
\newcommand{\Z}{\mathbb{Z}}
\newcommand{\R}{\mathbb{R}}

\numberwithin{equation}{section}

\newcommand{\ind}[1]{\mathbf{1}_{\left\{#1\right\}}}
\newcommand{\floor}[1]{{\left\lfloor #1 \right\rfloor}}
\newcommand{\ceil}[1]{{\left\lceil #1 \right\rceil}}
\newcommand{\norme}[1]{{\left\Vert #1 \right\Vert}}

\DeclareMathOperator{\E}{\mathbf{E}}

\renewcommand{\P}{\mathbf{P}}
\DeclareMathOperator{\Var}{\mathbf{V}\mathrm{ar}}

\newcommand{\calF}{\mathcal{F}}

\newcommand{\calL}{\mathcal{L}}

\newcommand{\calR}{\mathcal{R}}

\newcommand{\conv}[2]{{\underset{#1\to #2}{\longrightarrow}}}

\newcommand{\T}{\mathbf{T}}
\newcommand{\calC}{\mathcal{C}}
\newcommand{\calD}{\mathcal{D}}
\renewcommand{\tilde}[1]{\widetilde{#1}}
\newcommand{\norm}[1]{\left\Vert #1 \right\Vert}
\renewcommand{\hat}[1]{\widehat{#1}}
\renewcommand{\bar}[1]{\overline{#1}}
\renewcommand{\phi}{\varphi}
\newcommand{\calA}{\mathcal{A}}
\newcommand{\calB}{\mathcal{B}}
\newcommand{\calW}{\mathcal{W}}

\newcommand{\crochet}[1]{\left\langle #1 \right\rangle}

\newcommand{\rmL}{\mathrm{L}}

\newcommand{\calG}{\mathcal{G}}

\title{Maximal displacement of a branching random walk in time-inhomogeneous environment}
\author{Bastien Mallein\footnote{LPMA, Univ. P. et M. Curie (Paris 6). Research partially supported by the ANR project MEMEMO.}\footnote{DMA, École Normale Supérieure (Paris).}}
\date{\today}

\begin{document}

\maketitle

\begin{abstract}
Consider a branching random walk evolving in a macroscopic time-inhomogeneous environment, that scales with the length $n$ of the process under study. We compute the first two terms of the asymptotic of the maximal displacement at time $n$. The coefficient of the first (ballistic) order is obtained as the solution of an optimization problem, while the second term, of order $n^{1/3}$, comes from time-inhomogeneous random walk estimates, that may be of independent interest. This result partially answers a conjecture of Fang and Zeitouni. Same techniques are used to obtain the asymptotic of other quantities, such as the consistent maximal displacement.
\end{abstract}

\section{Introduction}
\label{sec:introduction}

A \textit{time-inhomogeneous branching random walk on $\R$} is a process evolving as follows: it starts with one individual located at the origin at time 0. At each time $k \in \N$, every individual alive at generation $k$ dies, giving birth to a certain number of children, which are positioned around their parent according to independent versions of a point process, whose law may depend on the time. If the law of the point process governing the reproduction does not depend on the time, the process is simply called a \textit{branching random walk}.

We write $M_n$ for the maximal displacement at time $n$ in the branching random walk. The asymptotic of $M_n$, when the reproduction law does not depend on the time, is well understood. The first results come from early works of Hammersley \cite{Ham74}, Kingman \cite{Kin75} and Biggins \cite{Big76}, who obtained the existence of an explicit constant $v$ such that $\frac{M_n}{n} \conv{n}{+\infty} v$; Hu and Shi \cite{HuS09} and Addario-Berry and Reed \cite{ABR09} exhibited the logarithmic second term in the asymptotic $c_0 \log n$ for some explicit constant $c_0$; finally Aidekon proved in \cite{Aid13} the convergence of $M_n - nv - c_0 \log n$ to a random shift of a Gumble distribution.

A model of time-inhomogeneous branching random walk has been introduced by Fang and Zeitouni in \cite{FaZ11}. In this process, the reproduction law of the individuals evolves at macroscopic scale: every individual has two children, each of which moves independently according to a centred Gaussian random variable, with variance $\sigma_1^2$ (respectively $\sigma_2^2$) if the child is alive before time $n/2$ (resp. between times $n/2$ and $n$). In this branching random walk, the asymptotic of $M_n$ is still given by a ballistic first order, plus a negative logarithmic correction and bounded fluctuations of order one. However, first and second order strongly depend on the sign of $\sigma^2_1-\sigma^2_2$, and the coefficient of the logarithmic correction exhibits a phase transition as $\sigma^2_1$ grows bigger than $\sigma^2_2$. This result has been generalised to more general reproduction laws in \cite{Mal14}.

The study of the maximal displacement in a time-inhomogeneous branching Brownian motion, the continuous time counterpart of the branching random walk, with smoothly varying environment has been started in \cite{FaZ12}. In this process individuals split into 2 children at rate 1, and move using independent Gaussian diffusion with variance $\sigma^2_{t/n}$ at time $t \in [0,n]$. In \cite{FaZ12}, Fang and Zeitouni conjectured that, under mild hypotheses, for an explicit $v^*$, the sequence $(M_n - nv^* - g(n), n \geq 1)$ is tensed for a given function $g$ such that
\[ -\infty < \liminf_{n \to +\infty} \frac{g(n)}{n^{1/3}} \leq \limsup_{n \to +\infty} \frac{g(n)}{n^{1/3}} \leq 0.\]
They proved this result for smoothly decreasing variance. Using PDE techniques, Nolen, Roquejoffre and Ryzhik \cite{NRR13} established, again in the case of decreasing variances, that $g(n) = l^* n^{1/3} + O(\log n)$ for some explicit constant $l^*$. Maillard and Zeitouni \cite{MaZ13} obtained, independently from our result, the more precise result $g(n) = l^* n^{1/3} - c_1  \log n$, for some explicit $c_1$, using proofs similar to the ones presented here, based on first and second moment particles computation and the study of a particular PDE.

We prove in this article that for a large class of time-inhomogeneous branching random walks, $M_n - nv^* \sim_{n \to +\infty} l^* n^{1/3}$ for some explicit constants $v^*$ and $l^*$. Conversely to previous articles in the domain, the displacements we authorize are non necessarily Gaussian. Moreover, the law of the number of children is correlated with the displacement and depends on the time. More importantly, we do not restrict ourselves to --a hypothesis similar to-- decreasing variance. The decreasing variance case remains interesting, as in this case quantities such as $v^*$ and $l^*$ admit a closed expression. We do not prove in this article there exists a function $g$ such that $M_n-n v^* -g(n)$ is tight, therefore we do not exactly answer to the conjecture of Fang and Zeitouni. However, Fang \cite{Fan12} proved that the sequence $M_n$ shifted by its median is tight for a large class of \textit{generalized branching random walks}. This class does not exactly covers the class of time-inhomogeneous branching random walks we consider, but there is a non-trivial intersection. On this subset, the conjecture is then proved applying Theorem \ref{thm:main}.

To address the fact the displacements are non Gaussian, we use the Sakhanenko estimate \cite{Sak84}. This theorem couples sums of independent random variables with a Brownian motion. Consequently we use Brownian estimates to compute our branching random walk asymptotics. The non-monotonicity of the variance leads to additional concerns. We discuss in Section \ref{subsec:heuristic} a formula for $\lim_{n \to +\infty} \frac{M_n}{n}$, given as the solution of an optimization problem under constraints \eqref{eqn:existence_max}. This equation is solved in Section \ref{sec:optimization}, using some analytical tools such as the existence of Lagrange multipliers in Banach spaces described in \cite{Kur76}. When we solve this problem, an increasing function appears naturally, that replaces the inverse of the variance in computations of \cite{NRR13} and \cite{MaZ13}.

\paragraph*{Notation}
In this article, $c,C$ stand for two positive constants, respectively small enough and large enough, which may change from line to line, and depend only on the law of the random variables we consider. We always assume the convention $\max \emptyset = - \infty$ and $\min \emptyset = +\infty$. For $x \in \R$, we write $x_+ = \max(x,0)$, $x_- = \max(-x,0)$ and $\log_+(x) = (\log x)_+$. For all function $f : [0,1] \to \R$, we say that $f$ is \textit{Riemann-integrable} if
\[
  \liminf_{n \to +\infty} \frac{1}{n} \sum_{k=0}^{n-1} \min_{s \in [\frac{k-1}{n},\frac{k+2}{n}]} f_s = \limsup_{n \to +\infty} \frac{1}{n} \sum_{k=0}^{n-1} \max_{s \in [\frac{k-1}{n},\frac{k+2}{n}]} f_s,
\]
and this common value is written $\int_0^1 f_s ds$. In particular, a Riemann-integrable function is bounded. A subset $F$ of $[0,1]$ is said to be \textit{Riemann-integrable} if and only if $\mathbf{1}_F$ is Riemann-integrable. For example, an open subset of $(0,1)$ of Lebesgue measure 1/2 that contains all rational numbers is not Riemann-integrable. Finally, if $A$ is a measurable event, we write $\E(\cdot ; A)$ for $\E(\cdot \mathbf{1}_A)$. An index of notations is present in Appendix \ref{app:notation}

The rest of the introduction is organised as follows. We start with some branching random walk notation in Section \ref{subsec:notation}. We describe in Section \ref{subsec:heuristic} the optimization problem that gives the speed of the time-inhomogeneous branching random walk. In Section \ref{subsec:main}, we state the main result of this article: the asymptotic of the maximal displacement in a time-inhomogeneous branching random walk. We also introduce another quantity of interest for the branching random walk: the consistent maximal displacement with respect to the optimal path in Section \ref{subsec:cmbintro}. Finally, in Section \ref{subsec:introrw}, we introduce some of the random walk estimates that are used to estimate moments of the branching random walk.

\subsection{Branching random walk notation}
\label{subsec:notation}

Let $\T$ be a plane rooted tree --which can be encoded according to the Ulam-Harris notation, for example-- and $V : \T \to \R$. We call $(\T,V)$ a (plane, rooted) \textit{marked tree}. For a given individual $u \in \T$, we write $|u|$ for the generation to which $u$ belongs, i.e. its distance to the root $\emptyset$ according to the graph distance in the tree $\T$. If $u$ is not the root, we denote by $\pi u$ the parent of $u$. For $k \leq |u|$, we write $u_k$ its ancestor alive at generation $k$, with $u_0 = \emptyset$. We call $V(u)$ the position of $u$ and $(V(u_0), V(u_1), \ldots V(u))$ the path followed by $u$.

For any $n \in \N$, we call $\{ u \in \T, |u|=n\}$ the \textit{$n^\mathrm{th}$ generation of the tree $\T$}, which is often abbreviated as $\{|u|=n\}$ if the notation is clear in the context. For $u \in \T$, the set of children of $u$ is written $\Omega(u) = \{ u' \in \T : \pi u' = u\}$, and let $L^u = (V(u')-V(u), u' \in \Omega(u))$ be the point process of the displacement of the children with respect to their parent. In this article, the point processes we consider are a finite or infinite collection of real numbers --repetitions are allowed-- that admit a maximal element and no accumulation point.

We add a piece of notation on random point processes. Let $L$ be a point process with law $\calL$, i.e. a random variable taking values in $\cup_{k \in \Z_+ \cup \{+\infty\}} \R^k$. As the point processes we consider have a maximal element and no accumulation point, we can write $L = (\ell_1,\ldots \ell_N)$, where $N \in \Z_+ \cup \{+\infty\}$ is the random variable of the number of points in the process, and $\ell_1 \geq \ell_2 \geq \cdots \geq \ell_N$ is the set of points in $L$, listed in the decreasing order, with the convention $\ell_{+\infty} = -\infty$.

We now consider $(\calL_t, t \in [0,1])$ a family of laws of point processes. For $t \in [0,1]$, we write $L_t$ for a point process with law $\calL_t$. For $\theta \geq 0$, we denote by $\kappa_t(\theta) = \log \E\left[ \sum_{\ell \in L_t} e^{\theta \ell}\right]$ the log-Laplace transform of $\theta$ and by $\kappa^*_t(a) = \sup_{\theta > 0} [\theta a - \kappa_t(\theta)]$ its Fenchel-Legendre transform.  We recall the following elementary fact: if $\kappa_t^*$ is differentiable at $a$, then putting $\theta = \partial_a \kappa^*_t(a)$, we  have
\begin{equation}
  \label{eqn:legendreestimate}
  \theta a - \kappa_t(\theta) = \kappa^*_t(a).
\end{equation}

The \textit{branching random walk of length $n$ in the time-inhomogeneous environment $(\calL_t, t \in [0,1])$} (BRWtie) is the marked tree $(\T^{(n)},V^{(n)})$ such that $\{L^u, u \in \T^{(n)}\}$ forms a family of independent point processes, where $L^u$ has law $\calL_\frac{|u|+1}{n}$ if $|u|<n$, and is empty otherwise. In particular, $\T^{(n)}$ is the (time-inhomogeneous) Galton-Watson tree of the genealogy of this process. When the value of $n$ is clear in the context, we often omit the superscript, to lighten notations.

It is often easier to consider processes that never get extinct, and have supercritical offspring above a given straight line with slope $p$. We introduce the (strong) supercritical assumption
\begin{equation}
  \label{eqn:breeding}
  \forall t \in [0,1], \P(L_t = \emptyset) = 0 \quad \mathrm{and} \quad \exists p \in \R : \inf_{t \in [0,1]} \P(\# \{ \ell \in L_t : \ell \geq p \} \geq 2)>0.
\end{equation}
Such a strong supercritical assumption is not always necessary, but is technically convenient to obtain concentration inequalities for the maximal displacement. It is also helpful to guarantee the existence of a solution of the optimization problem that defines $v^*$. The second part of the assumption is used only once in this article: to prove Lemma 5.4 (which is used to prove the second equation of Lemma 5.6).

In this article, we assume that the function $t \to \calL_t$ satisfies some strong regularity assumptions. We write 
\begin{equation}
  \label{eqn:defDomain} D = \{ (t,\theta) \in [0,1] \times [0,+\infty) : \kappa_t(\theta)< +\infty\} \quad \mathrm{and} \quad D^* = \{ (t,a) : \kappa^*_t(a)<+\infty \},
\end{equation}
and we assume that $D$ and $D^*$ are non-empty, that $D$ (resp. $D^*$) is open in $[0,1] \times [0,+\infty)$ (resp. $[0,1] \times \R$) and that
\begin{equation}
  \label{eqn:regularity}
  \kappa \in \calC^{1,2}\left(D\right) \text{ and } \kappa^* \in \calC^{1,2}\left(D^*\right).
\end{equation}
These regularity assumptions are used to ensure that the solution of the optimization problem defining $v^*$ is a regular point in some sense. If we know, by some other way, that the solution is regular, then these assumptions are no longer needed. These assumptions imply that the maximum of the point processes we consider has at least exponential tails, and that the probability that this maximum is equal to the essential supremum is zero. They are not optimal, but sufficient to define a quantity that we prove to be the speed of the BRWtie. For example, a random number of i.i.d. random variables with exponential left tails satisfy the above condition. Conversely, heavy tailed random variables, or if the maximum of the point process verifies
\[
  \P(\max\{ \ell \in L \} \geq x ) \sim_{x \to +\infty} x^{-1 - \epsilon} e^{-x}
\]
will not satisfy \eqref{eqn:defDomain}.

\subsection{The optimization problem}
\label{subsec:heuristic}

We write $\calC$ for the set of continuous functions, and $\calD$ for the set of càdlàg --right-continuous with left limits at each point-- functions on $[0,1]$ which are continuous at point $1$. To a function $b \in \calD$, we associate a path --i.e. a sequence-- of length $n$ defined for $k \leq n$ by $\bar{b}^{(n)}_k = \sum_{j=1}^k b_{j/n}$. We say that $b$ is the \textit{speed profile} of the path $\bar{b}^{(n)}$, and we introduce
\[
  K^* :
  \begin{array}{rcl}
  \calD & \longrightarrow & \calC\\
  b & \longmapsto & \left(\int_0^t \kappa^*_s(b_s) ds, t \in [0,1]\right).
  \end{array}
\]
By standard computations on branching random walks (see, e.g. \cite{BigBO}), for all $t \in [0,1]$, the mean number of individuals that follow the path $\bar{b}^{(n)}$ --i.e. that stay at all time within distance $\sqrt{n}$ from the path-- until time $tn$ verifies
\[ \frac{1}{n} \log \E\left[ \sum_{|u|=\floor{nt}} \ind{ \left|V(u_k) - \bar{b}^{(n)}_{k/n} \right| < \sqrt{n}, k \leq nt} \right] \approx_{n \to +\infty} -K^*(b)_t.\]
Therefore, $e^{-n K^*(b)_t}$ is a good approximation of the number of individuals that stay close to the path $\bar{b}^{(n)}$ until time $tn$. 

If there exists $t_0 \in (0,1)$ such that $K^*(b)_{t_0}>0$, then with high probability, there is no individual who stayed close to this path until time $n t_0$. Conversely, if for all $t \in [0,1]$, $K^*(b)_t \leq 0$, one would expect to find with positive probability at least one individual at time $n$ to the right of $\bar{b}^{(n)}$. Following this heuristic, we introduce
\begin{equation}
  \label{eqn:defv}
  v^* = \sup \left\{ \int_0^1 b_s ds, b \in \calD : \forall t \in [0,1], K^*(b)_t \leq 0 \right\}.
\end{equation}
We expect $nv^*$ to be the highest terminal point in the set of paths that are followed with positive probability by individuals in the branching random walk. Therefore we expect that $\lim_{n \to +\infty} \frac{M_n}{n} = v^*$ in probability.

We are interested in the path that realises the maximum in \eqref{eqn:defv}. We define the optimization problem under constraints
\begin{equation}
  \label{eqn:existence_max}
  \exists a \in \calD : v^* = \int_0^1 a_s ds \quad \mathrm{and} \quad \forall t \in [0,1], K^*(a)_t \leq 0.
\end{equation}
We say that $a$ is a solution to \eqref{eqn:existence_max} if $\int_0^1 a_s ds = 0$ and $K^*(a)$ is non-positive. Such a solution $a$ is called an \textit{optimal speed profile}, and $\bar{a}^{(n)}$ an \textit{optimal path} for the branching random walk. The path followed by the rightmost individual at time $n$ is an optimal path, thus describing such a path is interesting to obtain the second order correction. In effect, as highlighted in the time-homogeneous branching random walk in \cite{AiS10}, the second order of the asymptotic of $M_n$ is linked to the difficulty for a random walk to follow the optimal path.
\begin{proposition}
\label{prop:regularity}
Let $a \in \calD$. Under the assumptions \eqref{eqn:breeding} and \eqref{eqn:regularity}, $a$ is a solution to \eqref{eqn:existence_max}, i.e.
\[
  v^* = \int_0^1 a_s ds \quad \mathrm{and} \quad \forall t \in [0,1], K^*(a)_t \leq 0,
\]
if and only if, setting $\theta_t = \partial_a \kappa^*_t(a_t)$, we have
\begin{enumerate}
  \item $\theta$ is positive and non-decreasing ;
  \item $K^*(a)_1=0$ ;
  \item $\int_0^1 K^*(a)_s d\theta^{-1}_s=0$.
\end{enumerate}
There exists a unique solution $a$ to \eqref{eqn:existence_max}, and $a$ (and $\theta$) are Lipschitz.
\end{proposition}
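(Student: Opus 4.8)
The optimisation \eqref{eqn:defv} is a \emph{convex} program: the objective $b \mapsto \int_0^1 b_s\,ds$ is linear, and since each $\kappa^*_s$ is convex the map $K^*$ is convex for the pointwise order on $\calC$, so $\{b\in\calD : K^*(b)\le 0\}$ is convex. I would therefore attack it through Lagrange duality. The constraints $K^*(b)_t \le 0$ are indexed by $t\in[0,1]$, so the natural multiplier is a finite non-negative Borel measure $\mu$ on $[0,1]$; writing $\bar\mu(s)=\mu([s,1])$ and using Fubini, the Lagrangian is $\int_0^1\bigl(b_s-\bar\mu(s)\kappa^*_s(b_s)\bigr)\,ds$. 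Maximising pointwise in $b_s$ forces the stationarity relation $\partial_a\kappa^*_s(a_s)=1/\bar\mu(s)$, i.e. $\theta_s=1/\bar\mu(s)$: since $\bar\mu$ is non-negative and non-increasing this is exactly assertion~1, the multiplier being $\mu=-d\theta^{-1}$; complementary slackness $\int_0^1 K^*(a)_t\,\mu(dt)=0$ is then assertion~3; and the binding of the terminal constraint (if $K^*(a)_1<0$ one could add a small constant to $a$ on $(1-\varepsilon,1)$ and increase $\int a$) gives assertion~2, $K^*(a)_1=0$. So 1--3 are precisely the Karush--Kuhn--Tucker conditions of \eqref{eqn:defv}.

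\emph{Sufficiency.} Suppose $a$ satisfies 1--3 (which, as one checks, also forces $K^*(a)\le 0$). Let $b\in\calD$ be admissible. By convexity of $\kappa^*_s$ and the definition of $\theta_s$ one has $\kappa^*_s(b_s)-\kappa^*_s(a_s)\ge\theta_s(b_s-a_s)$; dividing by $\theta_s>0$ and integrating, $\int_0^1(b_s-a_s)\,ds\le\int_0^1\theta_s^{-1}\,d\bigl(K^*(b)-K^*(a)\bigr)_s$. Integrating by parts and using $K^*(b)_0=K^*(a)_0=0$, the right-hand side equals $\theta_1^{-1}\bigl(K^*(b)_1-K^*(a)_1\bigr)-\int_0^1\bigl(K^*(b)_s-K^*(a)_s\bigr)\,d\theta^{-1}_s$. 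The first term is $\le 0$ since $K^*(b)_1\le 0=K^*(a)_1$ and $\theta_1^{-1}>0$; in the second, $d\theta^{-1}\le 0$ and $K^*(b)_s\le 0$ give $-\int K^*(b)_s\,d\theta^{-1}_s\le 0$, while $\int K^*(a)_s\,d\theta^{-1}_s=0$ by assertion~3. Hence $\int_0^1 b_s\,ds\le\int_0^1 a_s\,ds$, so $\int_0^1 a_s\,ds=v^*$ and $a$ solves \eqref{eqn:existence_max}.

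\emph{Necessity and existence.} For the converse I would invoke the Lagrange multiplier theorem in Banach spaces of \cite{Kur76}, applied to \eqref{eqn:defv} read as an optimisation over a Banach space of speed profiles with an inequality constraint valued in $\calC$ ordered by its non-positive cone. The one real difficulty, and the reason \eqref{eqn:breeding} and \eqref{eqn:regularity} are imposed, is the constraint qualification (regularity of the optimal point) that the existence of a multiplier requires: \eqref{eqn:regularity} guarantees that the optimal $a$ takes values in the interior of $D^*$, so $K^*$ is Fréchet-differentiable near $a$ and downward perturbations of $a$ stay admissible, while the strong supercriticality \eqref{eqn:breeding} supplies a Slater-type direction along which $K^*$ can be strictly decreased while keeping $v^*$ reachable. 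Once $\mu$ is produced, stationarity and complementary slackness yield 1--3 as above, and in particular $\theta$ is bounded away from $0$ and $+\infty$ and $K^*(a)\le 0$ throughout. Existence of a solution then follows from the same argument, or constructively by assembling $a$ from $a_s=\partial_\theta\kappa_s(\theta_s)$ (via \eqref{eqn:legendreestimate}) for an explicitly built non-decreasing $\theta$ that is constant off $\{K^*(a)=0\}$; the breeding assumption provides the coercivity making the supremum in \eqref{eqn:defv} attained.

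\emph{Uniqueness and regularity.} Uniqueness comes from the strict convexity of each $\kappa^*_s$ (from $\partial_a^2\kappa^*_s>0$, a consequence of \eqref{eqn:regularity} and non-degeneracy): if $a\neq a'$ were two solutions, then $K^*\bigl(\tfrac{a+a'}{2}\bigr)$ would be strictly negative on a non-trivial interval while $\int_0^1\tfrac{a_s+a_s'}{2}\,ds=v^*$, allowing one to raise the profile there without breaking the constraints and exceed $v^*$ --- a contradiction. Lipschitz continuity of $\theta$, hence of $a=\partial_\theta\kappa(\theta)$ by the $\calC^{1,2}$ regularity, follows because $\theta$ is locally constant where $K^*(a)<0$, while on the closed set $\{K^*(a)=0\}$ one has $\kappa^*_s(a_s)=\tfrac{d}{ds}K^*(a)_s=0$; differentiating this identity together with $a_s=\partial_\theta\kappa_s(\theta_s)$ and applying the implicit function theorem (non-degeneracy again from \eqref{eqn:regularity}) yields a uniform bound on the derivative of $\theta$. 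The main obstacle throughout is the constraint-qualification step in the necessity part, i.e. checking that the optimal profile is a regular point in the sense of \cite{Kur76}.
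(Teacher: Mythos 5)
Your sufficiency argument (convexity inequality for $\kappa^*_t$, division by $\theta_t$, Stieltjès integration by parts, sign considerations) is exactly the paper's, and your strict-convexity route to uniqueness is a reasonable alternative to the paper's (which instead shows any two solutions coincide with $v$ on the support of $d\theta^{-1}$ and deduces equality of the parameter functions). But two essential pieces are left as declarations rather than proofs. First, the constraint qualification in the necessity direction: to apply Theorem \ref{thm:lagrange} with $g(a)=(\kappa^*_s(a_s))_{s\in[0,1]}$ one needs $D_a g(h)=(\theta_s h_s)_s$ to be a bijection, i.e. $\theta_t=\partial_a\kappa^*_t(a_t)>0$ for \emph{every} $t$, at any solution of \eqref{eqn:existence_max}. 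You yourself flag this as ``the main obstacle'' and offer only a vague Slater-type remark; the paper devotes a separate lemma (Lemma \ref{lem:regularity_existence_max}) to it, ruling out $\theta\equiv 0$, $\theta$ vanishing at some time with $\theta_0>0$, and $\theta_0=0$ by three explicit admissible perturbations of $a$, using that \eqref{eqn:breeding} gives $\sup_t\inf_a\kappa^*_t(a)<0$. Without this step the necessity half does not go through. Second, existence of a solution: KKT conditions are necessary conditions at a maximiser and cannot produce one; ``follows from the same argument'' and an appeal to ``coercivity'' are not proofs. The paper reformulates the problem as an optimisation over non-decreasing parameter functions $\theta$, shows the admissible class $\Theta$ is compact via Arzel\`a--Ascoli, takes a maximiser and verifies it satisfies conditions 1--3; some substitute for this compactness argument is needed in your plan.

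Two smaller points. Your parenthetical claim that conditions 1--3 ``force $K^*(a)\le 0$'' is false: with $\theta$ constant (so condition 3 is vacuous) one can have $K^*(a)_1=0$ while $K^*(a)_t>0$ at intermediate times (binary branching with Gaussian steps whose variance is large on $[0,1/2]$ and small on $(1/2,1]$, with $\theta$ chosen so the terminal energy vanishes); the converse implication must be proved, as in the paper, for $a\in\calR$, i.e. assuming $K^*(a)\le 0$. Also, your Lipschitz argument implicitly uses that the solution has no jumps (so that at points of $\{K^*(a)=0\}$ both one-sided derivatives of $K^*(a)$ vanish and $\theta=\bar{\theta}$ there); the paper proves continuity of $a$ first, and your write-up skips it. Finally, in the uniqueness sketch one must check that the raised midpoint profile stays in $D^*$ with a uniform bound on $\partial_a\kappa^*$ along it (for instance using that both parameter functions are bounded by their values at time $1$); this is doable but not automatic.
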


Consequence of this proposition, we now call $a$ the optimal speed profile, and $\bar{a}$ the optimal path. This result is proved in Section \ref{sec:optimization}. The optimization problem \eqref{eqn:existence_max} is similar to the one solved for the GREM by Bovier and Kurkova \cite{BoK07}.

\subsection{Asymptotic of the maximal displacement}
\label{subsec:main}

Under the assumptions \eqref{eqn:regularity} and \eqref{eqn:existence_max}, let $a$ be the optimal speed profile characterised by Proposition \ref{prop:regularity}. For $t \in [0,1]$ we denote by
\begin{equation}
  \label{eqn:thetadef}
  \theta_t = \partial_a \kappa^*_t(a_t) \quad \mathrm{and} \quad \sigma_t^2 = \partial^2_\theta \kappa_t(\theta_t).
\end{equation}
To obtain the asymptotic of the maximal displacement, we introduce the following regularity assumptions:
\begin{equation}
  \label{eqn:regularitytheta}
  \theta \text{ is absolutely continuous, with a Riemann-integrable derivative } \dot{\theta},
\end{equation}
\begin{equation}
  \label{eqn:regularityenergy}
  \{ t \in [0,1] : K^*_t(a)=0 \} \text{ is Riemann-integrable}.
\end{equation}
Finally, we make the following second order integrability assumption:
\begin{equation}
  \label{eqn:integrability2}
  \sup_{t \in [0,1]} \E\left[ \left( \sum_{\ell \in L_t} e^{\theta_t \ell_t} \right)^2 \right] < +\infty.
\end{equation}

\begin{remark}
This last integrability condition is not optimal. Using the spinal decomposition as well as estimates on random walks enriched by random variables depending only on the last step, as in \cite{Mal14} would lead to an integrability condition of the form $\E(X (\log X)^2) < +\infty$ instead of \eqref{eqn:integrability2}. However, this assumption considerably simplifies the proofs.
\end{remark}

The main result of this article is the following.
\begin{theorem}[Maximal displacement in the BRWtie]
\label{thm:main}
We assume \eqref{eqn:breeding}, \eqref{eqn:regularity}, \eqref{eqn:regularitytheta}, \eqref{eqn:regularityenergy} and \eqref{eqn:integrability2} are verified. We write $\alpha_1$ for the largest zero of the Airy function of first kind --recall that $\alpha_1  \approx -2.3381...$-- and we set
\begin{equation}
  \label{eqn:definel}
  l^* = \frac{\alpha_1}{2^{1/3}} \int_0^1 \frac{(\dot{\theta}_s \sigma_s)^{2/3}}{\theta_s} ds \leq 0.
\end{equation}
Then we have for all $l>0$,
\[ \lim_{n \to +\infty} \frac{1}{n^{1/3}} \log \P\left(M_n \geq nv^* + (l^*+l)n^{1/3}\right) = -\theta_0 l,\]
and for all $\epsilon>0$,
\[ \limsup_{n \to +\infty}  \frac{1}{n^{1/3}} \log \P\left( \left|M_n - nv^* -l^* n^{1/3}\right| \geq \epsilon n^{1/3}\right) < 0.\]
\end{theorem}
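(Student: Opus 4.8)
The plan is to obtain the theorem from matching upper and lower bounds on the probability that a particle in the BRWtie ends near $nv^* + (l^*+l)n^{1/3}$, reducing everything to time-inhomogeneous random walk estimates via Sakhanenko's coupling. The skeleton is the classical first-moment/second-moment scheme, but along the \emph{optimal path} $\bar a^{(n)}$ rather than a straight line, and with the $n^{1/3}$ correction coming from the cost of a random walk staying below a slowly-moving barrier.

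\textbf{Upper bound.} First I would tilt the branching random walk by the (time-varying) parameter $\theta_t$ from Proposition \ref{prop:regularity}. For a target level $nv^* + (l^*+l)n^{1/3}$, a many-to-one / first-moment computation gives
\[
  \P\left(M_n \geq nv^* + (l^*+l)n^{1/3}\right) \leq \E\!\left[\#\{|u|=n : V(u) \geq nv^*+(l^*+l)n^{1/3}\}\right],
\]
and after the change of measure this expectation is $\exp\big(\sum_{k} \kappa_{k/n}(\theta_{k/n}) - \theta_? \cdot(\text{level})\big)$ times a random-walk probability. Using \eqref{eqn:legendreestimate}, the identity $K^*(a)_1 = 0$, Abel summation together with $\theta$ non-decreasing and $\int_0^1 K^*(a)_s\, d\theta^{-1}_s = 0$, the bulk terms $\sum_k \kappa_{k/n}(\theta_{k/n}) - \theta_1 \bar a^{(n)}_n$ telescope to $o(n^{1/3})$, leaving a factor $e^{-\theta_0 (l^*+l) n^{1/3} + o(n^{1/3})}$ against the probability that the tilted walk stays below the barrier $k \mapsto \bar a^{(n)}_k + (\text{correction})$. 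To keep only the barrier-respecting particles one restricts the first moment to those $u$ whose path never exceeds $\bar a^{(n)}_{k} + x$ for suitable $x \asymp n^{1/3}$ (particles exceeding it earlier are controlled by the same first-moment bound applied at the first exceedance time, which is where the strong supercriticality and \eqref{eqn:integrability2} get used to patch the stopped line). By Sakhanenko, the tilted centred walk is coupled to a Brownian motion with time-varying variance $\sigma^2_t$; a time-change turns the curved barrier (whose curvature is governed by $\dot\theta_s$) into the problem of Brownian motion staying below a parabola, whose small-probability asymptotics are governed by the Airy function --- this is exactly where $\alpha_1$ and the integral $\int_0^1 (\dot\theta_s\sigma_s)^{2/3}/\theta_s\, ds$ enter, via a Riemann-sum approximation using \eqref{eqn:regularitytheta} and \eqref{eqn:regularityenergy}. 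Combining, $\tfrac{1}{n^{1/3}}\log\P(\cdots) \leq -\theta_0 l + o(1)$.

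\textbf{Lower bound.} Here I would run a truncated second-moment argument. Fix a mesh, and consider particles whose rescaled path stays in a thin tube around $\bar a^{(n)}$ \emph{and} below the slightly-lowered parabolic barrier (lowering by $l n^{1/3}$, so the target is comfortably reachable). Let $Z_n$ count such particles ending above $nv^*+(l^*+l)n^{1/3}$. The truncated first moment $\E[Z_n]$ is, by the same computation as above applied to the lowered barrier, of order $e^{-\theta_0 l n^{1/3} + o(n^{1/3})}$ (the $l^*$ term being absorbed by the barrier geometry). For the second moment, the standard spine/pair decomposition splits $\E[Z_n^2]$ over the generation $j$ at which two chosen particles branch; using \eqref{eqn:integrability2} for the branching cost at the split and, again, Sakhanenko plus parabolic-barrier estimates for the two independent post-split legs, one bounds $\E[Z_n^2] \leq C\, \E[Z_n]^2 \cdot n^{O(1)}$ --- the barrier keeps each leg's conditioned probability controlled and makes the sum over $j$ converge. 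Paley--Zygmund then gives $\P(Z_n \geq 1) \geq \E[Z_n]^2/\E[Z_n^2] \geq e^{-\theta_0 l n^{1/3} + o(n^{1/3})}$, i.e. $\tfrac{1}{n^{1/3}}\log\P(\cdots)\geq -\theta_0 l + o(1)$. Taking $l \downarrow 0$ along this estimate (combined with the upper bound at level $l^* - l$, which forces a lower bound on $M_n$) yields the concentration statement: for any $\epsilon>0$, both $\P(M_n \geq nv^* + (l^*+\epsilon)n^{1/3})$ and $\P(M_n \leq nv^* + (l^*-\epsilon)n^{1/3})$ decay like $e^{-c n^{1/3}}$, the latter requiring a left-tail bound obtained from the first lower-bound estimate plus a union/Borel--Cantelli-type control along the optimal path at time $n$.

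\textbf{Main obstacle.} The delicate point is the matching of constants in the parabolic barrier asymptotics: one must show that the probability that a time-inhomogeneous random walk with variance profile $\sigma^2_t$ stays below a barrier of the form $\bar a^{(n)}_k + y n^{1/3} \phi(k/n)$ decays like $\exp\big(\tfrac{\alpha_1}{2^{1/3}} n^{1/3}\int_0^1 (\dot\theta_s \sigma_s)^{2/3}\theta_s^{-1}\,ds + o(n^{1/3})\big)$, uniformly enough in the endpoint and the starting point to feed both moment computations. This needs (i) Sakhanenko with an $n^{1/3}$-scale error that is genuinely negligible --- so the coupling error must be polynomial, not just $o(n^{1/3})$ in expectation --- (ii) a local CLT-type lower bound for the walk's endpoint inside the strip, and (iii) a careful Riemann-sum discretization of the Airy cost over subintervals where $\dot\theta$ and $\sigma^2$ are almost constant, using precisely the Riemann-integrability hypotheses \eqref{eqn:regularitytheta}--\eqref{eqn:regularityenergy} to glue the pieces with no loss. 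The fact that the ``active'' set $\{K^*(a)_t = 0\}$ may be a proper subset of $[0,1]$ (on its complement $\theta$ is constant and the barrier is locally flat, contributing nothing) must be handled when assembling these pieces, and is the reason \eqref{eqn:regularityenergy} is imposed.
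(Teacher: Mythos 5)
Your outline reproduces the paper's general architecture (tilting by the time-varying $\theta$, many-to-one, Sakhanenko coupling, block/Riemann-sum discretization, first and second moments along the optimal path), but the central random-walk quantity is misidentified, and this is not cosmetic. After the many-to-one lemma with the tilt $\theta_{k/n}$, what remains is not a probability that the tilted walk stays below a (parabolic) barrier, but the expectation of the exponential functional $e^{-\theta_1 \tilde{S}_n + \sum_j (\theta_{(j+1)/n}-\theta_{j/n})\tilde{S}_j}$ on the event that $\tilde{S}$ stays below a boundary of height $O(n^{1/3})$; this is a Laplace transform of the area under the constrained walk (the paper's Theorem \ref{thm:general_rw}), and the constant $\alpha_1$ comes from the linear potential in the Feynman--Kac representation (Lemma \ref{lem:bmOnesided}), not from a barrier-staying probability. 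As stated, your key claim in the ``main obstacle'' paragraph is false: the probability that a centred walk stays below a bounded multiple of $n^{1/3}$ for $n$ steps is only polynomially small, so it cannot produce a decay $\exp\bigl(\tfrac{\alpha_1}{2^{1/3}} n^{1/3}\int_0^1 (\dot{\theta}_s\sigma_s)^{2/3}\theta_s^{-1}ds\bigr)$; one can trade the area weight for a parabolic drift by an exponential change of measure, but the explicit Gaussian factor produced is itself of order $e^{O(n^{1/3})}$ and must be tracked exactly, which your sketch ignores. Concretely, your upper bound as written isolates the factor $e^{-\theta_0(l^*+l)n^{1/3}}$ and bounds the rest by a barrier probability, hence gives at best $-\theta_0(l^*+l) > -\theta_0 l$: it misses exactly the Airy contribution $\theta_0|l^*|$, which in the paper is recovered by choosing the crossing boundary as the ODE-defined curve $g^\lambda$ of \eqref{eqn:pourg}, i.e. \eqref{eqn:differentialPourmax}, $g^\lambda_t = \lambda + \tfrac{\alpha_1}{2^{1/3}}\int_0^t (\dot{\theta}_s\sigma_s)^{2/3}\theta_s^{-1}ds$, which equalizes the cost of crossing at every time; a flat cutoff ``$x\asymp n^{1/3}$'' does not.

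The lower-bound and concentration steps also contain genuine errors. The claimed bound $\E[Z_n^2]\leq C\,\E[Z_n]^2 n^{O(1)}$ cannot hold: through Paley--Zygmund it would give $\P(M_n\geq nv^*+(l^*+l)n^{1/3})\geq n^{-O(1)}$, contradicting the matching upper bound $e^{-\theta_0 l n^{1/3}(1+o(1))}$. The correct estimate (the paper's Lemma \ref{lem:secondmoment_estimate}, which is where \eqref{eqn:integrability2} is used) is $\E[Z_n^2]\leq \E[Z_n]^2 e^{\theta_0 l n^{1/3}+o(n^{1/3})}$, the extra factor coming from the decomposition over the splitting generation, and Cauchy--Schwarz then yields exactly the exponent $-\theta_0 l$. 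Finally, the concentration statement does not follow from ``$l\downarrow 0$'' plus a union/Borel--Cantelli argument: the second-moment method only shows that $\P(M_n\geq nv^*+(l^*-\epsilon)n^{1/3})\geq e^{-o(n^{1/3})}$, whereas you need this probability to be $1-e^{-cn^{1/3}}$. The paper's Lemma \ref{lem:concentration} achieves this amplification by using the strong supercriticality \eqref{eqn:breeding} to couple with a Galton--Watson process and produce $e^{\alpha\eta n^{1/3}}$ particles above a linear path at time $\eta n^{1/3}$, then applying the (uniform in the starting time) tail estimate to each of these independent subtrees; this mechanism is absent from your sketch and is precisely where the second half of \eqref{eqn:breeding} enters.
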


This theorem is proved in Section \ref{sec:maxdis}. The presence of the largest zero of the Airy function of first kind is closely related to the asymptotic of the Laplace transform of the area under a Brownian motion staying positive,
\[\E\left( e^{-\int_0^t B_s ds} ; B_s \geq 0, s \leq t\right) \approx_{t \to +\infty} e^{\frac{\alpha_1}{2^{1/3}} t + o(t)}.\]
The fact that the second order of $M_n$ is $n^{1/3}$ can be explained as follows: when $\theta$ is strictly increasing at time $t$, the optimal path has to stay very close to the boundary of the branching random walk at time $nt$. In particular, if $\theta$ is strictly increasing on $[0,1]$, the optimal path has to stay close to the boundary. The $n^{1/3}$ second order is then similar to the asymptotic of the consistent minimal displacement for the time-homogeneous branching random walk, which is of order $n^{1/3}$, as proved in \cite{FaZ10,FHS12}.

\subsection{Consistent maximal displacement}
\label{subsec:cmbintro}
The arguments we develop for the proof of Theorem \ref{thm:main} can easily be extended to obtain the asymptotic of the consistent maximal displacement with respect to the optimal path in the branching random walk, which we define now. For $n \in \N$ and $u \in \T^{(n)}$, we denote by
\[
  \Lambda(u) = \max_{k \leq |u|} \left[ \bar{a}^{(n)}_k - V(u_k) \right],
\]
the maximal delay an ancestor of the individual $u$ has with respect to the optimal path. The consistent maximal displacement with respect to the optimal path is defined by
\begin{equation}
  \label{eqn:defineCMD}
  \Lambda_n = \min_{u \in \T^{(n)}, |u|=n} \Lambda(u).
\end{equation}
This quantity corresponds to the smallest distance from the optimal path $\bar{a}$ at which one can put a barrier below which individuals get killed such that the global system still survives. The consistent maximal displacement has been studied for time-homogeneous branching random walks in \cite{FaZ10}. We obtain the following asymptotic for the consistent maximal displacement in the BRWtie.
\begin{theorem}
\label{thm:cmd}
Under the assumptions \eqref{eqn:breeding}, \eqref{eqn:regularity}, \eqref{eqn:existence_max}, \eqref{eqn:regularitytheta}, \eqref{eqn:regularityenergy} and \eqref{eqn:integrability2}, there exists $\lambda^* \leq -l^*$, defined in Section~\ref{subsec:cmd} such that for any $\lambda \in (0,\lambda^*)$,
\[
  \lim_{n \to +\infty} \frac{1}{n^{1/3}} \log \P\left( \Lambda_n \leq (\lambda^*-\lambda) n^{1/3}\right) = - \theta_0 \lambda,
\]
and for any $\epsilon>0$,
\[  \limsup_{n \to +\infty} \frac{1}{n^{1/3}} \log \P(|\Lambda_n - \lambda^* n^{1/3}| \geq \epsilon n^{1/3})  < 0.\]
\end{theorem}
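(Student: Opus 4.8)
The argument mirrors the proof of Theorem~\ref{thm:main}, with the terminal constraint ``a particle sits above $nv^*+yn^{1/3}$ at time $n$'' replaced by the pathwise constraint ``a particle keeps all of its ancestors above the moving barrier $\bar a^{(n)}_k-zn^{1/3}$''. Indeed $\{\Lambda_n\le zn^{1/3}\}=\{\exists u,\ |u|=n,\ \forall k\le n,\ V(u_k)\ge \bar a^{(n)}_k-zn^{1/3}\}$, so everything reduces to estimating, uniformly for $z$ near $\lambda^*$, the survival probability up to time $n$ of the BRWtie killed below the barrier $\bar a^{(n)}_\cdot-zn^{1/3}$. The constant $\lambda^*$ is precisely the threshold at which this survival probability ceases to be exponentially small in $n^{1/3}$; it is defined in Section~\ref{subsec:cmd} through the same Sturm--Liouville data (hence the Airy constant $\alpha_1$) and the same function $\theta$ that produce $l^*$, and the inequality $\lambda^*\le -l^*$ reflects the fact, established in the proof of Theorem~\ref{thm:main}, that the ancestral line of the rightmost particle already stays within distance $(-l^*+o(1))n^{1/3}$ of the optimal path $\bar a^{(n)}$.

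\textbf{Upper bound via the first moment.} By Markov's inequality, and after intersecting with the overwhelmingly likely event that the front of the BRWtie stays below $\bar a^{(n)}_k+\epsilon n^{1/3}$ for all $k$ (a first-moment estimate, as the optimal path is an upper envelope), one has $\P(\Lambda_n\le zn^{1/3})\le \E\big[\sum_{|u|=n}\ind{\bar a^{(n)}_k-zn^{1/3}\le V(u_k)\le\bar a^{(n)}_k+\epsilon n^{1/3},\ k\le n}\big]+e^{-cn^{1/3}}$. I would apply the many-to-one lemma with the tilt $\theta_t=\partial_a\kappa^*_t(a_t)$: using \eqref{eqn:legendreestimate} and the Legendre relation $\partial_\theta\kappa_t(\theta_t)=a_t$, the tilted path $X_k:=S_k-\bar a^{(n)}_k$ is a centred time-inhomogeneous random walk with variance increments $\sigma^2_{k/n}$, and, after an Abel summation together with $K^*(a)_1=0$ from Proposition~\ref{prop:regularity} and the Lipschitz bounds therein, the exponential weight equals $\exp\!\big(O(1)-\theta_1 X_n+\sum_{k=1}^{n-1}X_k(\theta_{(k+1)/n}-\theta_{k/n})\big)$, with all the increments $\theta_{(k+1)/n}-\theta_{k/n}\ge 0$. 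Then the time-inhomogeneous random walk estimates announced in Section~\ref{subsec:introrw} --- Sakhanenko coupling with a Brownian motion of variance profile $\sigma^2$, and the small-deviation asymptotics for the Laplace transform of the area of a Brownian motion confined to an interval, where $\alpha_1$ enters --- evaluate this quantity as $\exp\big((g(z)+o(1))n^{1/3}\big)$ for a nondecreasing function $g$ with $g(\lambda^*)=0$ and $g(\lambda^*-\lambda)=-\theta_0\lambda$ for $\lambda\in(0,\lambda^*)$. This yields $\limsup_n n^{-1/3}\log\P(\Lambda_n\le(\lambda^*-\lambda)n^{1/3})\le-\theta_0\lambda$, and, taking $z=\lambda^*-\epsilon$, the lower-deviation half of the concentration statement.

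\textbf{Lower bound via a truncated second moment.} For the matching lower bound on $\P(\Lambda_n\le(\lambda^*-\lambda)n^{1/3})$ and for $\P(\Lambda_n\ge(\lambda^*+\epsilon)n^{1/3})\le e^{-cn^{1/3}}$, I would count the particles whose ancestral path remains inside a corridor $[\bar a^{(n)}_k-zn^{1/3},\ \bar a^{(n)}_k-zn^{1/3}+\Delta(k)]$ of the appropriate $n^{1/3}$-width (and with otherwise typical fluctuations); call $Z_n$ their number. Its first moment is handled as above, and its second moment reduces, via the standard decomposition over the generation $j$ where two spines split, to the probability that both subtrees keep a particle in the corridor after time $j$ --- once more a time-inhomogeneous random walk estimate, with \eqref{eqn:breeding} providing enough offspring near the barrier and \eqref{eqn:integrability2} bounding the many-pairs term. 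Paley--Zygmund then gives $\P(Z_n\ge 1)\gtrsim(\E Z_n)^2/\E[Z_n^2]$. To produce the factor $e^{-\theta_0\lambda n^{1/3}}$ I would first force an ancestor around time $\delta n$ to sit at height $\bar a^{(n)}_{\delta n}+\lambda n^{1/3}$, which by the (now moderate-deviation) first/second moment at time $\delta n$ occurs with probability $\asymp e^{-\theta_{\delta}\lambda n^{1/3}}$, tending to $e^{-\theta_0\lambda n^{1/3}}$ as $\delta\to 0$; such a particle is then at distance $\lambda^* n^{1/3}$ above the barrier, i.e.\ at the critical corridor width, and from it the corridor argument gives survival with probability bounded away from $0$. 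For the reverse direction, the offset $\lambda^*+\epsilon$ is strictly supercritical, so the same corridor argument has positive survival probability from the root; the upgrade to $1-e^{-cn^{1/3}}$ comes from the usual bootstrap --- at an early time $\eta n$ there are, with probability $1-e^{-cn^{1/3}}$, exponentially many particles strictly inside the corridor (first and second moment), each of which seeds an independent corridor-survival trial --- using \eqref{eqn:breeding}. Finally $\P(\Lambda_n\le(\lambda^*-\epsilon)n^{1/3})\le e^{-cn^{1/3}}$ is the case $\lambda=\epsilon$ of the first part.

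\textbf{Main obstacle.} The crux is the exact matching of the first- and second-moment exponents and the identification of the rate as $\theta_0$: one must show that narrowing the corridor by $\lambda n^{1/3}$ costs precisely $e^{-\theta_0\lambda n^{1/3}}$, that is, that the penalty is a boundary effect localised near time $0$, and that $g(z)$ is the same constant on both sides. This requires the sharp time-inhomogeneous random walk estimates, uniform in the barrier offset $z$ and in the slowly varying profiles $\theta$ and $\sigma$, together with careful handling of the non-monotone variance through the nondecreasing function $\theta$ which, by Proposition~\ref{prop:regularity}, plays the role formerly played by $1/\sigma^2$; the corridor constraint binds only on $\{t:K^*(a)_t=0\}$, and assumption~\eqref{eqn:regularityenergy} is exactly what permits the local costs to be integrated into $\lambda^*$. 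Everything else --- the Airy constant, the Legendre bookkeeping, and the spine computations --- is as in the proof of Theorem~\ref{thm:main}.
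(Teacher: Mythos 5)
Your overall architecture --- rewriting $\{\Lambda_n\le zn^{1/3}\}$ as survival of the walk killed below $\bar{a}^{(n)}_k-zn^{1/3}$, many-to-one with the tilt $\theta$, corridor first and second moments, Paley--Zygmund, and a branching bootstrap via \eqref{eqn:breeding} --- is indeed the paper's. But the quantitative core is missing, and your upper bound as written fails. First, the event that the front stays below $\bar{a}^{(n)}_k+\epsilon n^{1/3}$ for \emph{all} $k$ is not overwhelmingly likely: the optimal path is an upper envelope only on $G=\{t:K^*(a)_t=0\}$, and at times with $K^*(a)_t<0$ (e.g.\ the mixed case of Lemma \ref{lem:specialcase}, where $a_s<v_s$ on an initial interval) the front lies at distance of order $n$ above $\bar{a}^{(n)}$. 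Second, and more seriously, even imposing the ceiling only on $G$, a ceiling at \emph{constant} height $\epsilon$ does not give the rate $-\theta_0\lambda$: in the time-homogeneous case the expected number of particles confined to $[\bar{a}^{(n)}_k-zn^{1/3},\,\bar{a}^{(n)}_k+\epsilon n^{1/3}]$ has exponent $\theta z-\tfrac{\pi^2\sigma^2}{2(z+\epsilon)^2}$, and since $\tfrac{\pi^2\sigma^2}{2(\lambda^*)^{2}}=\tfrac{\theta\lambda^*}{3}$, at $z=\lambda^*-\lambda$ this is roughly $\tfrac23\theta\lambda^*-\theta\lambda>0$ for small $\lambda,\epsilon$: the first moment \emph{grows} like $e^{cn^{1/3}}$ while the probability decays like $e^{-\theta\lambda n^{1/3}}$, so your claimed evaluation $g(\lambda^*)=0$, $g(\lambda^*-\lambda)=-\theta_0\lambda$ is false for that corridor. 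The tight bound requires a ceiling of time-varying height --- the solution $g^\lambda$ of the Carath\'eodory equation \eqref{eqn:differentialCmd} (width proportional to $(1-t)^{1/3}$ in the homogeneous case) --- and is run by splitting at the first crossing of that curve (the set $\calA_n$, Lemma \ref{lem:estimate_upperfrontier}) plus the endpoint count in the corridor (Lemma \ref{lem:firstorder}); the equation is built exactly so that $H^{F,G}_t(f,g^\lambda,\theta)-\theta_t g^\lambda_t\equiv-\theta_0\lambda$ for all $t$, which is also what makes the first- and second-moment exponents match in the lower bound (Lemma \ref{lem:lowerbound} and the remark following it).

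This is precisely the part you set aside as the ``main obstacle'', and it is the mathematical content of the proof rather than a technicality: the theorem asserts the existence of $\lambda^*$, so one must construct it --- in the paper as the unique $\lambda$ with $g^{\lambda}_1=0$ --- and the statement you assert without proof (``narrowing the corridor by $\lambda n^{1/3}$ costs exactly $e^{-\theta_0\lambda n^{1/3}}$, a boundary effect at time $0$'') is obtained there from the translation identity $g^{\lambda,\mu}=g^{\lambda+\mu,0}-\mu$ for \eqref{eqn:differentialCmd}, which converts a lowering of the killing barrier into a shift of the initial datum of the ODE. Your alternative lower-bound strategy (lift a particle by $\lambda n^{1/3}$ above $\bar{a}$ by time $\delta n$, then survive from the critical offset) presupposes that survival at offset $\lambda^*$ has probability $e^{o(n^{1/3})}$, which is again the $\lambda\to0$ case of the statement and needs the same ODE corridor; you do allow a varying width $\Delta(k)$ there, but never identify it. Finally, for $\P(\Lambda_n\ge(\lambda^*+\epsilon)n^{1/3})$ the bootstrap should be seeded over the first $O(n^{1/3})$ generations, as in Lemma \ref{lem:concentration}: seeding at time $\eta n$ either places the seeds at distance of order $n$ below the killing barrier (if one only uses \eqref{eqn:breeding}), or requires exponentially many particles \emph{inside} the corridor at time $\eta n$ with probability $1-e^{-cn^{1/3}}$, a claim which on a boundary stretch $[0,\eta]\subset G$ is essentially as strong as the theorem itself.
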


\begin{remark}
We observe that if $u \in \T^{(n)}$ verifies $V(u) = M_n$, then $\Lambda^*(u) \leq nv^* - M_n$. As a consequence, the inequality $L_n \leq nv^* - M_n$ holds almost surely, which proves that $\lambda^* \leq -l^*$, as soon as these quantities exist.
\end{remark}

In Theorem \ref{thm:cmd}, we give the asymptotic of the consistent maximal displacement with respect to the optimal path. However, this is not the only path one may choose to consider. For example, one can choose the ``natural speed path'', in which the speed profile is a function $v \in \calC$ defined by $v_t = \inf_{\theta > 0} \frac{\kappa_t(\theta)}{\theta}$. Note that $v_t$ is the speed of a time-homogeneous branching random walk with reproduction law $\calL_t$. As for all $t \in [0,1]$, $K^*(v)_t=0$, for any $\lambda > 0$, the number of individuals that stayed above $\bar{v}^{(n)} - \lambda n^{1/3}$ at all time $k \leq n$ is of order $e^{O(n^{1/3})}$.

In Section \ref{sec:path}, we provide a time-inhomogeneous version of the Many-to-one lemma, that links additive moments of the branching random walk with time-inhomogeneous random walk estimates. To prove Theorems \ref{thm:main} and \ref{thm:cmd}, we use random walk estimates that are proved in Section \ref{sec:rw}.

\subsection{Airy functions and random walk estimates}
\label{subsec:introrw}

We introduce a few basic property on Airy functions, that can be found in \cite{AbS64}. The \textit{Airy function of first kind} $\mathrm{Ai}$ can be defined, for $x \in \R$, by the improper integral
\begin{equation}
  \label{eqn:airy}
 \mathrm{Ai}(x) = \frac{1}{\pi}\lim_{t \to + \infty} \int_0^t \cos\left( \tfrac{s^3}{3} + x s \right)ds,
\end{equation}
and the \textit{Airy function of second kind} $\mathrm{Bi}$ by
\begin{equation}
  \label{eqn:airy2}
 \mathrm{Bi}(x) = \frac{1}{\pi} \lim_{t \to + \infty} \int_0^t \exp\left( -\tfrac{s^3}{3} + x s \right) + \sin\left(\tfrac{s^3}{3} + x s \right)ds.
\end{equation}
These two functions form a basis of the space of functions solutions to
\[
  \forall x \in \R, y''(x) - x y(x) = 0,
\]
and verify $\lim_{x \to +\infty} \mathrm{Ai}(x) = 0$ and $\lim_{x \to +\infty} \mathrm{Bi}(x) = +\infty$. The equation $\mathrm{Ai}(x) = 0$ has an infinitely countable number of solutions, all negative with no accumulation points, which are listed in the decreasing order in the following manner: $0 > \alpha_1 > \alpha_2 > \cdots$.

The Laplace transform of the area below a random walk, or a Brownian motion, conditioned to stay positive admits an asymptotic behaviour linked to the largest zero of $\mathrm{Ai}$, as proved by Darling \cite{Dar83}, Louchard \cite{Lou84} and Tak\'acs \cite{Tak92}. This result still holds in time-inhomogeneous settings. Let $(X_{n,k}, n \geq 1, k \leq n)$ be a triangular array of independent centred random variables. We assume that
\begin{equation}
  \label{eqn:variance_rw}
  \exists \sigma \in \calC([0,1],(0,+\infty)) : \forall n \in \N, k \leq n, \E(X_{n,k}^2) = \sigma_{k/n}^2,
\end{equation}
\begin{equation}
  \label{eqn:integrability_rw}
  \exists \mu > 0 : \E\left[ e^{\mu |X_{n,k}|} \right] < +\infty.
\end{equation}
We write $S^{(n)}_k = \sum_{j=1}^k X_{n,j}$ for the time-inhomogeneous random walk.
\begin{theorem}[Time-inhomogeneous Tak\'acs estimate]
\label{thm:taktie}
Under \eqref{eqn:variance_rw} and \eqref{eqn:integrability_rw}, for any continuous function $g$ such that $g(0)>0$ and any absolutely continuous increasing function $h$ with a Riemann-integrable derivative $\dot{h}$, we have
\begin{multline*}
  \lim_{n \to +\infty} \frac{1}{n^{1/3}} \log \E\left[ \exp\left(- \sum_{j=1}^{n} (h_{j/n}-h_{(j-1)/n}) S^{(n)}_{j}\right) ; S_j \leq g_{j/n} n^{1/3}, j \leq n \right] \\
  = \int_0^1 \left(\dot{h}_s g_s + \frac{a_1}{2^{1/3}} (\dot{h}_s \sigma_s)^{2/3}\right) ds.
\end{multline*}
\end{theorem}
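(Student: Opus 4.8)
The plan is to reduce the statement to the case of Brownian motion via the Sakhanenko strong approximation, and then to localize the time-inhomogeneous Brownian estimate on a partition fine enough that $\sigma$ and $\dot h$ are essentially constant on each block, where the classical Darling--Louchard--Takács asymptotic gives the contribution $\frac{a_1}{2^{1/3}}(\dot h_s\sigma_s)^{2/3}$ per unit time. The $\dot h_s g_s$ term is a deterministic artefact of the barrier: writing $S_j \le g_{j/n}n^{1/3}$ and pulling the linear shift $g_{j/n}n^{1/3}$ out of the exponential weight will produce, via summation by parts, the Riemann sum $\sum_j (h_{j/n}-h_{(j-1)/n}) g_{j/n} n^{1/3} \to n^{1/3}\int_0^1 \dot h_s g_s\, ds$, leaving a centered problem with a flat (or slowly varying) barrier. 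So the genuine content is the constant $\frac{a_1}{2^{1/3}}$.

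Concretely, first I would use \eqref{eqn:integrability_rw} and Sakhanenko's theorem \cite{Sak84} to couple, for each $n$, the walk $(S^{(n)}_k)_{k\le n}$ with a Brownian motion $(B_t)$ having the right time-changed variance, with error $o(n^{1/3})$ on the relevant scale (the error in Sakhanenko is logarithmic in the number of steps, hence negligible against $n^{1/3}$ after the diffusive rescaling by $n^{1/2}$ of space and $n$ of time, which turns the window into one of width $n^{-1/6}$ in Brownian units — one should check the scaling carefully, as the natural scale here is $n^{1/3}$ in space and the Brownian clock runs to time $1$). Rescaling $S^{(n)}_{\lfloor nt\rfloor} \approx n^{1/2} W_t$ where $W$ is a Brownian motion with variance profile $\sigma^2$, the weight becomes $\exp(-n^{1/2}\int_0^1 \dot h_s W_s\, ds)$ and the constraint becomes $W_s \le g_s n^{-1/6}$, i.e. asymptotically $W_s \le 0$. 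Thus the quantity equals, up to $e^{o(n^{1/3})}$ and the deterministic shift term computed above, $\E[\exp(-n^{1/2}\int_0^1 \dot h_s W_s\, ds)\,;\, W_s \le 0,\ s\le 1]$.

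Next I would break $[0,1]$ into $N$ blocks and, on each block, replace $\dot h$ and $\sigma$ by constants (using that $\dot h$ is Riemann-integrable and $\sigma$ continuous), reducing to a product of independent Brownian area Laplace transforms under a positivity constraint — on a block of length $\delta$ with constant weight $c$ and variance $\varsigma^2$, a Brownian scaling $B_t \mapsto \varsigma^{2/3} c^{-1/3} B_{c^{2/3}\varsigma^{-2/3} t}$ reduces $\E[\exp(-c\int_0^\delta B_s\, ds)\,;\,B\ge 0]$ to the standard form, whose logarithmic asymptotic in the long-time limit is $\frac{\alpha_1}{2^{1/3}}$ times the rescaled length $c^{2/3}\varsigma^{-2/3}\cdot$(...) — collecting the exponents gives exactly $\frac{\alpha_1}{2^{1/3}}(\dot h_s\sigma_s)^{2/3}\delta$, and one must also verify that the eigenfunction-expansion remainder and the mismatch of endpoint conditions between consecutive blocks cost only $e^{o(n^{1/3})}$ (a standard spectral-gap/Markov argument). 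Summing over blocks and then letting $N\to\infty$ turns $\sum \frac{\alpha_1}{2^{1/3}}(\dot h\sigma)^{2/3}\delta$ into the integral $\int_0^1 \frac{\alpha_1}{2^{1/3}}(\dot h_s\sigma_s)^{2/3}\, ds$; noting $\alpha_1=a_1$ and adding back $\int_0^1 \dot h_s g_s\, ds$ yields the claimed formula, with matching upper and lower bounds since the localization can be done from both sides.

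The main obstacle, I expect, is controlling the errors uniformly across the $N\to\infty$ blocks while keeping the Sakhanenko coupling error genuinely $o(n^{1/3})$: the interchange of the limits $n\to\infty$ and $N\to\infty$ requires quantitative spectral estimates for the killed, time-changed generator on each block (to bound both the subleading eigenmodes and the cost of not matching the free/absorbing boundary data at block interfaces), together with a uniform-in-$n$ large-deviation tail bound ensuring the rare event $\{S_j\le g_{j/n}n^{1/3}\}$ combined with the exponential weight is not dominated by atypically large excursions — this is where assumption \eqref{eqn:integrability_rw} is essential. The upper bound also needs a soft argument (e.g. restricting to the event that the walk stays above $-C n^{1/3}$, which carries all the mass up to $e^{o(n^{1/3})}$) to make the area functional bounded below so that the Laplace method applies cleanly.
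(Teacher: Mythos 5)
Your overall route coincides with the paper's: Sakhanenko coupling, decomposition of $[0,1]$ into blocks on which $\dot h$ and $\sigma$ are frozen, the Airy/Feynman--Kac asymptotic for the Laplace transform of the Brownian area under a one-sided constraint, and Riemann sums with matching upper and lower bounds (this is Theorem \ref{thm:general_rw} with $F=\emptyset$, $G=[0,1]$, built on Lemma \ref{lem:bmOnesided} and Lemmas \ref{lem:upperboundRw}--\ref{lem:lowerboundRw}). The genuine gap is in your order of operations for the upper bound: you perform one \emph{global} coupling of $(S^{(n)}_k)_{k\leq n}$ with a Brownian motion and invoke an error $o(n^{1/3})$. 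The typical error is indeed $O(\log n)$, but the expectation carries a weight of size $e^{\Theta(n^{1/3})}$: on the constraint event the weight can be as large as $\exp\bigl(n^{1/3}\sum_j (h_{j/n}-h_{(j-1)/n}) g_{j/n}\bigr)\approx e^{n^{1/3}\int_0^1 \dot h_s g_s ds}$, whereas the target is smaller by the factor $e^{-n^{1/3}|\alpha_1| 2^{-1/3}\int_0^1 (\dot h_s\sigma_s)^{2/3} ds}$. Sakhanenko's inequality only gives $\P(\Delta_n \geq \epsilon n^{1/3}) \leq e^{-C_0\mu\epsilon n^{1/3}}(1+O(\sqrt n))$ with $\mu$ fixed by \eqref{eqn:integrability_rw} and $\epsilon$ forced to be small (it also enters the approximation on the good event), so the crude bound (maximal weight)$\times$(bad-event probability) can dominate the main term, and Cauchy--Schwarz does not help since the squared weight has the same exponential order. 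You flag exactly this competition as ``the main obstacle'' but give no mechanism to resolve it. The paper's mechanism is structural: in Lemmas \ref{lem:upperboundRw} and \ref{lem:lowerboundRw} the Markov property is applied on windows of length $\approx A n^{2/3}$, the barrier and $\dot h$ are frozen on each window so the weight fluctuation per window is $O_A(\epsilon)$, and a \emph{fresh} coupling is used on each window; its failure probability vanishes as $n\to+\infty$ uniformly in the window and is absorbed additively against a per-window main term that is a constant in $n$, before taking $\frac{1}{n^{1/3}}\log(\cdot)$ and letting $n\to+\infty$, then $A\to+\infty$. Coupling blockwise after freezing the barrier is the missing idea.

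A smaller point: your signs are inconsistent. Your intermediate reduction $\E\bigl[\exp(-n^{1/2}\int_0^1 \dot h_s W_s ds); W_s\leq 0\bigr]$ rewards downward excursions with no lower barrier and grows exponentially in $n$ (compare the remark following Theorem \ref{thm:general_rw} on the failure of \eqref{eqn:paslineaire}), while your block computation silently switches to the convergent Tak\'acs form $\E[e^{-c\int B};B\geq 0]$; likewise the extracted shift only produces $+\int_0^1 \dot h_s g_s ds$ under the convention of Theorem \ref{thm:general_rw}, namely the weight $e^{+\sum_j (h_{(j+1)/n}-h_{j/n})S^{(n)}_j}$ with the upper barrier, recentred by working with the distance $g_{j/n}n^{1/3}-S^{(n)}_j\geq 0$ to the barrier. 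Fix the convention once and for all in that way; with this correction your block analysis, including the endpoint-window gluing via the eigenfunction estimates, is the paper's Lemma \ref{lem:asymptoticOnesided} and Corollary \ref{cor:asymptoticOnesided}.
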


This result is, in some sense, similar to the Mogul'ski\u\i{} estimate \cite{Mog74}, which gives the asymptotic of the probability for a random walk to stay in an interval of length $n^{1/3}$. A time-inhomogeneous version of this result, with an additional exponential weight, holds again. To state this result, we introduce a function $\Psi$, defined in the following lemma.
\begin{lemma}
\label{lem:existencePsi}
Let $B$ be a Brownian motion. There exists a unique convex function $\Psi : \R \to \R$ such that for all $h \in \R$
\begin{equation}
 \label{eqn:definepsi}
 \lim_{t \to +\infty} \frac{1}{t} \log \sup_{x \in [0,1]} \E_x\left[ e^{ -h \int_0^t B_s ds} ; B_s \in [0,1], s \in [0,t] \right] = \Psi(h).
\end{equation}
\end{lemma}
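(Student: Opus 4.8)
The plan is to establish existence of the limit \eqref{eqn:definepsi} by a standard subadditivity argument, and then to verify convexity in $h$ by a Hölder-type interpolation. First I would fix $h \in \R$ and, for $t > 0$, set
\[
  \phi_t(h) = \log \sup_{x \in [0,1]} \E_x\left[ e^{-h\int_0^t B_s\, ds}\,;\, B_s \in [0,1],\ s\in[0,t] \right].
\]
Since the integrand is bounded between $e^{-|h|t}$ and $e^{|h|t}$ and the event $\{B_s\in[0,1], s\le t\}$ has probability bounded away from $0$ and $1$ uniformly in the starting point (by standard small-ball estimates for Brownian motion), $\phi_t(h)$ is finite for every $t$, and $|\phi_t(h)| \le |h|t + C$. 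The key step is to show that $(\phi_t(h))_{t\ge 0}$ is \emph{subadditive}: for $s,t>0$, applying the Markov property at time $s$,
\[
  \E_x\left[ e^{-h\int_0^{s+t} B_u du}; B_u \in [0,1], u \le s+t\right]
  = \E_x\left[ e^{-h\int_0^s B_u du}; B_u \in [0,1], u \le s\,;\, F(B_s)\right],
\]
where $F(y) = \E_y\left[ e^{-h\int_0^t B_u du}; B_u\in[0,1], u\le t\right] \le e^{\phi_t(h)}$. Taking the supremum over $x$ yields $\phi_{s+t}(h) \le \phi_s(h) + \phi_t(h)$. By Fekete's subadditive lemma, $\phi_t(h)/t$ converges as $t\to+\infty$ to $\inf_{t>0}\phi_t(h)/t =: \Psi(h)$, which lies in $[-|h|, |h|]$ by the uniform bound, hence is finite; this defines $\Psi$ and gives \eqref{eqn:definepsi}. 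Uniqueness is immediate since $\Psi(h)$ is characterised as the value of this limit.

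It remains to prove that $\Psi$ is convex. Fix $h_0, h_1 \in \R$ and $\lambda \in (0,1)$, and put $h = \lambda h_0 + (1-\lambda) h_1$. For each fixed starting point $x$ and each $t$, Hölder's inequality with exponents $1/\lambda$ and $1/(1-\lambda)$ applied to the measure $e^{-h\int_0^t B_s ds}$ written as $\big(e^{-h_0\int_0^t B_s ds}\big)^\lambda \big(e^{-h_1\int_0^t B_s ds}\big)^{1-\lambda}$ on the event $\{B_s\in[0,1], s\le t\}$ gives
\[
  \E_x\left[ e^{-h\int_0^t B_s ds}; B_s\in[0,1], s\le t\right]
  \le \E_x\left[ e^{-h_0\int_0^t B_s ds}; B_s\in[0,1], s\le t\right]^{\lambda}
  \E_x\left[ e^{-h_1\int_0^t B_s ds}; B_s\in[0,1], s\le t\right]^{1-\lambda}.
\]
Bounding each factor on the right by $e^{\lambda \phi_t(h_0)}$ and $e^{(1-\lambda)\phi_t(h_1)}$ respectively and taking the supremum over $x$ on the left gives $\phi_t(h) \le \lambda \phi_t(h_0) + (1-\lambda)\phi_t(h_1)$; dividing by $t$ and letting $t \to +\infty$ yields $\Psi(h) \le \lambda \Psi(h_0) + (1-\lambda)\Psi(h_1)$, i.e. convexity. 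A convex finite function on $\R$ is automatically continuous, so no further regularity argument is needed.

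The main obstacle is the uniform finiteness of $\phi_t(h)$: one must check that $\sup_{x\in[0,1]}\E_x[\,\cdot\,]$ does not blow up in $t$ faster than exponentially, and that it is not identically $0$ for large $t$ (so that $\log$ is well-defined and $\Psi(h) > -\infty$). Both follow from the elementary fact that a Brownian motion started anywhere in $[0,1]$ has probability at least some $c>0$, independent of the starting point, of remaining in $[0,1]$ on a unit time interval — iterating this over $\lfloor t\rfloor$ unit intervals gives $\P_x(B_s\in[0,1], s\le t) \ge c^{\lceil t\rceil}$, and combined with the deterministic bound $e^{-|h|t} \le e^{-h\int_0^t B_s ds} \le e^{|h|t}$ on the event in question, this pins $\phi_t(h)$ between $-|h|t + \lceil t \rceil \log c$ and $|h|t$. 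Everything else is routine.
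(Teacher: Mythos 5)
Your proof is correct and follows essentially the same route as the paper: subadditivity of $t\mapsto \log\sup_x \E_x[\cdots]$ via the Markov property plus Fekete's lemma for existence of the limit, and Hölder interpolation for convexity. Your treatment of the uniform lower bound via the small-ball estimate $\P_x(B_s\in[0,1],\,s\le t)\ge c^{\lceil t\rceil}$ is in fact slightly more careful than the paper's claimed bound $|\Psi_t(h)|\le |h|$, and correctly rules out the limit being $-\infty$.
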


\begin{remark}
We show in Appendix \ref{subsec:twosided} that $\Psi$ admits the following alternative definition:
\[
  \forall h > 0, \Psi(h) =  \frac{h^{2/3}}{2^{1/3}} \sup\left\{\lambda \leq 0 : \mathrm{Ai}\left( \lambda \right) \mathrm{Bi} \left( \lambda + (2h)^{1/3} \right) - \mathrm{Bi}\left( \lambda \right) \mathrm{Ai}\left( \lambda + (2h)^{1/3} \right) = 0 \right\},
\]
and prove that $\Psi$ verifies $\Psi(0) = -\frac{\pi^2}{2}$, $\Psi(h) \sim_{h \to +\infty} \alpha_1 \frac{h^{2/3}}{2^{1/3}}$ and $\Psi(h)-\Psi(-h)=-h$ for all $h \in \R$.
\end{remark}

\begin{proof}[Proof of Lemma \ref{lem:existencePsi}]
For $h \in \R$ and $t \geq 0$, we write
\[
  \Psi_t(h) = \frac{1}{t} \log \sup_{x \in [0,1]} \E_x \left[ e^{ h \int_0^t B_s ds} ; B_s \in [0,1], s \in [0,t] \right].
\]
As $B_s \in [0,1]$, we have trivially $|\Psi_t(h)| \leq |h| < +\infty$. Let $0 \leq t_1 \leq t_2$ and $x \in [0,1]$, by the Markov property
\begin{align*}
  &\E_x \left[ e^{ h \int_0^{t_1 + t_2} B_s ds} ; B_s \in [0,1], s \in [0,t_1 + t_2] \right]\\
  &\qquad = \E_x\left[ e^{h \int_0^{t_1} B_s ds} \E_{B_{t_1}} \left[ e^{h\int_0^{t_2} B_s ds} ; B_s \in [0,1], s \in [0,t_2]\right] ; B_s \in [0,1], s \in [0,t_1] \right]\\
  &\qquad \leq e^{t_2 \Psi_{t_2}(h)} \E_x\left[ e^{h \int_0^{t_1} B_s ds}; B_s \in [0,1], s \in [0,t_1] \right]
  \leq e^{t_1 \Psi_{t_1}(h)} e^{t_2 \Psi_{t_2}(h)}.
\end{align*}
As a consequence, for all $h \in \R$, $(\Psi_t(h), t \geq 0)$ is a sub-additive function and
\[
  \lim_{t \to +\infty} \Psi_t(h) = \inf_{t \geq 0} \Psi_t(h) =: \Psi(h).
\]
In particular, for all $h \in \R$, we have $|\Psi(h)| \leq |h| < +\infty$.

We now prove that $\Psi$ is a convex function on $\R$, thus continuous. By the Hölder inequality, for all $\lambda \in [0,1]$, $(h_1,h_2) \in \R^2$, $x \in [0,1]$ and $t \geq 0$, we have
\begin{align*}
  &\E_x \left[ e^{ (\lambda h_1 + (1-\lambda)h_2) \int_0^t B_s ds} ; B_s \in [0,1], s \in [0,t] \right]\\
  \leq &\E_x \left[ \left(e^{\lambda h_1 \int_0^t B_s ds} \ind{B_s \in [0,1], s \in [0,t]} \right)^{\frac{1}{\lambda}}\right]^\lambda \E_x \left[ \left(e^{(1-\lambda) h_2 \int_0^t B_s ds} \ind{B_s \in [0,1], s \in [0,t]} \right)^{\frac{1}{1-\lambda}}\right]^{1-\lambda}\\
  \leq &e^{t \lambda \Psi_t(h_1)} e^{t(1-\lambda) \Psi_t(h_2)}.
\end{align*}
Consequently
\[
  \limsup_{t \to +\infty} \Psi_t(\lambda h_1 + (1-\lambda)h_2) \leq \lambda \limsup_{t \to + \infty} \Psi_t(h_1) + (1-\lambda) \limsup_{t \to +\infty} \Psi_t(h_2),
\]
which proves that $\Psi$ is convex, thus continuous.
\end{proof}

\begin{theorem}[A time-inhomogeneous Mogul'ski\u\i{} estimate]
\label{thm:mogtie}
Under \eqref{eqn:variance_rw} and \eqref{eqn:integrability_rw}, for any pair of continuous functions $f<g$ such that $f(0)<0<g(0)$ and any absolutely continuous function $h$ with a Riemann-integrable derivative $\dot{h}$, we have
\begin{multline*}
  \lim_{n \to +\infty} \frac{1}{n^{1/3}} \log \E\left[ \exp\left( \sum_{j=1}^{n} (h_{j/n}-h_{(j-1)/n}) S^{(n)}_{j} \right) ; \frac{S_j}{n^{1/3}} \in [f_{j/n}, g_{j/n}], j \leq n \right] \\
  = \int_0^1 \left( \dot{h}_s g_s + \frac{\sigma_s^2}{(g_s-f_s)^2}\Psi\left( \tfrac{(g_s-f_s)^3}{\sigma_s^2}\dot{h}_s \right) \right) ds.
\end{multline*}
\end{theorem}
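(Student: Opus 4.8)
The plan is to reduce the statement, via a blocking argument combined with a Sakhanenko coupling performed block by block, to the constant-coefficient Brownian estimate of Lemma~\ref{lem:existencePsi}; this is the time-inhomogeneous counterpart of the classical proof of the Mogul'ski\u\i{} estimate \cite{Mog74}. Fix $m\in\N$ and cut $[0,1]$ into the blocks $I_i=(\tfrac{i-1}{m},\tfrac{i}{m}]$, $1\le i\le m$. On $I_i$ I would freeze $\sigma,f,g$ to the values $\sigma_{t_i},f_{t_i},g_{t_i}$ at a sample point $t_i\in I_i$ chosen, using that $\dot h$ is Riemann-integrable, so that $\sum_i\int_{I_i}|\dot h_s-\dot h_{t_i}|\,ds\to 0$; since moreover $f,g,\sigma$ are continuous and $g-f$ is bounded away from $0$ on the compact $[0,1]$, the error this freezing introduces, once divided by $n^{1/3}$ and with $n\to\infty$, is bounded by a quantity that tends to $0$ as $m\to\infty$. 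Within $I_i$ one replaces $S^{(n)}$, via the Sakhanenko estimate \cite{Sak84} (which applies by \eqref{eqn:integrability_rw}), by a Brownian motion of variance $\sigma^2_{t_i}$, the coupling error being $o(n^{1/3})$ outside an event of probability at most $Cn\,e^{-cn^{1/3}/\log n}$.

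On $I_i$, the affine map $x\mapsto\big(g_{t_i}n^{1/3}-x\big)/\big(w_i n^{1/3}\big)$, with $w_i:=g_{t_i}-f_{t_i}$, carries the tube $[f_{t_i}n^{1/3},g_{t_i}n^{1/3}]$ onto $[0,1]$, and after the accompanying Brownian rescaling of space by $w_i n^{1/3}$ and of time by $w_i^2 n^{2/3}/\sigma^2_{t_i}$ it turns the block weight $\exp\big(\sum_{j\in I_i}(h_{j/n}-h_{(j-1)/n})S^{(n)}_j\big)$ into the deterministic factor $\exp\big(n^{1/3}g_{t_i}(h_{i/m}-h_{(i-1)/m})\big)$ times $\exp\big(-\mathbf{h}_i\int_0^{\hat T_i}\hat B_u\,du\big)$, where $\hat B$ is a standard Brownian motion confined to $[0,1]$ and
\[
  \mathbf{h}_i=\tfrac{(g_{t_i}-f_{t_i})^3}{\sigma^2_{t_i}}\,\dot h_{t_i},\qquad \hat T_i=\tfrac{\sigma^2_{t_i}}{m\,w_i^2}\,n^{1/3}.
\]
As $n\to\infty$ one has $\hat T_i\to\infty$ while $\mathbf{h}_i$ is a fixed bounded constant, so Lemma~\ref{lem:existencePsi} gives $\sup_{x\in[0,1]}\E_x\big[\exp(-\mathbf{h}_i\int_0^{\hat T_i}\hat B_u\,du)\,;\,\hat B_u\in[0,1],\,u\le\hat T_i\big]=\exp\big(\hat T_i\Psi(\mathbf{h}_i)+o(n^{1/3})\big)$; moreover the deterministic factors telescope in $i$ and their product, a Riemann–Stieltjes sum, converges as $m\to\infty$ to $\exp\big(n^{1/3}\int_0^1 g_s\dot h_s\,ds\big)$.

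Gluing the $m$ blocks with the Markov property yields the two matching bounds. For the upper bound, on the event that the coupling succeeds in every block the expectation is at most $\exp\big(n^{1/3}(L_m+o(1))\big)$, where
\[
  L_m=\sum_{i=1}^m g_{t_i}(h_{i/m}-h_{(i-1)/m})+\sum_{i=1}^m\tfrac{\sigma^2_{t_i}}{m\,w_i^2}\,\Psi(\mathbf{h}_i)\;\underset{m\to\infty}{\longrightarrow}\;\int_0^1\Big(\dot h_s g_s+\tfrac{\sigma_s^2}{(g_s-f_s)^2}\Psi\big(\tfrac{(g_s-f_s)^3}{\sigma_s^2}\dot h_s\big)\Big)ds;
\]
and if one of the blocks is bad, one bounds the remaining blocks by their suprema and the bad block crudely by $\exp(O(n^{1/3}/m))\cdot Cn\,e^{-cn^{1/3}/\log n}$ — here $\Psi$ is used bounded on the compact range of the $\mathbf{h}_i$, so that forfeiting one block costs only an $O(1/m)$ piece of the exponent — whence the bad events contribute an exponent at most $L_m+O(1/m)$. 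For the lower bound one keeps only the event that the coupling succeeds and that $\hat B$ ends each block in a fixed sub-interval of $[0,1]$ (a cost $e^{-O(1)}$ per block, negligible, via a starting-point-uniform version of Lemma~\ref{lem:existencePsi}). Letting $n\to\infty$ and then $m\to\infty$, both bounds converge to the asserted value, the convergence of the Riemann sums being guaranteed by Riemann-integrability of $\dot h$ and continuity of $f,g,\sigma$ and of $\Psi$.

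The main difficulty is the bookkeeping: for each fixed $m$ the Sakhanenko error, the frozen-coefficient error, the cost of a bad block and the endpoint restrictions must all be controlled, after division by $n^{1/3}$ and $n\to\infty$, by quantities vanishing as $m\to\infty$, uniformly enough to let one send $m\to\infty$ afterwards. The two genuinely delicate points are the control of bad blocks — which is precisely why one couples block by block, so that losing a block forfeits only an $O(1/m)$ fraction of the exponent rather than a macroscopic amount — and the transfer of the possibly discontinuous $\dot h$ through the nonlinear functional $\Psi$, for which the paper's Riemann-integrability hypothesis on $\dot h$, combined with the continuity of $\Psi$, is exactly what is required.
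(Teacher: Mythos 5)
Your overall strategy -- coarse blocks on which $f,g,\sigma,\dot h$ are frozen, a Sakhanenko coupling, reduction to a Brownian motion confined to a unit strip, Markov-property gluing and Riemann sums -- is the strategy of the paper, which proves the more general Theorem \ref{thm:general_rw} and reads off Theorem \ref{thm:mogtie} as the case $F=G=[0,1]$. However, there is a genuine gap at the coupling step. Theorem \ref{thm:sakhanenko} couples the increments of the block with a Gaussian walk having the \emph{same} covariances, i.e.\ with $B_{t_j}$ where $t_j=\sum_k \sigma^2_{k/n}$ is a nonlinear time change, not with a Brownian motion of constant variance $\sigma^2_{t_i}$. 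On a block of macroscopic length $1/m$ the oscillation $\sup_{s,t\in I_i}|\sigma^2_s-\sigma^2_t|$ does not vanish as $n\to+\infty$ for fixed $m$, so the pathwise discrepancy between $B_{t_j}$ and the linearly time-changed motion $B_{j\sigma^2_{t_i}}$ is of order $\big((n/m)\sup_{s,t\in I_i}|\sigma^2_s-\sigma^2_t|\big)^{1/2}\asymp n^{1/2}$, which swamps the tube width $n^{1/3}$. Hence the claim ``coupling error $o(n^{1/3})$ outside a superexponentially small event'' is false for fixed $m$, and with it the reduction of a block to the constant-coefficient functional of Lemma \ref{lem:existencePsi}. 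This is exactly why the paper uses two scales: inside Lemmas \ref{lem:upperboundRw} and \ref{lem:lowerboundRw} the Sakhanenko coupling and the freezing of the variance are carried out on sub-blocks of length of order $An^{2/3}$ (macroscopic length $An^{-1/3}\to 0$), where the modulus of continuity $\omega_{n,A}$ of $\sigma^2$ vanishes as $n\to+\infty$ for fixed $A$, and these sub-blocks are then glued by the Markov property; the coarse blocks on which $f,g,\dot h$ are frozen only come afterwards, in the proof of Theorem \ref{thm:general_rw}. To repair your argument you must either insert this intermediate scale, or replace the pathwise comparison by a comparison of the two functionals (inhomogeneous versus linear time change), for which no obvious monotonicity is available and which would need its own proof.

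Two smaller points. Freezing $f$ and $g$ at a sample point gives neither an upper nor a lower bound for the constrained expectation: you must enlarge the tube (infimum of $f$, supremum of $g$ over the slightly enlarged block) for the upper bound and shrink it for the lower bound, continuity then controlling the discrepancy as $m\to\infty$, as in the paper. Also, the ``starting-point-uniform version of Lemma \ref{lem:existencePsi}'' with a terminal constraint, on which your lower bound rests, does not follow from the subadditivity argument proving that lemma; it is precisely Lemma \ref{lem:bmTwosided}, whose proof requires the Feynman--Kac and Sturm--Liouville analysis of Appendix \ref{app:bm}. Your treatment of bad blocks (paying only an $O(1/m)$ share of the exponent) and of the Riemann sums via the Riemann-integrability of $\dot h$ is sound and matches the paper's bookkeeping.
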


The rest of the paper is organised as follows. Theorems \ref{thm:taktie} and \ref{thm:mogtie} are unified and proved in Section \ref{sec:rw}. These results are used in Section \ref{sec:path} to compute some branching random walk estimates, useful to bound the probability that there exists an individual that stays in a given path until time $n$. We study \eqref{eqn:existence_max} in Section \ref{sec:optimization}, proving in particular Proposition \ref{prop:regularity}. Using the particular structure of the optimal path, we prove Theorems \ref{thm:main} and \ref{thm:cmd} in Section \ref{sec:maxdis}.

\textbf{Acknowledgments.} I would like to thank Pascal Maillard, for introducing me to the time-inhomogeneous branching random walk topic, Ofer Zeitouni for his explanations on \cite{FaZ12} and Zhan Shi for help in all the stages of the research. I also thank the referees for their careful proofreading of this article and pointing out a mistake in one of the proofs. Finally, I wish to thank David Gontier and Cécile Huneau for their help with the PDE analysis in Appendix \ref{app:bm}.

\section{Random walk estimates}
\label{sec:rw}

We consider an array $(X_{n,k}, n \geq 1, k \leq n)$ of independent centred random variables, such that there exist $\sigma \in \calC([0,1], (0,+\infty))$ and $\mu\in (0,+\infty)$ verifying \eqref{eqn:variance_rw} and \eqref{eqn:integrability_rw}. We write $S^{(n)}_k = S^{(n)}_0 + \sum_{j=1}^k X_{n,j}$ for the time-inhomogeneous random walk of length $n$, with $\P_x(S^{(n)}_0=x)=1$. Let $\E_x$ be the expectation corresponding to the probability $\P_x$. Let $h$ be a continuous function on $[0,1]$ such that
\begin{equation}
\label{eqn:regularityh}
  h \text{ is absolutely continuous, with Riemann-integrable derivative } \dot{h}.
\end{equation}
The main result of this section is the computation of the asymptotic, as $n \to +\infty$, of the Laplace transform of the integral of $S^{(n)}$ with respect to $h$, on the event that $S^{(n)}$ stays in a given path, that is defined by \eqref{eqn:defIn}.

Let $f$ and $g$ be two continuous functions on $[0,1]$ such that $f<g$ and $f(0)<0<g(0)$, and $F$ and $G$ be two Riemann-integrable subsets of $[0,1]$ --i.e. verifying $\mathbf{1}_F$ and $\mathbf{1}_G$ are Riemann-integrable-- such that
\begin{equation}
  \label{eqn:paslineaire}
  \{t \in [0,1] : \dot{h}_t < 0 \} \subset F \quad \mathrm{and} \quad \{ t \in [0,1] : \dot{h}_t > 0 \} \subset G.
\end{equation}
Interval $F$ (respectively $G$) represent the set of times at which the barrier $f$ (resp. $g$) is put below (resp. above) the path of the time-inhomogeneous random walk. Consequently, \eqref{eqn:paslineaire} implies that when there is no barrier below, the Laplace exponent is non-negative, so that the random walk does not ``escape'' to $-\infty$ with high probability.

For $n \geq 1$, we introduce the $\frac{1}{n}^\mathrm{th}$ approximation of $F$ and $G$, defined by
\begin{equation}
  \label{eqn:deffnandgn}
  F_n = \left\{ 1\leq k \leq n : \left[\tfrac{k}{n}, \tfrac{k+1}{n}\right] \cap F \neq \emptyset\right\} \text{, } G_n = \left\{ 0 \leq k \leq n : \left[\tfrac{k}{n}, \tfrac{k+1}{n}\right] \cap G \neq \emptyset\right\}.
\end{equation}
The path followed by the random walk of length $n$ is defined, for $0 \leq j \leq n$, by
\begin{equation}
  \label{eqn:defIn}
  I_n(j) =
  \begin{cases}
    \left[f_{j/n}n^{1/3}, g_{j/n}n^{1/3}\right] & \text{if } j \in F_n \cap G_n,\\
    \left[f_{j/n},+\infty\right[ & \text{if } j \in F_n \cap G_n^c,\\
    \left]-\infty, g_{j/n}n^{1/3}\right] & \text{if } j \in F_n^c\cap G_n,\\
    \R & \text{otherwise.}
  \end{cases}
\end{equation}
The random walk $S^{(n)}$ follows the path $I^{(n)}$ if $ \geq f_{k/n} n^{1/3}$ at any time $k \in F_n$, and $S^{(n)}_k \leq g_{k/n} n^{1/3}$ at any time $k \in G_n$. Choosing $F$ and $G$ in an appropriate way, we obtain Theorem \ref{thm:taktie} --where $F=\emptyset$ and $G=[0,1]$-- and Theorem \ref{thm:mogtie} --where $F=G=[0,1]$.

We introduce the quantity
\begin{multline}
  \label{eqn:defineH}
  H^{F,G}_{f,g} = \int_0^1 \dot{h}_s g_s ds + \int_{F\cap G} \frac{\sigma_s^2}{(g_s - f_s)^2} \Psi\left( \tfrac{(g_s-f_s)^3}{\sigma_s^2} \dot{h}_s \right) ds\\
   + \int_{F^c \cap G} \frac{\alpha_1}{2^{1/3}} (\dot{h}_s \sigma_s)^{2/3} ds + \int_{G \cap F^c} \left( \dot{h}_s(f_s - g_s) + \frac{\alpha_1}{2^{1/3}} (-\dot{h}_s \sigma_s)^{2/3} \right) ds,
\end{multline}
where $\Psi$ is the function defined by \eqref{eqn:definepsi}. The first integral in this definition enables to ``center'' the path interval in a way that $g$ is replaced by $0$. The integral term over $F\cap G$ comes from the set of times in which the random walk is blocked in an interval of finite length as in Theorem \ref{thm:mogtie}, and the last two integral terms correspond to paths with only one bound, above or below the random walk respectively.

The rest of the section is devoted to the proof of the following result.
\begin{theorem}
\label{thm:general_rw}
Under the assumptions \eqref{eqn:variance_rw} and \eqref{eqn:integrability_rw}, for any continuous function $h$ satisfying \eqref{eqn:regularityh}, for any pair of continuous functions $f<g$ such that $f(0)<0<g(0)$, for any Riemann-integrable $F,G \subset [0,1]$ such that \eqref{eqn:paslineaire} holds, we have
\begin{equation}
  \label{eqn:limsup}
  \limsup_{n \to +\infty} \frac{1}{n^{1/3}} \sup_{x \in \R} \log \E_x\left[ e^{\sum_{j=1}^n (h_{(j+1)/n}-h_{j/n}) S^{(n)}_j}; S^{(n)}_j \in I^{(n)}_j, j \leq n \right] = H^{F,G}_{f,g}(1).
\end{equation}
Moreover, setting $\tilde{I}^{(n)}_j = I^{(n)}_j \cap [-n^{2/3},n^{2/3}]$, for all $f_1 < a < b < g_1$ we have
\begin{equation}
  \label{eqn:liminf}
  \liminf_{n \to +\infty} \frac{1}{n^{1/3}} \log \E_0\left[ e^{\sum_{j=1}^n (h_{(j+1)/n} - h_{j/n}) S^{(n)}_j} \ind{S^{(n)}_n \in [an^{1/3},bn^{1/3}]} ; S^{(n)}_j \in \tilde{I}^{(n)}_j, j \leq n \right] = H^{F,G}_{f,g}.
\end{equation}
\end{theorem}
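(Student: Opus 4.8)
The plan is to reduce the general two-sided/one-sided situation to the two pure cases: the Mogul'ski\u\i-type estimate (random walk confined to an interval of width $\sim n^{1/3}$, with an exponential tilt) and the Tak\'acs-type estimate (random walk staying below a single barrier), and then to patch these together across the Riemann-integrable sets $F$ and $G$. The upper bound \eqref{eqn:limsup} and the lower bound \eqref{eqn:liminf} are handled by somewhat different devices, as is usual for such large-deviations-at-scale-$n^{1/3}$ statements.

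First I would establish the building blocks at the level of Brownian motion. Via the Sakhanenko coupling \cite{Sak84} (using \eqref{eqn:integrability_rw}), the time-inhomogeneous random walk $S^{(n)}$ on an interval of length $n$ is, after the parabolic rescaling $t \mapsto tn$, $x \mapsto xn^{1/3}$, comparable to a Brownian motion with time-dependent variance $\sigma_{t}^2$; the error in the coupling is $o(n^{1/3})$ on the relevant scale, so it does not affect the exponential rate. The key scalar inputs are: (i) on a subinterval where both barriers are present, $\E_x[e^{-h\int_0^t B_s\,ds}; B_s\in[0,\ell]] \approx e^{t\,\frac{1}{\ell^2}\Psi(\ell^3 h)}$ after rescaling the interval to $[0,1]$ — this is exactly the content of Lemma \ref{lem:existencePsi} together with Brownian scaling; (ii) on a subinterval with only an upper barrier, $\E[e^{-h\int_0^t B_s\,ds}; B_s\ge 0] \approx e^{\frac{\alpha_1}{2^{1/3}} h^{2/3} t}$ (Darling/Louchard/Tak\'acs), which is the $\ell\to\infty$ limit of (i) since $\Psi(k)\sim \alpha_1 k^{2/3}/2^{1/3}$; and (iii) the identity $\Psi(h)-\Psi(-h)=-h$ lets one convert an upper barrier into a lower barrier by the reflection $B\mapsto -B$, producing the $F\cap G^c$ term with its extra $\dot h_s(f_s-g_s)$ shift. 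The first integral $\int_0^1 \dot h_s g_s\,ds$ in \eqref{eqn:defineH} is just the bookkeeping from recentering the tube around its upper edge $g$; after recentering, the tilt acts on the centered walk and the width $g_s-f_s$ is what enters. I would isolate these facts as a single ``one-block'' lemma stating the asymptotics of $\frac{1}{t}\log\E_x[\cdots]$ over a time interval of length $t$, uniformly in the starting point, for each of the four cases in \eqref{eqn:defIn}.

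Next comes the patching. For the upper bound: partition $[0,n]$ into $1/\varepsilon$ macroscopic blocks of length $\varepsilon n$; on each block apply the Markov property to factorize the expectation, bound the contribution of block $i$ by the supremum over entry points of the one-block quantity (this is where we need the uniformity in $x$), and use continuity of $f,g,\sigma,\dot h$ plus Riemann-integrability of $F,G$ and $\dot h$ to replace the per-block rates by their integral over the block up to $o(1)$. Summing and letting $\varepsilon\to 0$ gives $\le H^{F,G}_{f,g}$. The Riemann-integrability hypothesis is exactly what guarantees that the ``bad'' times near the boundary of $F$ or $G$ — where within a single $\varepsilon$-block the constraint may switch — have asymptotically negligible total length; the definitions $F_n,G_n$ with the $\frac{1}{n}$-thickening in \eqref{eqn:deffnandgn}, together with the $[\frac{k-1}{n},\frac{k+2}{n}]$-window in the paper's notion of Riemann-integrability, are tailored to make this robust. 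For the lower bound \eqref{eqn:liminf}: again block the interval, but now on each block force the walk (by restricting to a sub-event) to traverse from a small target interval around one edge to a small target interval around the other, staying strictly inside a slightly shrunk tube $[-n^{2/3},n^{2/3}]\cap$ (path), apply Markov to get a product of lower bounds, and use the corresponding lower-bound half of the one-block lemma. Forcing the endpoints into fixed small windows costs only $e^{o(n^{1/3})}$ and lets the blocks be glued; the truncation at $[-n^{2/3},n^{2/3}]$ is harmless because, by \eqref{eqn:integrability_rw}, the walk exits $[-n^{2/3},n^{2/3}]$ with probability $e^{-cn^{1/3}}$ times anything, a rate that is dominated. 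Finally I would check that the upper and lower rates coincide with $H^{F,G}_{f,g}$, and deduce Theorems \ref{thm:taktie} and \ref{thm:mogtie} by specializing $(F,G)=(\emptyset,[0,1])$ and $(F,G)=([0,1],[0,1])$, noting that in the first case the $F\cap G$ and $F^c\cap G^c$-type terms drop out and only the $\int \frac{\alpha_1}{2^{1/3}}(\dot h_s\sigma_s)^{2/3}$ term survives (after handling the sign of $\dot h$ via \eqref{eqn:paslineaire}).

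The main obstacle I expect is the uniform control needed to run the Markov-property factorization at scale $n^{1/3}$: the one-block estimate must hold uniformly over all admissible entry points (in particular entry points at distance $\sim n^{2/3}$ from the tube when the tube is one-sided), and the Sakhanenko coupling error, the block-boundary continuity corrections, and the Riemann-integrability ``defect'' near $\partial F\cup\partial G$ must all be shown to be $o(n^{1/3})$ \emph{simultaneously} and uniformly, so that summing $1/\varepsilon$ blocks and then sending $\varepsilon\to 0$ is legitimate. Getting the uniform-in-$x$ version of the Brownian one-block asymptotics — especially the lower bound, where one must bring a far-away starting point back down to $O(n^{1/3})$ without paying in the exponential rate — is the delicate technical core, and is presumably where the bulk of Section \ref{sec:rw} is spent.
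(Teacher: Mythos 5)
Your strategy coincides with the paper's proof: Section \ref{sec:rw} builds exactly these Brownian one-block asymptotics via Feynman--Kac and Airy eigenfunction arguments (Lemmas \ref{lem:bmOnesided} and \ref{lem:bmTwosided}), transfers them to the time-inhomogeneous walk with the Sakhanenko coupling on mesoscopic blocks of length of order $An^{2/3}$ (Lemmas \ref{lem:upperboundRw} and \ref{lem:lowerboundRw}), and then patches macroscopic blocks of length $n/A$ by the Markov property, freezing $f,g,\sigma,\dot h$ and using Riemann-integrability of $F,G$, with the lower bound obtained by forcing the walk into $\epsilon n^{1/3}$-windows along a continuous path and recovering the one-sided terms from $\Psi$ through an auxiliary width $Hn^{1/3}$ sent to infinity. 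So your proposal is correct and follows essentially the same route; the only cosmetic point is that your claim that the $n^{2/3}$ truncation is ``harmless'' is not needed for the lower bound, since the forced tube already lies inside $[-n^{2/3},n^{2/3}]$.
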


\begin{remark}
Observe that when \eqref{eqn:paslineaire} does not hold, the correct rate of growth of the expectations in \eqref{eqn:limsup} and \eqref{eqn:liminf} is exponential, instead of the order $e^{O(n^{1/3})}$.
\end{remark}

To prove this theorem, we decompose the time interval $[0,n]$ into $A$ intervals, each of length $\frac{n}{A}$. On these smaller intervals, the functions $f$, $g$ and $\dot{h}$ can be approached by constants. These intervals are divide into $\frac{n^{1/3}}{tA}$ subintervals of length $tn^{2/3}$. On these subintervals, the time-inhomogeneous random walk can be approached by a Brownian motion, on which the quantities can be explicitly computed, using the Feynman-Kac formula. Letting $n$, $t$ then $A$ grow to $+\infty$, we conclude the proof of Theorem \ref{thm:general_rw}. We give in Section \ref{subsec:bm} the asymptotic of the area under a Brownian motion constrained to stay non-negative or in an interval, and use the Sakhanenko exponential inequality in Section \ref{subsec:rw} to quantify the approximation of a random walk by a Brownian motion, before proving Theorem \ref{thm:general_rw} in Section \ref{subsec:conclusion}.

\subsection{Brownian estimates through the Feynman-Kac formula}
\label{subsec:bm}

The asymptotic of the Laplace transform of the area under a Brownian motion, constrained to stay non-negative or in an interval, is proved in Appendix~\ref{app:bm}. In this section, $(B_t, t \geq 0)$ is a standard Brownian motion, which starts at position $x \in \R$ at time $0$ under the law $\P_x$. We give the main results that are used in the next section to compute similar quantities for time-inhomogeneous random walks. First, for a Brownian motion that stay non-negative:
\begin{lemma}
\label{lem:bmOnesided}
For all $h>0$, $0<a<b$ and $0<a'<b'$, we have
\begin{multline}
  \label{eqn:bmOnesided}
  \lim_{t \to +\infty} \frac{1}{t} \log \sup_{x \in \R} \E_x\left[ e^{-h \int_0^t B_s ds} ; B_s \geq 0, s \leq t \right]\\
  = \lim_{t \to +\infty} \frac{1}{t} \log \inf_{x \in [a,b]} \E_x\left[ e^{-h \int_0^t B_s ds} \ind{B_t \in [a',b']} ; B_s \geq 0, s \leq t  \right] = \frac{\alpha_1}{2^{1/3}}h^{2/3}.
\end{multline}
\end{lemma}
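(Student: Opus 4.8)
The plan is to reduce the statement to a single spectral computation for the Airy operator on a half-line, via the Feynman--Kac formula, and then to bootstrap the constant-start computation to the uniform-over-$x$ and the conditioned-endpoint versions by elementary domination arguments.

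First I would fix $h>0$ and apply Brownian scaling. Writing $B_s = h^{-1/3} \tilde B_{h^{2/3} s}$ for a standard Brownian motion $\tilde B$, one finds $h \int_0^t B_s\,ds = h^{2/3} \int_0^{h^{2/3} t} \tilde B_u\,du$ after the change of variables, and the event $\{B_s \geq 0\}$ is preserved. Hence it suffices to prove the result for $h=1$ with the exponent $\frac{\alpha_1}{2^{1/3}}$ replaced by $\alpha_1 \cdot 2^{-1/3}$ and $t$ replaced by $h^{2/3}t$; the $h^{2/3}$ factor then emerges automatically. So the core claim is
\[
  \lim_{t \to +\infty} \frac{1}{t} \log \E_x\left[ e^{-\int_0^t B_s\,ds} ; B_s \geq 0, s \leq t \right] = \frac{\alpha_1}{2^{1/3}},
\]
uniformly in the appropriate sense. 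Next I would introduce the Feynman--Kac semigroup $(P_t)_{t\geq 0}$ acting on, say, $L^2(0,\infty)$ by $P_t \phi(x) = \E_x[ e^{-\int_0^t B_s ds} \phi(B_t); B_s \geq 0, s\leq t]$, whose generator is $\frac12 \partial_x^2 - x$ with Dirichlet boundary condition at $0$. The eigenvalue problem $\frac12 u'' - x u = \mu u$ on $(0,\infty)$ with $u(0)=0$ and $u \to 0$ at $+\infty$ is solved by $u(x) = \mathrm{Ai}(2^{1/3}(x+ \mu) )$ up to normalization (using $\mathrm{Ai}$ from \eqref{eqn:airy} and its ODE $y'' = x y$), and the Dirichlet condition forces $2^{1/3}\mu = \alpha_k$ for some zero $\alpha_k$ of $\mathrm{Ai}$. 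The top eigenvalue is $\mu_1 = \alpha_1 \cdot 2^{-1/3}$, with a positive (ground-state) eigenfunction $\varphi_1(x) = \mathrm{Ai}(2^{1/3} x + \alpha_1)$ on $(0,\infty)$.

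Then I would extract the exponential rate. The upper bound $\limsup_t \frac1t \log \E_x[\cdots] \leq \mu_1$ follows from $\E_x[e^{-\int_0^t B_s ds}; B_s \geq 0] = P_t \mathbf 1(x)$ together with a self-improving two-step bound like the one already used in the proof of Lemma \ref{lem:existencePsi}: sub-multiplicativity of $\sup_x P_t \mathbf 1(x)$ combined with the spectral identity $P_t \varphi_1 = e^{\mu_1 t}\varphi_1$ and the fact that $\mathbf 1 \leq C \varphi_1$ fails near $+\infty$ but one can instead compare on a large compact and absorb the tail. Cleanest is: on $B_s \ge 0$ for $s\le t$, split off the last unit of time; $\E_x[e^{-\int_0^t B}; B\ge 0 \text{ on }[0,t]]$ with a terminal localization is controlled by $e^{\mu_1(t-1)}\|\varphi_1\|_\infty \cdot \sup_y (\text{stuff on }[t-1,t])$, yielding the $\mu_1$ upper rate. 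For the lower bound, pick $0<a<b$, use $\mathbf 1 \geq c\,\varphi_1 \mathbf 1_{[a,b]}$, so $\E_x[e^{-\int_0^t B};B\ge 0] \geq c\, P_t(\varphi_1 \mathbf 1_{[a,b]})(x)$; bounding $\varphi_1 \mathbf 1_{[a,b]} \geq c'\varphi_1 - (\text{correction supported outside }[a,b])$ is awkward, so instead I would run the Markov property the other way: for $x\in[a,b]$, $\E_x[e^{-\int_0^t B}; B\ge 0 \text{ on }[0,t]] \geq \E_x[e^{-\int_0^1 B}\mathbf 1_{B_1 \in [a,b]}; B\ge 0 \text{ on }[0,1]] \cdot \inf_{y\in[a,b]} \E_y[e^{-\int_0^{t-2}B};B\ge 0] \cdot (\text{another unit step})$, and the middle factor is $\gtrsim e^{\mu_1(t-2)}$ because $P_{t}\mathbf 1(y) \geq e^{\mu_1 t} \varphi_1(y)/\|\varphi_1\|_\infty$ after projecting $\mathbf 1$ onto the positive part of the spectrum via $\mathbf 1 \geq (\text{const}) \varphi_1 \mathbf 1_{[0,R]}$ and $P_t$-positivity. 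This simultaneously gives the $\inf_{x\in[a,b]}\E_x[\cdots \mathbf 1_{B_t\in[a',b']}]$ lower bound in \eqref{eqn:bmOnesided}, since inserting the terminal event $B_t \in [a',b']$ costs only a $t$-independent constant by a further one-step Markov argument at time $t$.

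The remaining piece is the matching \emph{upper} bound $\sup_{x\in\R}\E_x[\cdots] \leq e^{\mu_1 t + o(t)}$ with the supremum over all of $\R$, not just a compact. For $x \leq 0$ the event $\{B_s \geq 0, s\leq t\}$ forces the walk to immediately enter $(0,\infty)$, and by the strong Markov property at the hitting time of $0$ this reduces to the $x=0$ case (the weight $e^{-\int B}$ being $\leq 1$ on the excursion below, but here $x\le 0$ is actually incompatible with $B_s\ge 0$ unless $x=0$, so only $x\ge0$ matters). For large $x>0$ the weight $e^{-\int_0^t B_s ds}$ is what saves us: on $[0,\varepsilon t]$ either $B$ has come down to order $1$, contributing at most the $x=O(1)$ bound on the remaining interval, or it has stayed large, in which case $e^{-\int_0^{\varepsilon t} B_s ds} \leq e^{-c x \varepsilon t}$ kills the contribution; optimizing shows $\sup_x P_t\mathbf 1(x) \le e^{\mu_1 t + o(t)}$. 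I expect \textbf{this last uniformity over unbounded $x$} — making the $e^{-\int B}$ decay beat the a priori Gaussian growth of $\sup_x$ — to be the main technical obstacle; everything else is the standard Krein--Rutman/principal-eigenvalue dictionary plus the sub-additivity trick already deployed for $\Psi$. Since the statement defers the proof to Appendix \ref{app:bm}, it is natural that only a clean PDE/spectral argument is needed there, and I would present it in the scaled coordinates above so that the constant $\alpha_1 2^{-1/3}$ is manifestly the top Dirichlet eigenvalue of $\tfrac12\partial_x^2 - x$ on $(0,\infty)$.
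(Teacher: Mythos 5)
Your spectral identification coincides with the paper's (Lemma \ref{lem:definepsin}): the top Dirichlet eigenvalue of $\tfrac12\partial_x^2-hx$ on $(0,+\infty)$ is $\mu_1:=\frac{h^{2/3}}{2^{1/3}}\alpha_1$, with positive ground state proportional to $x\mapsto\mathrm{Ai}((2h)^{1/3}x+\alpha_1)$, i.e. $\psi_1^h$, and your lower bound on compacts via $P_t\psi_1^h=e^{\mu_1 t}\psi_1^h$ and $\mathbf 1\geq\psi_1^h/\norme{\psi_1^h}_\infty$ is sound. The genuine gap is the uniform upper bound, which you rightly flag as the main obstacle but whose sketch does not close. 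A pointwise domination of the (finitely smoothed) semigroup by the ground state is impossible: keeping the path inside $[x-1,x+1]$ gives $\E_x\left[e^{-h\int_0^s B_u du};B_u\geq0,u\leq s\right]\geq c_s e^{-h(x+1)s}$, decaying only exponentially in $x$, whereas $\psi_1^h(x)$ decays like $e^{-cx^{3/2}}$; so ``split off the last unit of time and control by $e^{\mu_1(t-1)}\norme{\psi_1^h}_\infty$'' cannot work once the position at time $t-1$ is allowed to be large. Your large-$x$ dichotomy is also not exhaustive (a path may descend to $x/2$ without reaching order one and without paying area of order $x\varepsilon t$), and in the ``hits order one'' branch the strong-Markov reduction bounds $\sup_x$ of the quantity by itself on a shorter horizon, which is circular unless organized into a subadditivity/induction scheme you have not set up. The paper does something genuinely different at this exact point: it first runs the semigroup for a fixed time $\epsilon$, so the bounded datum becomes $u_\epsilon(x)\leq\norme{u_0}_\infty e^{-\epsilon h x+h^2\epsilon^3/6}$, an $\rmL^2$ function, and then expands in the full Airy basis $(\psi_n^h)$, proving the uniform convergence \eqref{eqn:asymptoticOnesided} through energy estimates (the functionals $J_1,J_2$, the spectral gap $\alpha_1-\alpha_2$, Gr\"onwall) and Sobolev embedding; the $\sup_x$ control is a byproduct of that quantitative convergence, not of a comparison with the ground state.

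The second gap is the endpoint-localized lower bound. Your three-factor chaining does not glue: the middle factor $\inf_{y\in[a,b]}\E_y\left[e^{-h\int_0^{t-2}B_s ds};B_s\geq0\right]$ carries no information on the position at time $t-1$, and $\inf_{y\geq0}\E_y\left[e^{-h\int_0^{1}B_s ds}\ind{B_1\in[a',b']};B_s\geq0\right]=0$, so the final unit step cannot be appended at a $t$-independent cost. Repairing this requires a lower bound for the semigroup with a localized terminal condition, e.g. writing $P_t(\psi_1^h\ind{[0,R]})\geq P_t\psi_1^h-\sup_{(R,+\infty)}\psi_1^h\cdot P_t\mathbf 1$ and absorbing the tail with an a priori upper bound of order $e^{\mu_1 t}$ — so the upper bound must come first and the work is heavier than ``a further one-step Markov argument''. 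In the paper both localized statements are immediate consequences of Corollary \ref{cor:asymptoticOnesided} applied with $u_0=\ind{[a',b']}$, since $\crochet{\ind{[a',b']},\psi_1^h}>0$ and $\inf_{[a,b]}\psi_1^h>0$. So your outline is the right dictionary, but the two steps you treat as routine are precisely where the paper's PDE analysis is doing the real work, and as written they do not constitute a proof.
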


A similar estimate holds for a Brownian motion constrained to stay in the interval $[0,1]$:
\begin{lemma}
\label{lem:bmTwosided}
Let $B$ be a Brownian motion. For all $h \in \R$, $0<a<b<1$ and $0<a'<b'<1$, we have
\begin{multline}
  \label{eqn:bmTwosided}
  \lim_{t \to +\infty} \frac{1}{t} \sup_{x \in [0,1]} \log \E_x\left[ e^{-h \int_0^t B_s ds} ; B_s \in [0,1], s \leq t \right]\\ = 
  \lim_{t \to +\infty} \frac{1}{t} \inf_{x \in [a,b]} \log \E_x\left[ e^{-h \int_0^t B_s ds} \ind{B_t \in [a',b']} ; B_s \in [0,1], s \leq t \right] = \Psi(h).
\end{multline}
Moreover, for all $h>0$, we have
\begin{equation}
  \label{eqn:alternativeDefinition}
  \Psi(h) =  \frac{h^{2/3}}{2^{1/3}} \sup\left\{\lambda \leq 0 : \mathrm{Ai}\left( \lambda \right) \mathrm{Bi} \left( \lambda + (2h)^{1/3} \right) - \mathrm{Bi}\left( \lambda \right) \mathrm{Ai}\left( \lambda + (2h)^{1/3} \right) = 0 \right\}.
\end{equation}
We also have $\Psi(0)=-\frac{\pi^2}{2}$, $\lim_{h \to +\infty} \frac{\Psi(h)}{h^{2/3}} = \frac{\alpha_1}{2^{1/3}}$ and, for $h \in \R$, $\Psi(h) - \Psi(-h) = h$.
\end{lemma}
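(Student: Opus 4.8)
The plan is to proceed analytically, identifying $\Psi(h)$ with the principal Dirichlet eigenvalue of the Schrödinger operator $\calG_h:=\tfrac12\partial_x^2-h x$ on $[0,1]$ and then computing that eigenvalue via Airy functions. First I would note that, by the Feynman–Kac formula, $u_t(x):=\E_x\!\left[e^{-h\int_0^t B_s\,ds};\ B_s\in[0,1],\ s\le t\right]$ solves $\partial_t u=\calG_h u$ on $(0,1)$ with $u_t(0)=u_t(1)=0$ and $u_0\equiv 1$. Since $\calG_h$ is self-adjoint with compact resolvent and smooth potential, it has discrete simple spectrum $\mu_1(h)>\mu_2(h)>\cdots\to-\infty$ with smooth eigenfunctions $\phi_k$, the first of which is positive on $(0,1)$; from $u_t(x)=\sum_k e^{\mu_k(h)t}\langle\mathbf{1},\phi_k\rangle\,\phi_k(x)$ and $\langle\mathbf{1},\phi_1\rangle=\int_0^1\phi_1>0$ we get $\tfrac1t\log u_t(x)\to\mu_1(h)$, uniformly on compacts of $(0,1)$, and taking the supremum over $x\in[0,1]$ (equivalently over $(0,1)$, as $u_t\equiv 0$ outside) gives the first limit in \eqref{eqn:bmTwosided}, with $\Psi(h)=\mu_1(h)$; this also re-proves Lemma \ref{lem:existencePsi}. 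For the pinned statement I would write $\E_x\!\left[e^{-h\int_0^t B}\ind{B_t\in[a',b']};\ B\in[0,1]\right]=\sum_k e^{\mu_k(h)t}\langle\mathbf{1}_{[a',b']},\phi_k\rangle\,\phi_k(x)$; since $\langle\mathbf{1}_{[a',b']},\phi_1\rangle=\int_{a'}^{b'}\phi_1>0$ and $\phi_1$ is bounded below by a positive constant on $[a,b]\subset(0,1)$, and the series tail is controlled by $\sum_k e^{2\mu_k(h)s}<+\infty$ for $s>0$ (Weyl: $\mu_k(h)\sim-\tfrac12 k^2\pi^2$), we obtain $\tfrac1t\log(\cdots)\to\mu_1(h)$ uniformly for $x\in[a,b]$ — the second limit in \eqref{eqn:bmTwosided}.

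To obtain \eqref{eqn:alternativeDefinition}, I would solve the eigenvalue equation $\tfrac12\phi''-hx\phi=\mu\phi$, i.e.\ $\phi''=(2hx+2\mu)\phi$: for $h>0$ the change of variable $\phi(x)=\psi\!\left((2h)^{1/3}x+\lambda\right)$ with $\lambda:=2^{1/3}\mu\,h^{-2/3}$ turns it into Airy's equation $\psi''=y\psi$, so $\psi=c_1\mathrm{Ai}+c_2\mathrm{Bi}$; the Dirichlet conditions at $x=0,1$, i.e.\ at the arguments $y=\lambda$ and $y=\lambda+(2h)^{1/3}$, admit a nontrivial $(c_1,c_2)$ exactly when $\mathrm{Ai}(\lambda)\mathrm{Bi}\!\left(\lambda+(2h)^{1/3}\right)-\mathrm{Bi}(\lambda)\mathrm{Ai}\!\left(\lambda+(2h)^{1/3}\right)=0$. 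The roots of this equation are precisely the rescaled eigenvalues $2^{1/3}\mu_k(h)h^{-2/3}$, so $\lambda_1(h)$ is the largest root, and it is negative because the nonnegative potential $hx$ only raises the Dirichlet eigenvalues above $\pi^2/2$, whence $\mu_1(h)\le-\pi^2/2<0$; thus $\Psi(h)=\mu_1(h)=\tfrac{h^{2/3}}{2^{1/3}}\lambda_1(h)$, which is \eqref{eqn:alternativeDefinition}. At $h=0$ the eigenproblem $\tfrac12\phi''=\mu\phi$, $\phi(0)=\phi(1)=0$, gives $\phi_1=\sin(\pi\,\cdot)$ and $\Psi(0)=-\tfrac{\pi^2}{2}$. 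The reflection $B_s\leftrightarrow 1-B_s$ gives $\E_x[e^{-h\int_0^t B};B\in[0,1]]=e^{-ht}\,\E_{1-x}[e^{-(-h)\int_0^t B};B\in[0,1]]$, hence $\Psi(h)=\Psi(-h)-h$; this also extends \eqref{eqn:alternativeDefinition} and the finiteness of $\Psi$ to $h<0$.

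For the behaviour as $h\to+\infty$ I would analyse the characteristic equation: since $\mathrm{Ai}\!\left(\lambda+(2h)^{1/3}\right)\to 0$ and $\mathrm{Bi}\!\left(\lambda+(2h)^{1/3}\right)\to+\infty$ super-exponentially, the identity $\mathrm{Ai}(\lambda)\mathrm{Bi}\!\left(\lambda+(2h)^{1/3}\right)=\mathrm{Bi}(\lambda)\mathrm{Ai}\!\left(\lambda+(2h)^{1/3}\right)$ forces $\mathrm{Ai}(\lambda_1(h))\to 0$, hence $\lambda_1(h)\to\alpha_1$, the largest zero of $\mathrm{Ai}$, and therefore $\Psi(h)/h^{2/3}=\lambda_1(h)/2^{1/3}\to\alpha_1/2^{1/3}$. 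As a robustness check, $h\mapsto\Psi(h)/h^{2/3}$ is nondecreasing on $(0,+\infty)$ by Brownian scaling — restricting $B$ to $[0,\delta]\subset[0,1]$ yields $\Psi(h)\ge\delta^{-2}\Psi(h\delta^3)$ for $\delta\in(0,1)$ — and it is bounded above by $\alpha_1/2^{1/3}$ by Lemma \ref{lem:bmOnesided}, obtained by enlarging the constraint set from $[0,1]$ to $[0,+\infty)$.

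\textbf{Main obstacle.} The technical heart is the \emph{uniform} spectral/heat-kernel asymptotics used above: controlling the eigenfunction series for $u_t$ and for $e^{t\calG_h}\mathbf{1}_{[a',b']}$ sharply enough to obtain the limits uniformly over the starting set $[a,b]$, and in particular producing a genuine matching lower bound for the pinned expectation rather than only the a priori upper bound inherited from Lemma \ref{lem:existencePsi}. One can keep this elementary with a sub-/super-solution argument: sandwich $u_t$ between constant multiples of $e^{\mu_1(h)t}\phi_1$ on slightly shrunk and enlarged subintervals via the maximum principle, bootstrapping from a crude pointwise lower bound on $u_{s_0}$ at a fixed small time $s_0$. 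A secondary point to verify carefully is that the largest root of the Airy determinant really is the principal eigenvalue and is $\le 0$, which is what licenses the normalisation ``$\sup\{\lambda\le 0:\cdots\}$'' in \eqref{eqn:alternativeDefinition}.
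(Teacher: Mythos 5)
Your proposal is correct and follows, in substance, the same route as the paper: Feynman--Kac reduces the problem to the Dirichlet spectral problem for $\tfrac12\partial_x^2-hx$ on $[0,1]$, whose eigenvalues are located through the Airy determinant (Lemma \ref{lem:definephin} here, obtained from Sturm--Liouville theory), the pinned lower bound comes from positivity of the principal eigenfunction on $(0,1)$, the case $h<0$ is handled by the reflection $x\mapsto 1-x$, $\Psi(0)$ by the $h=0$ eigenproblem (the paper instead quotes the explicit It\^o--McKean series), and the $h\to+\infty$ limit by forcing $\mathrm{Ai}(\lambda^h_1)\to 0$. The one place where you genuinely diverge is the long-time convergence to the principal mode: the paper does not expand the semigroup in eigenfunctions, but proves, for $u_0\in\calC^2_0$, uniform convergence of $e^{-\frac{h^{2/3}}{2^{1/3}}\lambda^h_1 t}u(t,\cdot)$ to $\crochet{u_0,\phi_1^h}\phi_1^h$ by an energy argument (the functionals $J_1,J_2$, the spectral-gap inequality for $w\perp\phi_1^h$, a Gr\"onwall estimate, then a Sobolev/Cauchy--Schwarz step), and then treats $u_0=\mathbf{1}$ and $u_0=\ind{\cdot\in[a',b']}$ by running the motion a small time $\epsilon$ and sandwiching $u(\epsilon,\cdot)$ between $\calC^2_0$ functions (Lemma \ref{lem:asymptoticTwosided} and Corollary \ref{cor:asymptoticTwosided}); your direct expansion of $e^{t\calG^h}u_0$ with Weyl-type tail control, or the sub/super-solution sandwich you offer as a fallback, achieves the same end, at the cost of justifying pointwise summability (sup-norm bounds on $\phi^h_k$, or an $\rmL^2\to\rmL^\infty$ smoothing step at a fixed small time), which is exactly the obstacle you flag, so this is a fair trade rather than a gap. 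Two small remarks: for $\lim_{h\to+\infty}\Psi(h)/h^{2/3}$, deducing $\lambda^h_1\to\alpha_1$ from $\mathrm{Ai}(\lambda^h_1)\to 0$ requires the a priori confinement of $\lambda^h_1$ to a compact set, which your scaling monotonicity together with the one-sided bound of Lemma \ref{lem:bmOnesided} indeed supplies (the paper is equally terse on this point); and your sign $\Psi(h)-\Psi(-h)=-h$ is the correct one, agreeing with the paper's own proof and with the remark following Lemma \ref{lem:existencePsi}, the opposite sign in the statement of the lemma being a typo.
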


\subsection{From a Brownian motion to a random walk}
\label{subsec:rw}

We use the Sakhanenko exponential inequality to extend the Brownian estimates to similar quantities for time-inhomogeneous random walks. We obtain here the correct $n^{1/3}$ order, but non-optimal upper and lower bounds. These results are used in the next section to prove Theorem \ref{thm:general_rw}. The Sakhanenko exponential inequality links a time-inhomogeneous random walk with a Brownian motion, in a similar way that the KMT coupling links a classical random walk with a Brownian motion.
\begin{theorem}[Sakhanenko exponential inequality \cite{Sak84}]
\label{thm:sakhanenko}
Let $X=(X_1,\ldots X_n)$ be a sequence of independent centred random variables. We suppose there exists $\lambda>0$ such that for all $j \leq n$
\begin{equation}
  \label{eqn:integrabilityRwBis}
  \lambda \E\left( |X_j|^3 e^{\lambda |X_j|}\right) \leq \E\left(X_j^2\right).
\end{equation}
We can construct a sequence $\tilde{X}=(\tilde{X}_1,\ldots \tilde{X}_n)$ with the same law as $X$; and $Y$ a sequence of centred Gaussian random variables with the same covariance as $\tilde{X}$ such that for some universal constant $C_0$ and all $n \geq 1$
\[ \E\left[ \exp(C_0 \lambda \Delta_n)\right] \leq 1 + \lambda \sqrt{\sum_{j=1}^n \Var(X_j)}, \]
where $\Delta_n = \max_{j \leq n} \left| \sum_{k=1}^j \tilde{X}_k-Y_k \right|$.
\end{theorem}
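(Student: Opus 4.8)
This is the strong approximation theorem of Sakhanenko \cite{Sak84}; in the present article it is invoked as a black box, so I only describe the strategy one would follow to establish it. The plan is to construct the coupling of $(\tilde X_k)_{k\le n}$ with the Gaussian sequence $(Y_k)_{k\le n}$ recursively, controlling at each stage the discrepancy between the corresponding partial sums, and then to upgrade this stagewise control into an exponential moment bound on the running maximum $\Delta_n$.

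First I would avoid coupling the increments one at a time: a single centred variable (say a two-point law) cannot in general be matched closely to a Gaussian of the same variance, so one must couple sums over blocks. Organise $\{1,\dots,n\}$ into a dyadic tree of consecutive blocks; at the root one splits the total sum between its two halves so as to mimic the way a Brownian path splits at its midpoint, then recurses down the tree. The elementary building block is a conditional quantile coupling: conditionally on the value of a block sum (and on the coarser-scale Gaussian skeleton already built), one realises the conditional law of a sub-block sum as a monotone transport of a Gaussian, where the transport map is close to the identity, with an error quantified by a conditional Berry--Esseen / Rosenthal-type estimate.

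The hypothesis \eqref{eqn:integrabilityRwBis}, namely $\lambda\,\E\!\left(|X_j|^3 e^{\lambda |X_j|}\right) \le \E(X_j^2)$, is tailored precisely so that these conditional approximation errors are not merely small in mean but exponentially integrable, uniformly over the tree: the factor $e^{\lambda|X_j|}$ forces sub-exponential tails for the individual coupling errors at a rate comparable to $\lambda$, while the comparison with $\E(X_j^2)$ calibrates the scale. One then accumulates the errors over the $O(\log n)$ scales of the tree; at each scale the maximum of the errors over the blocks at that scale is controlled by a union bound that preserves sub-exponentiality, and the sum over scales of these contributions remains sub-exponential. Tracking the constants carefully yields a bound of the announced form $\E\!\left[\exp(C_0\lambda\Delta_n)\right] \le 1 + \lambda\sqrt{\sum_{j=1}^n \Var(X_j)}$.

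The hard part is exactly this last step: passing from ``the error introduced at each stage is small and has good exponential moments'' to ``the \emph{maximum} of the accumulated errors over all $n$ partial sums has a controlled exponential moment'', without the logarithmic number of scales spoiling the rate $C_0\lambda$. This is what dictates the recursive, scale-by-scale construction rather than a single global coupling, and it is why the precise weighted third-moment assumption \eqref{eqn:integrabilityRwBis}, rather than a plain third-moment condition, is indispensable.
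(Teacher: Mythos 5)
The paper does not prove this statement: it is Sakhanenko's theorem, quoted from \cite{Sak84} and invoked as a black box, which is exactly how you treat it, so there is no argument in the paper to compare yours against. Your sketch of the strong-approximation strategy (coupling block sums via conditional quantile transforms, using the weighted third-moment condition \eqref{eqn:integrabilityRwBis} to make the stagewise errors exponentially integrable, then accumulating over scales) is a fair high-level description of how results of this type are obtained, but it is a description rather than a proof: none of the quantitative steps is carried out, and in particular the passage from the stagewise estimates to the exact stated bound $\E\left[\exp(C_0\lambda\Delta_n)\right]\leq 1+\lambda\sqrt{\sum_{j=1}^n \Var(X_j)}$ is asserted, not derived. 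One further caveat: the dyadic KMT scheme you outline is tailored to i.i.d.\ summands, whereas Sakhanenko's contribution is precisely a construction adapted to independent, non-identically distributed variables, so the actual proof in \cite{Sak84} differs in its details from your outline. Within the logic of this paper the appropriate move is simply to cite the result, as the author does; your text should be presented as a pointer to the literature rather than as a proof.
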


Using this theorem, we couple a time-inhomogeneous random walk with a Brownian motion in such a way that they stay at distance $O(\log n)$ with high probability. Technically, to prove Theorem \ref{thm:general_rw}, we simply need a uniform control on $\P(\Delta_{n} \geq \epsilon n^{1/3})$. To obtain it, the polynomial Sakhanenko inequality is enough, that only impose a uniform bound on the third moment of the array of random variables instead of \eqref{eqn:integrability_rw}. However in the context of branching random walks, exponential integrability conditions are needed to guarantee the regularity of the optimal path (see Section \ref{subsec:heuristic}).

Let $(X_{n,k}, n \in \N, k \leq n)$ be a triangular array of independent centred random variables, such that there exists a continuous positive function $\sigma^2$ verifying
\begin{equation}
  \label{eqn:variance1}
  \forall n \in \N, k \leq n, \E\left[ X_{n,k}^2 \right] = \sigma^2_{k/n}.
\end{equation}
We set $\underline{\sigma}= \min_{t \in [0,1]} \sigma_t > 0$ and $\bar{\sigma} = \max_{t \in [0,1]} \sigma_t < +\infty$. We also assume that
\begin{equation}
  \label{eqn:integrabilityRw1}
  \exists \lambda > 0 : \sup_{n \in \N, k \leq n} \E\left( e^{\lambda |X_{n,k}|} \right) < + \infty.
\end{equation}
Observe that for all $\mu<\lambda$, there exists $C>0$ such that for all $x \geq 0$, $x^3 e^{\mu x} \leq C e^{\lambda x}$. Thus \eqref{eqn:integrabilityRw1} implies
\begin{equation}
  \label{eqn:integrabilityRwBis1}
  \exists \mu > 0 : \sup_{n \geq 1, k \leq n} \mu \E\left( |X_{n,j}|^3 e^{\mu |X_{n,j}|} \right) \leq \underline{\sigma}^2.
\end{equation}

In the first instance, we bound from above the asymptotic of the Laplace transform of the area under a time-inhomogeneous random walk.
\begin{lemma}
\label{lem:upperboundRw}
We assume \eqref{eqn:variance1} and \eqref{eqn:integrabilityRw1} are verified. For all $h>0$, we have
\begin{equation}
  \label{eqn:upperboundRwOnesided}
  \limsup_{n \to +\infty} \frac{1}{n^{1/3}} \log \sup_{x \in \R} \E_x\left[ e^{-\frac{h}{n} \sum_{j=0}^{n-1} S^{(n)}_j} ; S^{(n)}_j \geq 0, j \leq n \right] \leq \frac{\alpha_1}{2^{1/3}} (h \underline{\sigma})^{2/3}.
\end{equation}
For all $h \in \R$ and $r>0$, we have
\begin{equation}
  \label{eqn:upperboundRwTwosided}
  \limsup_{n \to +\infty} \frac{1}{n^{1/3}} \log \sup_{x \in \R} \E_x\left[ e^{-\frac{h}{n} \sum_{j=0}^{n-1} S^{(n)}_j} ; S^{(n)}_j \in [0, rn^{1/3}] \right]\\
  \leq \frac{\underline{\sigma}^2}{r^2} \Psi\left( \tfrac{r^3}{\underline{\sigma}^2} h \right).
\end{equation}
\end{lemma}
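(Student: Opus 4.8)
The plan is to transfer the Brownian estimates of Lemma~\ref{lem:bmOnesided} and Lemma~\ref{lem:bmTwosided} to the time-inhomogeneous random walk via the Sakhanenko coupling of Theorem~\ref{thm:sakhanenko}. The key point is that the quantities to be bounded are computed over a time span of order $n$ with a spatial scale of order $n^{1/3}$, so after rescaling we are looking at a process on $[0,1]$ at spatial scale $n^{-2/3}$; the error in the Sakhanenko coupling is of order $\log n$, which is negligible compared to $n^{1/3}$, and this is precisely what makes the transfer work.

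First I would reduce to a single Brownian comparison on a block. Fix $A \in \N$ and cut $[0,n]$ into $A$ blocks of length $n/A$. Using the strong Markov property and the supremum over the starting point (which is already present in the statement), it suffices to bound, on a single block, the expectation $\sup_x \E_x[e^{-\frac{h}{n}\sum S_j^{(n)}}; S_j^{(n)} \geq 0]$ by $\exp(\frac{n^{1/3}}{A}(\frac{\alpha_1}{2^{1/3}}(h\bar\sigma)^{2/3} + o(1)))$ — wait, we want $\underline\sigma$ not $\bar\sigma$, so more carefully: on the block indexed near time $s$, the variances lie in $[\underline\sigma^2, \bar\sigma^2]$, and since $h>0$ the exponential weight $e^{-(h/n)\sum S_j}$ together with the positivity constraint makes a \emph{smaller} variance \emph{more} costly (the walk is pushed up against $0$, paying more area), so the worst case over the block is the one with the smallest variance; thus we should couple against a Brownian motion with variance $\underline\sigma^2$. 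Concretely, on each block I would introduce, via Sakhanenko, a coupled Gaussian walk with the same covariances, and then compare that Gaussian walk to a true Brownian motion with variance $\underline\sigma^2$ running at the right time scale — here one uses that a Gaussian walk with variances $\geq \underline\sigma^2$, conditioned on staying positive with the decreasing exponential weight, is stochastically dominated in the relevant sense by one with variance exactly $\underline\sigma^2$ (or, more cleanly, one embeds and uses monotonicity of $h \mapsto$ the Laplace functional). Then Lemma~\ref{lem:bmOnesided} applied with $h$ replaced by the appropriately rescaled constant gives, on a block of length $n/A$, the bound $\frac{1}{A}\frac{\alpha_1}{2^{1/3}}(h\underline\sigma)^{2/3} n^{1/3}(1+o(1))$.

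The second step is to handle the coupling error. On the event $\Delta_n \leq \epsilon n^{1/3}$, the random walk stays within $\epsilon n^{1/3}$ of the Gaussian walk, so the constraint $\{S_j^{(n)}\geq 0\}$ is implied by $\{Y_j \geq \epsilon n^{1/3}\}$ (a shifted positivity constraint, absorbed by Lemma~\ref{lem:bmOnesided} since shifting the starting region by $O(n^{1/3})$ does not change the exponential rate — here one sees why the lemma's rate is insensitive to the starting point), and the area differs by at most $\frac{h}{n}\cdot n \cdot \epsilon n^{1/3} = h\epsilon n^{1/3}$. On the complementary event $\Delta_n > \epsilon n^{1/3}$, Theorem~\ref{thm:sakhanenko} combined with \eqref{eqn:integrabilityRwBis1} (which is exactly the hypothesis \eqref{eqn:integrabilityRwBis} Sakhanenko requires) and Markov's inequality gives $\P(\Delta_n > \epsilon n^{1/3}) \leq (1 + \lambda\sqrt{n\bar\sigma^2})e^{-C_0\lambda\epsilon n^{1/3}} = e^{-c n^{1/3}}$ for a constant $c$ depending on $\epsilon$; since the integrand $e^{-\frac{h}{n}\sum S_j}$ is bounded by $1$ on the positivity event (as $h>0$ and $S_j\geq 0$), this contribution is at most $e^{-cn^{1/3}}$ and is dominated. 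Combining and letting $n\to\infty$, then $\epsilon \to 0$, then $A\to\infty$ yields \eqref{eqn:upperboundRwOnesided}.

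The third step, \eqref{eqn:upperboundRwTwosided}, is entirely parallel but uses Lemma~\ref{lem:bmTwosided} in place of Lemma~\ref{lem:bmOnesided}: the walk is confined to $[0, rn^{1/3}]$, which after rescaling space by $rn^{1/3}$ becomes confinement to $[0,1]$, and the area weight $-\frac{h}{n}\sum S_j$ becomes $-h' \int_0^t B_s ds$ with $h' = \frac{r^3}{\underline\sigma^2}h$ after accounting for the spatial and temporal rescaling; the answer $\frac{\underline\sigma^2}{r^2}\Psi(\frac{r^3}{\underline\sigma^2}h)$ is exactly the rescaled version of $\Psi(h')$. The only subtlety is that here $h$ may have either sign, so one must check which variance is the worst case: for the two-sided constraint the monotonicity argument shows the relevant extremum is again at $\underline\sigma^2$ because $r^2 \Psi(r^3 h/\sigma^2)/\sigma^2$ is, for fixed $r$, an increasing function of $1/\sigma^2$ when combined with the constraint (this uses $\Psi$ convex with $\Psi(0) = -\pi^2/2 < 0$, and can be read off from the alternative definition \eqref{eqn:alternativeDefinition}); one should state this monotonicity as a small sub-claim and verify it directly.

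I expect the main obstacle to be step two's handling of the interaction between the coupling error and the hard constraint: one needs the rate in Lemma~\ref{lem:bmOnesided} to be genuinely uniform over starting points \emph{and} robust to shifting the barrier by $O(n^{1/3})$ in the rescaled picture, i.e.\ one is really applying the Brownian lemma with its constraint $\{B_s \geq -\epsilon'\}$ rather than $\{B_s\geq 0\}$, and must check the rate is continuous in this perturbation (it is, since $\epsilon' \to 0$, but it requires care to phrase the block decomposition so the shifts do not accumulate across the $A$ blocks — one handles this by noting that on $\{\Delta_n \leq \epsilon n^{1/3}\}$ a \emph{single} global shift suffices rather than one per block). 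A secondary nuisance is making the variance-comparison (replacing the true covariances by the constant $\underline\sigma^2$) rigorous; the cleanest route is probably to couple the Gaussian walk with covariances $(\sigma^2_{k/n})$ to one with covariances $\underline\sigma^2$ by adding an independent Gaussian increment, and to observe that adding an independent mean-zero perturbation can only increase the Laplace functional $\E[e^{-(h/n)\sum S_j}; S_j \geq 0]$ after a further $O(\sqrt{n\log n}) = o(n^{1/3})$ shift — again absorbed.
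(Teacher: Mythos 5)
There is a genuine gap, and it sits exactly where you yourself flag the ``main obstacle'': the variance comparison forced by your choice of \emph{macroscopic} blocks of length $n/A$. On such a block the increment variances genuinely range over $[\underline{\sigma}^2,\bar{\sigma}^2]$, so to invoke Lemma \ref{lem:bmOnesided} (or \ref{lem:bmTwosided}) you must bound the truncated Laplace functional of an \emph{inhomogeneous}-variance Gaussian walk by that of the constant-variance $\underline{\sigma}^2$ walk. Your proposed justification does not work. First, the direction is reversed: writing the true Gaussian walk as $Y=Z+W$ with $Z$ the $\underline{\sigma}^2$-walk and $W$ an independent centred Gaussian walk with increment variances $\sigma^2_{j/n}-\underline{\sigma}^2$, the upper bound \eqref{eqn:upperboundRwOnesided} requires $\E\left[\prod_j e^{-\frac{h}{n}(Z_j+W_j)}\ind{Z_j+W_j\geq 0}\right]\leq e^{o(n^{1/3})}\,\E\left[\prod_j e^{-\frac{h}{n}Z_j}\ind{Z_j\geq 0}\right]$, i.e.\ that adding the noise \emph{decreases} the functional, whereas you assert it ``can only increase'' it. Second, no domination or Jensen-type argument applies, since $x\mapsto e^{-hx/n}\ind{x\geq 0}$ is neither monotone nor convex. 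Third, the absorption step is arithmetically false: $\sqrt{n\log n}$ is \emph{not} $o(n^{1/3})$, and a shift of that size would contribute factors of order $e^{ch\sqrt{n\log n}}$, destroying the $e^{O(n^{1/3})}$ scale. In effect, the per-block statement you need is Lemma \ref{lem:upperboundRw} itself for an inhomogeneous walk of length $n/A$, so the plan is circular as it stands.

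The paper avoids this entirely by a different block decomposition: $[0,n]$ is cut into $K\approx n^{1/3}/A$ blocks of \emph{mesoscopic} length $T=\ceil{An^{2/3}}$. A block then covers a macroscopic time window of length $An^{-1/3}\to 0$, so by uniform continuity of $\sigma^2$ the variance is constant on each block up to $\omega_{n,A}=\sup_{|t-s|\leq 2An^{-1/3}}|\sigma^2_t-\sigma^2_s|\to 0$; after the Sakhanenko coupling (your step two is fine there) and a Riemann-sum-to-integral approximation controlled by the modulus of continuity of $B$, each block reduces directly to the constant-variance Brownian functionals of Lemmas \ref{lem:bmOnesided} and \ref{lem:bmTwosided} on a horizon $A$, and the only comparison between different variances occurs at the level of the limiting rates, where it is elementary (for the one-sided case $\sigma\mapsto\frac{\alpha_1}{2^{1/3}}(h\sigma)^{2/3}$ is maximal at $\underline{\sigma}$ since $\alpha_1<0$; for the two-sided case $v\mapsto \frac{v}{r^2}\Psi(r^3h/v)$ is decreasing because $\Psi(u)-u\Psi'(u)\leq\Psi(0)<0$, which is the monotonicity you correctly identify). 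Two secondary points: the paper handles $h<0$ in \eqref{eqn:upperboundRwTwosided} by the reflection $x\mapsto rn^{1/3}-x$ together with $\Psi(h)-\Psi(-h)=-h$, reducing to $h\geq 0$; your direct treatment of signed $h$ leaves the coupling-error term uncontrolled, since for $h<0$ the integrand on the confined event is only bounded by $e^{|h|rn^{1/3}}$, which the bound $\P(\Delta_n\geq \epsilon n^{1/3})\leq e^{-C_0\mu\epsilon n^{1/3}}(1+\mu\bar{\sigma}A^{1/2}n^{1/3})$ does not beat for small $\epsilon$. Also, for the upper bound the correct inclusion on $\{\Delta_n\leq\epsilon n^{1/3}\}$ is $\{S^{(n)}_j\geq 0\}\subset\{Y_j\geq-\epsilon n^{1/3}\}$; you state the reverse implication, which is what a lower bound would use.
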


\begin{proof}
In this proof, we assume $h \geq 0$ --and $h>0$ if $r=+\infty$. The result for $h<0$ in \eqref{eqn:upperboundRwTwosided} can be deduced by symmetry and the formula $\Psi(h) - \Psi(-h)= -h$.

For all $r \in [0, +\infty)$, we write $f(r) = \frac{\underline{\sigma}^2}{r^2} \Psi\left( \tfrac{r^3}{\underline{\sigma}^2} h \right)$ and $f(+\infty) = \frac{\alpha_1}{2^{1/3}} (h \underline{\sigma})^{2/3}$. For all $x \in \R$, we use the convention $+\infty + x = x + \infty = +\infty$. By Lemmas \ref{lem:bmOnesided} and \ref{lem:bmTwosided}, for all $r \in [0,+\infty]$, we have
\begin{equation}
  \label{eqn:generalUpperbound}
  \limsup_{t \to +\infty} \frac{1}{t} \log \sup_{x \in \R} \E_x\left[ e^{-h \int_0^t B_{\underline{\sigma}^2s}ds} ; B_{\underline{\sigma}^2s} \in [0,r], s \leq t \right]
  \leq f(r),
\end{equation}
using the scaling property of the Brownian motion.

Let $A \in \N$ and $n \in \N$, we write $T = \ceil{A n^{2/3}}$ and $K = \floor{n/T}$. For all $k \leq K$, we write $m_k = k T$; applying the Markov property at time $m_K,m_{K-1},\ldots m_1$, we have
\begin{multline}
  \label{eqn:decompositionUpper}
  \sup_{x \geq 0} \E_x\left[ e^{-\frac{h}{n} \sum_{j=0}^{n-1} S^{(n)}_j} ; S^{(n)}_j \in  [0,rn^{1/3}], j \leq n \right]\\
  \leq \prod_{k=0}^{K-1} \sup_{x \in \R} \E_x\left[ e^{-\frac{h}{n} \sum_{j=0}^{n-1} S^{(n,k)}_j} ; S^{(n,k)}_j \in  [0,rn^{1/3}], j \leq T \right],
\end{multline}
where we write $S^{(n,k)}_j = S^{(n)}_0 + S^{(n)}_{m_k+j}- S^{(n)}_{m_k}$ for the time-inhomogeneous random walk starting at time $m_k$ and at position $x$ under $\P_x$. We now bound, uniformly in $k<K$, the quantity
\[
  E^{(n)}_k(r) = \sup_{x \in \R} \E_x\left[ e^{-\frac{h}{n} \sum_{j=0}^{T-1} S^{(n,k)}_j} ; S^{(n,k)}_j \in  [0,rn^{1/3}], j \leq T \right].
\]

Let $k < K$, we write $t^k_j = \sum_{i=kT+1}^{kT + j} \sigma^2_{j/n}$. We apply Theorem \ref{thm:sakhanenko}, by \eqref{eqn:variance1} and \eqref{eqn:integrabilityRw1}, there exist Brownian motions $B^{(k)}$ such that, denoting by $\tilde{S}^{(n,k)}$ a random walk with same law as $S^{(n,k)}$ and $\Delta_n^k = \max_{j \leq T} \left| B^{(k)}_{t^k_j} - \tilde{S}^{(n,k)}_j \right|$, there exists $\mu>0$ such that for all $\epsilon>0$, $n \geq 1$ and $k \leq K$,
\[
  \P\left(\Delta_n^k \geq \epsilon n^{1/3}\right) \leq e^{-C_0 \mu \epsilon n^{1/3}} \E\left( e^{C_0 \mu \Delta_n^k} \right) \leq e^{-C_0 \mu \epsilon n^{1/3}} \left(1+\mu \bar{\sigma} A^{1/2} n^{1/3}\right),
\]
where we used \eqref{eqn:integrabilityRw1} (thus \eqref{eqn:integrabilityRwBis1}) and the exponential Markov inequality. Note in particular that for all $\epsilon>0$, $\P(\Delta_n^k \geq \epsilon n^{1/3})$ converges to 0 as $n \to +\infty$, uniformly in $k \leq K$. As a consequence, for all $\epsilon>0$
\begin{align*}
  E^{(n)}_k(r)
  &= \sup_{x \in \R} \E_x\left[ e^{-\frac{h}{n} \sum_{j=0}^{T-1} \tilde{S}^{(n,k)}_j}  ; \tilde{S}^{(n,k)}_j \in  [0,rn^{1/3}], j \leq T \right]\\
  &\leq \sup_{x \in \R} \E_x\left[ e^{-\frac{h}{n} \sum_{j=0}^{T-1} \tilde{S}^{(n,k)}_j} \ind{\Delta_n^k \leq \epsilon n^{1/3}} ; \tilde{S}^{(n,k)}_j \in  [0,rn^{1/3}], j \leq T \right] + \P\left(\Delta_n^k \geq \epsilon n^{1/3}\right).
\end{align*}
Moreover,
\begin{align*}
  &\sup_{x \in \R} \E_x\left[ e^{-\frac{h}{n} \sum_{j=0}^{T-1} \tilde{S}^{(n,k)}_j} \ind{\Delta_n^k \leq \epsilon n^{1/3}} ; \tilde{S}^{(n,k)}_j \in  [0,rn^{1/3}], j \leq T \right]\\
  & \qquad \leq \sup_{x \in \R} \E_x\left[ e^{h\frac{T}{n}\Delta_n^k-\frac{h}{n} \sum_{j=0}^{T-1} B_{t^k_j}} \ind{\Delta_n^k \leq \epsilon n^{1/3}} ; B_{t^k_j} \in  [-\Delta_n^k,rn^{1/3}+\Delta_n^k], j \leq T \right]\\
  & \qquad \leq \sup_{x \in \R} \E_x\left[ e^{h \frac{T}{n} \epsilon n^{1/3}-\frac{h}{n} \sum_{j=0}^{T-1} B_{t^k_j}}; B_{t^k_j} \in [-\epsilon n^{1/3}, (r+\epsilon)n^{1/3}], j \leq T \right] \leq e^{3 h A \epsilon} \tilde{E}^{(n)}_k(r+2\epsilon),
\end{align*}
setting $\tilde{E}^{(n)}_k(r) = \sup_{x \in \R} \E_x\left[ e^{-\frac{h}{n} \sum_{j=0}^{T-1} B_{t^k_j}} ; B_{t^k_j} \in [0, r n^{1/3}], j \leq T \right]$ for all $r \in [0,+\infty]$. We set $\tau^k_j = n^{-2/3}t^k_j$; by the scaling property of the Brownian motion, we have
\[
  \tilde{E}^{(n)}_k(r) = \sup_{x \in \R} \E_x\left[ e^{-\frac{h}{n^{2/3}} \sum_{j=0}^{T-1} B_{\tau^k_j}} ; B_{\tau^k_j} \in [0,r], j \leq T \right].
\]

We now replace the sum in $\tilde{E}$ by an integral: we set
\[
  \omega_{n,A} = \sup_{|t-s| \leq 2 A n^{-1/3}} \left| \sigma^2_t - \sigma^2_s \right| \quad \mathrm{and} \quad \Omega_{n,A} = \sup_{\substack{s,t \leq 2\bar{\sigma}^2 A + \omega_{n,A}\\ |t-s| \leq 2 A \omega_{n,A} + \bar{\sigma}^2 n^{-1/3}}} \left| B_t - B_s\right|.
\]
For all $k < K$ and $j \leq T$, we have
\[
  \left| \tau^k_j - j \sigma^2_{kT/n} n^{-2/3} \right| \leq n^{-2/3} \sum_{i=m_k+1}^{m_k+j} \left| \sigma^2_{i/n} - \sigma^2_{kT/n} \right| \leq 2 A \omega_{n,A},
\]
and $\sup_{s \in [j\underline{\sigma}^2/n,(j+1)\underline{\sigma}^2/n}\left| B_s - B_{t^k_j} \right| \leq \Omega_{n,A}$. As a consequence, for all $\epsilon>0$, we obtain
\begin{align*}
  \tilde{E}^{(n)}_k(r)
  &\leq \sup_{x \in \R} \E_x\left[ e^{-\frac{h}{n^{2/3}} \sum_{j=0}^{T-1} B_{\tau^k_j}} \ind{\Omega_{n,A} \leq \epsilon} ; B_{\tau^k_j} \in [0,r], j \leq T \right] + \P(\Omega_{n,A} \geq \epsilon)\\
  &\leq e^{3hA \epsilon} \sup_{x \in \R} \E_x\left[ e^{-h \int_0^A B_{\underline{\sigma}^2 s} ds} ; B_{\underline{\sigma}^2 s} \in [0,(r+2\epsilon)], s \leq A \right] + \P(\Omega_{n,A} \geq \epsilon).
\end{align*}
We set $\bar{E}^{A}(r) = \sup_{x \in \R} \E_x\left[ e^{-h \int_0^A B_{\underline{\sigma}^2 s} ds} ; B_{\underline{\sigma}^2 s} \in [0,r], s \leq A \right]$. As $B$ is continuous, we have $\lim_{n \to +\infty} \P(\Omega_{n,A} \geq \epsilon) = 0$ uniformly in $k<K$. Therefore \eqref{eqn:decompositionUpper} leads to
\begin{align*}
  &\limsup_{n \to +\infty} \frac{1}{n^{1/3}} \log \sup_{x \geq 0} \E_x\left[ e^{-\frac{h}{n} \sum_{j=0}^{n-1} S^{(n)}_j} ; S^{(n)}_j \in  [0,rn^{1/3}], j \leq n \right]\\
  &\qquad \qquad \leq \limsup_{n \to +\infty} \frac{K}{n^{1/3}} \max_{k \leq K} \log E^{(n)}_k(r)\\
  &\qquad \qquad \leq \frac{1}{A} \limsup_{n \to +\infty} \left[ 3 h A \epsilon + \max_{k \leq K} \log \left( \tilde{E}^{(n)}_k(r+2\epsilon) + \P\left(\Delta_n^k \geq \epsilon n^{1/3}\right) \right) \right]\\
  &\qquad \qquad \leq 6 h \epsilon + \limsup_{n \to +\infty} \log \left[\bar{E}^A(r+4\epsilon) + \P(\Omega_{n,A} \geq \epsilon) + \max_{k \leq K} \P\left(\Delta_n^k \geq \epsilon n^{1/3}\right) \right]\\
  &\leq 6h \epsilon + \frac{1}{A} \log \bar{E}^A(r+4\epsilon).
\end{align*}
We now use \eqref{eqn:generalUpperbound}, letting $A \to +\infty$, and thereby letting $\epsilon \to 0$, this yields
\[
  \limsup_{n \to +\infty} \frac{1}{n^{1/3}} \log \sup_{x \geq 0} \E_x\left[ e^{-\frac{h}{n} \sum_{j=0}^{n-1} S^{(n)}_j} ; S^{(n)}_j \in  [0,rn^{1/3}], j \leq n \right] \leq f(r),
\]
which ends the proof.
\end{proof}

Next, we derive lower bounds with similar computations. We set $I^{(n)}_{a,b} = [an^{1/3},bn^{1/3}]$.
\begin{lemma}
\label{lem:lowerboundRw}
We assume \eqref{eqn:variance1} and \eqref{eqn:integrabilityRw1}. For all $h>0$, $0<a<b$ and $0<a'<b'$, we have
\begin{equation}
  \label{eqn:lowerboundRwOnesided}
  \liminf_{n \to +\infty} \frac{1}{n^{1/3}} \log \inf_{x \in I^{(n)}_{a,b}} \E_x\left[ e^{-\frac{h}{n} \sum_{j=0}^{n-1} S^{(n)}_j} \ind{S_n \in I^{(n)}_{a',b'}} ; S^{(n)}_j \geq 0, j \leq n \right] \geq \frac{\alpha_1}{2^{1/3}} (h \bar{\sigma})^{2/3},
\end{equation}
and for all $h \in \R$, $r>0$, $0<a<b<r$ and $0<a'<b'<r$, we have
\begin{equation}
  \label{eqn:lowerboundRwTwosided}
  \liminf_{n \to +\infty} \frac{1}{n^{1/3}} \log \inf_{x \in I^{(n)}_{a,b}} \E_x\left[ e^{-\frac{h}{n} \sum_{j=0}^{n-1} S^{(n)}_j} \ind{S_n \in I^{(n)}_{a',b'}} ; S^{(n)}_j \in I^{(n)}_{0,r}, j \leq n \right]
  \geq \frac{\bar{\sigma}^2}{r^2} \Psi\left( \tfrac{r^3}{\bar{\sigma}^2} h \right).
\end{equation}
\end{lemma}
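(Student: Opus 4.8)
The plan is to mirror the upper-bound argument of Lemma \ref{lem:upperboundRw}, but with all inequalities reversed and all ``$\underline{\sigma}$'' replaced by ``$\bar{\sigma}$'', so that the Brownian comparison produces a \emph{lower} rate rather than an upper one. First I would chop the interval $[0,n]$ into $K = \floor{n/T}$ blocks of length $T = \ceil{A n^{2/3}}$ and apply the Markov property at the block endpoints $m_k = kT$. Because the integrand is a product of positive terms, restricting the random walk at each endpoint $m_k$ to lie in a fixed small interval, say $I^{(n)}_{a',b'}$ (rescaled so that it matches both the required terminal window and the start window of the next block), gives a \emph{lower} bound by a product of $K$ single-block expectations, each of the form
\[
  \inf_{x \in I^{(n)}_{a,b}} \E_x\left[ e^{-\frac{h}{n} \sum_{j=0}^{T-1} S^{(n,k)}_j} \ind{S^{(n,k)}_T \in I^{(n)}_{a',b'}} ; S^{(n,k)}_j \geq 0 \ (\text{or} \in [0,rn^{1/3}]), j \leq T \right].
\]
Here one must be slightly careful: the terminal window of block $k$ must be inside the starting window of block $k{+}1$, so I would fix a single window $[a'',b''] \subset (a,b) \cap (a',b')$ (and, in the two-sided case, $\subset (0,r)$) and insist the walk lands in $[a''n^{1/3}, b''n^{1/3}]$ at each $m_k$; the final block then lands in $I^{(n)}_{a',b'}$ as required.

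Next I would apply the Sakhanenko coupling (Theorem \ref{thm:sakhanenko}) exactly as in Lemma \ref{lem:upperboundRw} to replace $\tilde{S}^{(n,k)}$ by a Brownian motion $B^{(k)}$ run on the time-change $t^k_j$, paying the error $\P(\Delta_n^k \geq \epsilon n^{1/3})$, which tends to $0$ uniformly in $k$. Now, however, the direction of the deviation helps us: on the event $\{\Delta_n^k \leq \epsilon n^{1/3}\}$ the constraint $S^{(n,k)}_j \geq 0$ is \emph{implied} by the stronger Brownian constraint $B_{t^k_j} \geq \epsilon n^{1/3}$ (and similarly the upper barrier $rn^{1/3}$ is implied by $B_{t^k_j} \leq (r-\epsilon)n^{1/3}$), and the exponential weight loses at most a factor $e^{-3hA\epsilon}$. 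So one gets a lower bound by a Brownian quantity with a slightly shrunk interval and slightly shrunk terminal window. Replacing the Riemann sum $\frac{h}{n^{2/3}}\sum B_{\tau^k_j}$ by the integral $h\int_0^A B_s\, ds$ costs another $e^{-3hA\epsilon}$ and a continuity-modulus error $\P(\Omega_{n,A} \geq \epsilon) \to 0$; and since $\sigma^2 \leq \bar\sigma^2$, the time change satisfies $t^k_T \leq \bar\sigma^2 T$, so after Brownian scaling the relevant Brownian quantity is, up to these errors, bounded below by
\[
  \inf_{x} \E_x\left[ e^{-h \int_0^{\bar\sigma^2 A} B_s\, ds} \ind{B_{\bar\sigma^2 A} \in [\cdot,\cdot]} ; B_s \geq 0 \ (\text{or} \in [0,\cdot]), s \leq \bar\sigma^2 A \right],
\]
to which the \emph{lower} halves of Lemma \ref{lem:bmOnesided} and Lemma \ref{lem:bmTwosided} apply, giving rate $\frac{\alpha_1}{2^{1/3}} h^{2/3}$ (resp.\ $\Psi(h)$) per unit of Brownian time, i.e.\ $\frac{\alpha_1}{2^{1/3}}(h\bar\sigma)^{2/3}$ (resp.\ $\frac{\bar\sigma^2}{r^2}\Psi(\frac{r^3}{\bar\sigma^2}h)$) per block of $n^{2/3}$ units of walk time. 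Taking $\log$, dividing by $n^{1/3}$, using $K/n^{1/3} \to 1/(\bar\sigma^2 A)$-ish normalization (more precisely $K T / n \to 1$), then letting $n \to \infty$, then $\epsilon \to 0$, then $A \to \infty$, yields the claimed liminf. The case $h < 0$ in the two-sided estimate follows by the reflection symmetry $B \mapsto r n^{1/3} - B$ together with the identity $\Psi(h) - \Psi(-h) = -h$ recorded in Lemma \ref{lem:bmTwosided}, exactly as in the proof of Lemma \ref{lem:upperboundRw}.

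The main obstacle I anticipate is the bookkeeping of the nested windows across blocks: one needs the terminal localization window of each block to be simultaneously (i) inside the starting window for the next block's infimum, (ii) inside $(0,r)$ in the two-sided case so the Brownian lower bound of Lemma \ref{lem:bmTwosided} is non-degenerate after the $\epsilon$-shrinking, and (iii) compatible with the final required window $I^{(n)}_{a',b'}$. This is purely a matter of choosing constants $0 < a'' < b'' < \min(b,b',r)$ with $\max(a,a') < a''$ and then verifying the Brownian estimates still apply to the shrunk windows $[a''+2\epsilon, b''-2\epsilon]$ for $\epsilon$ small; no new idea is needed, but it must be done carefully so that the $\epsilon \to 0$ and $A \to \infty$ limits can be taken in the right order without the constants degenerating. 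A secondary (routine) point is that the number of blocks $K$ and the sum $\sum_{j=0}^{n-1} S^{(n)}_j$ versus $\sum_k \sum_{j=0}^{T-1} S^{(n,k)}_j$ must be reconciled, absorbing the $O(1)$ leftover steps $n - KT \leq T = O(n^{2/3})$ into a negligible multiplicative error since the walk stays in an interval of width $O(n^{1/3})$ there.
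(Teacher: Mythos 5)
Your plan is, in outline, the paper's own proof: the paper also cuts $[0,n]$ into blocks of length $T=\floor{An^{2/3}}$, forces the walk at each block endpoint into a fixed window $J^{(n)}_\delta=[(u-\delta)n^{1/3},(u+\delta)n^{1/3}]$ with $(u-3\delta,u+3\delta)\subset(a',b')$ (which settles exactly the window-compatibility bookkeeping you worry about), applies the Sakhanenko coupling of Theorem \ref{thm:sakhanenko} block by block with the error event $\{\Delta^k_n\geq \epsilon n^{1/3}\}$, trades the Riemann sum for an integral at the cost of $e^{3hA\epsilon}$ and $\P(\Omega_{n,A}\geq\epsilon)$, invokes the lower halves of Lemmas \ref{lem:bmOnesided} and \ref{lem:bmTwosided}, absorbs the leftover $n-KT\leq T$ steps, and treats $h<0$ in \eqref{eqn:lowerboundRwTwosided} by the reflection symmetry and the relation between $\Psi(h)$ and $\Psi(-h)$.

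The one step that does not hold as you state it is the passage ``$t^k_T\leq\bar{\sigma}^2T$, hence the block expectation is bounded below by the same expectation run for the full Brownian time $\bar{\sigma}^2A$.'' With the terminal localization $\ind{B_{t^k_T}\in[\cdot]}$ present, the constrained expectation is \emph{not} monotone in the horizon: applying the Markov property at time $t^k_T$ inside the longer-horizon quantity produces a factor $\E_{B_{t^k_T}}[\cdots]$ which is not dominated by the indicator of the window, so no pathwise or horizon comparison yields the claimed inequality (indeed the two quantities have different exponential rates in general). Note also that after the time change the weight per unit of Brownian time is $h/\sigma^2_{kT/n}$, not $h$, so the object to which the Brownian lemmas must be applied is genuinely block-dependent. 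The way $\bar{\sigma}$ actually enters \eqref{eqn:lowerboundRwOnesided}--\eqref{eqn:lowerboundRwTwosided} is through the rate, not the horizon: apply Lemma \ref{lem:bmOnesided} (resp.\ \ref{lem:bmTwosided}, after scaling space by $r$) at the true block horizon $\approx\sigma^2_{kT/n}A$ with weight $h/\sigma^2_{kT/n}$; the per-block logarithmic contribution is then $A\,\tfrac{\alpha_1}{2^{1/3}}(h\sigma_{kT/n})^{2/3}$, resp.\ $A\,\tfrac{\sigma^2_{kT/n}}{r^2}\Psi\bigl(\tfrac{r^3h}{\sigma^2_{kT/n}}\bigr)$, and both expressions are minimized over $\sigma\in[\underline{\sigma},\bar{\sigma}]$ at $\sigma=\bar{\sigma}$ (for the second, $s\mapsto s\Psi(c/s)$ with $c=r^3h\geq 0$ has derivative $\Psi(c/s)-\tfrac{c}{s}\Psi'(c/s)\leq\Psi(0)<0$ by convexity of $\Psi$; the one-sided expression is decreasing in $\sigma$ since $\alpha_1<0$), the error being uniform in $k$ because the horizon and the weight parameter range over compact sets. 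With that replacement, which is how \eqref{eqn:generalLowerbound} is used in the paper, your argument closes and coincides with the paper's proof.
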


\begin{proof}
We once again assume $h \geq 0$; as if $h<0$ we can deduce \eqref{eqn:lowerboundRwTwosided} by symmetry and the formula $\Psi(h) - \Psi(-h)= h$. We write, for all $r \in [0, +\infty)$, $f(r) = \frac{\bar{\sigma}^2}{r^2} \Psi\left( \tfrac{r^3}{\bar{\sigma}^2} h \right)$ and $f(+\infty) = \frac{\alpha_1}{2^{1/3}} (h \bar{\sigma})^{2/3}$. By Lemmas \ref{lem:asymptoticOnesided} and \ref{lem:asymptoticTwosided}, for all $r \in [0,+\infty]$, $0<a<b<r$ and $0<a'<b'<r$, we have
\begin{equation}
  \label{eqn:generalLowerbound}
  \liminf_{t \to +\infty} \frac{1}{t} \log \inf_{x \in [a,b]} \E_x\left[ e^{-h \int_0^t B_{\bar{\sigma}^2s}ds} \ind{B_t \in [a',b']} ; B_s \in [0,r], s \leq t \right]
  \geq f(r).
\end{equation}

We choose $u \in (a',b')$ and $\delta > 0$ such that $(u-3\delta,u+3\delta) \subset (a',b')$, and we introduce $J^{(n)}_{\delta} = I^{(n)}(u-\delta, u+\delta)$. We decompose again $[0,n]$ into subintervals of length of order $n^{2/3}$. Let $A \in \N$ and $n \in \N$, we write $T = \floor{A n^{2/3}}$ and $K = \floor{n/T}$. For all $k \leq K$, we set again $m_k = kT$, for all $r \in [0,+\infty]$, applying the Markov property at times $m_K,m_{K-1},\ldots m_1$ leads to
\begin{multline}
  \label{eqn:decompositionLower}
   \inf_{x \in I^{(n)}_{a,b}} \E_x\left[ e^{-\frac{h}{n} \sum_{j=0}^{n-1} S^{(n)}_j} \ind{S_n \in I^{(n)}_{a',b'}} ; S^{(n)}_j \in I^{(n)}_{0,r}, j \leq n \right]\\
   \geq  \inf_{x \in I^{(n)}_{a,b}} \E_x\left[ e^{-\frac{h}{n} \sum_{j=0}^{T-1} S^{(n)}_j} \ind{S^{(n)}_T \in J^{(n)}_\delta} ; S^{(n)}_j \in I^{(n)}_{0,r}, j \leq T \right] \qquad \qquad \qquad\\
   \qquad \quad\times \prod_{k=1}^{K-1} \inf_{x \in J^{(n)}_\delta} \E_x\left[ e^{-\frac{h}{n} \sum_{j=0}^{T-1} S^{(n,k)}_j} \ind{S^{(n,k)}_T \in J^{(n)}_\delta} ; S^{(n,k)}_j \in I^{(n)}_{0,r}, j \leq T \right]\\
   \times\inf_{x \in J^{(n)}_\delta} \E_x\left[ e^{-\frac{h}{n} \sum_{j=0}^{n-KT} S^{(n,K)}_j} ;  S^{(n,k)}_j \in I^{(n)}_{a',b'}, j \leq n-KT \right],
\end{multline}
where $S^{(n,k)}_j = S^{(n)}_0 + S^{(n)}_{m_k+j}- S^{(n)}_{m_k}$. Let $0<a<b<r$ and $0<a'<b'<r$, we set $\epsilon>0$ such that $a>8\epsilon$, $r-b>8\epsilon$ and $b'-a'>8\epsilon$. We bound uniformly in $k$ the quantity
\[
  E^{(n)}_k(r) =  \inf_{x \in I^{(n)}_{a,b}} \E_x\left[ e^{-\frac{h}{n} \sum_{j=0}^{T-1} S^{(n,k)}_j} \ind{S^{(n,k)}_T \in I^{(n)}_{a',b'}} ; S^{(n,k)}_j \in I^{(n)}_{0,r}, j \leq T \right].
\]

To do so, we set once again, for $k<K$, $t^k_j = \sum_{i=kT+1}^{kT + j} \sigma^2_{j/n}$. By Theorem \ref{thm:sakhanenko}, we introduce a Brownian motion $B$ such that, denoting by $\tilde{S}^{(n,k)}$ a random walk with the same law as $S^{(n,k)}$ and setting $\Delta_n^k = \max_{j \leq T} \left| B_{t^k_j} - \tilde{S}^{(n,k)}_j \right|$, for all $\epsilon>0$, by \eqref{eqn:integrabilityRwBis1} and the exponential Markov inequality we get
\[
  \sup_{k \leq K} \P\left(\Delta_n^k \geq \epsilon n^{1/3}\right) \leq e^{-C_0 \mu \epsilon n^{1/3}} \left(1+\mu \bar{\sigma} A^{1/2} n^{1/3}\right),
\]
which converges to $0$, uniformly in $k$, as $n \to +\infty$. As a consequence, for all $\epsilon>0$ and $k < K$,
\begin{align*}
  E^{(n)}_k(r)
  &= \inf_{x \in I^{(n)}_{a,b}} \E_x\left[ e^{-\frac{h}{n} \sum_{j=0}^{T-1} \tilde{S}^{(n,k)}_j} \ind{\tilde{S}^{(n,k)}_T \in I^{(n)}_{a',b'}} ; \tilde{S}^{(n,k)}_j \in I^{(n)}_{0,r}, j \leq T \right]\\
  &\geq \inf_{x \in I^{(n)}_{a,b}} \E_x\left[ e^{-\frac{h}{n} \sum_{j=0}^{T-1} \tilde{S}^{(n,k)}_j} \ind{\tilde{S}^{(n,k)}_T \in I^{(n)}_{a',b'}} \ind{\Delta_n^k \leq \epsilon n^{1/3}} ; \tilde{S}^{(n,k)}_j \in I^{(n)}_{0,r}, j \leq T \right]\\
  &\geq  \inf_{x \in  I^{(n)}_{a-\epsilon,b+\epsilon}}  e^{-3hA \epsilon}\E_x\left[ e^{-\frac{h}{n} \sum_{j=0}^{T-1} B_{t^k_j}} \ind{B_{t^k_T} \in  I^{(n)}_{a'+\epsilon,b'-\epsilon}} \ind{\Delta_n^k \leq \epsilon n^{1/3}} ; B_{t^k_j} \in  I^{(n)}_{\epsilon,r-\epsilon}, j \leq T \right]\\
   &\geq e^{-3 h A \epsilon} \left(\tilde{E}^{(n)}_k(r-2\epsilon) - \P(\Delta_n^k \geq \epsilon n^{1/3}) \right),
\end{align*}
where we set
\[
  \tilde{E}^{(n)}_k(r) = \inf_{x \in [a-2\epsilon,b+2\epsilon]} \E_x\left[ e^{-\frac{h}{n^{2/3}} \sum_{j=0}^{T-1} B_{\tau^k_j}} \ind{B_{\tau^k_T} \in [a'+2\epsilon,b'-2\epsilon]} ; B_s \in  [0,r], s \leq \tau^k_T \right],
\]
and $\tau^k_j = t^k_jn^{-2/3}$. We also set
\[
  \omega_{n,A} = \sup_{|t-s| \leq 2 A n^{-1/3}} \left| \sigma^2_t - \sigma^2_s \right| \quad \mathrm{and} \quad \Omega_{n,A} = \sup_{\substack{s,t \leq 2\bar{\sigma}^2 A + \omega_{n,A}\\ |t-s| \leq 2 A \omega_{n,A} + \bar{\sigma}^2 n^{-1/3}}} \left| B_t - B_s\right|,
\]
so that for all $k < K$ and $j \leq T$, we have
\[
  \left| \tau^k_j - j \sigma^2_{kT/n} n^{-2/3} \right| \leq n^{-2/3} \sum_{i=m_k+1}^{m_k+j} \left| \sigma^2_{i/n} - \sigma^2_{kT/n} \right| \leq 2 A \omega_{n,A},
\]
and $\sup_{s \in [\underline{\sigma}^2\frac{j}{n},\underline{\sigma}^2\frac{j+1}{n}]}\left| B_s - B_{t^k_j} \right| \leq \Omega_{n,A}$. As a consequence,
\begin{multline*}
  \tilde{E}^{(n)}_k(r) e^{3hA\epsilon} \geq \\
  \inf_{x \in [a-4\epsilon, b + 4\epsilon]} \E_x\left[ e^{-h \int_0^A B_{\underline{\sigma}^2 s} ds} \ind{B_{\underline{\sigma}^2 A} \in [a'+4\epsilon, b'-4\epsilon]} ; B_{\underline{\sigma}^2 s} \in [0,r-2\epsilon], s \leq A \right]
   - \P(\Omega_{n,A} \geq \epsilon).
\end{multline*}

This last estimate gives a lower bound for $E^{(n)}_k(r)$ which is uniform in $k \leq K$. As a consequence, \eqref{eqn:decompositionLower} yields 
\begin{multline*}
  \liminf_{n \to +\infty} \frac{1}{n^{1/3}} \log \inf_{x \in I^{(n)}_{a,b}} \E_x\left[ e^{-\frac{h}{n} \sum_{j=0}^{n-1} S^{(n)}_j} \ind{S_n \in I^{(n)}_{a',b'}} ; S^{(n)}_j \in I^{(n)}_{0,r}, j \leq n \right] \geq \\
 -6h\epsilon + \frac{1}{A} \log \inf_{x \in [a-4\epsilon, b + 4 \epsilon]} \E_x\left[ e^{-h \int_0^A B_{\underline{\sigma}^2 s} ds} \ind{B_{\underline{\sigma}^2 A} \in [a'+4\epsilon, b'-4\epsilon]} ; B_{\underline{\sigma}^2 s} \in [0,r-4\epsilon], s \leq A \right].
\end{multline*}
Letting $A \to +\infty$, then $\epsilon \to 0$ leads to
\[
  \liminf_{n \to +\infty} \frac{1}{n^{1/3}} \log \inf_{x \in I^{(n)}_{a,b}} \E_x\left[ e^{-\frac{h}{n} \sum_{j=0}^{n-1} S^{(n)}_j} \ind{S_n \in I^{(n)}_{a',b'}} ; S^{(n)}_j \in I^{(n)}_{0,r}, j \leq n \right] \geq  f(r),
\]
which ends the proof.
\end{proof}

\subsection{Proof of Theorem \ref{thm:general_rw}}
\label{subsec:conclusion}

We prove Theorem \ref{thm:general_rw} by decomposing $[0,n]$ into $A$ intervals of length $n/A$, and apply Lemmas~\ref{lem:upperboundRw} and \ref{lem:lowerboundRw}.

\begin{proof}[Proof of Theorem \ref{thm:general_rw}]
We set $n \in \N$ and $A \in \N$. For all $0 \leq a \leq A$, we write $m_a = \floor{na/A}$, and $d_a = m_{a+1}-m_a$.

\paragraph*{Upper bound in \eqref{eqn:limsup}.}
We apply the Markov property at times $m_{A-1}, m_{A-2}, \ldots m_1$, to see that
\begin{multline*}
  \sup_{x \in I^{(n)}_0}  \E_x\left[ e^{\sum_{j=0}^{n-1} (h_{(j+1)/n}-h_{j/n}) S^{(n)}_j}; S^{(n)}_j \in I^{(n)}_j, j \leq n \right]\\
  \leq \prod_{a=0}^{A-1} \underbrace{\sup_{x \in I^{(n)}_{m_a}} \E_x\left[ e^{\sum_{j=0}^{d_a-1} (h_{(m_a+j+1)/n}-h_{(m_a+j)/n}) S^{(n,a)}_j}; S^{(n,a)}_j \in I^{(n)}_{m_a+j}, j \leq d_a \right]}_{R^{(n)}_{a,A}},
\end{multline*}
where $S^{(n,a)}_j = S^{(n)}_0 + S^{(n)}_{m_a+j} - S^{(n)}_{m_a}$ is the time-inhomogeneous random walk starting at time $m_a$ and position $x$. Letting $n \to +\infty$, this yields
\begin{multline}
  \label{eqn:upperDivision}
  \limsup_{n \to +\infty} \sup_{x \in \R} \frac{1}{n^{1/3}} \log \E_x\left[ e^{\sum_{j=1}^n (h_{(j+1)/n}-h_{j/n}) S^{(n)}_j}; S^{(n)}_j \in I^{(n)}_{j}, j \leq n \right]\\
  \leq \sum_{a=0}^{A-1} \limsup_{n \to +\infty} \frac{1}{n^{1/3}} \log R^{(n)}_{a,A}.
\end{multline}

To bound $R^{(n)}_{a,A}$, we replace functions $f,g$ and $\dot{h}$ by constants. We set, for all $A \in \N$ and $a \leq A$,
\begin{multline*}
  \bar{h}_{a,A} = \sup_{t \in [\frac{a-1}{A},\frac{a+2}{A}]} \dot{h}_t, \quad \underline{h}_{a,A} = \inf_{t \in [\frac{a-1}{A}, \frac{a+2}{A}]} \dot{h}_t,\\
  g_{a,A} = \sup_{t \in [\frac{a-1}{A},\frac{a+2}{A}]} g_t, \quad f_{a,A} = \inf_{t \in [\frac{a-1}{A}, \frac{a+2}{A}]} f_t \quad \mathrm{and} \quad \sigma_{a,A} = \inf_{t \in [\frac{a-1}{A},\frac{a+2}{A}]} \sigma_s.
\end{multline*}
For any $n \in \N$ and $k \leq n$, by \eqref{eqn:paslineaire}, if $h_{(k+1)/n}>h_{k/n}$, then $k \in G_n$, and if $h_{(k+1)/n}<h_{k/n}$, then $k \in F_n$. Consequently, for all $x \in I^{(n)}_k$,
\begin{equation}
  \label{eqn:paslineaireConsequence}
  (h_{(k+1)/n} - h_{k/n}) x \leq (h_{(k+1)/n}-h_{k/n})_+ g_{k/n} n^{1/3} - (h_{k/n} - h_{(k+1)/n})_+ f_{k/n} n^{1/3}.
\end{equation}
We bound from above $R^{(n)}_{a,A}$ in four different cases.

First, for all $a<A$, by \eqref{eqn:paslineaireConsequence}, we have
\[
  R^{(n)}_{a,A} \leq \exp\left( \sum_{j=m_a}^{m_{a+1}-1} (h_{(j+1)/n}-h_{j/n})_+ g_{j/n} n^{1/3} - (h_{j/n} - h_{(j+1)/n})_+ f_{k/n} n^{1/3} \right),
\]
and thus,
\begin{equation}
  \label{eqn:free}
  \limsup_{n \to +\infty} \frac{1}{n^{1/3}} \log R^{(n)}_{a,A} \leq \int_{a/A}^{(a+1)/A} (\dot{h}_s)_+ g_s - (\dot{h}_s)_- f_sds = \int_{a/A}^{(a+1)/A} \dot{h}_s g_s - (\dot{h}_s)_- (f_s-g_s) ds.
\end{equation}

This crude estimate can be improved as follows. If $\underline{h}_{a,A} > 0$, then $[\frac{a}{A},\frac{a+1}{A}] \subset G$ and the upper bound $g_{k/n}n^{1/3}$ of the path is present at all times $k \in [m_a, m_{a+1}]$. As a consequence, \eqref{eqn:paslineaireConsequence} becomes
\begin{equation}
  \label{eqn:paslineaireConsequenceUpperfrontier}
  \forall k \in [m_a,m_{a+1}), \sup_{x \in I^{(n)}_k} (h_{(k+1)/n}-h_{k/n})x \leq (h_{(k+1)/n}-h_{k/n})g_{a,A}n^{1/3} + \frac{1}{n} \underline{h}_{a,A}(x - g_{a,A}n^{1/3}).
\end{equation}
We have
\begin{align*}
  R^{(n)}_{a,A} &= \sup_{x \in I^{(n)}_{m_a}} \E_x\left[ e^{\sum_{j=0}^{d_a-1} (h_{(m_a+j+1)/n}-h_{(m_a+j)/n}) S^{(n),a}_j}; S^{(n),a}_j \in I^{(n)}_{m_a+j}, j \leq d_a \right]\\
  &\leq e^{\sum_{j=m_a}^{m_{a+1}-1} (h_{(j+1)/n}-h_{j/n}) g_{a,A} n^{1/3}}\\
  &\qquad\qquad \times \sup_{x \in I^{(n)}_{m_a}} \E_x \left[ e^{\frac{1}{n} \sum_{j=0}^{d_a-1} \underline{h}_{a,A} (S^{(n),a}_j - g_{a,A}n^{1/3})}; S^{(n),a}_j \in I^{(n)}_{m_a+j}, j \leq d_a \right]\\
  &\leq e^{(h_{m_{a+1}/n} - h_{m_a/n})g_{a,A}n^{1/3}} \sup_{x \leq 0} \E_x\left[ e^{\frac{1}{n} \sum_{j=0}^{d_a-1} \underline{h}_{a,A} S^{(n),a}_j} ; S^{(n),a}_{m_a+j}  \leq 0, j \leq d_a\right].
\end{align*}
Letting $n \to +\infty$, we have
\begin{multline*}
  \limsup_{n \to +\infty} \frac{1}{n^{1/3}} \log R^{(n)}_{a,A} \leq (h_{(a+1)/A}-h_{a/A}) g_{a,A} \\
  + \limsup_{n \to +\infty} \frac{1}{n^{1/3}} \log \sup_{x \leq 0} \E_x\left[ e^{\frac{\underline{h}_{a,A}}{n} \sum_{j=0}^{d_a-1} S^{(n),a}_j} ; S^{(n),a}_j \leq 0, j \leq d_a\right].
\end{multline*}
As $d_a \sim_{n \to +\infty} n/A$, by \eqref{eqn:upperboundRwOnesided},
\begin{multline*}
  \limsup_{n \to +\infty} \frac{1}{n^{1/3}} \log \sup_{x \leq 0} \E_x\left[ e^{\frac{\underline{h}_{a,A}}{A (d_a+1)} \sum_{j=0}^{d_a-1} S^{(n),a}_j} ; S^{(n),a}_j \leq 0, j \leq d_a \right]\\
  \leq \frac{1}{A^{1/3}}\frac{\alpha_1}{2^{1/3}} \left(\tfrac{1}{A}\underline{h}_{a,A} \sigma_{a,A} \right)^{2/3} = \frac{\alpha_1}{2^{1/3}A} \left(\underline{h}_{a,A} \sigma_{a,A} \right)^{2/3}.
\end{multline*}
We conclude that
\begin{equation}
  \label{eqn:premierOnesided}
  \limsup_{n \to +\infty} \frac{1}{n^{1/3}} \log R^{(n)}_{a,A} \leq (h_{(a+1)/A}-h_{a/A}) g_{a,A} + \frac{\alpha_1}{2^{1/3}A} \left(\underline{h}_{a,A} \sigma_{a,A} \right)^{2/3}.
\end{equation}
By symmetry, if $\bar{h}_{a,A}<0$, then $[\frac{a}{A},\frac{a+1}{A}] \subset F$, $h_{(k+1)/n}<h_{k/n}$ and the lower bound of the path is present at all time, which leads to
\begin{equation}
  \label{eqn:secondOnesided}
  \limsup_{n \to +\infty} \frac{1}{n^{1/3}} \log R^{(n)}_{a,A} \leq (h_{(a+1)/A}-h_{a/A}) f_{a,A} + \frac{\alpha_1}{2^{1/3}A} \left(-\bar{h}_{a,A} \sigma_{a,A} \right)^{2/3}.
\end{equation}

Fourth and the smallest upper bound; if $[\frac{a}{A},\frac{a+1}{A}] \subset F\cap G$, then both bounds of the path are present at any time in $[m_a,m_{a+1}]$, and, by \eqref{eqn:paslineaireConsequenceUpperfrontier}, setting $r_{a,A} = g_{a,A}-f_{a,A}$,
\begin{multline*}
  R^{(n)}_{a,A} \leq e^{(h_{m_{a+1}/n} - h_{m_a/n})g_{a,A}n^{1/3}} \\
  \times \sup_{x \in [r_{a,A} n^{1/3},0]} \E_x\left[ e^{\frac{1}{n} \sum_{j=0}^{d_a-1} \underline{h}_{a,A} S^{(n),a}_j} ; S^{(n),a}_j  \in [-r_{a,A} n^{1/3},0], j \leq d_a\right].
\end{multline*}
We conclude that
\begin{multline*}
    \limsup_{n \to +\infty} \frac{1}{n^{1/3}} \log R^{(n)}_{a,A} \leq (h_{(a+1)/A}-h_{a/A}) g_{a,A} \\
  + \limsup_{n \to +\infty} \frac{1}{n^{1/3}} \log \sup_{x \in  [-r_{a,A} n^{1/3},0]} \E_x\left[ e^{\frac{\underline{h}_{a,A}}{n} \sum_{j=0}^{d_a-1} S^{(n),a}_j} ; S^{(n),a}_j \in  [-r_{a,A} n^{1/3},0], j \leq d_a\right].
\end{multline*}
Applying then \eqref{eqn:upperboundRwTwosided}, this yields
\begin{multline*}
  \limsup_{n \to +\infty} \frac{1}{n^{1/3}} \log \sup_{x \in  [-r_{a,A} n^{1/3},0]} \E_x\left[ e^{\frac{\underline{h}_{a,A}}{n} \sum_{j=0}^{d_a-1} S^{(n),a}_j} ; S^{(n),a}_j \in  [-r_{a,A} n^{1/3},0], j \leq d_a\right]\\
  \leq \frac{\sigma_{a,A}^2}{A r_{a,A}^2} \Psi\left( \frac{r_{a,A}^3}{\sigma_{a,A}^2} \underline{h}_{a,A} \right),
\end{multline*}
which yields
\begin{equation}
  \label{eqn:twoSided}
  \limsup_{n \to +\infty} \frac{1}{n^{1/3}} \log R^{(n)}_{a,A} \leq (h_{(a+1)/A}-h_{a/A}) g_{a,A} + \frac{\sigma_{a,A}^2}{A (g_{a,A} - f_{a,A})^2} \Psi\left( \frac{(g_{a,A}- f_{a,A})^3}{\sigma_{a,A}^2} \underline{h}_{a,A} \right).
\end{equation}
We now let $A$ grow to $+\infty$ in \eqref{eqn:upperDivision}. By Riemann-integrability of $F,G$ and $\dot{h}$, we have
\begin{multline}
  \label{eqn:twoSidedLimit}
  \limsup_{A \to +\infty} \sum_{\substack{0\leq a < A \\ [\frac{a}{A},\frac{a+1}{A}] \subset F\cap G}} \left[ (h_{(a+1)/A}-h_{a/A}) g_{a,A} + \frac{\sigma_{a,A}^2}{A (g_{a,A} - f_{a,A})^2} \Psi\left( \frac{(g_{a,A}- f_{a,A})^3}{\sigma_{a,A}^2} \underline{h}_{a,A} \right)\right]\\ \leq \int_{F\cap G} \dot{h}_s g_s + \frac{\sigma^2_s}{(g_s-f_s)^2} \Psi\left( \frac{(g_s-f_s)^3}{\sigma_s^2} \dot{h}_s \right) ds.
\end{multline}
Similarly, using the fact that $\dot{h}$ is non-negative on $F^c$, and non-positive on $G^c$, \eqref{eqn:premierOnesided} and \eqref{eqn:secondOnesided} lead respectively to
\begin{multline}
  \label{eqn:premierOnesidedLimit}
  \limsup_{A \to +\infty} \sum_{\substack{0 \leq a < A \\ \underline{h}_{a,A}>0, [\frac{a}{A},\frac{a+1}{A}] \not\subset F\cap G}} \left[ (h_{(a+1)/A}-h_{a/A}) g_{a,A} + \frac{\alpha_1}{A 2^{1/3}} \left( \underline{h}_{a,A} \sigma_{a,A} \right)^{2/3}\right]\\
  \leq \int_{F^c \cap G} \dot{h}_s g_s + \frac{\alpha_1}{2^{1/3}} \left( \dot{h}_s \sigma_s\right)^{2/3} ds , 
\end{multline}
and to
\begin{multline}
  \label{eqn:secondOnesidedLimit}
  \limsup_{A \to +\infty} \sum_{\substack{0 \leq a < A \\ \bar{h}_{a,A}<0, [\frac{a}{A},\frac{a+1}{A}] \not\subset F\cap G}} \left[(h_{(a+1)/A}-h_{a/A}) f_{a,A} + \frac{\alpha_1}{A 2^{1/3}} \left( - \bar{h}_{a,A} \sigma_{a,A} \right)^{2/3}\right]\\
  \leq \int_{F^c \cap G} \dot{h}_s g_s + \dot{h}_s (f_s - g_s) + \frac{\alpha_1}{2^{1/3}} \left( -\dot{h}_s \sigma_s\right)^{2/3} ds.
\end{multline}
Finally, by \eqref{eqn:free}, \eqref{eqn:twoSidedLimit}, \eqref{eqn:premierOnesidedLimit} and \eqref{eqn:secondOnesidedLimit}, letting $A \to +\infty$, \eqref{eqn:upperDivision} yields
\[
  \limsup_{n \to +\infty} \sup_{x \in \R} \frac{1}{n^{1/3}} \log \E_x\left[ e^{\sum_{j=1}^n (h_{(j+1)/n}-h_{j/n}) S^{(n)}_j}; S^{(n)}_j \in I^{(n)}_{j}, j \leq n \right] \leq H^{F,G}_{f,g}.
\]

\paragraph*{Lower bound in \eqref{eqn:liminf}}
We now take care of the lower bound. We start by fixing $H>0$, and we write
\[
  I^{(n,H)}_{j} = I^{(n)}_j \cap [-Hn^{1/3}, Hn^{1/3}],
\]
letting $H$ grow to $+\infty$ at the end of the proof. We only need \eqref{eqn:lowerboundRwTwosided} here.

We choose $k \in \calC([0,1])$ a continuous function such that $k_0=0$ and $k_1 \in (a,b)$ and $\epsilon>0$ such that for all $t \in [0,1]$, $k_t \in [f_t + 4\epsilon, g_t-4\epsilon]$ and $k_1 \in [a+4\epsilon, b-4\epsilon]$. We set
\[
  J^{(n)}_a = \left[ (k_{a/A} - \epsilon)n^{1/3}, (k_{a/A} + \epsilon)n^{1/3}\right].
\]
We apply the Markov property at times $m_{A-1},\ldots m_1$, only considering random walk paths that are in interval $J^{(n)}_a$ at any time $m_a$. For all $n \geq 1$ large enough, we have
\begin{align*}
  \E & \left[ e^{\sum_{j=0}^{n-1} (h_{(j+1)/n}-h_{j/n}) S^{(n)}_j} \ind{\frac{S^{(n)}_n}{n^{1/3}} \in [a',b']} ; S^{(n)}_j \in \tilde{I}^{(n)}_j, j \leq n \right]\\
  &\geq \prod_{a=0}^{A-1} \inf_{x \in I^{(n)}_{m_a}} \E_x\left[ e^{\sum_{j=0}^{d_a-1} (h_{(m_a+j+1)/n}-h_{(m_a+j)/n}) S^{(n,a)}_j} \ind{S^{(n,a)}_{d_a} \in J^{(n)}_{a+1}}; S^{(n,a)}_j \in I^{(n,H)}_{m_a+j}, j \leq d_a \right]\\
  &=: \prod_{a=0}^{A-1} \tilde{R}^{(n)}_{a,A},
\end{align*}
with the same random walk notation as in the previous paragraph. Therefore,
\begin{equation}
  \label{eqn:lowerDivision}
  \liminf_{n \to +\infty} \frac{1}{n^{1/3}} \log \E\left[ e^{\sum_{j=1}^n (h_{(j+1)/n}-h_{j/n}) S^{(n)}_j}; S^{(n)}_j \in I^{(n)}_{j}, j \leq n \right]
  \geq \sum_{a=0}^{A-1} \liminf_{n \to +\infty} \frac{1}{n^{1/3}} \log \tilde{R}^{(n)}_{a,A}.
\end{equation}
We now bound from below $\tilde{R}^{(n)}_{a,A}$, replacing functions $f,g$ and $\dot{h}$ by constants. We write here
\[
  f_{a,A} = \sup_{t \in [\frac{a-1}{A},\frac{a+2}{A}]} f_t, \quad g_{a,A} = \inf_{t \in [\frac{a-1}{A}, \frac{a+2}{A}]} g_t \quad \mathrm{and} \quad \sigma_{a,A} = \inf_{t \in [\frac{a-1}{A}, \frac{a+2}{A}]} \sigma_t,
\]
keeping notations $\bar{h}_{a,A}$ and $\underline{h}_{a,A}$ as above. We assume $A > 0$ is chosen large enough such that
\[
  \sup_{|t-s| \leq \frac{2}{A}} |f_t - f_s| + |g_t-g_s| + |k_t-k_s| \leq \epsilon.
\]

We first observe that $[f_{a,A}n^{1/3}, g_{a,A} n^{1/3}] \subset I^{(n,H)}_j$ for all $j \in [m_a,m_{a+1}]$, therefore, writing $r_{a,A} = g_{a,A} - f_{a,A}$,
\begin{multline*}
  \tilde{R}^{(n)}_{a,A} \geq e^{(h_{m_{a+1}/n} - h_{m_a/n}) g_{a,A}n^{1/3}} \\
  \times \inf_{x \in J^{(n)}_a} \E_x\left[ e^{\frac{\bar{h}_{a,A}}{n} \sum_{j=0}^{d_a-1}  S^{(n,a)}_j} \ind{S^{(n,a)}_{d_a} \in J^{(n)}_{a+1}}; S^{(n,a)}_j \in [-r_{a,A}n^{1/3}, 0], j \leq d_a \right].
\end{multline*}
Thus, by \eqref{eqn:lowerboundRwTwosided}, we have
\begin{equation}
  \label{eqn:twosidedLower}
  \liminf_{n \to +\infty} \frac{1}{n^{1/3}} \log \tilde{R}^{(n)}_{a,A} \geq (h_{(a+1)/A} - h_{a/A}) g_{a,A} + \frac{\sigma_{a,A}^2}{A(g_{a,A}-f_{a,A})^2} \Psi\left( \tfrac{(g_{a,A}-f_{a,A})^3}{\sigma_{a,A}^2} \bar{h}_{a,A} \right).
\end{equation}

This lower bound can be improved, if $[\frac{a}{A},\frac{a+1}{A}] \subset F^c$. We have $[-Hn^{1/3}, g_{a,A}n^{1/3}] \subset I^{(n,H)}_j$ for all $j \in [m_a,m_{a+1}]$, thus
\begin{multline*}
  \tilde{R}^{(n)}_{a,A} \geq e^{(h_{m_{a+1}/n} - h_{m_a/n}) g_{a,A}n^{1/3}} \\ \times \inf_{x \in J^{(n)}_a} \E_x\left[ e^{\frac{\bar{h}_{a,A}}{n} \sum_{j=0}^{d_a-1}  S^{(n,a)}_j} \ind{S^{(n,a)}_{d_a} \in J^{(n)}_{a+1}}; S^{(n,a)}_j \in [-(H-g_{a,A})n^{1/3}, 0], j \leq d_a \right],
\end{multline*}
which leads to
\begin{equation}
  \label{eqn:premierOnesidedLower}
  \liminf_{n \to +\infty} \frac{1}{n^{1/3}} \log \tilde{R}^{(n)}_{a,A} \geq (h_{(a+1)/A} - h_{a/A}) g_{a,A} + \frac{\sigma_{a,A}^2}{A(g_{a,A}+H)^2} \Psi\left( \tfrac{(g_{a,A}+H)^3}{\sigma_{a,A}^2} \bar{h}_{a,A} \right).  
\end{equation}
By symmetry, if $[\frac{a}{A},\frac{a+1}{A}] \subset G^c$, we have
\begin{equation}
  \label{eqn:secondOnesidedLower}
  \liminf_{n \to +\infty} \frac{1}{n^{1/3}} \log \tilde{R}^{(n)}_{a,A} \geq (h_{(a+1)/A} - h_{a/A}) f_{a,A} + \frac{\sigma_{a,A}^2}{A(H-f_{a,A})^2} \Psi\left(- \tfrac{(H-f_{a,A})^3}{\sigma_{a,A}^2} \underline{h}_{a,A} \right).  
\end{equation}
As a consequence, letting $A \to +\infty$, by Riemann-integrability of $F$, $G$ and $\dot{h}$, \eqref{eqn:twosidedLower} leads to
\begin{equation}
  \label{eqn:twosidedLowerLimit}
  \liminf_{A \to +\infty} \sum_{\substack{0 \leq a \leq A\\ [\frac{a}{A},\frac{a+1}{A}] \cap F\cap G \neq \emptyset}} \liminf_{n \to +\infty} \frac{1}{n^{1/3}} \log \tilde{R}^{(n)}_{a,A}\\
  \geq \int_{F\cap G} \dot{h}_t g_t + \frac{\sigma^2_t}{(g_t-f_t)^2} \Psi\left( \tfrac{(g_t-f_t)^3}{\sigma_t^2} \dot{h}_t \right) dt.
\end{equation}
Similarly, \eqref{eqn:premierOnesidedLower} gives
\begin{equation}
  \label{eqn:premierOnesidedLowerLimit}
  \liminf_{A \to +\infty} \sum_{\substack{0 \leq a \leq A\\ [\frac{a}{A},\frac{a+1}{A}] \subset F^c}} \liminf_{n \to +\infty} \frac{1}{n^{1/3}} \log \tilde{R}^{(n)}_{a,A}\\
  \geq \int_{F^c} \dot{h}_t g_t + \frac{\sigma^2_t}{(g_t+H)^2} \Psi\left( \tfrac{(g_t+H)^3}{\sigma_t^2} \dot{h}_t \right) dt,
\end{equation}
and \eqref{eqn:secondOnesidedLower} gives
\begin{multline}
  \label{eqn:secondOnesidedLowerLimit}
  \liminf_{A \to +\infty} \sum_{\substack{0 \leq a \leq A\\ [\frac{a}{A},\frac{a+1}{A}] \subset F \cap G^c}} \liminf_{n \to +\infty} \frac{1}{n^{1/3}} \log \tilde{R}^{(n)}_{a,A}\\
  \geq \int_{F \cap G^c} \dot{h}_t g_t + \dot{h}_t(f_t - g_t) + \frac{\sigma^2_t}{(H-f_t)^2} \Psi\left( -\tfrac{(H-f_t)^3}{\sigma_t^2} \dot{h}_t \right) dt.
\end{multline}

Finally, we recall that
\[
  \lim_{H \to +\infty} \frac{1}{H^{2/3}} \Psi(H) = \frac{\alpha_1}{2^{1/3}}.
\]
As $\dot{h}$ is non-negative on $G^c$ and null on $F^c \cap G^c$, by dominated convergence, we have
\[
  \lim_{H \to +\infty} \int_{F^c} \frac{\sigma^2_s}{(g_s+H)^2} \Psi\left( \tfrac{(g_s+H)^3}{\sigma_s^2} \dot{h}_s \right) ds = \int_{F^c} \frac{\alpha_1}{2^{1/3}} (\dot{h}_s \sigma_s)^{2/3} ds,
\]
and as $\dot{h}$ is non-positive on $F^c$, we have similarly,
\[
  \lim_{H \to +\infty} \int_{F \cap G^c} \frac{\sigma^2_s}{(H-f_s)^2} \Psi\left( - \tfrac{(H-f_s)^3}{\sigma^2_s} \dot{h}_s \right) ds = \int_{F\cap G^c} \frac{\alpha_1}{2^{1/3}} (-\dot{h}_s \sigma_s)^{2/3}ds.
\]
Consequently, letting $n$, then $A$, then $H$ grow to $+\infty$ --observe that $\epsilon$, given it is small enough, does not have any impact on the asymptotic-- we have
\[
  \liminf_{n \to +\infty} \frac{1}{n^{1/3}} \log \E_0\left[ e^{\sum_{j=0}^{n-1} (h_{(j+1)/n} - h_{j/n}) S^{(n)}_j} \ind{S^{(n)}_n \in [an^{1/3},bn^{1/3}]} ; S^{(n)}_j \in \tilde{I}^{(n)}_j, j \leq n \right]
  \geq H_{f,g}^{F,G}.
\]

\paragraph*{Conclusion}
Using the fact that
\begin{multline*}
  \sup_{x \in \R} \E_x\left[ e^{\sum_{j=0}^{n-1} (h_{(j+1)/n} - h_{j/n})S^{(n)}_j} ; S^{(n)}_j \in I^{(n)}_j, j \leq n \right]\\
  \geq \E_0\left[ e^{\sum_{j=1}^n (h_{(j+1)/n} - h_{j/n}) S^{(n)}_j} \ind{S^{(n)}_n \in [an^{1/3},bn^{1/3}]} ; S^{(n)}_j \in \tilde{I}^{(n)}_j, j \leq n \right],
\end{multline*}
the two inequalities we obtained above allow to conclude the proof.
\end{proof}

\section{The many-to-one lemma and branching random walk estimates}
\label{sec:path}
In this section, we introduce a time-inhomogeneous version of the many-to-one lemma, that links some additive moments of the branching random walk with the random walk estimates obtained in the previous section. Using the well-established method in the branching random walk theory (see e.g. \cite{Aid13,AiJ11,AiS10,FaZ11,FaZ12,GHS11,HaR11,HuS09,MaZ13,Mal14} and a lot of others) that consists in proving the existence of a frontier via a first moment method, then bounding the tail distribution of maximal displacement below this frontier by estimation of first and second moments of the number of individuals below this frontier, and the Cauchy-Schwarz inequality. The frontier will be determined by a differential equation, which is solved in Section 5.

\subsection{Branching random walk notations and the many-to-one lemma}
\label{subsec:manytoone}

The many-to-one lemma can be traced back at least to the early works of Peyrière \cite{Pey74} and Kahane and Peyrière \cite{KaP76}. This result has been used under many forms in the past years, extended to branching Markov processes in \cite{BiK04}. This is a very powerful tool that has been used to obtain different branching random walk estimates, see e.g. \cite{Aid13,AiJ11,AiS10,FaZ11,FaZ12,HaR11}. We introduce some additional branching random walk notation in a first time.

Let $(\T,V)$ be a BRWtie of length $n$ with environment $(\calL_t, t \in [0,1])$. We recall that $\T$ is a tree of height $n$ and that for any $u \in \T$, $|u|$ is the generation to which $u$ belongs, $u_k$ the ancestor of $u$ at generation $k$ and $V(u)$ the position of $u$. We introduce, for $k \leq n$, $\calF_k = \sigma \left( (u,V(u)), |u| \leq k \right)$ the $\sigma$-field generated by the branching random walk up to generation $k$.

For $y \in \R$ and $k \leq n$, we denote by $\P_{k,y}$ the law of the time-inhomogeneous branching random walk $(\T^k, V^k)$ such that $\T^k$ is a tree of length $n-k$, and that $\{L^{u'}, u' \in \T^k, |u'| \leq n-k-1\}$ is a family of independent point processes, with $L^{u'}$ of law $\calL_{(|u'|+k+1)/n}$. With this definition, we observe that conditionally on $\calF_k$, for every individual $u \in \T$ alive at generation $k$, the subtree $\T^u$ of $\T$ rooted at $u$, with marks $V_{|\T^u}$ is a time-inhomogeneous branching random walk with law $\P_{|u|,V(u)}$, independent of the rest of the branching random walk $(\T \backslash \T^u, V)$.

We introduce $\phi$ a continuous positive function on $[0,1]$ such that
\begin{equation}
  \label{eqn:phibiendef}
  \forall t \in [0,1], \kappa_t(\phi_t) < +\infty,
\end{equation}
and set, for $t \in [0,1]$
\begin{equation}
  \label{eqn:meanandvariance}
  b_t = \partial_\theta \kappa_t(\phi_t) \quad \mathrm{and} \quad \sigma^2_t = \partial^2_\theta \kappa_t(\phi_t).
\end{equation}
Let $(X_{n,k}, n \geq 1, k \leq n)$ be a triangular array of independent random variables such that for all $n \geq 1$, $k\leq n$ and $x \in \R$, we have
\[
  \P(X_{n,k} \leq x) = \E\left[ \sum_{\ell \in L_{k/n}} \ind{\ell \leq x} e^{\phi_{k/n} \ell - \kappa_{k/n}(\phi_{k/n})} \right],
\]
where $L_{k/n}$ is a point process of law $\calL_{k/n}$. By \eqref{eqn:regularity} and \eqref{eqn:meanandvariance}, we have
\[
  \E(X_{n,k}) = b_{k/n} \quad \mathrm{and} \quad \E\left( (X_{n,k}-b_{k/n})^2 \right) = \sigma^2_{k/n}.
\]
For $k \leq n$, we denote by $S_k = \sum_{j=1}^k X_{n,j}$ the time-inhomogeneous random walk associated to $\phi$, by $\bar{b}^{(n)}_k = \sum_{j=1}^k b_{j/n}$, by $\tilde{S}_k = S_k - \bar{b}^{(n)}_k$ the centred version of this random walk and by
\begin{equation}
  \label{eqn:energyDef}
  E_k := \sum_{j=1}^k \phi_{j/n} b_{j/n} - \kappa_{j/n}(\phi_{j/n}) = \sum_{j=1}^k \kappa^*_{j/n}(b_{j/n}),
\end{equation}
by \eqref{eqn:legendreestimate}. Under the law $\P_{k,y}$, $(S_j, j \leq n-k)$ has the same law as $\left(y + \sum_{i=k+1}^{k+j+1} X_{n,i},j \leq n-k\right)$.

\begin{lemma}[Many-to-one lemma]
\label{lem:manytoone}
Let $n \geq 1$ and $k \leq n$. Under assumption \eqref{eqn:phibiendef}, for any measurable non-negative function $f$, we have
\[
  \E\left( \sum_{|u|=k} f(V(u_j),j \leq k) \right) = e^{-E_k}\E\left[ e^{-\phi_{k/n} \tilde{S}_k + \sum_{j=0}^{k-1} (\phi_{(j+1)/n}-\phi_{j/n}) \tilde{S}_j} f(S_j, j \leq k) \right].
\]
\end{lemma}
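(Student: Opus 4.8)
The plan is to prove the identity by induction on $k$, exploiting the branching property and the explicit tilting that defines the law of the random walk $S$. First I would set up notation: fix $n$, and for each generation $j$ let $L^{(j)}$ denote a point process with law $\calL_{j/n}$. The key computation is the one-step case. Starting from the root, the $k=1$ case reads
\[
  \E\left( \sum_{|u|=1} f(V(\emptyset),V(u)) \right) = \E\left[ \sum_{\ell \in L^{(1)}} f(0,\ell) \right].
\]
On the other hand, by the definition of $X_{n,1}$, for any non-negative measurable $g$ we have $\E[g(X_{n,1})] = \E\big[\sum_{\ell\in L^{(1)}} g(\ell)\, e^{\phi_{1/n}\ell - \kappa_{1/n}(\phi_{1/n})}\big]$, so that $\E[\sum_{\ell\in L^{(1)}} f(0,\ell)] = e^{\kappa_{1/n}(\phi_{1/n})}\E[ e^{-\phi_{1/n} X_{n,1}} f(0,X_{n,1})]$. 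Writing $S_1 = X_{n,1}$, $\tilde S_1 = S_1 - b_{1/n}$ and $E_1 = \phi_{1/n} b_{1/n} - \kappa_{1/n}(\phi_{1/n})$, and noting $\tilde S_0 = 0$, this rearranges to exactly $e^{-E_1}\E[ e^{-\phi_{1/n}\tilde S_1 + (\phi_{2/n}-\phi_{1/n})\tilde S_0} f(S_0,S_1)]$ — wait, one must be careful that the sum in the exponent runs $j=0$ to $k-1$, so for $k=1$ the only term is $(\phi_{1/n}-\phi_{0/n})\tilde S_0$; but since this multiplies $\tilde S_0 = 0$ it vanishes, so the formula is consistent. (Here I am reading $\phi_{j/n}$ for $j=0,\dots,k$; only $\phi_{1/n},\dots,\phi_{k/n}$ and the telescoping structure matter.)

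For the inductive step, assume the identity holds at generation $k-1$ for all non-negative measurable $f$, and all environments shifted appropriately. Given a non-negative measurable $f$ of the path up to generation $k$, condition on $\calF_{k-1}$. Each individual $u$ with $|u|=k-1$ reproduces according to an independent point process $L^u$ of law $\calL_{k/n}$, so
\[
  \E\left( \sum_{|u|=k} f(V(u_j), j\le k) \right)
  = \E\left( \sum_{|w|=k-1} \E\Big[ \sum_{\ell \in L^w} f\big(V(w_0),\dots,V(w),V(w)+\ell\big) \,\Big|\, \calF_{k-1}\Big] \right).
\]
The inner conditional expectation, by the one-step tilting identity applied at time $k/n$, equals $e^{\kappa_{k/n}(\phi_{k/n})}\E_{X}\big[ e^{-\phi_{k/n} X_{n,k}} f(V(w_0),\dots,V(w),V(w)+X_{n,k})\big]$, where the remaining expectation is over an independent copy of $X_{n,k}$. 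Pulling this back into the generation-$(k-1)$ sum and applying the induction hypothesis to the function $(x_0,\dots,x_{k-1})\mapsto \E_X[e^{-\phi_{k/n}X_{n,k}} f(x_0,\dots,x_{k-1},x_{k-1}+X_{n,k})]$, one obtains $e^{-E_{k-1}}e^{\kappa_{k/n}(\phi_{k/n})}$ times an expectation over $(S_0,\dots,S_{k-1})$ with weight $e^{-\phi_{(k-1)/n}\tilde S_{k-1} + \sum_{j=0}^{k-2}(\phi_{(j+1)/n}-\phi_{j/n})\tilde S_j}$, and then appending the fresh step $X_{n,k}$ with its weight $e^{-\phi_{k/n}X_{n,k}}$.

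The remaining task is bookkeeping: show that $e^{-E_{k-1}+\kappa_{k/n}(\phi_{k/n})}$ combined with the $e^{-\phi_{k/n}X_{n,k}}$ weight reassembles into $e^{-E_k}$ times the correct exponential weight at generation $k$. Using $X_{n,k} = \tilde S_k - \tilde S_{k-1} + b_{k/n}$, the factor $e^{-\phi_{k/n} X_{n,k}}$ becomes $e^{-\phi_{k/n}\tilde S_k + \phi_{k/n}\tilde S_{k-1} - \phi_{k/n} b_{k/n}}$; the $e^{\phi_{k/n}\tilde S_{k-1}}$ part merges with the existing $e^{-\phi_{(k-1)/n}\tilde S_{k-1}}$ to produce $e^{(\phi_{k/n}-\phi_{(k-1)/n})\tilde S_{k-1}}$, which is exactly the missing $j=k-1$ term of $\sum_{j=0}^{k-1}(\phi_{(j+1)/n}-\phi_{j/n})\tilde S_j$; and $e^{-\phi_{k/n}b_{k/n}}$ combined with $e^{\kappa_{k/n}(\phi_{k/n})}$ absorbs the increment $E_k - E_{k-1} = \phi_{k/n}b_{k/n} - \kappa_{k/n}(\phi_{k/n})$, turning $e^{-E_{k-1}}$ into $e^{-E_k}$. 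The main obstacle is purely notational — keeping the indices on $\phi$, the telescoping exponent, and the centred walk $\tilde S$ aligned so that the one-step weight slots in cleanly — rather than any conceptual difficulty; the conditional-expectation manipulation is justified by the tower property and Tonelli's theorem since $f$ is non-negative, so no integrability hypothesis beyond \eqref{eqn:phibiendef} is needed.
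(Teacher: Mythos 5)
Your proposal is correct and follows essentially the same route as the paper: induction on $k$ using the branching property (conditioning on $\calF_{k-1}$) together with the tilting identity defining $X_{n,k}$. The only cosmetic difference is that the paper first establishes the uncentred identity $\E\big(\sum_{|u|=k} f(V(u_j),j\leq k)\big)=\E\big[e^{-\sum_{j=1}^{k}(\phi_{j/n}X_{n,j}-\kappa_{j/n}(\phi_{j/n}))}f(S_j,j\leq k)\big]$ and performs the Abel transform once at the end, whereas you fold the same telescoping bookkeeping into each induction step; both are equally valid.
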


\begin{remark}
As an immediate corollary of the many-to-one lemma, we have, for $p \leq n$, $y \in \R$ and $k \leq n-p$,
\begin{multline*}
  \E_{p,y} \left( \sum_{|u|=p} f(V(u_j), j \leq k) \right)\\
  = e^{E_p-E_{k+p}} e^{\phi_{p/n} y} \E_{p,y} \left[ e^{-\phi_{(k+p)/n} \tilde{S}_k + \sum_{j=p}^{p+k-1}(\phi_{(j+1)/n}-\phi_{j/n}) \tilde{S}_{j-p}} f(S_j, j \leq k) \right].
\end{multline*}
\end{remark}

\begin{proof}
Let $n \geq 1$, $k \leq n$ and $f$ non-negative and measurable, we prove by induction on $k \leq n$ that
\[
  \E\left( \sum_{|u|=k} f(V(u_j),j \leq k) \right) = \E\left[ e^{-\sum_{j=1}^k \phi_{j/n}X_{n,j} - \kappa_{j/n}(\phi_{j/n})} f(S_j, j \leq k) \right].
\]
We first observe that if $k=1$, by definition of $X_{n,1}$, we have
\[
  \E\left( \sum_{|u|=1} f(V(u)) \right) = \E\left[ e^{-\phi_{1/n}X_{n,1} + \kappa_{1/n}(\phi_{1/n})} f(X_{n,1}) \right].
\]
Let $k \geq 2$. By conditioning on $\calF_{k-1}$, we have
\begin{align*}
  \E\left( \sum_{|u|=k} f(V(u_j), j \leq k) \right) &= \E\left[ \sum_{|u|=k-1} \sum_{u' \in \Omega(u)} f(V(u'_j), j \leq k) \right]\\
  &= \E\left( \sum_{|u|=k-1} g(V(u_j), j \leq k-1) \right),
\end{align*}
where, for $(x_j, j \leq k-1) \in \R^{k-1}$,
\begin{align*}
  g(x_j, j \leq k-1) &= \E\left[ \sum_{\ell \in L_{k/n}} f(x_1,\ldots x_{k-1}, x_{k-1} + \ell) \right]\\
  &= \E\left[ e^{-\phi_{k/n} X_{n,k} + \kappa_{k/n}(\phi_{k/n})} f(x_1,\ldots x_{k-1}, x_{k-1} + X_{n,k}) \right].
\end{align*}
Using the induction hypothesis, we conclude that
\begin{align*}
  \E\left( \sum_{|u|=k} f(V(u_j), j \leq k) \right) &= \E\left[ e^{-\sum_{j=1}^k \phi_{j/n} X_{n,j} - \kappa_{j/n}(\phi_{j/n})} f(S_j, j \leq k) \right]\\
  &= e^{-E_k} \E\left[ e^{-\sum_{j=1}^k \phi_{j/n} (X_{n,j}-b_{j/n})} f(S_j, j \leq k) \right].
\end{align*}
Finally, we modify the exponential weight by the Abel transform,
\begin{align*}
  \sum_{j=1}^k \phi_{j/n} (X_{n,j} - b_j)
  &= \sum_{j=1}^k \phi_{j/n} (\tilde{S}_j - \tilde{S}_{j-1})
  = \sum_{j=1}^k \phi_{j/n} \tilde{S}_j - \sum_{j=1}^k \phi_{j/n} \tilde{S}_{j-1}\\
  &= \sum_{j=1}^k \phi_{j/n} \tilde{S}_j - \sum_{j=1}^{k-1} \phi_{(j+1)/n} \tilde{S}_j
  = \phi_{k/n} \tilde{S}_k - \sum_{j=1}^{k-1} (\phi_{(j+1)/n}-\phi_{j/n}) \tilde{S}_j,
\end{align*}
which ends the proof.
\end{proof}

\subsection{Number of individuals staying along a path}
\label{subsec:moments}

In this section, we bound some quantities related to the number of individuals that stay along a path. We start with an upper bound of the expected number of individuals that stay in the path until some time $k \leq n$, and then exit the path by the upper frontier. Subsequently, we bound the probability that there exists an individual that stays in the path until time $n$. We then compute the first two moments of the number of such individuals, and apply the Cauchy-Schwarz inequality to conclude. We assume in this section that
\begin{equation}
  \label{eqn:regularphi}
  \phi \text{ is absolutely continuous, with a Riemann-integrable derivative } \dot{\phi},
\end{equation}
as we plan to apply Theorem \ref{thm:general_rw} with function $h = \phi$. Under this assumption, $\phi$ is Lipschitz, thus so is $b$. As a consequence, we have
\begin{equation}
  \label{eqn:energy}
  \sup_{\substack{n \in \N\\ k \leq n}} \sup_{t \in [\frac{k-1}{n},\frac{k+2}{n}]} \left| E_k - n K^*(b)_t \right| < +\infty \quad \mathrm{and} \quad   \sup_{\substack{n \in \N\\ k \leq n}} \sup_{t \in [\frac{k-1}{n},\frac{k+2}{n}]} \left| \bar{b}^{(n)}_k - n \int_0^t b_s ds \right| < +\infty.
\end{equation}

Let $f<g$ be two continuous functions such that $f(0)<0<g(0)$, and $F$ and $G$ two Riemann-integrable subsets of $[0,1]$ such that
\begin{equation}
  \label{eqn:FetG}
  \{ t \in [0,1] : \dot{\phi}_t < 0 \} \subset F \quad \mathrm{and} \quad \{ t \in [0,1] : \dot{\phi}_t > 0 \} \subset G.
\end{equation}
We write, for $t \in [0,1]$
\begin{multline}
  \label{eqn:defineK}
  H_t^{F,G}(f,g,\phi) = \int_0^t \dot{\phi}_s g_s ds + \int_0^t \mathbf{1}_{F \cap G}(s) \frac{\sigma_s^2}{(g_s - f_s)^2} \Psi\left( \tfrac{(g_s-f_s)^3}{\sigma_s^2} \dot{\phi}_s \right) ds\\
   + \int_0^t \mathbf{1}_{F^c \cap G}(s) \frac{a_1}{2^{1/3}} (\dot{\phi}_s \sigma_s)^{2/3}  + \mathbf{1}_{F \cap G^c} \left( \dot{\phi}_s(f_s - g_s) + \frac{a_1}{2^{1/3}} (-\dot{\phi}_s \sigma_s)^{2/3} \right) ds.
\end{multline}
We keep notation of Section \ref{sec:rw}: $F_n$ and $G_n$ are the subsets of $\{0,\ldots n-1\}$ defined in \eqref{eqn:deffnandgn}, and the path $I^{(n)}_k$ as defined in \eqref{eqn:defIn}. We are interested in the individuals $u$ alive at generation $n$ such that for all $k \leq n$, $V(u_k)-\bar{b}^{(n)}_k \in I^{(n)}_k$.

\subsubsection{A frontier estimate}
\label{subsubsec:frontier}

We compute the number of individuals that stayed in $\bar{b}^{(n)} + I^{(n)}$ until some time $k-1$ and then crossed the upper boundary $\bar{b}^{(n)}_k + g_{k/n} n^{1/3}$ of the path at time $k \in G_n$. We denote by
\[
  \calA_n^{F,G}(f,g) = \left\{ u \in \T, |u| \in G_n : V(u) - \bar{b}^{(n)}_{|u|} > g_{|u|/n}n^{1/3} , V(u_j) - \bar{b}^{(n)}_j \in I^{(n)}_j , j < |u|\right\},
\]
the set of such individuals, and by $A_n^{F,G}(f,g) = \# \calA_n^{F,G}(f,g)$.

\begin{lemma}
\label{lem:estimate_upperfrontier}
Under the assumptions \eqref{eqn:regularity}, \eqref{eqn:phibiendef}, \eqref{eqn:regularphi} and \eqref{eqn:FetG}, if $G \subset \{ t \in [0,1] : K^*(b)_t = 0\}$,
\begin{align*}
  \limsup_{n \to +\infty} \frac{1}{n^{1/3}} \log \E(A_n^{F,G}(f,g)) \leq \sup_{t \in [0,1]} \left[K^{F,G}_t(f,g,\phi) - \phi_t g_t \right].
\end{align*}
\end{lemma}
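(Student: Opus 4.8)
The plan is to decompose $\calA_n^{F,G}(f,g)$ according to the generation $k \in G_n$ at which an individual first crosses the upper frontier, and to control each term by the many-to-one lemma followed by Theorem~\ref{thm:general_rw}. Concretely, write $A_n^{F,G}(f,g) = \sum_{k \in G_n} A_n^{(k)}$, where $A_n^{(k)}$ counts individuals $u$ with $|u|=k$, $V(u_j) - \bar{b}^{(n)}_j \in I^{(n)}_j$ for $j < k$, and $V(u) - \bar{b}^{(n)}_k > g_{k/n} n^{1/3}$. Applying Lemma~\ref{lem:manytoone} with $\phi$ in the role of $h$ and with the test function $f(x_0,\dots,x_k) = \ind{x_j - \bar b^{(n)}_j \in I^{(n)}_j, j<k}\ind{x_k - \bar b^{(n)}_k > g_{k/n} n^{1/3}}$, and centring via $\tilde S$, one gets
\[
  \E(A_n^{(k)}) = e^{-E_k}\, \E\!\left[ e^{-\phi_{k/n}\tilde S_k + \sum_{j=0}^{k-1}(\phi_{(j+1)/n}-\phi_{j/n})\tilde S_j}\, \ind{\tilde S_j \in I^{(n)}_j, j<k}\,\ind{\tilde S_k > g_{k/n} n^{1/3}} \right].
\]
On the event in the expectation the terminal weight $e^{-\phi_{k/n}\tilde S_k}$ is at most $e^{-\phi_{k/n} g_{k/n} n^{1/3}}$ (using $\phi>0$ and the frontier crossing), so, using \eqref{eqn:energy} to replace $E_k$ by $nK^*(b)_{k/n} + O(1)$ and the hypothesis $G \subset \{K^*(b)=0\}$ — hence $K^*(b)_{k/n}=0$ when $k\in G_n$, up to the $O(1/n)$ fuzz in the definition of $G_n$ — we are left with bounding an exponential-of-random-walk functional over a path constraint of exactly the type treated in Theorem~\ref{thm:general_rw}.

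The second step is to feed this into the upper bound \eqref{eqn:limsup}. The constraint $\tilde S_j \in I^{(n)}_j$ for $j<k$ and the absence of a constraint at time $k$ is handled by extending the path trivially (putting $I^{(n)}_j = \R$ for $j \geq k$, or equivalently applying the theorem on $[0,k]$ with $k/n \to t$ along a subsequence); either way the $\limsup$ of $\frac{1}{n^{1/3}}\log$ of the random-walk expectation is at most $H^{F,G}_{f,g}(t) = K^{F,G}_t(f,g,\phi)$ where $t = \lim k/n$, since the definition \eqref{eqn:defineK} is precisely the "running" version of \eqref{eqn:defineH}. Collecting the terminal penalty $-\phi_{k/n} g_{k/n} n^{1/3} \to -\phi_t g_t\, n^{1/3}$, each $\E(A_n^{(k)})$ contributes at most $\exp\big(n^{1/3}(K^{F,G}_t(f,g,\phi) - \phi_t g_t) + o(n^{1/3})\big)$. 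Since the sum over $k \in G_n$ has at most $n+1$ terms, the polynomial factor is absorbed by $n^{1/3}$ in the exponent, and taking the supremum over $t \in [0,1]$ (which is legitimate because the convergence is uniform in $k$, by the way Theorem~\ref{thm:general_rw} is proved via the $A$-block decomposition with constants) yields the claimed bound.

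The delicate point — the main obstacle — is the uniformity in $k$ of the estimate coming from Theorem~\ref{thm:general_rw}: the theorem as stated gives a $\limsup$ for a fixed sequence of path data, whereas here we need a bound valid simultaneously for all $k \leq n$ so that the sum $\sum_{k \in G_n}$ really only costs a harmless polynomial factor. The way around this is to not apply the theorem as a black box for each $k$, but to redo the $[0,n] = \bigsqcup_a [m_a, m_{a+1}]$ block decomposition from Section~\ref{subsec:conclusion} directly: on each block the functions $f, g, \dot\phi, \sigma$ are replaced by constants, Lemma~\ref{lem:upperboundRw} is applied block by block, and the resulting product bound $\exp(n^{1/3} H^{F,G}_{f,g}(m_{\lfloor kA/n\rfloor \cdot (1/A)}) + o(n^{1/3}))$ is automatically uniform in $k$ because the per-block constants do not depend on $k$. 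A minor subtlety is that $G_n$ was defined with a slightly enlarged window (see \eqref{eqn:deffnandgn}), so one must check that $k \in G_n$ still forces $K^*(b)_{k/n}$ to be within $O(1/n)$ of zero given $G \subset \{K^*(b)=0\}$ and the continuity \eqref{eqn:regularity} of $\kappa^*$; this follows since $k \in G_n$ means $[\frac kn, \frac{k+1}n] \cap G \neq \emptyset$, so there is $s \in G$ with $|s - k/n| \leq 1/n$ and $K^*(b)_s = 0$, whence $|K^*(b)_{k/n}| = |K^*(b)_{k/n} - K^*(b)_s| = O(1/n)$ by Lipschitz continuity of $s \mapsto K^*(b)_s$ (which holds because $b$ and $\kappa^*$ are regular). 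Everything else is bookkeeping with \eqref{eqn:energy} and the Abel-summation form of the many-to-one weight.
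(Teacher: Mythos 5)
Your proposal follows essentially the same route as the paper's proof: many-to-one lemma, bounding the terminal weight by $e^{-\phi_{k/n} g_{k/n} n^{1/3}}$ on the crossing event, using \eqref{eqn:energy} together with $G \subset \{K^*(b)=0\}$ to make $E_k$ bounded for $k \in G_n$, and securing uniformity in $k$ through the block decomposition of Section \ref{subsec:conclusion} (the paper implements exactly your idea by applying the Markov property at the block boundary $m_a$ below $k$, paying a crude $e^{d_{a,A} n^{1/3}}$ on the partial block, and applying Theorem \ref{thm:general_rw} only to the $A$ quantities $\Phi^{(n)}_{a,A}$, which do not depend on $k$), so that the sum over $k \in G_n$ only costs a polynomial factor. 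The only step you leave implicit is the routine check that $\tilde{S}$ satisfies \eqref{eqn:variance_rw} and \eqref{eqn:integrability_rw}, which the paper verifies via $\E\left[ e^{\mu |X_{n,k}|}\right] \leq e^{\kappa_t(\phi_t+\mu)-\kappa_t(\phi_t)} + e^{\kappa_t(\phi_t-\mu)-\kappa_t(\phi_t)}$ and the openness assumption in \eqref{eqn:regularity}.
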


\begin{remark}
Observe that in order to use this lemma, we need to assume that
\[ \{ t \in [0,1] : \dot{\phi}_t > 0 \} \subset G \subset \{ t \in [0,1] : K^*(b)_t = 0\}, \]
we cannot consider paths of speed profile $b$ such that the associated parameter $\phi$ increases at a time when there is an exponentially large number of individuals following the path. For such paths, the mean of $A_n$ grows exponentially fast.
\end{remark}

\begin{proof}
By \eqref{eqn:phibiendef} and Lemma \ref{lem:manytoone}, we have
\begin{align*}
  \E(A_n^{F,G}(f,g)) &= \sum_{k\in G_n} \E\left[ \sum_{|u|=k} \ind{V(u) - \bar{b}^{(n)}_{k} > g_{k/n}n^{1/3}} \ind{V(u_j) - \bar{b}^{(n)}_j \in I^{(n)}_j , j < k} \right]\\
  &= \sum_{k \in G_n} e^{-E_k} \E\left[ e^{-\phi_{k/n} \tilde{S}_k + \sum_{j=0}^{k-1} (\phi_{(j+1)/n} - \phi_{j/n})\tilde{S}_j} \ind{\tilde{S}_k > g_{k/n} n^{1/3}} \ind{\tilde{S}_j \in I^{(n)}_j, j < k} \right].
\end{align*}
For all $k \in G_n$, there exists $t \in [k/n,(k+1)/n]$ such that $t \in G$, thus $K^*(b)_t=0$. By \eqref{eqn:energy}, this implies that $\sup_{n \in \N, k \in G_n} E_k< +\infty$, hence
\[
  \E(A_n^{F,G}(f,g))
  \leq C \sum_{k \in G_n} e^{-\phi_{k/n} g_{k/n} n^{1/3}} \E\left[ e^{\sum_{j=0}^{k-1} (\phi_{(j+1)/n}-\phi_{j/n})\tilde{S}_j} \ind{\tilde{S}_k > g_{k/n} n^{1/3}} \ind{\tilde{S}_j \in I^{(n)}_j, j < k} \right].
\]
As \eqref{eqn:FetG} is verified, similarly to \eqref{eqn:paslineaireConsequence}, for all $k \leq n$ and $x \in I^{(n)}_k$, we have
\begin{equation}
  \label{eqn:FetGconsequence}
  (\phi_{(k+1)/n} - \phi_{k/n}) x \leq (\phi_{(k+1)/n}-\phi_{k/n})_+ g_{k/n} n^{1/3} - (\phi_{k/n} - \phi_{(k+1)/n})_+ f_{k/n} n^{1/3}.
\end{equation}
In particular, $(\phi_{(k+1)/n}-\phi_{k/n}) x \leq \left| \phi_{(k+1)/n} - \phi_{k/n} \right| \left( \norme{f}_\infty + \norme{g}_\infty \right)$. Let $A>0$ be a large integer. For $a < A$, we set $m_a = \floor{an/A}$ and
\begin{multline*}
  \underline{g}_{a,A} = \inf\left\{g_t,t \in \left[\tfrac{a-1}{A},\tfrac{a+2}{A}\right]\right\},  \quad \underline{\phi}_{a,A} = \inf\left\{ \phi_t, t \in \left[\tfrac{a-1}{A},\tfrac{a+2}{A}\right] \right\}\\ \mathrm{and} \quad d_{a,A} = (\norm{f}_\infty + \norm{g}_\infty)\int_{(a-1)/A}^{(a+2)/A} |\dot{\phi}_s| ds.
\end{multline*}
For $k \in (m_a, m_{a+1}]$, applying the Markov property at time $m_a$, we have
\[
  \E\left[ e^{\sum_{j=0}^{k-1} (\phi_{(j+1)/n}-\phi_{j/n}) \tilde{S}_j} \ind{\tilde{S}_k > g_{k/n} n^{1/3}} \ind{\tilde{S}_j \in I^{(n)}_j, j < k} \right] \leq \exp\left(d_{a,A} n^{1/3}\right) \Phi_{a,A}^{(n)},
\]
where $\Phi_{a,A}^{(n)} = \E\left[ e^{\sum_{j=1}^{m_a}(\phi_{(j+1)/n}-\phi_{j/n}) \tilde{S}_j} \ind{\tilde{S}_j \in I^{(n)}_j, j \leq m_a} \right]$. We observe that $\tilde{S}$ is a centred random walk which, by \eqref{eqn:meanandvariance}, verifies \eqref{eqn:variance_rw} with variance function $\sigma^2$. Moreover, as
\begin{multline*}
  \E\left[ e^{\mu|X_{n,k}|} \right] \leq \E\left[ e^{\mu X_{n,k}} + e^{-\mu X_{n,k}} \right]
  \leq \E\left[ \sum_{\ell \in L_{k/n}} e^{(\phi_t+\mu) \ell - \kappa_t(\phi_t)} + e^{(\phi_t - \mu) \ell - \kappa_t(\phi_t)} \right]\\ \leq e^{\kappa_t(\phi_t+\mu) - \kappa_t(\phi_t)} + e^{\kappa_t(\phi_t-\mu) - \kappa_t(\phi_t)},
\end{multline*}
by \eqref{eqn:regularity}, there exists $\mu>0$ such that $\sup_{n \in \N, k \leq n} \E\left[ e^{\mu|X_{n,k}|} \right] < +\infty$ and \eqref{eqn:integrability_rw} is verified. For all $a \leq A$, we apply Theorem \ref{thm:general_rw}, to $h_t = \phi_{t\wedge a/A}$, functions $f$ and $g$ and intervals $F$ and $G$ stopped at time $a/A$. We have $ \displaystyle \limsup_{n \to +\infty} \frac{\log \Phi_{a,A}^{(n)}}{n^{1/3}} = K_{a/A}^{F,G}(f,g,\phi)$.

We observe that
\[
  \E(A^{F,G}_n(f,g)) \leq C \sum_{a=0}^{A-1} \frac{n}{A} \exp\left( \left(d_{a,A} - \underline{\phi}_{a,A} \underline{g}_{a,A}\right) n^{1/3}\right) \Phi_{a,A}^{(n)}.
\]
Letting $n \to +\infty$, we have
\[
  \limsup_{n \to +\infty} \frac{\log E(A_n(f,g))}{n^{1/3}} \leq \max_{a < A} K_{a/A}^{F,G}(f,g,\phi) - \underline{\phi}_{a,A} \underline{g}_{a,A} + d_{a,A}.
\]
By uniform continuity of $K,g,\phi$, and as $\lim_{A \to +\infty} d_{a,A} = 0$, letting $A  \to +\infty$, we have
\[
  \limsup_{n \to +\infty} \frac{\log E(A_n^{F,G}(f,g))}{n^{1/3}} \leq \sup_{t \in [0,1]} \left[ H_t^{F,G}(f,g,\phi) - \phi_t g_t \right].
\]
\end{proof}

Lemma \ref{lem:estimate_upperfrontier} is used to obtain an upper bound for the maximal displacement among individuals that stay above $\bar{b}^{(n)}_k + n^{1/3}f_{k/n}$ at any time $k \in F_n$. If the quantity $\sup_{t \in [0,1]} \left[H_t^{F,G}(f,g,\phi) - \phi_t g_t \right]$ is negative, then with high probability, no individual crosses the frontier $\bar{b}^{(n)}_k + n^{1/3} g_{k/n}$ at time $k \in (G \cup \{1\})_n$. In particular, there is at time $n$ no individual above $\bar{b}^{(n)}_n + g_1 n^{1/3}$. If we choose $g$ and $G$ in a proper manner, the upper bound obtained here is tight.

\subsubsection{Concentration estimate by a second moment method}
\label{subsubsec:secondmoment}

We take interest in the number of individuals which stay at any time $k \leq n$ in $\bar{b}^{(n)}_k + I^{(n)}_k$. For all $0<x < g_1 - f_1$, we set
\[
   \calB_n^{F,G} (f,g,x) = \left\{ |u|=n : V(u_j) - \bar{b}^{(n)}_j \in \tilde{I}^{(n)}_j, j \leq n, V(u)-\bar{b}^{(n)}_n\geq (g_1-x)n^{1/3}\right\},
\]
where $\tilde{I}^{(n)}_j = I^{(n)}_j \cap [-n^{2/3},n^{2/3}]$. We denote by $B_n^{F,G}(f,g,x) = \# \calB_n^{F,G}(f,g,x)$. In order to bound from above the probability that $\calB_n \neq \emptyset$, we compute the mean of $B_n$.

\begin{lemma}
\label{lem:firstorder}
We assume \eqref{eqn:regularity}, \eqref{eqn:phibiendef}, \eqref{eqn:regularphi} and \eqref{eqn:FetG}. If $K^*(b)_1=0$ then
\begin{equation*}
  \lim_{n \to +\infty} \frac{1}{n^{1/3}} \log \E(B^{F,G}_n(f,g,x)) = K_1^{F,G}(f,g,\phi) -  \phi_1 (g_1-x).
\end{equation*}
\end{lemma}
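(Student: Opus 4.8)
The plan is to apply the many-to-one lemma to reduce $\E(B^{F,G}_n(f,g,x))$ to a time-inhomogeneous random walk expectation of the type handled by Theorem~\ref{thm:general_rw}, with tilt function $\phi$. By Lemma~\ref{lem:manytoone} at generation $n$,
\[
  \E\!\left(B^{F,G}_n(f,g,x)\right) = e^{-E_n}\,\E\!\left[ e^{-\phi_1 \tilde{S}_n + \sum_{j=0}^{n-1}(\phi_{(j+1)/n}-\phi_{j/n})\tilde{S}_j}\,\ind{\tilde{S}_j \in \tilde{I}^{(n)}_j,\ j\leq n}\,\ind{\tilde{S}_n \geq (g_1-x)n^{1/3}} \right],
\]
where $\tilde{S}$ is the centred random walk attached to $\phi$; by \eqref{eqn:meanandvariance} it has variance profile $\sigma^2$, and by \eqref{eqn:regularity} it satisfies the exponential integrability \eqref{eqn:integrability_rw} (exactly as in the proof of Lemma~\ref{lem:estimate_upperfrontier}). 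Since $K^*(b)_1=0$, \eqref{eqn:energy} gives $\sup_n |E_n|<+\infty$, so the factor $e^{-E_n}$ is irrelevant on the $n^{1/3}$ scale.

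For the upper bound I would use $\phi_1>0$ together with the indicator $\ind{\tilde{S}_n \geq (g_1-x)n^{1/3}}$ to bound $e^{-\phi_1\tilde{S}_n}\leq e^{-\phi_1(g_1-x)n^{1/3}}$, then drop that indicator and replace each $\tilde{I}^{(n)}_j$ by the larger $I^{(n)}_j$; the surviving expectation is precisely the one estimated by \eqref{eqn:limsup} of Theorem~\ref{thm:general_rw} with $h=\phi$, whose exponential rate is $H^{F,G}_{f,g}(1)=K^{F,G}_1(f,g,\phi)$. Hence $\limsup_n \frac{1}{n^{1/3}}\log\E(B_n) \leq K^{F,G}_1(f,g,\phi)-\phi_1(g_1-x)$.

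For the matching lower bound, fix a small $\varepsilon\in(0,x)$ — possible because $0<x<g_1-f_1$, so $[g_1-x,\,g_1-x+\varepsilon]\subset(f_1,g_1)$ for $\varepsilon$ small — and restrict the expectation to the thinner event $\{\tilde{S}_n\in[(g_1-x)n^{1/3},(g_1-x+\varepsilon)n^{1/3}]\}$, on which $e^{-\phi_1\tilde{S}_n}\geq e^{-\phi_1(g_1-x+\varepsilon)n^{1/3}}$ and the final-position constraint of $B_n$ is automatically met. What remains is exactly the expectation appearing in \eqref{eqn:liminf} with $a=g_1-x$ and $b=g_1-x+\varepsilon$, of rate $K^{F,G}_1(f,g,\phi)$; so $\liminf_n\frac{1}{n^{1/3}}\log\E(B_n)\geq K^{F,G}_1(f,g,\phi)-\phi_1(g_1-x+\varepsilon)$, and letting $\varepsilon\to 0$ closes the gap.

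The only bookkeeping points are that the weight $\sum_{j=0}^{n-1}(\phi_{(j+1)/n}-\phi_{j/n})\tilde{S}_j$ produced by Lemma~\ref{lem:manytoone} agrees with the weight $\sum_{j=1}^{n}(h_{(j+1)/n}-h_{j/n})S_j$ of Theorem~\ref{thm:general_rw} up to the vanishing $j=0$ term (since $\tilde{S}_0=0$) once $\phi$ is extended constantly beyond $1$, and that the slab $[(g_1-x)n^{1/3},(g_1-x+\varepsilon)n^{1/3}]$ lies inside $\tilde{I}^{(n)}_n$ for large $n$ (because $g_1-x+\varepsilon<g_1$ and $n^{1/3}=o(n^{2/3})$). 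There is no genuine obstacle beyond this; the one place demanding a little care is the extraction of the boundary term $-\phi_1(g_1-x)$, which uses the crude one-sided bound $e^{-\phi_1\tilde{S}_n}\leq e^{-\phi_1(g_1-x)n^{1/3}}$ for the upper estimate but a genuine localisation of $\tilde{S}_n$ near $(g_1-x)n^{1/3}$ for the lower estimate.
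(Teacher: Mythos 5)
Your proposal is correct and follows essentially the same route as the paper: many-to-one at generation $n$, the boundedness of $E_n$ from $K^*(b)_1=0$ via \eqref{eqn:energy}, the crude bound $e^{-\phi_1\tilde{S}_n}\leq e^{-\phi_1(g_1-x)n^{1/3}}$ with enlargement of $\tilde{I}^{(n)}$ to $I^{(n)}$ and \eqref{eqn:limsup} for the upper bound, and the restriction to the slab $\tilde{S}_n-(g_1-x)n^{1/3}\in[0,\epsilon n^{1/3}]$ with \eqref{eqn:liminf} followed by $\epsilon\to 0$ for the lower bound. This is exactly the paper's argument, down to the choice of localisation window.
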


\begin{proof}
Observe that, as $K^*(b)_1 = 0$, by \eqref{eqn:energy} $|E_n|$ is bounded by a constant uniformly in $n \in \N$. Using the many-to-one lemma, we have
\begin{align*}
  \E(B_n^{F,G}(f,g,x)) &= e^{-E_n}\E\left[ e^{-\phi_1 \tilde{S}_n + \sum_{j=1}^n  (\phi_{(j+1)/n}-\phi_{j/n}) \tilde{S}_j} \ind{\tilde{S}_j \in \tilde{I}^{(n)}_j, j \leq n} \ind{\tilde{S}_n \geq (g_1 - x)n^{1/3}} \right]\\
  &\leq C e^{-\phi_1 (g_1 - x)n^{1/3}} \E\left[ e^{\sum_{j=1}^n (\phi_{(j+1)/n}-\phi_{j/n}) \tilde{S}_j} \ind{\tilde{S}_j \in I^{(n)}_j, j \leq n} \right].
\end{align*}
Therefore applying Theorem \ref{thm:general_rw}, we have
\[
  \limsup_{n \to +\infty} \frac{\log \E(B^{F,G}_n(f,g,x))}{n^{1/3}} = K_1^{F,G}(f,g,\phi) - \phi_1 (g_1-x).
\]

We compute a lower bound for $\E(B_n)$. Applying Lemma \ref{lem:manytoone}, for any $\epsilon>0$ we have
\begin{multline*}
  \E(B_n^{F,G}(f,g,x))\\ \geq e^{-E_n}\E\left[ e^{-\phi_1 \tilde{S}_n + \sum_{j=1}^n (\phi_{(j+1)/n}-\phi_{j/n}) \tilde{S}_j} \ind{\tilde{S}_j \in \tilde{I}^{(n)}_j, j \leq n} \ind{\tilde{S}_n - (g_1 - x)n^{1/3} \in [0,\epsilon n^{1/3}]} \right]\\
  \geq c e^{-\phi_1 (g_1 - x +\epsilon)n^{1/3}} \E\left[ e^{\sum_{j=1}^n (\phi_{(j+1)/n}-\phi_{j/n}) \tilde{S}_j} \ind{\tilde{S}_j \in \tilde{I}^{(n)}_j, j \leq n} \ind{\tilde{S}_n - (g_1 - x)n^{1/3} \in [0,\epsilon n^{1/3}]}\right].
\end{multline*}
Applying Theorem \ref{thm:general_rw} again, we have
\[
  \liminf_{n \to +\infty} \frac{\log \E\left(B^{F,G}_n(f,g,x)\right)}{n^{1/3}} \geq K_1^{F,G}(f,g,\phi) - \phi_1 (g_1 - x + \epsilon).
\]
Letting $\epsilon \to 0$ concludes the proof.
\end{proof}

To obtain a lower bound for $\P(\calB_n \neq \emptyset)$, we compute an upper bound for the second moment of $B_n$. We assume
\begin{equation}
  \label{eqn:integrability2phi}
  \sup_{t \in [0,1]} \E\left[ \left(\sum_{\ell \in L_t} e^{\phi_t \ell} \right)^2 \right] < +\infty
\end{equation}
which enables to bound the second moment of $B_n$.

\begin{lemma}
\label{lem:secondmoment_estimate}
Under the assumptions \eqref{eqn:regularity}, \eqref{eqn:phibiendef}, \eqref{eqn:regularphi}, \eqref{eqn:FetG} and \eqref{eqn:integrability2phi}, if $G = [0,1]$, $K^*(b)_1=0$ and for all $t \in [0,1]$, $K^*(b)_t \leq 0$, then
\begin{multline*}
  \limsup_{n \to + \infty} \frac{1}{n^{1/3}} \log \E\left({B^{F,G}_n(f,g,x)}^2\right)\\
  \leq 2 \left[ K^{F,G}_1(f,g,\phi) - \phi_1 (g_1 - x) \right] - \inf_{t \in [0,1]}\left[ H_t^{F,G}(f,g,\phi)- \phi_t g_t\right].
\end{multline*}
\end{lemma}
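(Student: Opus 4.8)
\textbf{The plan.} This is a standard second-moment argument organised by the generation of the most recent common ancestor. Write $\mathcal B_n=\mathcal B^{F,G}_n(f,g,x)$. Two distinct individuals $u\ne v$ counted by $B_n$ have ancestral lineages that coincide up to some generation $k\in\{0,\dots,n-1\}$ and then split into two distinct children of $u_k=v_k$, so by conditioning on $\calF_{k+1}$ and using the branching property (exactly as for the first moment in Lemma~\ref{lem:firstorder}),
\begin{gather*}
  \E\big({B_n}^2\big)=\E(B_n)+\sum_{k=0}^{n-1}Q_k,\\
  Q_k=\E\Big(\sum_{|w|=k}\ind{V(w_j)-\bar b^{(n)}_j\in\tilde I^{(n)}_j,\ j\le k}\ \sum_{c_1\ne c_2\in\Omega(w)}\eta_n(k,V(c_1))\,\eta_n(k,V(c_2))\Big),
\end{gather*}
where $\eta_n(k,z)$ is the $\P_{k+1,z}$-expected number of individuals at generation $n$ whose lineage stays in $\bar b^{(n)}+\tilde I^{(n)}$ from generation $k+1$ on and which end above $\bar b^{(n)}_n+(g_1-x)n^{1/3}$ (with $\eta_n(k,z)=0$ if $z-\bar b^{(n)}_{k+1}\notin\tilde I^{(n)}_{k+1}$). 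By Lemma~\ref{lem:firstorder} the diagonal term $\E(B_n)$ has $n^{1/3}$-exponent $K^{F,G}_1(f,g,\phi)-\phi_1(g_1-x)$; since $\phi_1>0$, $x>0$ and $\inf_t[H^{F,G}_t(f,g,\phi)-\phi_t g_t]\le H^{F,G}_1-\phi_1g_1=K^{F,G}_1-\phi_1g_1<K^{F,G}_1-\phi_1(g_1-x)$, this is strictly below the target, so it suffices to estimate $\sum_kQ_k$.

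\textbf{The subtree factor and the branching step.} Applying the many-to-one lemma to the subtree rooted at a generation-$(k+1)$ individual (the Remark after Lemma~\ref{lem:manytoone}) and then Theorem~\ref{thm:general_rw}, exactly as in Lemma~\ref{lem:firstorder}, one obtains for all $z$ a bound of the form $\eta_n(k,z)\le C\,e^{\phi_{(k+1)/n}(z-\bar b^{(n)}_{k+1})}\,\delta_n(k)$, in which the initial tilt $e^{\phi_{(k+1)/n}(z-\bar b^{(n)}_{k+1})}$ comes from the change of measure, $\delta_n(k)$ is independent of $z$, and
\[
  \limsup_{n\to\infty}\tfrac1{n^{1/3}}\log\big(e^{E_n-E_{k+1}}\delta_n(k)\big)\le H^{F,G}_1(f,g,\phi)-H^{F,G}_{(k+1)/n}(f,g,\phi)-\phi_1(g_1-x),
\]
uniformly in $k$, via the block decomposition of $[0,1]$ used in the proof of Lemma~\ref{lem:estimate_upperfrontier}. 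Conditioning on $\calF_k$, bounding $\sum_{c_1\ne c_2}\le(\sum_c)^2$ and using $\eta_n(k,V(w)+\ell)\le C\,e^{\phi_{(k+1)/n}\ell}\,e^{\phi_{(k+1)/n}(V(w)-\bar b^{(n)}_{k+1})}\,\delta_n(k)$ then gives
\[
  \E\Big[\Big(\sum_{c\in\Omega(w)}\eta_n(k,V(c))\Big)^2\ \Big|\ \calF_k\Big]\le C\,\delta_n(k)^2\,e^{2\phi_{(k+1)/n}(V(w)-\bar b^{(n)}_{k+1})}\,\E\Big[\Big(\sum_{\ell\in L_{(k+1)/n}}e^{\phi_{(k+1)/n}\ell}\Big)^2\Big],
\]
and the last expectation is bounded uniformly in $k$ by assumption~\eqref{eqn:integrability2phi}; this is the only use of \eqref{eqn:integrability2phi}.

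\textbf{The spine and the conclusion.} Inserting this into $Q_k$ and applying Lemma~\ref{lem:manytoone} to the spine up to generation $k$ with the extra weight $e^{2\phi_{k/n}\tilde S_k}$ (passing from $\phi_{(k+1)/n},\bar b^{(n)}_{k+1}$ to $\phi_{k/n},\bar b^{(n)}_k$ costs only a factor $e^{o(n^{1/3})}$ on the path), the lemma's weight $-\phi_{k/n}\tilde S_k$ combines with $e^{2\phi_{k/n}\tilde S_k}$ into $e^{\phi_{k/n}\tilde S_k}\le e^{\phi_{k/n}g_{k/n}n^{1/3}}$ — here we use $G=[0,1]$, so that $\tilde S_k\le g_{k/n}n^{1/3}$ throughout the path — whence
\[
  Q_k\le C\,\delta_n(k)^2\,e^{-E_k}\,e^{\phi_{k/n}g_{k/n}n^{1/3}}\,\E_0\Big[e^{\sum_{j=0}^{k-1}(\phi_{(j+1)/n}-\phi_{j/n})\tilde S_j};\ \tilde S_j\in I^{(n)}_j,\ j\le k\Big].
\]
The last expectation has, uniformly in $k$, exponent $H^{F,G}_{k/n}(f,g,\phi)$ by Theorem~\ref{thm:general_rw} on $[0,k/n]$ (again via the block argument). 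For the energies, $K^*(b)_1=0$ and $K^*(b)_t\le0$ together with \eqref{eqn:energy} give $|E_n|\le C$ and $E_k\le C$, while $\kappa^*_{(k+1)/n}(b_{(k+1)/n})$ is bounded, so $2(E_{k+1}-E_n)-E_k=E_k+2\kappa^*_{(k+1)/n}(b_{(k+1)/n})-2E_n\le C$ uniformly; hence $\delta_n(k)^2e^{-E_k}$ contributes only the $n^{1/3}$-exponent $2\big(H^{F,G}_1-\phi_1(g_1-x)\big)-2H^{F,G}_{k/n}$, up to $o(n^{1/3})$. Collecting everything, for each $A$ there is $\epsilon_A\to0$ with
\[
  \limsup_{n\to\infty}\tfrac1{n^{1/3}}\log Q_k\le 2\big[K^{F,G}_1(f,g,\phi)-\phi_1(g_1-x)\big]-\big(H^{F,G}_{k/n}(f,g,\phi)-\phi_{k/n}g_{k/n}\big)+\epsilon_A
\]
uniformly in $k<n$; summing over $k$ (a factor $n$, which is $e^{o(n^{1/3})}$), adding the dominated diagonal term, and letting $A\to\infty$ yields the claimed bound, with $\inf_t[H^{F,G}_t(f,g,\phi)-\phi_tg_t]$ arising as $\sup_{k}\big(-(H^{F,G}_{k/n}-\phi_{k/n}g_{k/n})\big)$.

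\textbf{The main obstacle.} The delicate point is to obtain both estimates — the subtree factor $\delta_n(k)$ and the spine random-walk expectation — as $\limsup$ upper bounds that are \emph{uniform} in the splitting generation $k$, not merely valid for each fixed ratio $k/n$; this forces one to run the block decomposition of Lemma~\ref{lem:estimate_upperfrontier} in both computations. A secondary subtlety is that $E_k,E_{k+1},E_n$ are individually of order $n$, so the cancellation $2(E_{k+1}-E_n)-E_k\le C$ must be verified exactly, which is where the hypotheses $K^*(b)_1=0$ and $K^*(b)\le0$ are genuinely used.
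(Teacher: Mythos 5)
Your proposal is correct and follows essentially the same route as the paper: decomposition of $\E(B_n^2)$ over the most recent common ancestor, a many-to-one plus Theorem \ref{thm:general_rw} bound (made uniform in the splitting generation by the block decomposition with grid times $m_a=\floor{an/A}$) for the subtree factor, assumption \eqref{eqn:integrability2phi} for the one-step offspring second moment, the spine estimate with the tilt $e^{2\phi_{k/n}\tilde S_k}$ controlled by $G=[0,1]$, and the same energy cancellation using $K^*(b)_1=0$ and $K^*(b)\leq 0$. The points you flag as delicate (uniformity in $k$ and the exact bookkeeping of $E_k$, $E_{k+1}$, $E_n$) are precisely the ones the paper handles, and in the same way.
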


\begin{proof}
In order to estimate the second moment of $B_n$, we decompose the pairs of individuals $(u,u') \in \T^2$ according to their most recent common ancestor $u \wedge u'$ as follows:
\begin{multline*}
  \E\left[ B^{F,G}_n(f,g,x)^2 \right]
  = \sum_{k = 0}^n \E\left[ \sum_{\substack{|u|=|v|=n\\ |u \wedge u'|=k}} \ind{u \in \calB^{F,G}_n(f,g,x)} \ind{u' \in \calB^{F,G}_n(f,g,x)} \right]\\
  = \E\left[ B^{F,G}_n(f,g,x) \right] + \sum_{k=0}^{n-1} \E\left[ \sum_{|u|=k} \ind{V(u_j) - \bar{b}^{(n)}_j \in \tilde{I}^{(n)}_j, j \leq k} \sum_{u_1 \neq u_2 \in \Omega(u)} \Lambda(u_1)\Lambda(u_2) \right],
\end{multline*}
where, for $u' \in \T$, we denote by $\Lambda(u') = \sum_{|u|=n, u > u'} \ind{u \in \calB^{F,G}_n(f,g,x)}$ the number of descendants of $u'$ which are in $\calB_n$. We observe that for any two distinct individuals $|u_1|=|u_2|=k$, conditionally to $\calF_{k}$, the quantities $\Lambda(u_1)$ and $\Lambda(u_2)$ are independent.

By the Markov property applied at time $k$, for all $u' \in \T$ with $|u'|=k$, we have
\begin{align*}
  \E\left[ \left.\Lambda(u') \right| \calF_k \right]
  &= \E_{k,V(u')}\left[ \left. \sum_{|u|=n-k} \ind{V(u) - \bar{b}^{(n)}_n \geq (g_1 - x)n^{1/3}} \ind{V(u_j) - \bar{b}^{(n)}_{j+k} \in \tilde{I}^{(n)}_j, j \leq n-k} \right|  \calF_k \right]\\
  &= \exp\left( - E_n + E_k  + \phi_{k/n} (V(u') - \bar{b}^{(n)}_k)\right)\\
  &\qquad \times \E_{k,V(u')} \left[ e^{-\phi_1 \tilde{S}_{n-k} + \sum_{j=0}^{n-k-1} \Delta \phi_{n,k+j} \tilde{S}_j} \ind{\tilde{S}_j \in \tilde{I}^{(n)}_{j+k}, j \leq n-k} \ind{\tilde{S}_{n-k} \geq (g_1 - x)n^{1/3}} \right],
\end{align*}
using the many-to-one lemma. Therefore,
\begin{multline*}
  \E\left[ \left. \Lambda(u') \right| \calF_k \right] \leq C \exp\left(E_k + \phi_{k/n} (V(u') - \bar{b}^{(n)}_k) -\phi_1 (g_1 - x)n^{1/3}\right) \\
  \times \E_{k,V(u')}\left[ e^{\sum_{j=0}^{n-k-1}\Delta \phi_{n,j+k} \tilde{S}_j} \ind{\tilde{S}_j \in I^{(n)}_{j+k}, j \leq n-k} \right].
\end{multline*}

Let $A > 0$ be a large integer, and for $a \leq A$, let $m_a = \floor{ an/A}$. We introduce
\begin{multline*}
  \Phi_{a,A}^\mathrm{start} = \E\left[ \exp\left(\sum_{j=0}^{m_a-1} (\phi_{(j+1)/n}-\phi_{j/n}) \tilde{S}_j\right) \ind{\tilde{S}_j \in \tilde{I}^{(n)}_j, j \leq m_a} \right] \quad \mathrm{and}\\
  \Phi^\mathrm{end}_{a,A} = \sup_{y \in \R} \E_{m_a,y} \left[ \exp\left(\sum_{j=0}^{n-m_a-1} \Delta \phi_{n,m_a+j} \tilde{S}_j\right) \ind{\tilde{S}_j \in I^{(n)}_{m_a+j}, j \leq n-m_a} \right].
\end{multline*}
By Theorem \ref{thm:general_rw}, we have
\[
 \limsup_{n \to +\infty} \frac{\log \Phi^\mathrm{start}_{a,A}}{n^{1/3}} = K_{a/A}^{F,G}(f,g,\phi)\quad \mathrm{and} \quad \limsup_{n \to +\infty} \frac{\log \Phi^\mathrm{end}_{a,A}}{n^{1/3}} = K_1^{F,G}(f,g,\phi) - K_{a/A}^{F,G}(f,g,\phi).
\]
Moreover, using the same estimates as in Lemma \ref{lem:estimate_upperfrontier}, and setting
\begin{multline*}
  \bar{g}_{a,A} = \sup\left\{g_t,t \in \left[\tfrac{a-1}{A},\tfrac{a+1}{A}\right]\right\},  \quad \bar{\phi}_{a,A} = \sup\left\{ \phi_t, t \in \left[\tfrac{a-1}{A},\tfrac{a+1}{A}\right] \right\}\\ \mathrm{and} \quad d_{a,A} = \int_{(a-1)/A}^{(a+1)/A} |\dot{\phi}_s| ds (\norm{f}_\infty + \norm{g}_\infty),
\end{multline*}
for all $k \in [m_a,m_{a+1})$, applying the Markov property at time $m_{a+1}$, we have
\begin{equation}
  \label{eqn:endpart}
  \E\left[ \Lambda(u') | \calF_k \right] \leq C e^{E_k + \phi_{k/n} (V(u') - \bar{b}^{(n)}_k)} \exp\left( \left(d_{a,A}-\phi_1 (g_1 - x)\right)n^{1/3}\right) \Phi^\mathrm{end}_{a+1,A}.
\end{equation}

We observe that for all $u \in \T$ with $|u|=k$ and $V(u) \in \tilde{I}^{(n)}_k$ we have
\begin{align}
  \E\left[ \left. \sum_{u_1 \neq u_2 \in \Omega(u)} e^{\phi_{(k+1)/n} (V(u_1)+V(u_2))} \right| \calF_k \right] &\leq e^{2 \phi_{(k+1)/n} V(u)} \E\left[ \left(\sum_{\ell \in L_{(k+1)/n}} e^{\phi_{(k+1)/n} \ell} \right)^2 \right] \nonumber\\
  &\leq C e^{2 \phi_{k/n} V(u)} e^{n^{2/3}|\phi_{(k+1)/n} - \phi_{k/n}|} \leq Ce^{2 \phi_{k/n} V(u)},\label{eqn:middlepart}
\end{align}
using \eqref{eqn:integrability2phi} and the fact that $\phi$ is Lipschitz. We now bound, for $k \in [m_a, m_{a+1})$
\begin{multline}
  \E\left[ \sum_{|u|=k} e^{2 \phi_{k/n}(V(u) - \bar{b}^{(n)}_k)} \ind{V(u_j) - \bar{b}^{(n)}_j \in \tilde{I}^{(n)}_j, j \leq k} \right]\\
  = \E\left[ e^{\phi_{k/n} \tilde{S}_k + \sum_{j=0}^{k-1} (\phi_{(j+1)/n}-\phi_{j/n}) \tilde{S}_j} \ind{\tilde{S}_j \in \tilde{I}^{(n)}_j, j \leq k} \right],
\end{multline}
using Lemma \ref{lem:manytoone}. As $\sup_{t \in [0,1]} K^*(b)_t \leq 0$ and by \eqref{eqn:energy}, $E_k$ is bounded from above uniformly in $n \in \N$ and $k \leq n$. As $G_n = \{0,\ldots, n\}$, for all $n \in \N$ large enough and $k \in [m_a, m_{a+1})$, applying the Markov property at time $m_an$, it yields
\begin{equation}
 \label{eqn:startpart}
  \E\left[ \sum_{|u|=k} e^{2 \phi_{k/n}(V(u) - \bar{b}^{(n)}_k)} \ind{V(u_j) \in \tilde{I}^{(n)}_j, j \leq k} \right]
  \leq \exp\left( \left(\bar{\phi}_{a,A} \bar{g}_{a,A} +d_{a,A}\right)n^{1/3}\right) \Phi^\mathrm{start}_{a,A}.
\end{equation}
Finally, combining \eqref{eqn:endpart} with \eqref{eqn:middlepart} and \eqref{eqn:startpart}, for all $n \geq 1$ large enough and $k \in [m_a, m_{a+1})$,
\begin{multline*}
  \E\left[ \sum_{|u|=k} \ind{V(u_j) - \bar{b}^{(n)}_j \in \tilde{I}^{(n)}_j, j \leq k} \sum_{u_1 \neq u_2 \in \Omega(u)} \Lambda(u_1)\Lambda(u_2) \right]\\
  \leq C \exp\left[ n^{1/3}\left(-2\phi_1 (g_1 - x) + \bar{\phi}_{a,A} \bar{g}_{a,A} + 3 d_{a,A}\right) \right] \Phi^\mathrm{start}_{a,A} \left(\Phi^\mathrm{end}_{a+1,A}\right)^2,
\end{multline*}
thus
\begin{multline*}
  \limsup_{n \to +\infty} \frac{1}{n^{1/3}} \log \sum_{k=0}^{n-1} \E\left[ \sum_{|u|=k} \ind{V(u_j) - \bar{b}^{(n)}_j \in \tilde{I}^{(n)}_j, j \leq k} \sum_{u_1 \neq u_2 \in \Omega(u)} \Lambda(u_1)\Lambda(u_2) \right]\\
  \leq 2 \left( K_1^{F,G}(f,g,\phi) - (g_1-x)\right) - \min_{a < A} 2 K_{\frac{a+1}{A}}^{F,G}(f,g,\phi)-K_{\frac{a}{A}}^{F,G}(f,g,\phi) - \bar{\phi}_{a,A} \bar{g}_{a,A} - 3 d_{a,A}.
\end{multline*}
Letting $A \to +\infty$, and using Lemma \ref{lem:firstorder}, we obtain
\[
  \limsup_{n \to +\infty} \frac{1}{n^{1/3}} \log \E(B_n(f,g)^2) \leq 2 \left( K_1^{F,G}(f,g,\phi) - (g_1-x)\right) - \inf_{t \in [0,1]} \left(H_t^{F,G}(f,g,\phi) - \phi_t g_t\right).
\]
\end{proof}

Using the previous two lemmas, we can bound from below the probability that there exists an individual that follows the path $\bar{b}^{(n)} + I^{(n)}$. 
\begin{lemma}
\label{lem:lowerbound}
Assuming \eqref{eqn:regularity}, \eqref{eqn:phibiendef}, \eqref{eqn:regularphi}, \eqref{eqn:FetG} and \eqref{eqn:integrability2phi}, if $K^*(b)_1 = \sup_{t \in [0,1]} K^*(b)_t = 0$, then for any $x < g_1$
\begin{equation}
  \label{eqn:probaLower}
  \liminf_{n \to +\infty} \frac{1}{n^{1/3}} \log \P(\calB^{F,G}_n(f,g,x) \neq \emptyset) 
  \geq \inf_{t \in [0,1]} \left( K^{F,G}_t(f,g,\phi) - \phi_t g_t\right).
\end{equation}
\end{lemma}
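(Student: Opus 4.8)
The plan is to deduce this lower bound from the first- and second-moment estimates of Lemmas~\ref{lem:firstorder} and~\ref{lem:secondmoment_estimate} by a standard second-moment (Paley--Zygmund) argument. Write $B_n = B_n^{F,G}(f,g,x)$, a non-negative integer-valued random variable, so that the event $\{\calB_n^{F,G}(f,g,x) \neq \emptyset\}$ coincides with $\{B_n \geq 1\}$. First I would apply the Cauchy--Schwarz inequality in the form $\E(B_n) = \E\!\left(B_n \ind{B_n \geq 1}\right) \leq \E(B_n^2)^{1/2}\,\P(B_n \geq 1)^{1/2}$, which yields
\[
  \P\!\left(\calB_n^{F,G}(f,g,x) \neq \emptyset\right) \geq \frac{\E(B_n)^2}{\E(B_n^2)}
\]
whenever $\E(B_n^2)>0$. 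Since $x$ lies in $(0,g_1)\subset(0,g_1-f_1)$, $B_n$ is well defined, and Lemma~\ref{lem:firstorder} (using $K^*(b)_1 = 0$) gives that $\tfrac{1}{n^{1/3}}\log\E(B_n)$ converges to the finite value $K_1^{F,G}(f,g,\phi) - \phi_1(g_1-x)$; in particular $\E(B_n)>0$, hence $\E(B_n^2)>0$, for all $n$ large enough.

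Next I would take $\tfrac{1}{n^{1/3}}\log$ of the Paley--Zygmund bound and pass to the $\liminf$, exploiting the fact that $\tfrac{1}{n^{1/3}}\log\E(B_n)$ is a genuine limit, to obtain
\[
  \liminf_{n\to+\infty}\frac{1}{n^{1/3}}\log\P\!\left(\calB_n^{F,G}(f,g,x)\neq\emptyset\right) \geq 2\lim_{n\to+\infty}\frac{\log\E(B_n)}{n^{1/3}} - \limsup_{n\to+\infty}\frac{\log\E(B_n^2)}{n^{1/3}}.
\]
Then I would insert Lemma~\ref{lem:firstorder} for the first term, namely $2\bigl[K_1^{F,G}(f,g,\phi)-\phi_1(g_1-x)\bigr]$, and Lemma~\ref{lem:secondmoment_estimate} for the second --- whose hypotheses ($G=[0,1]$, $K^*(b)_1=0$, $\sup_{t}K^*(b)_t=0$, and \eqref{eqn:integrability2phi}) are exactly the ones in force here --- which bounds it above by $2\bigl[K_1^{F,G}(f,g,\phi)-\phi_1(g_1-x)\bigr] - \inf_{t\in[0,1]}\bigl[K_t^{F,G}(f,g,\phi)-\phi_t g_t\bigr]$. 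The two leading contributions $2\bigl[K_1^{F,G}(f,g,\phi)-\phi_1(g_1-x)\bigr]$ then cancel exactly, leaving $\inf_{t\in[0,1]}\bigl(K_t^{F,G}(f,g,\phi)-\phi_t g_t\bigr)$, which is \eqref{eqn:probaLower}.

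I do not expect a genuine obstacle here: the substance of the lemma is entirely carried by Lemmas~\ref{lem:firstorder} and~\ref{lem:secondmoment_estimate}, both already established. The only points requiring a little care are elementary --- the inequality $\liminf(u_n-v_n)\geq\liminf u_n-\limsup v_n$ combined with the fact (from Lemma~\ref{lem:firstorder}) that $\tfrac{1}{n^{1/3}}\log\E(B_n)$ is a true limit rather than merely a $\liminf$, the eventual positivity of $\E(B_n^2)$ so that the Cauchy--Schwarz ratio is meaningful, and verifying that the cancellation of the $2\bigl[K_1^{F,G}(f,g,\phi)-\phi_1(g_1-x)\bigr]$ terms is exact. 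No additional estimate is needed.
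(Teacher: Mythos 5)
Your argument is exactly the paper's first step, but it only covers the special case $G=[0,1]$, and the claim that the hypotheses of Lemma \ref{lem:secondmoment_estimate} are ``exactly the ones in force here'' is where the gap lies: that lemma explicitly assumes $G=[0,1]$, whereas Lemma \ref{lem:lowerbound} is stated for an arbitrary Riemann-integrable $G$ satisfying \eqref{eqn:FetG}. The restriction is not cosmetic. The second-moment computation in Lemma \ref{lem:secondmoment_estimate} uses $G_n=\{0,\ldots,n\}$ to control the term $\E\bigl[\sum_{|u|=k} e^{2\phi_{k/n}(V(u)-\bar{b}^{(n)}_k)}\ind{\cdots}\bigr]$ by $e^{(\bar{\phi}_{a,A}\bar{g}_{a,A}+d_{a,A})n^{1/3}}\Phi^{\mathrm{start}}_{a,A}$; without an upper barrier at level $g$ at every time, particles may sit as high as $n^{2/3}$ (the set $\tilde{I}^{(n)}_j$ only caps at $\pm n^{2/3}$), the factor $e^{2\phi_{k/n}V(u)}$ is then of order $e^{cn^{2/3}}$, and the Cauchy--Schwarz ratio degenerates. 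So for general $G$ you cannot simply quote the second-moment bound.

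The paper closes this gap with an approximation argument you are missing: after proving \eqref{eqn:probaLower} for $G=[0,1]$ exactly as you do, it takes $G$ closed (without loss of generality, since the boundary of a Riemann-integrable set is Lebesgue-null), sets $g^H_t=\max\{g_t,\,-\norme{g}_\infty+H\,d(t,G)\}$, and observes that $\calB^{F,[0,1]}_n(f,g^H,x)\subset\calB^{F,G}_n(f,g,x)$, so the $G=[0,1]$ case applied to $g^H$ gives the lower bound $\inf_{t\in[0,1]}\bigl(H^{F,[0,1]}_t(f,g^H,\phi)-\phi_t g^H_t\bigr)$ for every $H$. Letting $H\to+\infty$ and using $\Psi(h)\sim_{h\to+\infty}\frac{\alpha_1}{2^{1/3}}h^{2/3}$ together with \eqref{eqn:FetG} (so that $\dot{\phi}\geq 0$ off $F$ and $\dot{\phi}\leq 0$ off $G$), the right-hand side converges to $\inf_{t\in[0,1]}\bigl(H^{F,G}_t(f,g,\phi)-\phi_t g_t\bigr)$, which yields the stated bound. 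If you add this extension step (or restrict your claim to $G=[0,1]$ and then perform the limit), your proof matches the paper's; as written, it proves a strictly weaker statement. A further minor slip: your inclusion $(0,g_1)\subset(0,g_1-f_1)$ presumes $f_1\leq 0$, which is not part of the hypotheses, though nothing essential hinges on it.
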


\begin{proof}
We first assume that $G = [0,1]$. Since $B_n \in \Z_+$ a.s, we have
\[
  \P(\calB^{F,G}_n(f,g,x) \neq \emptyset) = \P(B^{F,G}_n(f,g,x)>0) \geq \frac{\E(B^{F,G}_n(f,g,x))^2}{\E(B^{F,G}_n(f,g,x)^2)},
\]
using the Cauchy-Schwarz inequality. As a consequence,
\begin{align*}
  \liminf_{n \to + \infty} \frac{1}{n^{1/3}} &\log \P(\calB^{F,G}_n(f,g,x) \neq \emptyset)\\
  &\geq 2 \liminf_{n \to +\infty} \frac{1}{n^{1/3}} \log \E(B^{F,G}_n(f,g,x)) - \limsup_{n \to +\infty} \frac{1}{n^{1/3}} \log \E\left(B^{F,G}_n(f,g,x)^2\right)\\
  &\geq \inf_{t \in [0,1]} \left( H_t^{F,G}(f,g,\phi) - \phi_t g_t \right).
\end{align*}

We then extend this estimate for $G$ a Riemann-integrable subset of $[0,1]$, that we can, without loss of generality, choose closed --as the Lebesgue measure of the boundary of a Riemann-integrable set is null. According to \eqref{eqn:FetG}, $\{ \dot{\phi}>0\} \subset G$. We set, for $H > 0$
\[
  g^H_t = \max\left\{ g_t, -\norme{g}_\infty + Hd(t,G) \right\}.
\]
Observe that $g^H$ is an increasing sequence of functions, that are equal to $g$ on $G$ and increase to $+\infty$ on $G^c$. For all $n \in \N$, $x \in [f_1,g_1]$ and $H>0$, we have $\calB^{F,[0,1]}_n(f,g^H,x) \subset \calB^{F,G}_n(f,g,x)$. As a consequence,
\begin{align*}
  \liminf_{n \to +\infty} \frac{1}{n^{1/3}} \log \P(\calB^{F,G}_n(f,g,x) \neq \emptyset)
  &\geq \lim_{H \to +\infty} \liminf_{n \to +\infty} \frac{1}{n^{1/3}} \log \P(\calB^{F,[0,1]}_n(f,g^H,x) \neq \emptyset)\\
  &\geq \lim_{H \to +\infty} \inf_{t \in [0,1]} \left( H_t^{F,[0,1]}(f,g^H, \phi) - \phi_t g^H_t\right).
\end{align*}
By Lemma \ref{lem:bmTwosided}, we have $\Psi(h) \sim_{h \to +\infty} \frac{\alpha_1}{2^{1/3}} h^{2/3}$. Thus, using \eqref{eqn:FetG}, this yields
\[
  \liminf_{n \to +\infty} \frac{1}{n^{1/3}} \log \P(\calB^{F,G}_n(f,g,x) \neq \emptyset) \geq \inf_{t \in [0,1]} \left( H_t^{F,G}(f,g, \phi) - \phi_t g_t\right).
\]
\end{proof}

\begin{remark}
Observe that the inequality in Lemma \ref{lem:lowerbound} is sharp when
\[
  \inf_{t \in [0,1]} \left(H_t^{F,G}(f,g,\phi) - \phi_t g_t\right) = K_1^{F,G}(f,g,\phi) - \phi_1 g_1.
\]
\end{remark}

\section{Identification of the optimal path}
\label{sec:optimization}

We denote by $\calR = \left\{ b \in \calD : \forall t \in [0,1], K^*(b)_t \leq 0 \right\}$. In this section, we take interest in functions $a \in \calR$ that verify
\begin{equation}
  \int_0^1 a_s ds = \sup\left\{ \int_0^1 b_s ds, b \in \calR \right\},
\end{equation}
i.e. which are solution of \eqref{eqn:existence_max}. This equation is an optimisation problem under constraints. Information on its solution can be obtained using a theorem of existence of Lagrange multipliers in Banach spaces.

Let $E,F$ be two Banach spaces, a function $f : E \to F$ is said to be differentiable at $u \in E$ if there exists a linear continuous mapping $D_u f : E \to F$ called its \textit{Fréchet derivative at $u$}, verifying
\[
  f(u+h) = f(u) + D_u f(h) + o(\norm{h}), \quad \norm{h} \to 0, \quad h \in E.
\]
A set $R$ is a \textit{closed convex cone} of $F$ if it is a closed subset of $F$ such that
\[
  \forall x, y \in R, \forall \lambda, \mu \in [0,+\infty)^2 , \lambda x + \mu y \in K.
\]
Finally, we set $F^*$ the set of linear continuous mappings from $F$ to $\R$. We now introduce a result on the existence of Lagrange multipliers in Banach spaces obtained in \cite{Kur76}, Theorem 4.5.
\begin{theorem}[Kurcyusz \cite{Kur76}]
\label{thm:lagrange}
Let $E,F$ be two Banach spaces, $J : E \to \R$, $g : E \to F$ and $R$ be a closed convex cone of $F$. If $\hat{u}$ verifies
\[J(\hat{u}) = \max\{ J(u), u \in E : g(u) \in R\} \quad \mathrm{and} \quad g(\hat{u}) \in R,\]
and if $J$ and $g$ are both differentiable at $\hat{u}$, and $D_{\hat{u}} g$ is a bijection, then there exists $\lambda \in F^*$ such that
\begin{align}
  &\forall h \in E, D_{\hat{u}} J (h) = \lambda^*\left[ D_{\hat{u}} g(h)\right] \label{eqn:lagrange1}\\
  &\forall h \in R, \lambda^*(h) \leq 0 \label{eqn:lagrange2} \\
  &\lambda^*(g(\hat{u})) = 0 \label{eqn:lagrange3}.
\end{align}
\end{theorem}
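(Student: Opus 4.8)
The statement is the classical Lyusternik--Lagrange multiplier rule in Banach spaces, quoted here from \cite{Kur76}; the plan is to exhibit the multiplier explicitly and then extract \eqref{eqn:lagrange2} and \eqref{eqn:lagrange3} from first-order optimality. Since $E$ and $F$ are Banach and $D_{\hat{u}}g\colon E\to F$ is a continuous linear bijection, the open mapping theorem makes it a linear homeomorphism, so $(D_{\hat{u}}g)^{-1}$ is continuous and linear. I would then define
\[
  \lambda^* := D_{\hat{u}}J\circ (D_{\hat{u}}g)^{-1},
\]
which automatically lies in $F^*$ and satisfies $\lambda^*(D_{\hat{u}}g(h)) = D_{\hat{u}}J(h)$ for every $h\in E$, i.e.\ \eqref{eqn:lagrange1}. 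The whole remaining content is that this particular $\lambda^*$ is nonpositive on $R$ and kills $g(\hat{u})$.

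The core of the argument is a perturbation step producing admissible curves through $\hat{u}$. Fix $y\in R$ and set $w_y:=(D_{\hat{u}}g)^{-1}(y-g(\hat{u}))$. Because $R$ is convex and contains both $g(\hat{u})$ and $y$, the point $(1-t)g(\hat{u})+ty$ belongs to $R$ for $t\in[0,1]$; I would solve $g(u_t)=(1-t)g(\hat{u})+ty$ for $u_t$ close to $\hat{u}$ and $t>0$ small, obtaining an admissible curve with the first-order expansion $u_t=\hat{u}+t\,w_y+o(t)$ as $t\to0^+$. Solvability together with this expansion is exactly Lyusternik's tangent-space theorem (equivalently a Graves-type open-mapping statement): it uses only Fréchet differentiability of $g$ at the single point $\hat{u}$ and surjectivity of $D_{\hat{u}}g$, and can be proved by a contraction/Newton iteration that solves $g(\hat{u}+t\,w_y+v)=(1-t)g(\hat{u})+ty$ for a correction $v=v(t)$ with $\|v(t)\|=o(t)$, exploiting the continuity of $(D_{\hat{u}}g)^{-1}$. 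I expect this to be the only non-routine step.

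With such a curve in hand, optimality of $\hat{u}$ forces $J(u_t)\le J(\hat{u})$, and Fréchet differentiability of $J$ at $\hat{u}$ gives
\[
  0 \ \ge\ J(u_t)-J(\hat{u}) \ =\ t\,D_{\hat{u}}J(w_y)+o(t);
\]
dividing by $t$ and letting $t\to0^+$ yields $D_{\hat{u}}J(w_y)\le0$, that is, $\lambda^*\big(y-g(\hat{u})\big)\le0$ for every $y\in R$. Specialising: $y=0\in R$ gives $\lambda^*(g(\hat{u}))\ge0$, while $y=2g(\hat{u})\in R$ (here I use that $R$ is a cone) gives $\lambda^*(g(\hat{u}))\le0$; hence $\lambda^*(g(\hat{u}))=0$, which is \eqref{eqn:lagrange3}. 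Substituting this back into the inequality above leaves $\lambda^*(y)\le0$ for all $y\in R$, which is \eqref{eqn:lagrange2}. The main obstacle, as indicated, is the Lyusternik perturbation lemma; the remainder is just the open mapping theorem plus a one-parameter Taylor expansion.
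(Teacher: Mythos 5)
The paper never proves this statement: it is imported verbatim from \cite{Kur76} and used as a black box, so there is no internal proof to compare routes with; what you propose is the standard Lyusternik-type derivation. The algebraic part of your sketch is correct and complete: granting an admissible curve $u_t$ with $g(u_t)=(1-t)g(\hat{u})+ty$ and $u_t=\hat{u}+t\,w_y+o(t)$, the definition $\lambda^*=D_{\hat{u}}J\circ(D_{\hat{u}}g)^{-1}$ (legitimate by the open mapping theorem) gives \eqref{eqn:lagrange1}, optimality gives $\lambda^*\left(y-g(\hat{u})\right)\le 0$ for all $y\in R$, and the choices $y=0$ and $y=2g(\hat{u})$, both in $R$ because $R$ is a cone containing $g(\hat{u})$, yield \eqref{eqn:lagrange3} and then \eqref{eqn:lagrange2}, exactly as you say.

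The genuine gap is in the one step you yourself flag as non-routine. Your claim that the solvability of $g(\hat{u}+t\,w_y+v)=(1-t)g(\hat{u})+ty$ with $\norme{v(t)}=o(t)$ follows by a contraction/Newton iteration \emph{from Fréchet differentiability of $g$ at the single point $\hat{u}$} is not correct in infinite dimensions. The iteration needs a two-point estimate $\norme{g(x)-g(x')-D_{\hat{u}}g(x-x')}\le \epsilon \norme{x-x'}$ for all $x,x'$ near $\hat{u}$, i.e. strict differentiability at $\hat{u}$ (the Graves--Lyusternik hypothesis, implied by continuous differentiability near $\hat{u}$); pointwise differentiability only controls $g(x)-g(\hat{u})-D_{\hat{u}}g(x-\hat{u})$. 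The conclusion can genuinely fail under your stated hypotheses: on $\ell^2$ one can modify the identity on small disjoint balls centred at points $z_n=e_n/n$, using that in infinite dimensions the sphere is a retract of the ball, to obtain a continuous map, Fréchet differentiable at $0$ with derivative the identity, whose range misses every $z_n$; such a map is not locally surjective at $0$, so the tangent-curve construction breaks down. To make the argument sound you should invoke Lyusternik/Graves under strict (or neighbourhood $\calC^1$) differentiability of $g$ at $\hat{u}$ — which is what Kurcyusz's setting supplies and what holds in the paper's application, where the regularity assumption \eqref{eqn:regularity} makes $a\mapsto(\kappa^*_s(a_s))_{s\in[0,1]}$ continuously differentiable — rather than claim the perturbation lemma from differentiability at a single point.
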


We first introduce the \textit{natural speed path} of the branching random walk, which is the path driven by $(v_t, t \in [0,1])$.
\begin{lemma}
Under the assumptions \eqref{eqn:breeding} and \eqref{eqn:regularity}, there exists a unique $v\in \calR$ such that for all $t \in [0,1]$, $\kappa^*_t(v_t)=0$. Moreover, for all $t \in [0,1]$, $\bar{\theta}_t := \partial_a \kappa^*_t(v_t)>0$, and $v$ and $\bar{\theta}$ are $\calC_1$ function.
\end{lemma}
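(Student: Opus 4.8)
The plan is to treat the statement as a pointwise problem in $t$: for each fixed $t\in[0,1]$ I would analyse the convex function $a\mapsto\kappa^*_t(a)$, show it has a unique zero $v_t$, identify $\bar\theta_t:=\partial_a\kappa^*_t(v_t)$, and then upgrade to regularity in $t$ by the implicit function theorem, the required non‑degeneracies being supplied by the openness of $D$ and $D^*$ in \eqref{eqn:regularity}.

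First I would set $g_t(\theta)=\kappa_t(\theta)/\theta$ for $\theta>0$ and record the elementary equivalence $\kappa^*_t(a)\le0\iff(\forall\theta>0,\ \theta a\le\kappa_t(\theta))\iff a\le v_t:=\inf_{\theta>0}g_t(\theta)$, so that $\{a:\kappa^*_t(a)\le0\}$ is a half‑line $(-\infty,v_t]$. To see $v_t\in\R$: by \eqref{eqn:breeding} one has $\E[\#L_t]>1$, hence $\liminf_{\theta\to0^+}\kappa_t(\theta)\ge\log\E[\#L_t]>0$ by Fatou, so $g_t(\theta)\to+\infty$ as $\theta\to0^+$; combined with the crude lower bound $\kappa_t(\theta)\ge\theta p-C$ (again from \eqref{eqn:breeding}) this makes $g_t$ bounded below, while $g_t(\theta_1)<+\infty$ for any $\theta_1$ in the non‑empty interval $D_t$ gives $v_t<+\infty$. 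Next, for $a<v_t$ and any $\theta>0$ one has $\theta a-\kappa_t(\theta)\le\theta(a-v_t)\le-\delta(v_t-a)<0$ when $\theta\ge\delta$, while for $\theta$ small $\theta a-\kappa_t(\theta)\le-\tfrac12\log\E[\#L_t]<0$ by the blow‑up of $\kappa_t$ at $0^+$; hence $\kappa^*_t(a)<0$ strictly, and by the half‑line identification $\kappa^*_t(a)>0$ for $a>v_t$.

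I would then argue that $v_t$ is the unique zero and that $\bar\theta_t>0$. Since $\kappa^*_t(v_t)\le0<+\infty$, $v_t\in D^*_t$; as $D^*_t\supseteq(-\infty,v_t)$ and $D^*_t$ is open, $v_t$ must be interior to $D^*_t$, so by \eqref{eqn:regularity} $\kappa^*_t$ is $\calC^1$ near $v_t$; being $<0$ to the left of $v_t$ and $>0$ to the right, continuity forces $\kappa^*_t(v_t)=0$, and this is the only zero. Put $\bar\theta_t:=\partial_a\kappa^*_t(v_t)$: convexity gives $\bar\theta_t\ge0$, and $\bar\theta_t=0$ would make $v_t$ a global minimizer of $\kappa^*_t$, contradicting $\kappa^*_t<0$ just left of $v_t$, so $\bar\theta_t>0$. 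By \eqref{eqn:legendreestimate}, $\bar\theta_t v_t-\kappa_t(\bar\theta_t)=\kappa^*_t(v_t)=0$, i.e. $g_t(\bar\theta_t)=v_t$ (the infimum is attained); since $\bar\theta_t$ must be interior to $D_t$ (else $D_t$ would contain its supremum, contradicting $D$ open), the first‑order condition reads $\partial_\theta\kappa_t(\bar\theta_t)=v_t$. Finally, from the mutual‑inverse relation between $\partial_a\kappa^*_t$ and $\partial_\theta\kappa_t$ near $v_t,\bar\theta_t$ one gets $\partial^2_a\kappa^*_t(v_t)\,\partial^2_\theta\kappa_t(\bar\theta_t)=1$, and finiteness of the first factor (from $\kappa^*\in\calC^{1,2}$) forces $\partial^2_\theta\kappa_t(\bar\theta_t)>0$. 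For the regularity in $t$, I would apply the implicit function theorem to $F(t,a):=\kappa^*_t(a)$, which is $\calC^1$ by \eqref{eqn:regularity}: from $F(t,v_t)=0$ and $\partial_aF(t,v_t)=\bar\theta_t\ne0$ we get $v\in\calC^1([0,1])$; a second application, to $G(t,\theta):=\partial_\theta\kappa_t(\theta)-v_t$ (now $\calC^1$ since $v\in\calC^1$ and $\kappa\in\calC^{1,2}$), using $G(t,\bar\theta_t)=0$ and $\partial_\theta G(t,\bar\theta_t)=\partial^2_\theta\kappa_t(\bar\theta_t)\ne0$, gives $\bar\theta\in\calC^1([0,1])$. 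In particular $v$ is continuous, so $v\in\calD$, and $K^*(v)_t=\int_0^t\kappa^*_s(v_s)\,ds=0\le0$ for every $t$, so $v\in\calR$; uniqueness of such $v$ in $\calR$ is immediate from the pointwise uniqueness of the zero of $\kappa^*_t$.

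The main obstacle is that for degenerate environments (for instance deterministic point processes) $\kappa^*_t$ can fail to have a zero at all, or can have a whole flat interval of zeros, or a vertical half‑tangent — all of which would break both the existence/uniqueness of $v_t$ and the applicability of the implicit function theorem. The argument circumvents this precisely through the hypotheses of \eqref{eqn:regularity}: the openness of $D^*$ forces $v_t$ into the interior of $D^*_t$, which is exactly what pins the zero down to a single point and, together with $D$ open, yields the attainment $g_t(\bar\theta_t)=v_t$; and the $\calC^{1,2}$‑regularity of $\kappa^*$ forces $\partial^2_\theta\kappa_t(\bar\theta_t)>0$. Verifying these exclusions carefully — i.e. checking that non‑attainment of $\inf_{\theta>0}g_t(\theta)$ would make $D^*_t$ non‑open, and that an affine piece of $\kappa_t$ would make $\kappa^*_t$ non‑$\calC^1$ — is the part of the proof that requires the most care.
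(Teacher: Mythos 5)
Your proof is correct and there is no genuine gap, but you take a somewhat different route from the paper, so a comparison is worthwhile. The paper works directly on the convex function $a\mapsto\kappa^*_t(a)$: it uses $\inf_a\kappa^*_t(a)=-\kappa_t(0)<0$ (this is where \eqref{eqn:breeding} enters), monotonicity, and the fact that openness of $D^*$ forces $\kappa^*_t(a)\to+\infty$ at the right edge of its domain, so a zero exists by the intermediate value theorem; strict increase at that zero (because the infimum is negative) gives uniqueness and $\bar{\theta}_t>0$, one application of the implicit function theorem gives $v\in\calC^1$, and $\bar{\theta}\in\calC^1$ is obtained ``by composition'' with $\partial_a\kappa^*$. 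You instead enter through the dual description $v_t=\inf_{\theta>0}\kappa_t(\theta)/\theta$ and the sublevel-set identification $\{\kappa^*_t\le 0\}=(-\infty,v_t]$, use \eqref{eqn:breeding} (via Fatou and the bound $\kappa_t(\theta)\ge \theta p-C$) for finiteness and strict negativity to the left of $v_t$, and use openness of $D^*$ to place $v_t$ in the interior of the domain so that continuity pins down $\kappa^*_t(v_t)=0$; this yields, in passing, the attainment $\partial_\theta\kappa_t(\bar{\theta}_t)=v_t$ and positivity of $\partial^2_\theta\kappa_t(\bar{\theta}_t)$, which you feed into a second implicit-function-theorem step to get $\bar{\theta}\in\calC^1$ --- a more explicit substitute for the paper's one-line composition argument, and arguably a cleaner justification of that step. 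Two spots deserve tightening, though neither is a real gap: the identity $\partial^2_a\kappa^*_t(v_t)\,\partial^2_\theta\kappa_t(\bar{\theta}_t)=1$ requires the first-order relation $\partial_\theta\kappa_t\bigl(\partial_a\kappa^*_t(a)\bigr)=a$ on a whole neighbourhood of $v_t$, not only at $v_t$ (it follows by the same \eqref{eqn:legendreestimate}-plus-interiority argument, since $\partial_a\kappa^*_t(a)>0$ near $v_t$ by continuity); and your parenthetical reason for $\bar{\theta}_t$ being interior to $D_t$ is muddled, whereas the fact itself is immediate from $\kappa_t(\bar{\theta}_t)=\bar{\theta}_t v_t<+\infty$, $\bar{\theta}_t>0$ and openness of $D$.
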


\begin{proof}
For any $t \in [0,1]$ we have $\inf_{a \in \R} \kappa^*_t(a) = -\kappa_t(0)< 0$, as $\kappa^*_t$ is the Fenchel-Legendre transform of $\kappa_t$. Moreover, $a \mapsto \kappa^*_t(a)$ is convex, continuous on the interior of its definition set and increasing. By \eqref{eqn:regularity}, we have $\kappa^*(a) \to +\infty$ when $a$ increases to $\sup\{ b \in \R : \kappa^*_t(b) < +\infty\}$. As a consequence, by continuity, there exists $x \in \R$ such that $\kappa^*_t(x)=0$. As $\inf_{a \in \R} \kappa^*_t(a)<0$, $\kappa^*_t$ is strictly increasing at point $x$. Therefore the point $v_t=x$ is uniquely determined, and $\bar{\theta}_t = \partial_a\kappa^*_t(x)$ at point $x$ is positive. Finally, $v \in \calC_1$ by the implicit function theorem; thus so is $\bar{\theta}$, by composition with $\partial_a \kappa^*$.
\end{proof}

We now observe that if $a$ is a solution of \eqref{eqn:existence_max}, then $a$ is a regular point of $\calR$ --i.e. we can apply Theorem \ref{thm:lagrange}.
\begin{lemma}
\label{lem:regularity_existence_max}
Under the assumptions \eqref{eqn:breeding} and \eqref{eqn:regularity}, if $a$ is a solution of \eqref{eqn:existence_max}, then for all $t \in [0,1]$, $\partial_a \kappa^*_t(a_t)>0$.
\end{lemma}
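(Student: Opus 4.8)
The plan is to prove the contrapositive by a localized perturbation argument. Assuming $\theta_{t_0}:=\partial_a\kappa^*_{t_0}(a_{t_0})=0$ for some $t_0\in[0,1]$, I would construct $\tilde a\in\calD$ with $K^*(\tilde a)_t\le 0$ for all $t$ but $\int_0^1\tilde a_s\,ds>\int_0^1 a_s\,ds$, contradicting the extremality of $a$ in \eqref{eqn:existence_max}, i.e.\ the definition \eqref{eqn:defv} of $v^*$. The starting point is that $\kappa^*_t$ is non-decreasing (for $a'\ge a$ and $\theta>0$, $\theta a'-\kappa_t(\theta)\ge\theta a-\kappa_t(\theta)$), so $\theta_t\ge 0$ for every $t$; and if $\theta_{t_0}=0$ then \eqref{eqn:legendreestimate} applied with $\theta=0$ gives $\kappa^*_{t_0}(a_{t_0})=-\kappa_{t_0}(0)=-\log\E[\#L_{t_0}]$, which is $<0$ because \eqref{eqn:breeding} forces $\#L_{t_0}\ge 1$ a.s.\ and $\P(\#L_{t_0}\ge 2)>0$, so $\E[\#L_{t_0}]>1$ (and finite, since $\kappa^*_{t_0}(a_{t_0})$ is real). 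Hence $K^*(a)$ is strictly decreasing at $t_0$, so $t^\dagger:=\inf\{t>t_0:K^*(a)_t\ge 0\}$ satisfies $t^\dagger>t_0$, and $K^*(a)_t<0$ on $(t_0,t^\dagger)$.

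All the action takes place inside this window. If $K^*(a)_t<0$ on all of $(t_0,1]$ (the degenerate case $t^\dagger=+\infty$), a one-sided modification already works: raising $a$ by a small $\delta$ on $[t_0,t_0+\eta)$ keeps $K^*(\tilde a)_t\le K^*(a)_{t_0}\le 0$ there, and for $t\ge t_0+\eta$ one has $K^*(\tilde a)_t=K^*(a)_t+o(1)$ uniformly, which is $\le 0$ since $\sup_{[t_0+\eta,1]}K^*(a)<0$. Otherwise $t^\dagger\le 1$, and since $\kappa^*_s(a_s)<0$ on a right-neighbourhood of $t_0$ while $\int_{t_0}^{t^\dagger}\kappa^*_s(a_s)\,ds=K^*(a)_{t^\dagger}-K^*(a)_{t_0}=-K^*(a)_{t_0}\ge 0$, there exists $s_1\in(t_0,t^\dagger)$ with $\kappa^*_{s_1}(a_{s_1})>0$, hence $\theta_{s_1}>0$. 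Using right-continuity of $a$, continuity of $\kappa^*$ and $\partial_a\kappa^*$ (from \eqref{eqn:regularity}) and openness of $D^*$, I would choose $\eta,c,\delta_0>0$ with $t_0+\eta<s_1$ such that for $s\in[t_0,t_0+\eta]$, $u\in[0,\delta_0]$: $(s,a_s+u)\in D^*$, $\kappa^*_s(a_s+u)<-c$, and (here $\theta_{t_0}=0$ is used) $\partial_a\kappa^*_s(a_s+u)\le\tfrac12\theta_{s_1}$; and $\zeta,\beta_0>0$ with $s_1+\zeta<t^\dagger$ and $\partial_a\kappa^*_s(a_s-u)\ge\gamma:=\tfrac12\theta_{s_1}$ for $s\in[s_1,s_1+\zeta]$, $u\in[0,\beta_0]$. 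Then set $\tilde a=a+\delta$ on $[t_0,t_0+\eta)$, $\tilde a=a-\beta$ on $[s_1,s_1+\zeta)$, $\tilde a=a$ elsewhere, with $\beta:=\Delta_1(\delta)/(\gamma\zeta)$ and $\Delta_1(\delta):=\int_{t_0}^{t_0+\eta}[\kappa^*_s(a_s+\delta)-\kappa^*_s(a_s)]\,ds$; for $\delta$ small $\beta\le\beta_0$ and $\tilde a\in\calD$.

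The feasibility check splits into four ranges. On $[0,t_0]$, $K^*(\tilde a)_t=K^*(a)_t\le 0$. On $[t_0,t_0+\eta)$, $K^*(\tilde a)_t=K^*(a)_{t_0}+\int_{t_0}^t\kappa^*_s(a_s+\delta)\,ds\le K^*(a)_{t_0}\le 0$ since $\kappa^*_s(a_s+\delta)<-c<0$. On $[t_0+\eta,s_1+\zeta)$, $K^*(\tilde a)_t\le K^*(a)_t+\Delta_1(\delta)$, and as $[t_0+\eta,s_1+\zeta]$ is a compact subset of $(t_0,t^\dagger)$ where $K^*(a)<0$, one has $\sup_{[t_0+\eta,s_1+\zeta]}K^*(a)<0$, so this is $\le 0$ for $\delta$ small. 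On $[s_1+\zeta,1]$, both the rise and the drop are in full effect, so $K^*(\tilde a)_t=K^*(a)_t+\Delta_1(\delta)-D_\beta$ with $D_\beta:=\int_{s_1}^{s_1+\zeta}[\kappa^*_s(a_s)-\kappa^*_s(a_s-\beta)]\,ds\ge\gamma\beta\zeta=\Delta_1(\delta)$, hence $K^*(\tilde a)_t\le K^*(a)_t\le 0$. Thus $\tilde a$ is feasible, while $\int_0^1\tilde a_s\,ds=\int_0^1 a_s\,ds+\delta\eta-\Delta_1(\delta)/\gamma\ge\int_0^1 a_s\,ds+\tfrac12\delta\eta>\int_0^1 a_s\,ds$, because $\theta_{t_0}=0$ makes $\Delta_1(\delta)\le\tfrac12\gamma\delta\eta$. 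This contradiction yields $\theta_{t_0}>0$.

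\textbf{Main obstacle.} The delicate point, and the reason the perturbation must be two-sided, is that the constraint $K^*(\cdot)_t\le 0$ is global and typically binding far downstream of $t_0$ (a one-sided version of the same argument shows $K^*(a)_1=0$), so one cannot simply raise $a$ near $t_0$. The point $t^\dagger$ isolates exactly the interval over which the slack $\kappa^*_{t_0}(a_{t_0})<0$ is available; the existence inside it of a location $s_1$ where $\kappa^*$ is strictly increasing is forced because $K^*(a)$ has to climb back to $0$; and the hypothesis $\theta_{t_0}=0$ is precisely what makes the local rise cheap, $\Delta_1(\delta)=o(\delta\eta)$, compared with the gain $\delta\eta$, so that after paying for the compensating drop the net displacement still increases. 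I expect the only genuine work to be the bookkeeping over these four ranges together with making the continuity and openness selections consistent; no idea beyond this should be needed.
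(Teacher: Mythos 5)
Your argument is correct in substance and rests on the same mechanism as the paper's proof --- a two-sided perturbation of $a$ in which raising the profile is cheap where $\partial_a\kappa^*$ vanishes and lowering it where $\partial_a\kappa^*$ is bounded below buys back the energy --- but it is organized differently. The paper splits into three cases ($\theta\equiv 0$; $\theta$ vanishing somewhere with $\theta_0>0$; $\theta_0=0$), and in the second case places the compensating decrease upstream, at time $0$, so that the energy credit precedes the cost and no control of the slack of $K^*(a)$ between the two perturbation sites is needed. You instead run a single argument at an arbitrary vanishing point $t_0$, always raise at $t_0$ and compensate downstream; this forces you to introduce the next binding time $t^\dagger$, to check $K^*(a)<0$ on the intermediate compact range, and to extract the compensating site $s_1$ with $\theta_{s_1}>0$ from the identity $\int_{t_0}^{t^\dagger}\kappa^*_s(a_s)\,ds=-K^*(a)_{t_0}\ge 0$. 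Your version is more uniform, and the localisation of $s_1$ is done more carefully than in the paper's third case; the paper's case split avoids the window bookkeeping.

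Two literal repairs are needed. First, the constants do not quite close: with the stated selection $\partial_a\kappa^*_s(a_s+u)\le\tfrac12\theta_{s_1}=\gamma$ you only get $\Delta_1(\delta)\le\gamma\delta\eta$, hence $\int_0^1\tilde a_s\,ds\ge\int_0^1 a_s\,ds$ without strict inequality; choose the threshold strictly below $\gamma$, e.g. $\partial_a\kappa^*_s(a_s+u)\le\tfrac14\theta_{s_1}$ (possible by continuity since $\theta_{t_0}=0$), which gives $\Delta_1(\delta)\le\tfrac12\gamma\delta\eta$ and the claimed gain $\tfrac12\delta\eta$. Second, your construction only perturbs to the right of $t_0$, so as written it says nothing when $t_0=1$ (the raise interval has measure zero); since functions of $\calD$ are continuous at $1$, the fix is your degenerate-case one-sided raise performed on a left neighbourhood $[1-\eta,1]$, where $\kappa^*_s(a_s+\delta)<0$ for small $\delta,\eta$ and nothing downstream needs compensation. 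Finally, \eqref{eqn:legendreestimate} is stated for $\theta=\partial_a\kappa^*_t(a)$ inside the range $\theta>0$ of the supremum; at $\theta_{t_0}=0$ argue as the paper does, via $\kappa^*_{t_0}(a_{t_0})=\inf_a\kappa^*_{t_0}(a)=-\kappa_{t_0}(0)<0$ using \eqref{eqn:breeding} --- the inequality $\kappa^*_{t_0}(a_{t_0})<0$, which is all you use, is unaffected.
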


\begin{proof}
Let $a \in \calR$ be a solution of \eqref{eqn:existence_max}. For $t \in [0,1]$, we set $\theta_t = \partial_a \kappa^*_t(a_t)$. We observe that $\theta \in \calD$ is non-negative.

We first assume that for all $t \in [0,1]$, $\theta_t=0$, in which case $\kappa^*_t(a_t)$ is the minimal value of $\kappa^*_t$. By \eqref{eqn:breeding}, we have $\inf_{t \in [0,1]} \kappa_t(0)>0$, thus $\sup_{t \in [0,1]} \inf_{a \in \R} \kappa^*_t(a) < 0$. As a consequence, by continuity, there exists $x \in \calD$ such that for all $t \in [0,1]$, $\kappa^*_t(a_t + x_t) \leq 0$. We have $a+x \in \calR$ and $\int_0^1 a_s + x_s ds > \int_0^1 a_s ds$, which contradicts $a$ is a solution of \eqref{eqn:existence_max}.

We now assume that $\theta$ is non-identically null, but there exists $t \in [0,1]$ such that $\theta_t = 0$. We start with the case $\theta_0>\epsilon>0$. As $\theta \in \calD$, there exists $t>0$ and $\delta>0$ such that $\inf_{s \in [0,\delta]} \theta_s > \epsilon$ and $\sup_{s \in [t,t+\delta]} \theta_s < \epsilon/3$. For $x>0$, we set $a^x =  a- x \mathbf{1}_{[0,\delta]} + 2x \mathbf{1}_{[t,t+\delta]}$. We observe that uniformly for $s \in [0,1]$, as $x \to 0$
\[
  K^*(a^x)_s \leq K^*(a)_s - x \epsilon s \wedge \delta + \tfrac{2}{3} x \epsilon (s-t)_+ \wedge \delta + O(x^2).
\]
Thus there exists $x>0$ small enough such that $a^x \in \calR$ and $\int a^x > \int a$, which contradicts again the fact that $a$ is a solution of \eqref{eqn:existence_max}.

Finally, we assume that $\theta_0=0$. In this case, there exists $\delta > 0$ such that $K^*(a)_t < - \delta t$ for all $t \leq \delta$. Therefore, there exists $t>0$ such that for all $0<s \leq t$, $K^*(a)_s < 0$, and $\theta_t>0$. For all $\theta_t>\epsilon>0$, there exists $\delta'>0$ such that for all for all $s< \delta'$, we have $\theta_s < \epsilon/3$ and for all $s \in [t,t+\delta']$, $\theta_s > 2 \epsilon$. Therefore, setting $a^x = a + 2x \mathbf{1}_{[0,\delta)} - x \mathbf{1}_{[t,t+\delta)}$, as $x \to 0$, uniformly in $s \in [0,1]$, we have
\[
  K^*(a^x) \leq K^*(a)_s + \frac{2}{3}x \epsilon (s \wedge \delta') - x \epsilon ((s-t)_+\wedge \delta) + O(x^2),
\]
so for $x>0$ small enough we have $a^x \in \calR$. Moreover $\int_0^1 a^x > \int_0^1 a$ which, once again, contradicts the fact that $a$ is a solution of \eqref{eqn:existence_max}.
\end{proof}

Applying Theorem \ref{thm:lagrange}, and using the previous lemma, we prove Proposition \ref{prop:regularity}.
\begin{proof}[Proof of Proposition \ref{prop:regularity}]
We first consider a function $a \in \calR$ that verifies
\[
  \int_0^1 a_s ds = \sup\left\{ \int_0^1 b_s ds , b \in \calR \right\},
\]
i.e. such that $a$ is a solution of \eqref{eqn:existence_max}. We set $\theta_t = \partial_a \kappa^*_t(a_t)$, and observe that $\theta \in \calD$.

We introduce $J : a \mapsto \int_0^1 a_s ds$ and $g : a \mapsto \left(\kappa^*_s(a_s),s \in [0,1]\right)$. These functions are differentiable at $a$, and for $h \in \calD$, we have $D_a J(h) = \int_0^1 h_s ds$ and $D_a g(h)_t = \theta_t h_t$. We denote by
\[R = \left\{ h \in \calD : \forall t \in [0,1], \int_0^t h_s ds \leq 0 \right\}, \]
which is a closed convex cone of $\calD$. Using Lemma \ref{lem:regularity_existence_max}, we have $\theta_t > 0$ for all $t \in [0,1]$, thus $D_a g$ is a bijection.

By Theorem \ref{thm:lagrange}, there exists $\lambda^* \in \calD^*$ --which is a measure by the Riesz representation theorem-- such that
\begin{align}
  & \forall h \in \calD , \int_0^1 h_s ds = \int_0^1 D_a g(h)_s \lambda^*(ds)\label{eqn:z1}\\
  & \forall h \in R, \int_0^1 h_s \lambda^*(ds) \leq 0 \label{eqn:z2}\\
  & \int_0^1 g(a)_s \lambda^*(ds) = 0 \label{eqn:z3}.
\end{align}

We observe easily that \eqref{eqn:z1} implies that $\lambda^*$ admits a Radon-Nikod\'ym derivative with respect to the Lebesgue measure, and that $\frac{\lambda^*(ds)}{ds} = \frac{1}{\theta_s}$. As a consequence, we can rewrite \eqref{eqn:z2} as
\[
  \forall h \in R, \int_0^1 h_s \frac{ds}{\theta_s} \leq 0.
\]
We set $f_t = \int_0^t \frac{ds}{\theta_s}$, for all $s, t \in [0,1]$, and $\mu \in (0,1)$, by \eqref{eqn:z2}, we have
\[
  \mu f_t + (1 - \mu) f_s - f_{\mu t + (1 - \mu) s} = \int_0^1 \left(\mu \ind{u < t} + (1 - \mu) \ind{u<s} - \ind{u<\mu t + (1 - \mu)s}\right) \frac{du}{\theta_u}
  \leq 0.
\]
As a consequence, $f$ is concave. In particular, its right derivative function $\frac{1}{\theta}$ is non-increasing. Consequently $\theta$ is non-decreasing.

The last equation \eqref{eqn:z3} gives
\[
  0 = \int_0^1 \kappa^*_s(a_s) \lambda^*(ds)= \int_0^1 \kappa^*_s(a_s) \theta_s^{-1} ds = K^*(a)_1 \frac{1}{\theta_1} - \int_0^1 K^*(a)_s d\theta_s^{-1},
\]
by Stieltjès integration by part. But for all $t \in [0,1]$, $K^*(a)_t \leq 0$, and $\frac{1}{\theta}$ is non-increasing. This yields
\[
  K^*(a)_1 = 0 \quad \mathrm{and} \quad \int_0^1 K^*(a)_s d\theta^{-1}_s = 0.
\]
In particular, as $a \in \calR$, $\theta$ increases on $\{t \in [0,1] : K^*(a)_t = 0\}$.

Conversely, we consider a function $a \in \calR$ such that the function $\theta : t \mapsto \partial_a \kappa^*_t(a_t)$ is non-decreasing, $K^*(a)_1=0$ and $\int_0^1 K^*(a)_s d\theta^{-1}_s=0$. Our aim is to prove that $\int_0^1 a_s ds = v^*$, by observing that $\int_0^1 a_s ds \geq \int_0^1 b_s ds$ for all $b \in \calR$. By convexity of $\kappa^*_t$, we have, for all $t \in [0,1]$
\[
  \kappa^*_t(b_t) \geq \kappa^*_t(a_t) + \theta_t(b_t - a_t),
\]
and integrating with respect to $t$, we obtain
\begin{align*}
  \int_0^1 a_t - b_t dt &\geq \int_0^1 \frac{\kappa^*_t(a_t) - \kappa^*_t(b_t)}{\theta_t} dt\\
  &\leq K^*(a)_1 - K^*(b)_1 - \int_0^1 \left(K^*(a)_t - K^*(b)_t\right) d\theta^{-1}_t,
\end{align*}
by Stieltljès integration by parts. Using the specific properties of $a$, we get
\[
  \int_0^1 a_t - b_t dt \leq -K^*(b)_1 + \int_0^1 K^*(b)_t d\theta^{-1}_t.
\]
As $K^*(b)$ is non-positive, and $\theta^{-1}$ is non-increasing, we conclude that the left-hand side is non-positive, which leads to $\int_0^1 a_s ds \geq \int_0^1 b_s ds$. Optimizing this inequality over $b \in \calR$ proves that $a$ is a solution of \eqref{eqn:existence_max}.

We now prove that if $a$ is a solution of \eqref{eqn:existence_max}, then $a$ is continuous, by proving that this function has no jump. In order to do so, we assume that there exists $t \in (0,1)$ such that $a_t \neq a_{t-}$, i.e. such that $a$ jumps at time $t$. Then, $\theta_t \neq \theta_{t-}$ by continuity of $\partial_a \kappa^*$ on $D^*$. As $\int_0^1 K^*(a)_s d\theta^{-1}_s = 0$ and $d\theta^{-1}$ has an atom at point $t$, thus $K^*(a)_t=0$.

Therefore, if $a$ jumps at time $t$, then the continuous function $s \mapsto K^*(a)_s$ with right and left derivatives at each point, bounded from above by $0$, hits a local maximum at time $t$. Its left derivative $\kappa^*_t (a_{t-})$ is then non-negative and its right derivative $\kappa^*_t(a_t)$ non-positive. As $\kappa^*t$ is a non-decreasing function, we obtain $a_{t-} \geq a_t$.

Moreover, by convexity of $\kappa^*_t$, $x \mapsto \partial_a \kappa^*_t(x)$ is also non-decreasing, and as a consequence $\theta_{t-} \geq \theta_t$, which is a contradiction with the hypothesis $\theta_{t-} \neq \theta_t$ and $\theta$ non-decreasing. We conclude that $a$ (and $\theta$) is continuous as a càdlàg function with no jump.

We now assume there exists another solution $b \in \calR$ to \eqref{eqn:existence_max}. Using the previous computations, we have
$\int_0^1 K^*(b)_s d\theta^{-1}_s = 0$, and $b$ is continuous. As a consequence, denoting by $T$ the support of $d\theta^{-1}$, for all $t \in T$, $K^*(b)_t=0$. Moreover, $K^*(b)$ is a $\calC^1$ function, with a local maximum at time $t$, thus $\kappa^*_t(b_t)=0$, or in other words, $b_t = v_t$, by Lemma \ref{lem:regularity_existence_max}.

Consequently, if we write $\phi_t = \partial_a \kappa^*_t(b_t)$, we know from previous results that $\phi$ is continuous and increasing. Furthermore, $\phi$ increases only on $T$, and $\phi_t = \partial_a \kappa^*_t(v_t) = \bar{\theta}_t$. For all $t \in [0,1]$, we set $\sigma_t=\sup\{s \leq t : s \in T\}$ and  $\tau_t = \inf\{s \geq t : s \in T\}$. If $\sigma$ and $\tau$ are finite then
\[
  \bar{\theta}_{\sigma_t} = \phi_{\sigma_t} = \phi_t  = \phi_{\tau_t} = \bar{\theta}_{\tau_t}.
\]
As $a$ is also a solution of \eqref{eqn:existence_max}, we have $\bar{\theta}_{\sigma_t} = \theta_t = \bar{\theta}_{\tau_t}$, therefore $\theta = \phi$. As a consequence, we have
\[
  a_t = \partial_\theta \kappa_t(\theta_t) = \partial_\theta \kappa_t(\phi_t) = b_t,
\]
which proves the uniqueness of the solution.

We now prove that $a$ and $\theta$ are Lipschitz functions. For all $t \in [0,1]$, $\int_0^t \kappa^*_s(a_s)ds \leq 0$, and $\int_0^t \kappa^*_s(a_s)d\theta^{-1}_s = 0$. In particular, this means that $\kappa^*_t(a_t)$ vanishes $d\theta^{-1}_t$ almost everywhere, thus $\theta_t = \bar{\theta}_t$ $d\theta^{-1}_t$ almost everywhere. By continuity of $\theta$ and $\bar{\theta}$, these functions are identical on $T$. In addition, for all $s<t$ such that $(s,t) \subset [0,1]\backslash T$, we have $\int_s^t d\theta^{-1}_u = 0$, hence $\theta_t=\theta_s$, which proves that $\theta$ is constant on $[0,1]\backslash T$. As a result, for all $s<t \in [0,1]$, we have $\theta_t=\theta_s$ if $(s,t) \subset [0,1]\backslash T$, otherwise
\[
  \theta_s = \inf_{u \geq s, u \in T} \bar{\theta}_u \quad \mathrm{and} \quad \theta_t = \inf_{u \leq t, u \in T} \bar{\theta}_u,
\]
In consequence $ |\theta_t - \theta_s| \leq \sup_{r,r' \in [s,t]} |\bar{\theta}_r - \bar{\theta}_{r'}|$. As $\bar{\theta}$ is $\calC^1$ on $[0,1]$, $\bar{\theta}$ and $\theta$ are Lipschitz functions. As $a_t = \partial_\theta \kappa_t(\theta_t)$, the function $a$ is also Lipschitz.

Finally, we prove the existence of a solution to \eqref{eqn:existence_max}. To do so, we reformulate this optimization problem in terms of an optimization problem for $\theta$. The aim is to find a positive function $\theta \in \calC$ such that
\begin{equation}
  \int_0^1 \partial_\theta \kappa_t(\theta_t) dt = \max\left\{ \int_0^1 \partial_\theta \kappa_t(\phi_t) dt : \phi \in \calC, \forall t \in [0,1], E(\phi)_t < +\infty \right\},
\end{equation}
where $E(\phi)_t = \int_0^t \phi_s \partial_\theta \kappa_t(\phi_t) - \kappa_t(\phi_t)$. By Theorem \ref{thm:lagrange}, if $\theta$ exists, then it is a non-decreasing function. Moreover $E(\theta)_1=0$ and for any $t \in [0,1]$, $\int_0^t E(\theta)_sd\theta^{-1}_s = 0$.

Using these three properties, we have $\theta=\bar{\theta}$ on the support of the measure $d\theta^{-1}$. Moreover, as $E_0(\theta)=E_1(\theta)=0$ and $E_t$ is non-positive, we observe that $E_t(\theta)$ is locally non-increasing in the neighbourhood of 0 and locally non-decreasing in the neighbourhood of 1, in particular
\[
  \theta_0 \partial_\theta \kappa_0(\theta_0) - \kappa_0(\theta_0) \leq 0 \quad \mathrm{and} \quad \theta_1 \partial_\theta \kappa_1(\theta_1) - \kappa_1(\theta_1) \geq 0.
\]
As for all $t \in [0,1]$, the function $\phi \mapsto \phi \partial_\theta \kappa_t(\phi) - \kappa_t(\phi)$ is increasing, we conclude that $\theta_0 \leq \bar{\theta}_0$ and $\theta_1 \geq \bar{\theta}_1$. As a consequence, $T = \{t \in [0,1] : \theta_t = \bar{\theta}_t\}$ is non-empty, and, setting $\sigma_t = \sup\{s \leq t : s \in T \}$ and $\tau_t = \inf\{s \geq t : s \in T\}$ we have $\theta_t = \bar{\theta}_{\sigma_t}$ if $\sigma_t > -\infty$ and $\theta_t = \bar{\theta}_{\tau_t}$ if $\tau_t < +\infty$.

We write 
\[\Theta = \left\{ \theta \in \calC : \theta \text{ non-decreasing}, \theta_0 \geq 0, \forall t \in [0,1], \int_0^t E_s(\theta)d\theta^{-1}_s = 0 \text{ and } E_t(\theta) \leq 0\right\}.\]
This set is uniformly equicontinuous and bounded, thus by Arzelà-Ascoli theorem, it is compact. It is non-empty as for all $\epsilon>0$ small enough, the function $t \mapsto \epsilon$ belongs to $\Theta$. We write $\theta$ a maximizer of $\int_0^1 \partial_\theta\kappa_s(\theta_s)ds$ on $\Theta$.

By continuity, if $E(\theta)_1<0$, then we can increase a little $\theta$ in the neighbourhood of $1$, thus $\theta$ is non-optimal. As a result, $\theta$ is non-decreasing, verifies $E(\theta)_1=0$ and $\int E(\theta)_s d\theta^{-1}_s=0$, which proves that $a = \partial_\theta \kappa(\theta)$ is a solution of \eqref{eqn:existence_max}.
\end{proof}

The previous proof gives some characteristics of the unique solution $a$ of \eqref{eqn:existence_max}. In particular, if we set $\theta_t = \partial_a \kappa^*_t(a_t)$, we know that $\theta$ is positive, non-decreasing, and that on the support of the measure $d\theta^{-1}$, $\theta$ and $\bar{\theta}$ are identical. Consequently, the optimal speed path of the branching random walk verifies the following property: while in the bulk of the branching random walk, it follows an equipotential line, and when close to the boundary it follows the natural speed path.

In certain specific cases, \eqref{eqn:existence_max} can be explicitly solved. For example, when $\bar{\theta}$ increases --which corresponds to the ``decreasing variance'' case in Gaussian settings-- we have $a=v$.
\begin{lemma}
\label{lem:specialcase}
We assume \eqref{eqn:breeding} and \eqref{eqn:regularity}.
\begin{description}
  \item[Non-decreasing case] If $\bar{\theta}$ is non-decreasing, then $a=v$ (and $\bar{\theta} = \theta$).
  \item[Non-increasing case] If $\bar{\theta}$ is non-increasing, then there exists $\theta \in [0,+\infty)$ such that $a_t = \partial_\theta \kappa_t(\theta)$.
  \item[Mixed case] If $\bar{\theta}$ is non-increasing on $[0,1/2]$ and increasing on $[1/2,1]$, then there exists $t \in [1/2,1]$ such that
  \[
    \forall s \in [0,1], \partial_a \kappa^*_s(a_s) = \bar{\theta}_{s \vee t}.
  \]
\end{description}
\end{lemma}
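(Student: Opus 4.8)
The plan is to read off the answer from the characterisation of the optimal speed profile in Proposition~\ref{prop:regularity} together with the structural information established in its proof. Write $\theta_t = \partial_a\kappa^*_t(a_t)$; recall that $\theta$ is positive, non-decreasing and continuous, that $a_t = \partial_\theta\kappa_t(\theta_t)$, that $\theta$ coincides with $\bar\theta$ on $T := \{t\in[0,1]:\theta_t=\bar\theta_t\}$, that $T$ is a non-empty closed set, that $\theta$ is constant on each connected component of $[0,1]\setminus T$, that $\theta_0\le\bar\theta_0$ and $\theta_1\ge\bar\theta_1$, and that $\theta_t=\bar\theta_{\sigma_t}$ (resp. $\theta_t=\bar\theta_{\tau_t}$) whenever $\sigma_t:=\sup\{s\le t:s\in T\}$ (resp. $\tau_t:=\inf\{s\ge t:s\in T\}$) is finite. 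Each of the three cases is then a matter of combining these facts with the monotonicity hypothesis on $\bar\theta$.

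In the non-decreasing case I claim that $v$ is itself the solution of \eqref{eqn:existence_max}, so that $a=v$ by uniqueness. Indeed $v\in\calR$ because $K^*(v)_t=\int_0^t\kappa^*_s(v_s)\,ds=0$ for every $t$, and, taking $\partial_a\kappa^*_t(v_t)=\bar\theta_t$, the three conditions of Proposition~\ref{prop:regularity} are satisfied: $\bar\theta$ is positive and, by assumption, non-decreasing; $K^*(v)_1=0$; and $\int_0^1 K^*(v)_s\,d\bar\theta^{-1}_s=0$ since $K^*(v)\equiv0$. Hence $a=v$ and $\theta=\bar\theta$.

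In the non-increasing case, if $s<t$ are both in $T$ then $\theta_s\le\theta_t$ by monotonicity of $\theta$, while $\theta_s=\bar\theta_s\ge\bar\theta_t=\theta_t$ since $\bar\theta$ is non-increasing; therefore $\theta$ is constant, say equal to $c$, on $T$. As $T\neq\emptyset$, for every $t\in[0,1]$ at least one of $\sigma_t,\tau_t$ is finite, so $\theta_t$ equals the value of $\bar\theta=\theta$ at a point of $T$, hence $\theta_t=c$. Thus $\theta\equiv c>0$ and $a_t=\partial_\theta\kappa_t(\theta_t)=\partial_\theta\kappa_t(c)$; setting $\theta:=c$ proves the claim. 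In the mixed case I introduce the detachment time $t:=\sup\{s\in[0,1]:\theta\text{ is constant on }[0,s]\}$, so that $\theta$ is constant on $[0,t]$ and $\theta_s>\theta_t$ for every $s>t$. I would then prove, in this order, that (a)~$t\in T$, since otherwise $\theta$ is constant on a whole component of $[0,1]\setminus T$ containing $t$ in its interior, which would extend its constancy strictly past $t$; (b)~$t\ge1/2$, since if $t<1/2$ then $\theta_s>\theta_t=\bar\theta_t\ge\bar\theta_s$ for all $s\in(t,1/2]$, so $(t,1/2]\subseteq[0,1]\setminus T$ sits in a component of the form $(t,\beta)$ with $\beta>t$ on which $\theta$ is constant, again contradicting the maximality of $t$; and (c)~$(t,1]\subseteq T$, by ruling out a component $(\alpha,\beta)$ of $[0,1]\setminus T$ inside $(t,1]$: such $\alpha$ would satisfy $\alpha>t\ge1/2$ and $\alpha\in T$, $\theta$ would be constant on $[\alpha,\beta]$, and then either $\bar\theta_\alpha=\bar\theta_\beta$ with $\alpha<\beta$ in the increasing regime $[1/2,1]$, or $\beta=1$ and $\theta_1=\bar\theta_\alpha<\bar\theta_1$, both impossible, the latter because $\theta_1\ge\bar\theta_1$. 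Combining (a)--(c): for $s\le t$, $\theta_s=\theta_t=\bar\theta_t=\bar\theta_{s\vee t}$, while for $s>t$, $s\in T$ and $\theta_s=\bar\theta_s=\bar\theta_{s\vee t}$; hence $\partial_a\kappa^*_s(a_s)=\bar\theta_{s\vee t}$ with $t\in[1/2,1]$.

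The non-decreasing and non-increasing cases are straightforward once Proposition~\ref{prop:regularity} is available; the substance is the mixed case, and I expect the main difficulty to be step~(c): showing that once $\theta$ detaches from its initial constant value it follows $\bar\theta$ all the way to time $1$, with no further flat stretch. The boundary situation $t=1$ (i.e. $\theta$ globally constant) has to be treated separately and is the one place where the inequality $\theta_1\ge\bar\theta_1$ from the proof of Proposition~\ref{prop:regularity}, rather than the structure of $T$ alone, is used.
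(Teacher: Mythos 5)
Your treatment of the non-decreasing and non-increasing cases is correct, and your steps (a)--(c) for the mixed case correctly settle every situation in which the detachment time $t$ is strictly less than $1$: this is essentially the content of the paper's argument, which instead works with $u=\inf\{s>1/2:\bar{\theta}_s=\theta_0\}$ and the identity $\theta_t^{-1}=\theta_0^{-1}+\int_0^t \ind{s\in T}\,d\bar{\theta}^{-1}_s$, so up to that point your route through the structure of $T$ and its complementary components is a legitimate, slightly more elementary variant of the same proof.

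The genuine gap is the subcase you defer, $t=1$ (i.e.\ $\theta$ globally constant), and the hint you give for closing it does not work: $\theta_1\ge\bar{\theta}_1$ is the inequality you already have, whereas what the conclusion requires in the constant case is the \emph{reverse} inequality, since $\theta_s=\bar{\theta}_{s\vee t^*}$ with $\theta$ constant and $\bar{\theta}$ strictly increasing on $[1/2,1]$ forces $t^*=1$ and $\theta\equiv\bar{\theta}_1$. Nothing among the facts you recall excludes a constant solution with value strictly above $\bar{\theta}_1$: if $\theta\equiv c$, condition 3 of Proposition \ref{prop:regularity} is vacuous, and the profile $a_s=\partial_\theta\kappa_s(c)$ with $K^*(a)_1=\int_0^1\kappa^*_s(a_s)\,ds=0$ is automatically in $\calR$ whenever $c\ge\bar{\theta}_1$, because then $\kappa^*_s(a_s)$ is non-positive exactly on an initial sub-interval of $[0,1/2]$ (where $\bar{\theta}_s\ge c$) and non-negative afterwards, so $K^*(a)$ attains its maximum at the endpoints. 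Concretely, in the Gaussian binary case with $\sigma$ very small on a long initial stretch, peaking at $1/2$ and then decreasing to $\sigma_1$, the constant $c=\bigl(2\log 2/\int_0^1\sigma_s^2\,ds\bigr)^{1/2}$ fulfils all three conditions of Proposition \ref{prop:regularity} and yet exceeds $\bar{\theta}_1=\sqrt{2\log 2}/\sigma_1$ as soon as $\int_0^1\sigma_s^2\,ds<\sigma_1^2$, so the constant profile is then the unique optimum and no $t^*\in[1/2,1]$ can represent it as $\bar{\theta}_{s\vee t^*}$. Hence the deferred case is not a routine boundary check: to finish you would need to prove (or additionally assume) $\theta_0\le\bar{\theta}_1$, i.e.\ that $\bar{\theta}$ re-attains the flat value before time $1$; note that this is precisely the point at which the paper's own proof is silent as well, since its $u$ may be infinite, so you have isolated the real difficulty but your proposed resolution via $\theta_1\ge\bar{\theta}_1$ would fail.
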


\begin{proof}
We first assume that $\bar{\theta}$ is a non-decreasing function. As $K^*(v)_t=0$ for all $t \in [0,1]$, we have $K^*(v)_1 = \int_0^t K^*(v)_t d \bar{\theta}^{-1}_t = 0$ which, by Proposition \ref{prop:regularity} implies that $v$ is the solution of \eqref{eqn:existence_max}.

We now denote by $a$ the solution of \eqref{eqn:existence_max}, and by $\theta_t = \partial_a \kappa^*_t(a_t)$. Let $T$ be the support of the measure of $d\theta^{-1}_t$, we know by Proposition \ref{prop:regularity} that $\theta$ is non-decreasing and equal to $\bar{\theta}_t$ when it is increasing. In particular, we have
\[
  \frac{1}{\theta_t} = \frac{1}{\theta_0} + \int_0^t d\theta^{-1}_s = \frac{1}{\theta_0} + \int_0^t \ind{s \in T} d\theta^{-1}_s = \frac{1}{\theta_0} + \int_0^t \ind{s \in T} d\bar{\theta}^{-1}_s.
\]

As a consequence, if $\bar{\theta}$ is non-increasing on $[0,t]$, then $\int_0^u \ind{s \in T} d\bar{\theta}^{-1}_s \geq 0$ for all $u \leq t$. As $\theta^{-1}$ is non-increasing, we conclude that $\int_0^u \ind{s \in T} d\bar{\theta}^{-1}_u = 0$, and $\theta_u = \theta_0$. In particular, in the non-increasing case, we conclude that $\theta$ is a constant.

In the mixed case, we have just shown that $\theta$ is constant up to time $1/2$. We set $u=\inf\{t > 1/2 : \bar{\theta}_t = \theta_0\}$. Since $\theta=\bar{\theta}$ on $T$, we know that $T \cap [1/2,u) = \emptyset$. Hence $\theta$ is constant up to point $u$. For $t>u$, as $\bar{\theta}$ increases, we have
\[
  \frac{1}{\theta_t} = \frac{1}{\theta_0} + \int_0^t \ind{s \in T} d\bar{\theta}^{-1}_s = \frac{1}{\bar{\theta}_u} + \int_{u}^t  \ind{s \in T} d\bar{\theta}^{-1}_s \geq \frac{1}{\bar{\theta}_t},
\]
which yields $\bar{\theta}_t \geq \theta_t$. We now observe that $K^*(a)_1=0$, thus $K^*(a)$ attains a local maximum at time 1, and its left derivative $\kappa^*_1(a_1)$ is non-negative. This implies that $\theta_1 \geq \bar{\theta}_1$. If there exists $s > u$ such that $\theta_s <\bar{\theta}_s$, then $T \cap [s,1] = \emptyset$, and $\theta_1 = \theta_s < \bar{\theta}_s \leq \bar{\theta}_1$, which contradicts the previous statement. In consequence, for $t \geq u$, we have $\theta_t = \bar{\theta}_t$, which ends the proof of the mixed case.
\end{proof}

\section{Maximal and consistent maximal displacements}
\label{sec:maxdis}

We apply the estimates obtained in the previous section to compute the asymptotic of some quantities of interest for the BRWtie. In Section \ref{subsec:maxdisplacement}, we take interest in the maximal displacement in a BRWtie with selection. In Section \ref{subsec:cmd}, we obtain a formula for the consistent maximal displacement with respect to a given path. If we apply these estimates in a particular case, we prove Theorems \ref{thm:main} and \ref{thm:cmd}.

\subsection{Maximal displacement in a branching random walk with selection}
\label{subsec:maxdisplacement}

We first define the \textit{maximal displacement in a branching random walk with selection}, which is the position of the rightmost individual among those alive at generation $n$ that stayed above a prescribed curve. We consider a positive function $\phi$ that satisfies \eqref{eqn:phibiendef} and \eqref{eqn:regularphi}. We introduce functions $b$ and $\sigma$ according to \eqref{eqn:meanandvariance}. Let $f$ be a continuous function on $[0,1]$ with $f(0)<0$, and $F$ be a Riemann-integrable subset of $[0,1]$. The set of individuals we consider is
\[
  \calW_n^\phi(f,F) = \left\{ |u|=n : \forall j \in F_n, V(u_j) \geq \bar{b}^{(n)}_j + f_{j/n} n^{1/3} \right\}.
\]
This set is the tree of the BRWtie with selection $(\calW_n^\phi(f,F), V_{|\calW_n^\phi (f,F)})$, in which every individual $u$ alive at time $k \in F_n$ with position below $\bar{b}^{(n)}_k + f_{k/n} n^{1/3}$ is immediately killed, as well as all its descendants. Its maximal displacement at time $n$ is denoted by
\[
  M^\phi_n(f,F) = \max\left\{ V(u), u \in \calW^\phi_n(f,F) \right\}.
\]

To apply the results of the previous section, we assume here that $b$ satisfies
\begin{equation}
  \label{eqn:brealpath}
  \sup_{t \in [0,1]} K^*(b)_t = 0 = K^*(b)_1;
\end{equation}
in other words, there exists individuals that follow the path with speed profile $b$ with positive probability, and at time 1, there are $e^{o(n)}$ of those individuals. We set $G$ the set of zeros of $K^*(b)$, and we assume that
\begin{equation}
  \label{eqn:pourG}
  G = \{ t \in [0,1] : K^*(b)_t = 0 \} \text{ is Riemann-integrable}.
\end{equation}
For $\lambda \in \R$, we set $g^\lambda \in \calC([0,t_\lambda))$ the function solution of
\begin{equation}
  \label{eqn:pourg}
  \forall t \in [0,t_\lambda), \phi_t g^\lambda_t - H_t^{F,G}(f,g^\lambda,\phi) = \phi_0 \lambda.
\end{equation}
To study $M^\phi_n(f,F)$, we first prove the existence of a unique maximal $t_\lambda \in (0,1]$, and a function $g^\lambda$ solution of \eqref{eqn:pourg}. We recall the following theorem of Carathéodory, that can be found in \cite{Fil88}.
\begin{theorem}[Existence and uniqueness of solutions of Carathéodory's ordinary differential equation]
\label{thm:existenceuniqueness}
Let $0 \leq t_1 < t_2 \leq 1$, $x_1 < x_2$, $M>0$ and $f : [t_1,t_2] \times [x_1,x_2] \to [-M,M]$ a bounded function. Let $t_0 \in [t_1,t_2]$ and $x_0 \in [x_1,x_2]$, we consider the differential equation consisting in finding $t > 0$ and a continuous function $\gamma : [t_0,t_0 + t] \to \R$ such that
\begin{equation}
  \label{eqn:differential}
  \forall s \in [t_0,t_0+t], \gamma(s) = x_0 + \int_{t_0}^s f(u,\gamma(u)) du.
\end{equation}

If for all $x \in [x_1,x_2]$, $t \mapsto f(t,x)$ is measurable and for all $t \in [t_1,t_2]$, $x \mapsto f(t,x)$ is continuous, then for all $(t_0,x_0)$ there exists $t\geq \min(t_2 - t_0, \frac{x_2-x_0}{M}, \frac{x_0-x_1}{M})$ and $\gamma$ that satisfy \eqref{eqn:differential}.

If additionally, there exists $L>0$ such that for all $x,y \in [x_1,x_2]$ and $t \in [t_1,t_2]$, $|f(t,x)-f(t,y)| \leq L |x-y|$, then for every pair of solutions $(t,\gamma)$ and $(\tilde{t},\tilde{\gamma})$ of \eqref{eqn:differential}, we have
\[
  \forall s \leq \min(t,\tilde{t}), \gamma(s) = \tilde{\gamma}(s).
\]
Consequently, there exists a unique solution defined on a maximal interval $[t_0,t_0+t_{\mathrm{max}}]$.
\end{theorem}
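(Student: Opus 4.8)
The plan is to prove the two assertions by the classical Carathéodory scheme: existence via Tonelli's delayed-argument approximation together with the Arzelà--Ascoli theorem and dominated convergence, and uniqueness (hence the maximal interval) via Grönwall's inequality. I may assume $t_0<t_2$, since otherwise the interval is degenerate and there is nothing to prove.

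\textbf{Existence.} Set $t^*=\min\!\bigl(t_2-t_0,\tfrac{x_2-x_0}{M},\tfrac{x_0-x_1}{M}\bigr)$, which is exactly the lower bound claimed for the length of the interval of definition. For each integer $m\geq 1$ I would define an approximate solution $\gamma_m:[t_0,t_0+t^*]\to\R$ by $\gamma_m(s)=x_0$ on $[t_0,t_0+t^*/m]$ and
\[
  \gamma_m(s)=x_0+\int_{t_0}^{s-t^*/m} f(u,\gamma_m(u))\,du \qquad \text{for } s\in[t_0+t^*/m,\,t_0+t^*].
\]
This is well defined by induction over the intervals $[t_0+\tfrac{k}{m}t^*,t_0+\tfrac{k+1}{m}t^*]$, because on such an interval the right-hand side only involves the values of $\gamma_m$ on the previous interval. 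The bound $|f|\leq M$ gives $|\gamma_m(s)-x_0|\leq Mt^*\leq\min(x_2-x_0,x_0-x_1)$, so $\gamma_m$ stays in $[x_1,x_2]$ and every composition $u\mapsto f(u,\gamma_m(u))$ is legitimate; the same bound makes each $\gamma_m$ $M$-Lipschitz.

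The family $(\gamma_m)$ is then uniformly bounded and uniformly Lipschitz, hence equicontinuous, and Arzelà--Ascoli yields a subsequence converging uniformly on $[t_0,t_0+t^*]$ to a continuous $\gamma$ with values in $[x_1,x_2]$. To pass to the limit in the integral equation at a fixed $s$, I would estimate the error of replacing $\int_{t_0}^{s-t^*/m}$ by $\int_{t_0}^{s}$ by $Mt^*/m\to 0$, and for the integrand use that $f(u,\cdot)$ is continuous while $\gamma_m(u)\to\gamma(u)$, so $f(u,\gamma_m(u))\to f(u,\gamma(u))$ pointwise; since $|f|\leq M$, dominated convergence gives $\gamma(s)=x_0+\int_{t_0}^s f(u,\gamma(u))\,du$, i.e. $\gamma$ solves \eqref{eqn:differential} on $[t_0,t_0+t^*]$.

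\textbf{Uniqueness and maximal interval.} Under the extra hypothesis that $f$ is $L$-Lipschitz in $x$ uniformly in $t$, let $(t,\gamma)$ and $(\tilde t,\tilde\gamma)$ be two solutions and put $\delta(s)=|\gamma(s)-\tilde\gamma(s)|$ on $[t_0,t_0+\min(t,\tilde t)]$. Subtracting the integral equations,
\[
  \delta(s)\leq\int_{t_0}^s|f(u,\gamma(u))-f(u,\tilde\gamma(u))|\,du\leq L\int_{t_0}^s\delta(u)\,du,
\]
and Grönwall's lemma forces $\delta\equiv 0$. Thus any two solutions agree on the intersection of their domains, and taking the union of all solution intervals — which is directed by restriction precisely because of this agreement — produces a solution on a maximal interval $[t_0,t_0+t_{\max}]$, unique by construction. \textbf{Main obstacle.} The one genuinely delicate point, and the reason the Carathéodory framework is needed rather than the classical Peano/Picard statements, is the measurability of $u\mapsto f(u,\gamma_m(u))$ for a merely continuous $\gamma_m$ (handled by approximating $\gamma_m$ uniformly by step functions and using measurability of $f(\cdot,x)$ together with continuity of $f(u,\cdot)$), combined with the bookkeeping that keeps the approximants inside $[x_1,x_2]$; once these are in place the argument is routine. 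In the context of this paper one may alternatively simply invoke \cite{Fil88}.
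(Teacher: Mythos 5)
Your argument is correct, but it is worth noting that the paper itself does not prove this statement at all: Theorem \ref{thm:existenceuniqueness} is quoted as a known result, with a pointer to Filippov \cite{Fil88}, and is then only used as a black box to construct the functions $g^\lambda$. What you supply is the standard self-contained Carathéodory proof: existence by the Tonelli delayed-argument approximants (well defined inductively, staying in $[x_1,x_2]$ because $Mt^*\leq\min(x_2-x_0,x_0-x_1)$, and $M$-Lipschitz), Arzel\`a--Ascoli plus dominated convergence to pass to the limit in the integral equation, and uniqueness by Gr\"onwall, which then yields the maximal interval by patching. The two routes buy different things: the citation keeps the paper short, your proof makes the statement self-contained, and you correctly isolate the only genuinely Carath\'eodory-specific point, namely the measurability of $u\mapsto f(u,\gamma_m(u))$ for continuous $\gamma_m$, which your step-function approximation handles. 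Two small points you gloss over, both routine: (i) in the patching step the supremum of the existence times is actually attained, because any solution is $M$-Lipschitz, hence extends continuously to the right endpoint with limit value in the closed set $[x_1,x_2]$, and the integral equation passes to that endpoint by continuity, so the maximal interval is indeed closed as stated; (ii) when $x_0\in\{x_1,x_2\}$ the lower bound $\min\left(t_2-t_0,\frac{x_2-x_0}{M},\frac{x_0-x_1}{M}\right)$ vanishes and the existence claim is vacuous, so your standing nondegeneracy assumption should include this case alongside $t_0<t_2$. Neither affects the substance, and for the uniqueness step your implicit use of $\gamma(u),\tilde\gamma(u)\in[x_1,x_2]$ is legitimate, since any solution of \eqref{eqn:differential} must take values in the domain of $f$ for the equation to make sense.
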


We use this theorem to prove there exists a unique solution $g$ to \eqref{eqn:pourg}.
\begin{lemma}
Let $f$ be a continuous function, $\phi$ that verifies \eqref{eqn:regularphi}, and $F,G$ two Riemann-integrable subsets of $[0,1]$. For all $\lambda > f_0$, there exists a unique $t_\lambda \in [0,1]$ and a unique continuous function defined on $[0,t_\lambda]$ such that for all $t < t_\lambda$, we have
\[
  g^\lambda_t > f_t \quad \mathrm{and} \quad \phi_t g^\lambda_t = \phi_0\lambda + K_s^{F,G}(f,g^\lambda,\phi).
\]
Moreover, there exists $\lambda_c$ such that for all $\lambda > \lambda_c$, $t_\lambda = 1$ and $\lambda \mapsto g^\lambda$ is continuous with respect to the uniform norm and strictly increasing.
\end{lemma}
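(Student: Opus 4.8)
The plan is to differentiate \eqref{eqn:pourg} so as to recast it as a Carath\'eodory ordinary differential equation, to which Theorem~\ref{thm:existenceuniqueness} applies. \textbf{Step 1 (reduction to an ODE).} Differentiating \eqref{eqn:pourg} in $t$ and using the definition \eqref{eqn:defineK} of $H^{F,G}_t$, the contribution $\int_0^t\dot\phi_s g_s\,ds$ cancels the term $\dot\phi_t g_t$ coming from $\frac{d}{dt}(\phi_t g_t)=\dot\phi_t g_t+\phi_t\dot g_t$; since $\phi$ is absolutely continuous and $g$ is sought continuous, a continuous $g$ with $g_t>f_t$ on $[0,t_\lambda)$ solves \eqref{eqn:pourg} if and only if it is an absolutely continuous solution of $\dot g_t=\mathcal G(t,g_t)$ with $g_0=\lambda$, where $\phi_t\mathcal G(t,x)$ is obtained from the integrand of \eqref{eqn:defineK} by deleting its first term $\dot\phi_t g_t$ and replacing $g_t$ by $x$ (the initial value $g_0=\lambda$ being forced by $H^{F,G}_0=0$). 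By \eqref{eqn:FetG} the fractional powers entering $\mathcal G$ are of nonnegative quantities, so $\mathcal G$ is well defined on the open set $U=\{(t,x):t\in[0,1],\ x>f_t\}$; moreover $1/\phi$ is bounded ($\phi$ being continuous and positive on $[0,1]$), $\dot\phi$ is bounded and measurable, $\sigma$ and $f$ are continuous, $\Psi$ is continuous by Lemma~\ref{lem:existencePsi}, and $\mathbf{1}_F,\mathbf{1}_G$ are measurable since $F,G$ are Riemann-integrable. Hence $\mathcal G$ is measurable in $t$, continuous in $x$ and locally Lipschitz in $x$ on $U$; the only obstruction to global regularity is the singular line $x=f_t$, where the factor $(x-f_t)^{-2}$ blows up.

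\textbf{Step 2 (the maximal solution).} Fix $\lambda>f_0$, so that $g_0=\lambda>f_0$. Given $(t_0,x_0)\in U$, continuity of $f$ gives $\eta,\delta>0$ such that the box $([t_0-\eta,t_0+\eta]\cap[0,1])\times[x_0-\delta,x_0+1]$ lies in $U$ and is bounded away from $\{x=f_t\}$, so there $\mathcal G$ is bounded, measurable in $t$ and Lipschitz in $x$; Theorem~\ref{thm:existenceuniqueness} then yields a unique local solution through $(t_0,x_0)$. Patching these solutions in the standard way produces a unique maximal solution $g^\lambda$ on a maximal interval $[0,t_\lambda)$, $t_\lambda\in(0,1]$ (with $t_\lambda>0$ because $g^\lambda_0>f_0$). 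A term-by-term inspection of $\mathcal G$, using $\Psi(h)=O(1+|h|)$ (which follows from Lemmas~\ref{lem:existencePsi} and~\ref{lem:bmTwosided}), gives for each fixed $\delta>0$ a bound $|\mathcal G(t,x)|\le C_\delta(1+|x|)$ on $\{(t,x)\in U:x\ge f_t+\delta\}$. Since the maximal solution must leave every compact subset of $U$, if $t_\lambda<1$ then $g^\lambda_t-f_t\to0$ as $t\uparrow t_\lambda$ (escape to $\pm\infty$ being impossible: $g^\lambda$ stays above $f$, and near $+\infty$ the bound above together with Gr\"onwall's lemma rules out a finite-time blow-up); a short argument using that near $\{x=f_t\}$ the drift $\mathcal G$ is bounded above, and strongly negative on $F\cap G$, then shows $g^\lambda$ extends continuously to $[0,t_\lambda]$ with $g^\lambda_{t_\lambda}=f_{t_\lambda}$. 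If instead the solution reaches $t=1$ while staying above $f$, put $t_\lambda=1$. Uniqueness of the pair $(t_\lambda,g^\lambda)$ follows from local uniqueness on $U$: two solutions agree as long as both stay above $f$, hence hit $f$ (or reach $t=1$) simultaneously.

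\textbf{Step 3 (large $\lambda$, monotonicity, continuity).} Suppose $t_\lambda<1$ and let $s$ be the first time $g^\lambda_s=\norm{f}_\infty+1$. Then $g^\lambda_t-f_t\ge1$ on $[0,s]$, so the bound of Step~2 with $\delta=1$ applies there and Gr\"onwall's lemma gives $g^\lambda_t\ge(\lambda+1)e^{-C_1}-1$ on $[0,s]$; taking $t=s$ yields $\norm{f}_\infty+1\ge(\lambda+1)e^{-C_1}-1$, i.e. $\lambda\le\lambda_c:=(\norm{f}_\infty+2)e^{C_1}-1$. Hence for $\lambda>\lambda_c$ no such $s$ exists, so $g^\lambda_t>\norm{f}_\infty+1>f_t$ throughout $[0,t_\lambda)$, which by Step~2 forces $t_\lambda=1$; thus $g^\lambda$ is defined on all of $[0,1]$ and stays above $\norm{f}_\infty+1$. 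On the region $\{x\ge\norm{f}_\infty+1\}$ (and bounded above, by Gr\"onwall) $\mathcal G(t,\cdot)$ is Lipschitz with an integrable Lipschitz constant $L(t)$, so the classical comparison and continuous-dependence theorems for ODEs---strictness being obtained from backward uniqueness---give, for $\lambda_c<\lambda<\lambda'$, that $f_t<g^\lambda_t<g^{\lambda'}_t$ on $[0,1]$ and $\norm{g^{\lambda'}-g^\lambda}_\infty\le e^{\int_0^1L(s)\,ds}|\lambda'-\lambda|$. Therefore $\lambda\mapsto g^\lambda$ is strictly increasing and uniformly continuous on $(\lambda_c,+\infty)$, which completes the proof.

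\textbf{Main obstacle.} The crux of the argument is the singularity of $\mathcal G$ along the moving line $x=f_t$, where Carath\'eodory's theorem does not apply: one has to localize away from that line to build and identify the maximal solution, and---for the assertion $t_\lambda=1$ when $\lambda$ is large---carry out the Gr\"onwall bootstrap of Step~3, showing that a solution issued far above $f$ never approaches the singular line and so is governed by the usual Lipschitz theory on the whole of $[0,1]$.
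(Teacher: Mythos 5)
Your proof follows essentially the same route as the paper's: recast \eqref{eqn:pourg} as a Carath\'eodory ODE (the paper keeps $h=\phi g$ as the unknown, you divide by $\phi$), apply Theorem \ref{thm:existenceuniqueness} locally away from the singular line $x=f_t$, patch to a unique maximal solution, keep the solution above the singular region for large $\lambda$ via a one-sided drift/Gr\"onwall bound to conclude $t_\lambda=1$, and get strict monotonicity from non-crossing and continuity in $\lambda$ from a uniform Lipschitz constant plus Gr\"onwall. The only deviations are cosmetic (your explicit $\lambda_c$ may be larger than the paper's $\inf\{\lambda : t_\lambda=1\}$, which still satisfies the statement, and your sketched claim that $g^\lambda_{t_\lambda}=f_{t_\lambda}$ when $t_\lambda<1$ is not needed), so the argument is sound.
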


\begin{proof}
Let $f$ be a continuous function, and $F$ be a Riemann-integrable subset of $[0,1]$, we set 
\[D = \{ (t,x) \in [0,1] \times \R : \text{if } t \in F,\text{ then } x > \phi_t f_t\},\]
and, for $(t,x) \in D$,
\begin{multline*}
  \Phi(t,x) = \frac{\dot{\phi}_t}{\phi_t} x + \mathbf{1}_{F \cap G}(t) \frac{\sigma_t^2}{(\frac{x}{\phi_t} - f_t)^2} \Psi\left( \tfrac{(\frac{x}{\phi_t}-f_t)^3}{\sigma_t^2} \dot{\phi}_t \right)\\
  +\mathbf{1}_{F^c \cap G}(t) \frac{a_1}{2^{1/3}} (\dot{\phi}_t \sigma_t)^{2/3}  + \mathbf{1}_{F \cap G^c} \left( \dot{\phi}_t(f_t - \tfrac{x}{\phi_t}) + \frac{a_1}{2^{1/3}} (-\dot{\phi}_t \sigma_t)^{2/3} \right).
\end{multline*}
For all $\lambda > f_0$, we introduce
\[
  \Gamma^\lambda = \left\{ (t,h), t \in [0,1], h \in \calC([0,t]) : \forall s \leq t, h_s = \phi_0 \lambda + \int_0^s \Phi(u,h_u)du \right\},
\]
the set of functions such that $g=\frac{h}{\phi}$ is a solution of \eqref{eqn:pourg}.

We observe that for all $[t_1,t_2] \times [x_1,x_2] \subset D$, $\Phi_{|[t_1,t_2] \times [x_1,x_2]}$ is measurable with respect to $t$, and uniformly Lipschitz with respect to $x$. As a consequence, by Theorem \ref{thm:existenceuniqueness}, for all $(t_0,x_0) \in D$, there exists $t>0$ such that there exists a unique function $h \in \calC([0,t])$ satisfying
\begin{equation}
  \label{eqn:differentialEquation}
  \forall s \leq t, h_s = x_0 + \int_{t_0}^s \Phi(u,h_u)du.
\end{equation}

Using this result, we first prove that $\Gamma^\lambda$ is a set of consistent functions. Indeed, let $(t_1,h^1)$ and $(t_2,h^2)$ be two elements of $\Gamma^\lambda$, and let $\tau = \inf\{ s \leq \min(t_1,t_2) : h^1_s \neq h^2_s \}$. We observe that if $\tau < \min(t_1,t_2)$, then by continuity of $h^1$ and $h^2$, we have $h^1_\tau = h^2_\tau$. Furthermore, $s \mapsto h^1_{\tau +s}$ and $s \mapsto h^2_{\tau+s}$ are two different functions satisfying \eqref{eqn:differentialEquation} with $t_0=\tau$ and $x_0 = h^1_\tau = h^2_\tau$, which contradicts the uniqueness of the solution. We conclude that $\tau \geq \min(t_1,t_2)$, every pair of functions in $\Gamma^\lambda$ are consistent up to the first terminal point.

We now define $t_\lambda = \max\left\{ t \in [0,1] : (t,h) \in \Gamma^\lambda\right\}$. We observe easily that $t_\lambda \in (0,1]$ by noting the existence of a local solution starting at time $0$ and position $\phi_0 \lambda$. For $s < t_\lambda$, we write $h^\lambda_s = h_s$, where $(s,h_s) \in \Gamma^\lambda$. By definition, for any $s < t_\lambda$, $h^\lambda_s = \phi_0 \lambda + \int_0^s \Phi(u,h^\lambda_u)du$. By local uniqueness of the solution, if there exists $t \in (0,1)$ such that $h^\lambda_t = h^{\lambda'}_t$, then for all $s \leq t$, $h^\lambda_s = h^{\lambda'}_s$, and in particular $\lambda = \lambda'$. We deduce that for all $\lambda < \lambda'$, if $s < \min(t_\lambda, t_\lambda')$ then $h^{\lambda}_s < h^{\lambda'}_s$.

Moreover, as there exist $C_1$ and $C_2>0$ such that for all $t \in [0,1]$ and $x > C_1$, $\Phi(t,x)< C_2$, we have $\limsup_{t \to t_\lambda} h^\lambda_t < +\infty$. Hence, if $\lambda < \lambda'$ and $t_\lambda > t_\lambda'$, if $x_0 \in \left[ \liminf_{t \to t_\lambda} h^\lambda_t, \limsup_{t \to t_\lambda} h^\lambda_t\right]$, then as $x_0 > h^\lambda_{t_{\lambda'}}$, we can extend $h^{\lambda'}$ on $[t_{\lambda'},t_{\lambda'}+\delta]$, which contradicts the fact that $t_{\lambda'}$ is maximal. We conclude that $t_{\lambda'} \geq t_\lambda$.

If $\lambda'>\lambda > \lambda_c$, the functions $h^{\lambda}$ and $h^{\lambda'}$ are defined on $[0,1]$. Moreover, the set
\[
  H^{\lambda,\lambda'} = \left\{ (t,x) \in [0,1] \times \R : x \in [h^\lambda_t, h^{\lambda'}_t] \right\},
\]
is a compact subset of $D$, that can be paved by a finite number of rectangles in $D$. As a consequence, there exists $L>0$ such that
\[
  \forall t \in [0,1], \forall x,x' : (t,x) \in H^{\lambda,\lambda'}, (t,x') \in H^{\lambda,\lambda'}, \left| \Phi(t,x) - \Phi(t,x') \right| \leq L |x-x'|.
\]
As for all $\mu \in [\lambda,\lambda']$, $(t,h^\mu_t) \in H^{\lambda, \lambda'}$, we observe that
\begin{align*}
  \left| h^{\mu}_t - h^{\mu'}_t \right|&\leq \left| \mu - \mu' \right| + \int_0^t \left| \Phi(s,h^{\mu}_s) - \Phi(s,h^{\mu'}_s) \right| ds \\
  &\leq \left| \mu - \mu' \right| + L \int_0^t \left| h^{\mu}_s - h^{\mu'}a_s \right|ds.
\end{align*}
Applying the Gronwall inequality, for all $\mu,\mu' \in [\lambda, \lambda']$, we have
\[
  \norme{h^{\mu} - h^{\mu'}}_\infty \leq  \left| \mu - {\mu'} \right| e^{L}
\]
which proves that $\lambda \mapsto h^\lambda$ is continuous with respect to the uniform norm.

Finally, there exist $C_0$ and $C_1>0$ such that for all $t \in [0,1]$ and $x \geq C_0$, we have $\Phi(t,x) \geq -C_1$. Therefore, for all $\lambda \geq C_0 + C_1 + \norme{\phi f}_\infty$, for all $t \in [0,1]$, $h^\lambda_t \geq \norme{\phi f}_\infty$, and $t_\lambda = 1$. We set $\lambda_c = \inf\{ \lambda \in \R : t_\lambda = 1\}$, and we conclude the proof by observing that $g^\lambda = \frac{h^\lambda}{\phi^\lambda}$ is the solution of \eqref{eqn:pourg}.
\end{proof}

\begin{lemma}
\label{lem:asymptoticGeneral}
Assuming \eqref{eqn:regularity}, \eqref{eqn:phibiendef}, \eqref{eqn:regularphi}, \eqref{eqn:FetG}, \eqref{eqn:integrability2phi} and \eqref{eqn:brealpath}, for any $\lambda > \max(0,\lambda_c)$, we have
\[
  \lim_{n \to +\infty} \frac{1}{n^{1/3}} \log \P\left( M_n^\phi(f,F) \geq \bar{b}^{(n)}_n + g^\lambda_1 n^{1/3} \right)= - \phi_0 \lambda.
\]
\end{lemma}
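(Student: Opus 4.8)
The plan is to prove the two matching bounds separately, the constant $-\phi_0\lambda$ arising in both from the defining equation \eqref{eqn:pourg}: since $\lambda > \lambda_c$ we have $t_\lambda = 1$, so $\phi_t g^\lambda_t - H^{F,G}_t(f,g^\lambda,\phi) = \phi_0\lambda$ for \emph{every} $t \in [0,1]$, whence
\[
  \sup_{t\in[0,1]}\bigl[H^{F,G}_t(f,g^\lambda,\phi) - \phi_t g^\lambda_t\bigr] = \inf_{t\in[0,1]}\bigl[H^{F,G}_t(f,g^\lambda,\phi) - \phi_t g^\lambda_t\bigr] = -\phi_0\lambda,
\]
and the same identity persists for $g^\mu$ with $\mu$ near $\lambda$. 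These two quantities are exactly the exponents produced by the branching estimates of Section~\ref{sec:path}, so the result should drop out once the right inclusions of events are set up.

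\textbf{Upper bound.} I would run a first-moment (union) argument through the frontier estimate, Lemma~\ref{lem:estimate_upperfrontier}. The point is that on the event $\{M_n^\phi(f,F) \geq \bar b^{(n)}_n + g^\lambda_1 n^{1/3}\}$ there is an individual $u$ of generation $n$ in $\calW_n^\phi(f,F)$ ending above $\bar b^{(n)}_n + g^\lambda_1 n^{1/3}$; since the root starts strictly below the frontier $g^\lambda_0 n^{1/3} = \lambda n^{1/3}$ (this is where $\lambda > 0$ is used), the ancestral line of $u$ must cross $\bar b^{(n)} + g^\lambda_{\cdot/n} n^{1/3}$, and taking $k := \min\{ j \in G_n : V(u_j) - \bar b^{(n)}_j > g^\lambda_{j/n}n^{1/3}\}$ (nonempty because $1 \in G$, hence $n \in G_n$), minimality together with the selection constraint gives $V(u_j) - \bar b^{(n)}_j \in I^{(n)}_j$ for $j < k$, so $u_k \in \calA_n^{F,G}(f,g^\lambda)$. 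Thus $\{M_n^\phi(f,F) \geq \bar b^{(n)}_n + g^\lambda_1 n^{1/3}\} \subset \{A_n^{F,G}(f,g^\lambda)\geq 1\}$ (modulo an $\epsilon$-shift $\lambda \rightsquigarrow \lambda - \epsilon$ to turn non-strict into strict inequalities and to keep $f < g^{\lambda-\epsilon}$ on all of $[0,1]$), and Markov's inequality plus Lemma~\ref{lem:estimate_upperfrontier} (applicable since $G = \{K^*(b) = 0\}$) gives $\P(\cdots) \leq \E(A_n^{F,G}(f,g^{\lambda-\epsilon})) = e^{(-\phi_0(\lambda-\epsilon)+o(1))n^{1/3}}$; letting $n\to\infty$ then $\epsilon\to 0$ finishes this half.

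\textbf{Lower bound.} Here I would apply Lemma~\ref{lem:lowerbound} directly to the path built from $f$ and $g^\lambda$. Any individual counted in $\calB_n^{F,G}(f,g^\lambda,x)$ survives the selection (it stays above $\bar b^{(n)}_j + f_{j/n}n^{1/3}$ on $F_n$) and ends above $\bar b^{(n)}_n + (g^\lambda_1 - x)n^{1/3}$; writing $g^\lambda_1 - x = g^\mu_1$ with $\mu = \mu(x) < \lambda$ and $\mu(x) \uparrow \lambda$ as $x \downarrow 0$ (using continuity and strict monotonicity of $\lambda\mapsto g^\lambda$), we get the inclusion $\{\calB_n^{F,G}(f,g^\lambda,x)\neq\emptyset\} \subset \{M_n^\phi(f,F) \geq \bar b^{(n)}_n + g^\mu_1 n^{1/3}\}$, so Lemma~\ref{lem:lowerbound} yields $\liminf_n n^{-1/3}\log\P(M_n^\phi(f,F) \geq \bar b^{(n)}_n + g^\mu_1 n^{1/3}) \geq -\phi_0\lambda$. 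Taking $\lambda \downarrow \mu$ (a decreasing family with $x = g^\lambda_1 - g^\mu_1 \downarrow 0$) upgrades this to $\liminf_n \geq -\phi_0\mu$, and relabelling $\mu$ as $\lambda$ and combining with the upper bound gives the stated limit.

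The main obstacle I anticipate is the careful bookkeeping of the first-crossing decomposition in the upper bound: one must genuinely locate the crossing at a time in $G_n$ (so that $u_k$ lands in $\calA_n^{F,G}$ rather than merely "above $g^\lambda$"), verify the ancestral path lies in $I^{(n)}_\cdot$ up to that time (combining minimality with the $F_n$-selection constraint), and ensure $n \in G_n$ so the crossing is not vacuous; these rely on $1 \in G$, i.e.\ on $K^*(b)_1 = 0$ from \eqref{eqn:brealpath}. The remaining issues are routine $\epsilon$-management: passing between $\geq$ and $>$, the boundary comparison of $g^\lambda_1$ with $f_1$, and keeping $\lambda - \epsilon$ (resp.\ $\mu$) above $\max(0,\lambda_c)$ so that the identity $\phi_t g^{\,\cdot}_t - H^{F,G}_t(f,g^{\,\cdot},\phi) \equiv \phi_0\,\cdot$ remains valid on the whole interval $[0,1]$.
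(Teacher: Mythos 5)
Your proposal is correct and follows essentially the same route as the paper: the upper bound via the inclusion $\{M_n^\phi(f,F) \geq \bar b^{(n)}_n + g^\lambda_1 n^{1/3}\} \subset \{\calA_n^{F,G}(f,g^\lambda) \neq \emptyset\}$, Markov's inequality and Lemma \ref{lem:estimate_upperfrontier}, and the lower bound via $\calB_n^{F,G}(f,g^{\lambda'},g^{\lambda'}_1-g^\lambda_1)$ with $\lambda'>\lambda$, Lemma \ref{lem:lowerbound}, and letting $\lambda' \downarrow \lambda$, the common constant $-\phi_0\lambda$ coming from the defining equation \eqref{eqn:pourg} since $t_\lambda=1$. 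Your extra $\epsilon$-shift and first-crossing bookkeeping are harmless refinements of what the paper states directly.
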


\begin{proof}
To obtain an upper bound, we recall that $1 \in G$, as $K^*(b)_1=0$ by \eqref{eqn:brealpath}. Let $\lambda > \max(0,\lambda_c)$, we set $g = g^\lambda$ the unique solution of \eqref{eqn:pourg}. We observe that
\[
  \P\left(M_n^\phi(f,F) \geq \bar{b}^{(n)}_n + g^\lambda_1 n^{1/3}\right) \leq \P\left(\calA_n^{F,G}(f,g) \neq \emptyset\right) \leq \E\left(\calA_n^{F,G}(f,g)\right).
\]
Therefore, by Lemma \ref{lem:estimate_upperfrontier}, we have
\[
  \limsup_{n \to +\infty} \frac{1}{n^{1/3}} \log \P(M_n^\phi(f,F) \geq \bar{b}^{(n)}_n + g^\lambda_1 n^{1/3}) \leq \sup_{t \in [0,1]} K^{F,G}_t(f,g,\phi) - \phi_t g_t = -\phi_0\lambda.
\]

When $\lambda'>\lambda$, we have $g^{\lambda'}_1 > g_1$, and
\[
  \P\left(M_n^\phi(f,F) \geq \bar{b}^{(n)}_n + g^\lambda_1 n^{1/3}\right) \geq \P\left(\calB_n^{F,G}(f,g^{\lambda'},g^{\lambda'}_1 - g_1) \neq \emptyset \right).
\]
Consequently, using Lemma \ref{lem:lowerbound}, we have
\[
  \liminf_{n \to +\infty} \frac{1}{n^{1/3}} \log \P\left(M_n^\phi(f,F) \geq \bar{b}^{(n)}_n + g^\lambda_1 n^{1/3}\right) \geq \sup_{t \in [0,1]} K^{F,G}_t(f,g,\phi) - \phi_t g_t = -\phi_0\lambda'.
\]
As $\lambda'$ decreases to $\lambda$, we have $\displaystyle \lim_{n \to + \infty} \frac{1}{n^{1/3}} \log \P\left(M_n^\phi(f,F) \geq \bar{b}^{(n)}_n + g^\lambda_1 n^{1/3}\right) = -\lambda$.
\end{proof}

The previous lemma gives an estimate of the right tail of $M_n^\phi(f,F)$ for all $f \in \calC$ and Riemann-integrable set $F \subset [0,1]$. Note that to obtain this estimate, we do not need the assumption \eqref{eqn:breeding} of supercritical reproduction, however \eqref{eqn:brealpath} implies that
\[
  \inf_{t \in [0,1]} \liminf_{n \to +\infty} \frac{1}{n} \log \E\left[ \# \{ u \in \T^{(n)}, |u|= \floor{tn} \} \right] \geq 0,
\]
which is a weaker supercriticality condition. Assuming \eqref{eqn:breeding}, we can strengthen Lemma \ref{lem:asymptoticGeneral} to prove a concentration estimate for $M_n^\phi(f,F)$ around $\bar{b}^{(n)}_n + g^0_1 n^{1/3}$.
\begin{lemma}[Concentration inequality]
\label{lem:concentration}
Under the assumptions \eqref{eqn:breeding}, \eqref{eqn:regularity}, \eqref{eqn:phibiendef}, \eqref{eqn:regularphi}, \eqref{eqn:FetG}, \eqref{eqn:integrability2phi} and \eqref{eqn:brealpath}, if $\lambda_c>0$, then for all $\epsilon>0$, we have
\[
  \limsup_{n \to +\infty} \frac{1}{n^{1/3}} \log \P\left( \left|M_n^\phi(f,F) - \bar{b}^{(n)}_n - g^0_1 n^{1/3}\right| \geq \epsilon n^{1/3}\right) <0.
\]
\end{lemma}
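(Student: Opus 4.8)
The plan is to bound the two one-sided deviations of $M_n^\phi(f,F)$ about $\bar b^{(n)}_n+g^0_1 n^{1/3}$. Since $\lambda_c<0$, the function $g^0$ is defined on all of $[0,1]$; by the preceding lemma $\lambda\mapsto g^\lambda$ is continuous for the uniform norm and strictly increasing, and by \eqref{eqn:pourg} one has $K_t^{F,G}(f,g^\lambda,\phi)-\phi_t g^\lambda_t=-\phi_0\lambda$ for all $t\in[0,1]$ whenever $\lambda>\lambda_c$. We may assume $\epsilon>0$ is small, in particular $\epsilon<g^0_1-f_1$. For the upper deviation, pick $\lambda\in(0,-\lambda_c)$ small enough that $g^\lambda_1<g^0_1+\epsilon$, which is possible by continuity of $\lambda\mapsto g^\lambda$. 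Then $\{M_n^\phi(f,F)\ge \bar b^{(n)}_n+(g^0_1+\epsilon)n^{1/3}\}\subseteq\{M_n^\phi(f,F)\ge \bar b^{(n)}_n+g^\lambda_1 n^{1/3}\}$, so Lemma \ref{lem:asymptoticGeneral} gives directly
\[
  \limsup_{n\to+\infty}\frac{1}{n^{1/3}}\log\P\!\left(M_n^\phi(f,F)\ge \bar b^{(n)}_n+(g^0_1+\epsilon)n^{1/3}\right)\le -\phi_0\lambda<0 .
\]

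The lower deviation is the heart of the matter, and I would establish the much stronger estimate $\P(M_n^\phi(f,F)\le \bar b^{(n)}_n+(g^0_1-\epsilon)n^{1/3})\le\exp(-e^{cn^{1/3}})$ for $n$ large, by a restart argument. The first ingredient is that a single BRWtie reaches the target with only a mildly small probability: fixing $\lambda>0$ small and applying Lemma \ref{lem:lowerbound} to the path $(f,g^\lambda)$ with $x=g^\lambda_1-g^0_1+\epsilon\in(0,g^\lambda_1-f_1)$, the displayed identity gives $\liminf_n n^{-1/3}\log\P(\calB_n^{F,G}(f,g^\lambda,x)\ne\emptyset)\ge -\phi_0\lambda$; since $\calB_n^{F,G}(f,g^\lambda,x)\ne\emptyset$ forces $M_n^\phi(f,F)\ge \bar b^{(n)}_n+(g^0_1-\epsilon)n^{1/3}$ and $\lambda>0$ is arbitrary, this event has probability at least $e^{-\eta n^{1/3}}$ for every fixed $\eta>0$ and $n$ large. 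The second ingredient, where the strong supercriticality \eqref{eqn:breeding} enters, is a branching estimate: for some small $\eta'>0$ and a scale $m_n=o(n^{1/3})$, with probability at least $1-e^{-c_1 n^{1/3}}$ there are at least $e^{\eta' n^{1/3}}$ individuals $u$ at generation $m_n$ satisfying $V(u_j)\ge \bar b^{(n)}_j+f_{j/n}n^{1/3}$ for all $j\in F_n$ with $j\le m_n$, and $V(u)\ge \bar b^{(n)}_{m_n}-\delta'n^{1/3}$, where $\delta'<\min(|f_0|,\epsilon/2)$ is an arbitrarily small fixed constant. Conditionally on $\calF_{m_n}$ the subtrees rooted at these individuals are independent; each is, by the Markov property, a BRWtie of length $n-m_n$ with an $o(1)$-perturbed time-environment, started within $\delta'n^{1/3}$ of the optimal path at the vanishing time $m_n/n$, so by a spatially shifted version of the single-tree estimate each of them produces, independently, a descendant at generation $n$ above $\bar b^{(n)}_n+(g^0_1-\epsilon)n^{1/3}$ while staying above the barrier on $F_n$, with probability at least $e^{-\eta n^{1/3}}$, for any fixed $\eta>0$. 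Taking $\eta<\eta'$, the probability that all these subtrees fail is at most $(1-e^{-\eta n^{1/3}})^{e^{\eta'n^{1/3}}}\le\exp(-e^{(\eta'-\eta)n^{1/3}})$; adding the $e^{-c_1 n^{1/3}}$ probability of failure of the first phase, and then combining with the upper bound, yields $\P(|M_n^\phi(f,F)-\bar b^{(n)}_n-g^0_1 n^{1/3}|\ge\epsilon n^{1/3})\le e^{-cn^{1/3}}$ for $n$ large, which is the assertion.

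The main obstacle is the branching estimate of the second ingredient. Assumption \eqref{eqn:breeding} only guarantees two children above the parent's position plus $p$ with a probability $q>0$ that need not exceed $1/2$, so the naive ``progeny that always branches above level $p$'' need not form a supercritical subtree. I expect to circumvent this by a two-step bootstrap: first, using $\P(L_t=\emptyset)=0$ together with the grouping of blocks of generations, grow the number of suitably placed individuals from $O(1)$ to a polynomial size with probability bounded away from $0$; then, over a further block of order $n^{1/3}$ generations, a Chernoff estimate passes from polynomial size to $e^{\eta'n^{1/3}}$ with probability $1-e^{-c_1 n^{1/3}}$. Throughout, the crude bound $V(u_j)\ge pj$ on the selected line, valid on a block of $o(n^{1/3})$ generations, keeps these individuals within $o(n^{1/3})$ of the optimal path and — because $f(0)<0$ — above the barrier $f$. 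The remaining point, that the estimates of Section \ref{sec:maxdis} are insensitive to replacing $n$ by $n-m_n$ and to the $o(1)$ shift of the time-environment in the subtrees, is routine and only needs to be recorded.
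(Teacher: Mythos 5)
Your strategy is the same as the paper's: the upper deviation is a direct consequence of the first-moment estimates (you route it through Lemma \ref{lem:asymptoticGeneral} with a small $\lambda>0$, the paper applies Lemma \ref{lem:estimate_upperfrontier} to $g^0+\epsilon$; both are fine), and the lower deviation is proved by a restart argument: use \eqref{eqn:breeding} to produce $e^{cn^{1/3}}$ individuals at an early generation from which independent subtrees are launched, show each subtree reaches the (slightly lowered) target with probability $e^{-o(n^{1/3})}$ via the second-moment bound of Lemma \ref{lem:lowerbound}, and conclude that the probability that all subtrees fail is doubly exponentially small. Your ``first ingredient'' is correct (your $x=g^\lambda_1-g^0_1+\epsilon$ lies in $(0,g^\lambda_1-f_1)$, the range in which $\calB_n$ is defined), and your reading of the hypothesis as $\lambda_c<0$, so that $g^0$ exists and small $\lambda>0$ are admissible in Lemma \ref{lem:asymptoticGeneral}, is the intended one.

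The gap is in the ``second ingredient''. First, its parameters are inconsistent: at a generation $m_n=o(n^{1/3})$ the population is in general (e.g.\ for binary branching) at most $2^{m_n}=e^{o(n^{1/3})}$, so you cannot have $e^{\eta'n^{1/3}}$ individuals there for a fixed $\eta'>0$; your own elaboration then switches to a block of order $n^{1/3}$ generations, in which case the crude bound $V(u_j)\ge pj$ leaves the selected individuals at distance of order $n^{1/3}$, not $o(n^{1/3})$, below $\bar{b}^{(n)}$ (repairable by taking $m_n=\gamma n^{1/3}$ with $\gamma(b_0-p)<\min(|f_0|,\epsilon/2)$, but not as written). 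Second, and more seriously, your bootstrap produces the required population only ``with probability bounded away from $0$'' at its first stage. That is not enough: the whole first phase must fail with probability at most $e^{-cn^{1/3}}$, otherwise the final bound is capped by a constant; and even repairing this by retrying over successive blocks of length $\log n$ (needed to reach polynomial size with bounded offspring) only yields a failure probability $e^{-cn^{1/3}/\log n}$, which does not make the limsup in the statement strictly negative. The paper's answer to the very issue you raise (that $\rho$ may be $\le 1/2$) is not a bootstrap: since $\P(L_t=\emptyset)=0$, every individual always contributes at least one child, so the number of individuals at position at least $kp$ is coupled with a Galton--Watson process with offspring law $2$ w.p.\ $\rho$ and $1$ w.p.\ $1-\rho$, which is supercritical and never dies out whatever the value of $\rho$; its lower-deviation bound $\P(Z_k\le e^{\alpha k})\le e^{-ck}$ at $k=\floor{\eta n^{1/3}}$ supplies exactly the $e^{-cn^{1/3}}$ error term you are missing. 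Note also that the paper takes $\eta$ close to $\epsilon/(b_0-p)$, i.e.\ it allows the restart individuals to sit almost $\epsilon n^{1/3}$ below the path and compensates by the fact that the target is $\epsilon n^{1/3}$ below $g^0_1n^{1/3}$, so each subtree only needs to exceed $g^{\lambda_\delta}_1$ with $\lambda_\delta\to0$; this removes any need to keep the restarted individuals within a prescribed small distance $\delta'n^{1/3}$ of the optimal path. Without an estimate of the Galton--Watson type, your lower-deviation argument does not close.
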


\begin{proof}
We set $g = g^0$ the solution of \eqref{eqn:pourg} for $\lambda = 0$. We observe that for all $\epsilon>0$ and $t \in [0,1]$, we have $H_t^{F,G}(f,g+\epsilon,\phi) - \phi_t (g_t + \epsilon) < 0$. Consequently, for any $\epsilon>0$, we have
\begin{align*}
  \limsup_{n \to +\infty} \frac{1}{n^{1/3}} \log \P\!\left( M_n^\phi(f,F) \geq \bar{b}^{(n)}_n + (g_1+\epsilon)n^{1/3} \right)
  &\leq \limsup_{n \to +\infty} \frac{1}{n^{1/3}} \log \P\!\left( \calA^{F,G\cup\{1\}}_n(f,g+\epsilon) \neq \emptyset  \right)\\
  &\leq \sup_{t \in [0,1]} H_t^{F,G}(f,g+\epsilon,\phi) - \phi_t (g_t+\epsilon)<0,
\end{align*}
by Lemma \ref{lem:estimate_upperfrontier}.

To obtain a lower bound, we need to strengthen the tail estimate of $M_n^\phi(f,F)$. Using \eqref{eqn:breeding}, the size of the population in the branching random walk increases at exponential rate. We set $p \in \R$ and $\rho>0$ such that $\rho = \inf_{t \in [0,1]} \P(\# \{ \ell \in L_t : \ell \geq p \} \geq 2)$. We can assume, without loss of generality, that $p < b_0$. We couple the BRWtie with a Galton-Watson process $(Z_n, n \geq 0)$ with $Z_0=1$, and reproduction law defined by $\P(Z_1 = 2) = 1 - \P(Z_1=1) = \rho$; in a way that
\[
  \forall n \in \N, \forall k \leq n, \# \{ u \in \T^{(n)} : |u|=k, V(u) \geq kp \} \geq Z_k.
\]
There exists $\alpha>0$ such that $\limsup_{n \to +\infty} \frac{1}{n} \log \P\left( Z_n \leq e^{\alpha n} \right) < 0$, by standard Galton-Watson theory (see, e.g. \cite{FlW07}). Consequently, with high probability, there are at least $e^{\alpha k}$ individuals to the right of $pk$ at any time $k \leq n$.

Let $\epsilon>0$ and $\eta > 0$, we set $k =\floor{\eta n^{1/3}}$. Applying the Markov property at time $k$, we have
\[
  \P\left(M_n \leq m_n - \epsilon n^{1/3} \right) \leq \P\left(Z_k \leq e^{\alpha k} \right) + \left[1 - \P_{k, kp}\left(M_{n-k} \geq m_n - \epsilon n^{1/3})\right) \right]^{e^{\alpha k}}.
\]
As a consequence
\begin{multline*}
  \limsup_{n \to +\infty} \frac{1}{n^{1/3}} \log \P\left( M_n \leq m_n - \epsilon n^{1/3} \right) \\
  \leq \max\left\{ \limsup_{n \to +\infty} \frac{1}{n^{1/3}} \log \P\left( Z_k \leq e^{\alpha k} \right), -\liminf_{n \to +\infty} \frac{e^{\alpha k}}{n^{1/3}} \P_{k,kp} \left( M_{n-k} \geq m_n - \epsilon n^{1/3} \right)\right\}.
\end{multline*}
We now prove that
\begin{equation}
  \label{eqn:final}
  \liminf_{n \to +\infty} e^{\alpha \eta n^{1/3}} \P_{k,0} \left( M_{n-k} \geq \bar{b}^{(n)}_n + (g_1-\epsilon) n^{1/3}-kp \right)>0.
\end{equation}

Let $\delta > 0$, we choose $\eta = \tfrac{\epsilon}{b_0-p} + \delta$, we have
\begin{multline*}
  \liminf_{n \to +\infty} \frac{1}{n^{1/3}} \log \P_{k,0} \left( M_{n-k} \geq \bar{b}^{(n)}_n + (g_1-\epsilon) n^{1/3}-kp \right)\\
  = \liminf_{n \to +\infty} \frac{1}{n^{1/3}} \log \P_{k,0} \left( M_{n-k} \geq \bar{b}^{(n)}_n - \bar{b}^{(n)}_k + g_1+\delta n^{1/3} \right) \leq  -\phi_0 \lambda_\delta,
\end{multline*}
by applying Lemma \ref{lem:asymptoticGeneral}, where $\lambda_\delta$ is the solution of the equation $g^{\lambda_\delta}_1=\delta$. Here, we implicitly used the fact that the estimate obtained in Lemma \ref{lem:asymptoticGeneral} is true uniformly in $k \in [0,\eta n^{1/3}]$. This is due to the fact that this is also true for Theorem \ref{thm:general_rw}. Finally, letting $\delta \to 0$, we have $\lambda_\delta \to 0$, hence
\[
  \liminf_{n \to +\infty}  \frac{e^{\alpha k}}{n^{1/3}} \P_{k,kp} \left( M_{n-k} \geq m_n - \epsilon n^{1/3} \right) = +\infty,
\]
which concludes the proof.
\end{proof}

\subsubsection*{Proof of Theorem \ref{thm:main}}

We denote by $a$ the solution of \eqref{eqn:existence_max} and by $\theta$ the function defined by $\theta_t = \partial_a \kappa^*_t(a_t)$. We assume that \eqref{eqn:regularity} is verified, i.e. $\theta$ is absolutely continuous with a Riemann-integrable derivative $\dot{h}$. For all $n \in \N$ and $k \leq n$, we set $\bar{a}^{(n)}_k = \sum_{j=1}^k a_{j/n}$. We recall that $l^* = \frac{\alpha_1}{2^{1/3}} \int_0^1 \frac{(\dot{\theta}_s \sigma_s)^{2/3}}{\theta_s} ds$, where $\alpha_1$ is the largest zero of the Airy function of first kind.

\begin{proof}[Proof of Theorem \ref{thm:main}]
We observe that with the previous notation, we have $M_n = M^\theta_n(-l^*-1,\emptyset)$. By Proposition \ref{prop:regularity}, $a$ satisfies \eqref{eqn:brealpath}, $\theta$ is non-decreasing and increases only on $G$. As a consequence, \eqref{eqn:FetG} is verified, and \eqref{eqn:differential} can be written, for $\lambda \in \R$,
\begin{equation}
  \label{eqn:differentialPourmax}
  \forall t \in [0,1], \theta_t g^\lambda_t = \theta_0 \lambda + \int_0^t \dot{\theta}_s g^\lambda_s + \frac{\alpha_1}{2^{1/3}} (\dot{\theta}_s \sigma_s)^{2/3} ds.
\end{equation}
By integration by parts, $g^\lambda_t = \lambda + \int_0^t \frac{\alpha_1}{2^{1/3}} (\dot{\theta}_s \sigma_s)^{2/3}ds$. In particular, $g^\lambda_1 = \lambda + l^*$. As a consequence, applying Lemma \ref{lem:asymptoticGeneral} to $\lambda = l$, we have
\[
  \lim_{n \to +\infty} \frac{1}{n^{1/3}} \log \P(M_n \geq \bar{a}^{(n)}_n + (l^*+l)n^{1/3}) = -\theta_0 l.
\]
Similarly, using Lemma \ref{lem:concentration}, for any $\epsilon>0$,
\[
  \limsup_{n \to +\infty} \frac{1}{n^{1/3}} \log \P\left( \left|M_n - \bar{a}^{(n)}_n - l^* n^{1/3} \right| \geq \epsilon n^{1/3} \right) < 0.
\]
As $a$ is Lipschitz, we have $\displaystyle \bar{a}^{(n)}_n = \sum_{j=1}^n a_{j/n} = n \int_0^1 a_s ds + O(1)$, concluding the proof.
\end{proof}

Mixing Lemma \ref{lem:specialcase} and Theorem \ref{thm:main}, we obtain explicit asymptotic for the maximal displacement, in some particular cases. If $\bar{\theta}$ is non-decreasing, then $\theta = \bar{\theta}$. As a result, setting
\[
  \bar{l}^* = \frac{\alpha_1}{2^{1/3}} \int_0^1 \frac{\left( \dot{\bar{\theta}}_s \sigma_s \right)^{2/3}}{\bar{\theta}_s}ds,
\]
we have $M_n = n \int_0^1 v_s ds + \bar{l}^* n^{1/3} + o(n^{1/3})$ in probability.

\begin{remark}
Let $\sigma \in \calC^2$ be a positive decreasing function. For $t \in [0,1]$, we define the point process $L_t = (\ell^1_t,\ell^2_t)$ with $\ell^1_t,\ell^2_t$ two i.i.d. centred Gaussian random variables with variance $\sigma_t$. We consider the BRWtie with environment $(\calL_t, t \in [0,1])$. We have $\bar{\theta}_t = \frac{\sqrt{2 \log 2}}{\sigma_t}$, which is increasing. Consequently, by Theorem \ref{thm:main} and Lemma \ref{lem:specialcase}
\[ M_n = n \sqrt{2 \log 2} \int_0^1 \sigma_s ds + n\frac{\alpha_1}{2^{1/3}(2 \log 2)^{1/6}} \int_0^1 (-\sigma'_s)^{2/3} \sigma_s^{1/3} ds,\]
which is consistent with the results obtained in \cite{MaZ13} and \cite{NRR13}.
\end{remark}

If $\bar{\theta}$ is non-increasing, then $\theta$ is constant. Applying Theorem \ref{thm:main}, we have $M_n = n v^* + o(n^{1/3})$.

\subsection{Consistent maximal displacement with respect to a given path}
\label{subsec:cmd}

Let $\phi$ be a continuous positive function, we write $b_t = \partial_\theta \kappa_t(\phi_t)$, and we assume that $b$ satisfies \eqref{eqn:brealpath}. We take interest in the consistent maximal displacement with respect to the path with speed profile $b$, defined by
\begin{equation}
  \label{eqn:defineCmdGeneral}
  \Lambda^\phi_n = \min_{|u|=n} \max_{k \leq n} \left( \bar{b}^{(n)}_k - V(u_k) \right).
\end{equation}
In other words, this is the smallest number such that, killing every individual at generation $k$ and in a position below $\bar{b}^{(n)}_k - \Lambda^\phi_n$, an individual remains alive until time $n$.

We set, for $u \in \T$, $\Lambda^\phi(u)= \max_{k \leq |u|} \left( \bar{b}^{(n)}_k - V(u_k) \right)$ the maximal delay of individual $u$. In particular, with the definition of Section \ref{sec:path}, for any $\mu \geq 0$, we have
\begin{equation}
  \label{eqn:important}
  M^\phi_n(-\mu, [0,1]) = \max\left\{ V(u), |u|=n, \Lambda^\phi(u) \leq \mu n^{1/3} \right\},
\end{equation}
in particular $M^\phi_n(-\mu, [0,1]) > - \infty \iff \Lambda^\phi_n \leq \mu n^{1/3}$.

For $\lambda,\mu > 0$, we denote by $g^{\lambda,\mu}$ the solution of
\begin{equation}
  \label{eqn:differentialCmd}
  \phi_t g_t = \phi_0\lambda + \int_0^t \dot{\phi}_s g_s + \ind{K^*(b)_s = 0} \frac{\sigma_s^2}{(g_s + \mu)^2} \Psi\left( \tfrac{(g_s + \mu)^3}{\sigma_s^2} \dot{\phi}_s \right) ds.
\end{equation}
Using the structure of this differential equation, for any $\lambda, \mu > 0$, we have $g^{\lambda,\mu}= g^{\lambda + \mu,0}- \mu$. Indeed, let $\lambda > 0$ and $\mu>0$, and let $g = g^{\lambda +\mu,0}-\mu$. By definition, the differential equation satisfied by $g+\mu$ is
\begin{align*}
  \phi_t (g_t + \mu) &= \phi_0( \lambda + \mu) + \int_0^t \dot{\phi}_s (g_s + \mu) +  \ind{K^*(b)_s = 0} \frac{\sigma_s^2}{(g_s + \mu)^2} \Psi\left( \tfrac{(g_s + \mu)^3}{\sigma_s^2} \dot{\phi}_s \right)ds\\
  \phi_t g_t &= \phi_0\lambda + \int_0^t \dot{\phi}_s g_s + \ind{K^*(b)_s = 0} \frac{\sigma_s^2}{(g_s + \mu)^2} \Psi\left( \tfrac{(g_s + \mu)^3}{\sigma_s^2} \dot{\phi}_s \right) ds,
\end{align*}
and by uniqueness of the solution of the equation, we have $g = g^{\lambda, \mu}$.

For $\lambda > 0$, we set $g^\lambda = g^{\lambda,0}$. We observe that if $\{\dot{\phi}_t > 0\} \subset \{ K^*(b)_t = 0\}$, then, for any $\lambda \geq 0$, $g^\lambda$ is a decreasing function. As $\lambda \mapsto g^\lambda$ is strictly increasing and continuous, there exists a unique non-negative $\lambda^*$ that verifies
\begin{equation}
  \label{eqn:definelambdastar}
  g^{\lambda^*}_1 = 0.
\end{equation}
Alternatively, $\lambda^*$ can be defined as $\tilde{g}_1/\phi_0$, where $\tilde{g}$ is the unique solution of the differential equation
\[
  \forall t \in [0,1),  \phi_t \tilde{g}_t = - \int_t^1 \dot{\phi}_s g_s + \ind{K^*(b)_s = 0} \frac{\sigma_s^2}{\tilde{g}_s^2} \Psi\left( \tfrac{\tilde{g}_s^3}{\sigma_s^2} \dot{\phi}_s \right) ds.
\]

\begin{lemma}[Asymptotic of the consistent maximal displacement]
\label{lem:asymptoticCmd}
Under the assumptions \eqref{eqn:regularity}, \eqref{eqn:phibiendef}, \eqref{eqn:regularphi}, \eqref{eqn:FetG}, \eqref{eqn:integrability2phi} and \eqref{eqn:brealpath}, for any $\lambda < \lambda^*$, we have
\[
  \lim_{n \to +\infty} \frac{1}{n^{1/3}} \log \P\left( \Lambda^\phi_n \leq (\lambda^*-\lambda) n^{1/3} \right)= - \phi_0\lambda.
\]
Moreover, for any $\epsilon>0$,
\[
  \limsup_{n \to +\infty} \frac{1}{n^{1/3}} \log \P\left( \left| \Lambda^\phi_n - \lambda^* n^{1/3} \right| \geq \epsilon n^{1/3} \right) < 0.
\]
\end{lemma}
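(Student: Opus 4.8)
The plan is to transfer the tail and concentration estimates for $M_n^\phi(f,F)$ obtained in Lemmas \ref{lem:asymptoticGeneral} and \ref{lem:concentration} to the consistent maximal displacement, via the identity \eqref{eqn:important}. The starting point is the observation that for any $\mu\geq 0$,
\[
  \left\{ \Lambda^\phi_n \leq \mu n^{1/3} \right\} = \left\{ M_n^\phi(-\mu,[0,1]) > -\infty \right\},
\]
so the event whose probability we want to estimate is exactly the event that the branching random walk with selection below the barrier $\bar b^{(n)}_k - \mu n^{1/3}$ survives until generation $n$. Thus we need a survival criterion for $M_n^\phi(f,F)$ with $f\equiv -\mu$ and $F=[0,1]$, which is governed by the differential equation \eqref{eqn:pourg}, i.e. here by \eqref{eqn:differentialCmd} with $G=\{K^*(b)=0\}$ (using \eqref{eqn:brealpath} and \eqref{eqn:FetG}, which force $\{\dot\phi>0\}\subset G$).

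First I would make the reduction $g^{\lambda,\mu}=g^{\lambda+\mu,0}-\mu$ explicit (this is already carried out in the excerpt just before the lemma): survival with barrier $-\mu$ and "target excess" $\lambda$ at time $1$ corresponds to $g^{\lambda+\mu}_1 \geq 0$, and the threshold is precisely $\lambda+\mu=\lambda^*$, by the definition \eqref{eqn:definelambdastar} of $\lambda^*$ as the unique value with $g^{\lambda^*}_1=0$ and the strict monotonicity of $\lambda\mapsto g^\lambda_1$. Concretely: for the upper bound, fix $\lambda<\lambda^*$ and set $\mu=\lambda^*-\lambda>0$. Then
\[
  \P\!\left(\Lambda^\phi_n\leq \mu n^{1/3}\right)
  = \P\!\left(M_n^\phi(-\mu,[0,1])>-\infty\right)
  \leq \P\!\left(M_n^\phi(-\mu,[0,1])\geq \bar b^{(n)}_n + g^{0,\mu}_1 n^{1/3} - C\right)
\]
for a suitable constant (or, more cleanly, bound the survival event by the event that \emph{some} individual crosses the frontier $g$-curve, i.e. by $\P(\calA_n^{F,G}(-\mu,g)\neq\emptyset)$ for an appropriate $g$), and apply Lemma \ref{lem:estimate_upperfrontier} together with the identity $\sup_t [H_t^{F,G}-\phi_t g_t] = -\phi_0\lambda$ coming from \eqref{eqn:pourg}. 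Since $g^{\lambda,\mu}_1 = g^{\lambda+\mu,0}_1-\mu = g^{\lambda^*}_1-\mu = -\mu < 0$, any individual realising $\Lambda^\phi(u)\le \mu n^{1/3}$ has $V(u_n) \ge \bar b^{(n)}_n-\mu n^{1/3} = \bar b^{(n)}_n + g^{\lambda,\mu}_1 n^{1/3}$, so $M_n^\phi(-\mu,[0,1])>-\infty$ forces the frontier at level $g^{\lambda,\mu}+\text{something}$ to be crossed — this gives $\limsup \frac{1}{n^{1/3}}\log\P(\Lambda^\phi_n\le(\lambda^*-\lambda)n^{1/3})\le -\phi_0\lambda$. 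For the matching lower bound, pick $\lambda'\in(\lambda,\lambda^*)$, set $\mu'=\lambda^*-\lambda'\in(0,\mu)$, and note
\[
  \P\!\left(\Lambda^\phi_n\leq(\lambda^*-\lambda)n^{1/3}\right)
  \ \geq\ \P\!\left(\calB_n^{F,G}(-\mu', g^{\lambda',\mu'}, x)\neq\emptyset\right)
\]
for an appropriate small $x>0$ so that the target interval lands above $\bar b^{(n)}_n-\mu n^{1/3}$; then Lemma \ref{lem:lowerbound} gives $\liminf\ge \inf_t[H_t^{F,G}-\phi_t g_t] = -\phi_0\lambda'$, and letting $\lambda'\downarrow\lambda$ closes the first assertion.

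For the concentration statement I would run the same dichotomy around $\mu=\lambda^*$. The upper tail ($\Lambda^\phi_n\geq(\lambda^*+\epsilon)n^{1/3}$, i.e. no survivor above barrier $-(\lambda^*+\epsilon)$... actually this is the \emph{easy} direction: fewer killed individuals means easier survival, so $\P(\Lambda^\phi_n\ge(\lambda^*+\epsilon)n^{1/3})$ requires showing that even with the \emph{more generous} barrier $-(\lambda^*+\epsilon)$ the system \emph{fails} to keep anyone close enough — wait, that is the direction needing a first-moment frontier estimate, Lemma \ref{lem:estimate_upperfrontier}, exactly as in Lemma \ref{lem:concentration}) is handled by observing $H_t^{F,G}(-\lambda^*-\epsilon-\delta,\,g^{0}+\delta',\phi)-\phi_t(g_t^0+\delta')<0$ uniformly for small shifts, so crossing is exponentially unlikely. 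The lower tail ($\Lambda^\phi_n\le(\lambda^*-\epsilon)n^{1/3}$) is the genuinely harder one: here I would import wholesale the population-growth argument from the proof of Lemma \ref{lem:concentration} — use \eqref{eqn:breeding} to couple with a supercritical Galton–Watson process to guarantee $e^{\alpha k}$ individuals near $\bar b^{(n)}_k$ at an early time $k=\lfloor\eta n^{1/3}\rfloor$, apply the Markov property, and use the uniform-in-starting-point version of Lemma \ref{lem:asymptoticCmd}'s first half (which itself holds uniformly because Theorem \ref{thm:general_rw} does) to get a positive probability of survival with delay $\le(\lambda^*-\epsilon)n^{1/3}$ from each such individual; then $[1-p]^{e^{\alpha k}}\to 0$ super-exponentially in $n^{1/3}$. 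The main obstacle is bookkeeping: one must check that shrinking the barrier by the extra $kp$ coming from the Galton–Watson coupling, and the $o(n^{1/3})$ slack from replacing $\bar b^{(n)}$ by $n\int_0^1 b_s\,ds$, do not spoil the identification of the exponent; this is exactly the $\eta=\epsilon/(b_0-p)+\delta$ tuning done in Lemma \ref{lem:concentration}, and the same choice works here provided $p<b_0$ is chosen in the coupling. With these pieces the lemma follows, and Theorem \ref{thm:cmd} is then the special case $\phi=\theta$, $b=a$, for which \eqref{eqn:differentialCmd} reduces (via \eqref{eqn:regularitytheta}, \eqref{eqn:regularityenergy}) to the equation defining $\lambda^*$ in Section \ref{subsec:cmd}.
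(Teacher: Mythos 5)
Your handling of the first limit is essentially the paper's argument: using \eqref{eqn:important} and the reduction $g^{\lambda,\mu}=g^{\lambda+\mu,0}-\mu$, you sandwich the survival event between a frontier-crossing event (controlled by Lemma \ref{lem:estimate_upperfrontier}) and a corridor event (controlled by Lemmas \ref{lem:firstorder} and \ref{lem:lowerbound}), with an $\epsilon$-shifted upper curve built from $g^{\lambda^*}$; this is exactly what the paper does (it does not even pass through Lemma \ref{lem:asymptoticGeneral}), and your loose bookkeeping of the terminal window $x$ and the ``$+\,$something'' shift is harmless.

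The concentration part, however, has the two tails swapped, and as written both halves fail. The event $\{\Lambda^\phi_n\ge(\lambda^*+\epsilon)n^{1/3}\}$ means that \emph{no} individual stays above the generous barrier $\bar{b}^{(n)}_\cdot-(\lambda^*+\epsilon)n^{1/3}$, i.e.\ extinction of the selected process; a first-moment estimate of the type ``$H^{F,G}_t(f,g,\phi)-\phi_t g_t<0$, so crossing is exponentially unlikely'' only bounds from above the probability of an \emph{existence} event and says nothing about extinction. This is precisely the tail that needs the supercriticality assumption \eqref{eqn:breeding} and the Galton-Watson population-growth argument, or, as the paper does in one line, the identity \eqref{eqn:important}, which rewrites it as $\P\left(M_n^\phi(-\lambda^*-\epsilon,[0,1])=-\infty\right)$, followed by Lemma \ref{lem:concentration}. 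Conversely, $\{\Lambda^\phi_n\le(\lambda^*-\epsilon)n^{1/3}\}$ is an existence event for a too-restrictive barrier: it is exponentially small and is already covered by the first statement of the lemma applied with $\lambda=\epsilon$ (decay rate $\phi_0\epsilon$). Your plan to run the Galton-Watson coupling on this tail cannot work: it would require each of the $e^{\alpha\eta n^{1/3}}$ early individuals to achieve delay at most $(\lambda^*-\epsilon)n^{1/3}$ with probability bounded away from $0$, whereas that per-individual probability is itself of order $e^{-c n^{1/3}}$ (again by the first statement), and the conclusion you are aiming at, $\P\left(\Lambda^\phi_n>(\lambda^*-\epsilon)n^{1/3}\right)\to 0$, is actually false -- it would contradict the first statement. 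Swapping the two ingredients (first statement for the lower tail; \eqref{eqn:important} together with Lemma \ref{lem:concentration}, where the Galton-Watson argument already lives, for the upper tail) repairs the proof and recovers the paper's argument.
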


\begin{proof}
Let $\lambda \in (0,\lambda^*)$, we set $g_t = g^{\lambda^*}_t$. Note first that $\Lambda^\phi_n \leq \lambda n^{1/3}$ if and only if there exists an individual $u$ alive at generation $n$ such that $\Lambda^\phi(u) \leq \lambda n^{1/3}$. To bound this quantity from above, we observe that such an individual either crosses $\bar{b}^{(n)}_{.} + n^{1/3} (g_{./n}-\lambda+\epsilon)$ at some time before $n$, or stays below this boundary until time $n$. Consequently, for any $\epsilon>0$, we have
\begin{multline*}
  \P\left( \calB_n^{[0,1],G}(-\lambda, g-\lambda + \epsilon, -\lambda \right)\\ \leq \P\left( \Lambda^\phi_n \leq \lambda n^{1/3} \right) \leq \P\left(\calA_n^{[0,1],G}(-\lambda,g-\lambda+\epsilon) \neq \emptyset\right) + \P\left( \calB_n^{[0,1],G}(-\lambda, g-\lambda + \epsilon, -\lambda \right).
\end{multline*}
Using Lemma \ref{lem:estimate_upperfrontier}, Lemma \ref{lem:firstorder} and Lemma \ref{lem:lowerbound}, and letting $\epsilon \to 0$, we conclude
\begin{equation*}
  \lim_{n \to +\infty} \frac{1}{n^{1/3}} \log \P\left( \Lambda^\phi_n \leq \lambda n^{1/3} \right) = -\phi_0 (\lambda^* - \lambda).
\end{equation*}

Finally, to bound $\P(\Lambda^\phi_n \geq (\lambda^*+\epsilon)n^{1/3})$ we apply \eqref{eqn:important}, and we get
\[
  \P(\Lambda^\phi_n \geq (\lambda^*+\epsilon)n^{1/3}) = \P(M_n(-\lambda^*-\epsilon,[0,1]) = -\infty).
\]
By Lemma \ref{lem:concentration}, we conclude that $\displaystyle \limsup_{n \to +\infty} \frac{1}{n^{1/3}} \log \P(\Lambda^\phi_n \geq (\lambda^*+\epsilon)n^{1/3}) < 0$.
\end{proof}

\subsubsection*{Proof of Theorem \ref{thm:cmd}}

We now prove Theorem \ref{thm:cmd}, applying Lemma \ref{lem:asymptoticCmd} to $\Lambda_n = \Lambda_n^\theta$.
\begin{proof}[Proof of Theorem \ref{thm:cmd}]
We denote by $G = \{ t \in [0,1] : K^*(a)_t=0\}$. For $\lambda > 0$, we write $g^\lambda$ for the solution of
\begin{equation}
   \label{eqn:lastdifferentialCmd}
  \theta_t g_t = \theta_0\lambda + \int_0^t \dot{\theta}_s g_s + \mathbf{1}_G(s) \frac{\sigma_s^2}{g_s^2} \Psi\left( \tfrac{g_s^3}{\sigma_s^2} \dot{\theta}_s \right) ds,
\end{equation}
and $\lambda^*$ for the unique non-negative real number that verifies $g^{\lambda^*}_1 = 0$. By Proposition \ref{prop:regularity}, $a$ satisfies \eqref{eqn:brealpath} and $\{\theta_t > 0\} \subset \{ K^*(b)_t = 0 \}$. Applying Lemma \ref{lem:asymptoticCmd}, for any $\lambda \in (0,\lambda^*)$, we have $\lim_{n \to +\infty} \frac{1}{n^{1/3}} \log \P\left[ \Lambda_n^\theta \leq (\lambda^* - \lambda)n^{1/3} \right] = - \theta_0 \lambda$. Moreover, for any $\epsilon>0$, we have $\limsup_{n \to +\infty} \frac{1}{n^{1/3}} \log \P\left[ \left| \Lambda_n^\phi - \lambda^* n^{1/3} \right| > \epsilon n^{1/3} \right] < 0$.
\end{proof}

In a similar way, we can compute the consistent maximal displacement with respect to the path with speed profile $v$, which is $\Lambda^{\bar{\theta}}_n$. We denote by $\bar{g}^\lambda$ the solution of the equation
\[
  \bar{\theta}_t g_t = \bar{\theta}_0\lambda + \int_0^t \dot{\bar{\theta}}_s g_s + \frac{\sigma_s^2}{g_s^2} \Psi\left( \tfrac{g_s^3}{\sigma_s^2} \dot{\bar{\theta}}_s \right) ds,
\]
and by $\bar{\lambda}^*$ the solution of $g^{\bar{\lambda}^*}_1 = 0$. By Lemma \ref{lem:asymptoticCmd}, for all $0\leq l \leq \bar{\lambda}^*$,
\[
  \lim_{n \to +\infty} \frac{1}{n^{1/3}} \log \P\left( \Lambda^\phi_n \leq (\lambda^*-\lambda) n^{1/3} \right)= - \phi_0\lambda,
\]
and for all $\epsilon>0$, we have
\[
  \limsup_{n \to +\infty} \frac{1}{n^{1/3}} \log \P\left[ \left| \Lambda_n^{\bar{\theta}} - \bar{\lambda}^* n^{1/3} \right| > \epsilon n^{1/3} \right]<0.
\]

\subsubsection*{Consistent maximal displacement of the time-homogeneous branching random walk}

We consider $(\T,V)$ a time-homogeneous branching random walk, with reproduction law $\calL$. We denote by $\kappa$ the Laplace transform of $\calL$. The optimal speed profile is a constant $v = \inf_{\theta > 0} \frac{\kappa(\theta)}{\theta}$, and we set $\theta^* = \kappa'(\theta^*)$ and $\sigma^2 = \kappa''(\theta^*)$. The equation \eqref{eqn:lastdifferentialCmd} can be written in the simpler form
\[
  \theta^* g^\lambda_t = \theta^* \lambda + \int_0^t \frac{\sigma^2}{\left(g^\lambda_s\right)^2} \Psi(0)ds.
\]
As $\Psi(0) = - \frac{\pi^2}{2}$, the solution of this differential equation is $g^\lambda_t = \left( \lambda^3 - t\frac{3\pi^2 \sigma^2}{2 \theta^*} \right)^{1/3}$.

For $\Lambda_n = \min_{|u| = n} \max_{k \leq n} \left( kv - V(u_k)\right)$, applying Theorem \ref{thm:cmd}, and the Borel-Cantelli lemma, we obtain
\[
  \lim_{n \to +\infty} \frac{\Lambda_n}{n^{1/3}} = \left( \frac{3\pi^2 \sigma^2}{2 \theta^*} \right)^{1/3} \quad \mathrm{a.s.}
\]
This result is similar to the one obtained in \cite{FaZ10} and \cite{FHS12}.

More generally, if $(\T,V)$ is a BRWtie such that $\bar{\theta}$ is non-increasing, then $\theta$ is a constant, and for all $\epsilon>0$,
\[
  \limsup_{n \to +\infty} \frac{1}{n^{1/3}} \log \P\left[ \left|\Lambda_n - \left( \frac{3\pi^2 \sigma^2}{2 \theta} \int_0^1 \ind{K^*(a)_s = 0} ds\right)^{1/3} n^{1/3} \right| \geq \epsilon n^{1/3} \right].
\]

\appendix

\section{Airy facts and Brownian motion estimates}
\label{app:bm}

In Section \ref{subsec:onesided}, using some Airy functions --introduced in Section \ref{subsec:introrw}-- facts, the Feynman-Kac formula and PDE analysis, we compute the asymptotic of the Laplace transform of the area under a Brownian motion constrained to stay positive, proving Lemma \ref{lem:bmOnesided}. Adding some Sturm-Liouville theory, we obtain by similar arguments Lemma \ref{lem:bmTwosided} in Section \ref{subsec:twosided}. In all this section, $B$ stands for a standard Brownian motion, starting from $x$ under law $\P_x$.

\subsection{Asymptotic of the Laplace transform of the area under a Brownian motion constrained to stay non-negative}
\label{subsec:onesided}

In this section, we write $\rmL^2 = \rmL^2([0,+\infty))$ for the set of square-integrable measurable functions on $[0,+\infty)$. This space $\rmL^2$ can be seen as a Hilbert space, when equipped with the scalar product
\[
  \crochet{f,g} = \int_0^{+\infty} f(x) g(x) dx.
\]
We denote by $\calC^2_0=\calC^2_0([0,+\infty))$ the set of twice differentiable functions $w$ with a continuous second derivative, such that $w(0) = \lim_{x \to +\infty} w(x) = 0$. Finally, for any continuous function $w$, $\norme{w}_\infty = \sup_{x \geq 0} |w(x)|$. The main result of the section is: \textit{for all $h>0$, $0<a<b$ and $0<a'<b'$, we have}
\begin{multline}
  \label{eqn:bmOnesidedRappel}
  \lim_{t \to +\infty} \frac{1}{t} \log \sup_{x \in \R} \E_x\left[ e^{-h \int_0^t B_s ds} ; B_s \geq 0, s \leq t \right]\\
  = \lim_{t \to +\infty} \frac{1}{t} \log \inf_{x \in [a,b]} \E_x\left[ e^{-h \int_0^t B_s ds} \ind{B_t \in [a',b']} ; B_s \geq 0, s \leq t  \right] = \frac{\alpha_1}{2^{1/3}}h^{2/3}.
\end{multline}

We recall that $(\alpha_n, n \in \N)$ is the set of zeros of $\mathrm{Ai}$, listed in the decreasing order. We start with some results on the Airy function $\mathrm{Ai}$, defined in \eqref{eqn:airy}.
\begin{lemma}
\label{lem:definepsin}
For $n \in \N$ and $x \geq 0$, we set
\begin{equation}
  \label{eqn:definepsin}
  \psi_n(x) = \mathrm{Ai}(x + \alpha_n)\left(\int_{\alpha_n}^{+\infty} \mathrm{Ai}(y)dy \right)^{-1/2}.
\end{equation}

The following properties hold:
\begin{itemize}
  \item $(\psi_n, n \in \N)$ forms an orthogonal basis of $\rmL^2$;
  \item $\lim_{n \to +\infty} \alpha_n n^{-2/3} = -\frac{3\pi}{2}$;
  \item for all $\lambda \in \R$ and $\psi \in \calC^2$, if
\begin{equation}
  \label{eqn:sturmliouvilleOnesided}
  \begin{cases}
    \forall x > 0, \psi''(x) - x \psi(x) = \lambda \psi(x)\\
    \psi(0) = \lim_{x \to +\infty} \psi(x) = 0,
  \end{cases}
\end{equation}
then either $\psi=0$, or there exist $n \in \N$ and $c \in \R$ such that $\lambda = \alpha_n$ and $\psi = c \psi_n$.
\end{itemize}
\end{lemma}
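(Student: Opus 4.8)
The plan is to establish the three bullet points in the order 3, 1, 2, since the Sturm--Liouville characterisation of the $\rmL^2$ eigenfunctions is the engine behind the basis property, while the asymptotic of the zeros is the only genuinely external ingredient. For the third bullet, given $\lambda\in\R$ and $\psi\in\calC^2$ solving \eqref{eqn:sturmliouvilleOnesided}, I would set $\phi(u)=\psi(u-\lambda)$, so that $\phi$ solves the Airy equation $\phi''(u)=u\phi(u)$ on $(\lambda,+\infty)$; since $\mathrm{Ai}$ and $\mathrm{Bi}$ span its solution space, $\psi(x)=c_1\mathrm{Ai}(x+\lambda)+c_2\mathrm{Bi}(x+\lambda)$ for some constants $c_1,c_2$. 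The condition $\lim_{x\to+\infty}\psi(x)=0$, together with $\mathrm{Bi}(u)\to+\infty$ and $\mathrm{Ai}(u)\to0$, forces $c_2=0$, and then $\psi(0)=c_1\mathrm{Ai}(\lambda)=0$ forces either $c_1=0$ (so $\psi\equiv0$) or $\mathrm{Ai}(\lambda)=0$, i.e.\ $\lambda=\alpha_n$ for some $n\in\N$, in which case $\psi=c\,\psi_n$. Conversely, differentiating \eqref{eqn:definepsin} and using the Airy equation shows $\psi_n''(x)-x\psi_n(x)=\alpha_n\psi_n(x)$, while $\psi_n(0)=0$ because $\mathrm{Ai}(\alpha_n)=0$ and $\psi_n(x)\to0$ because $\mathrm{Ai}(u)\to0$; along the way I would note that $\mathrm{Ai}(u)=O(e^{-u})$ for $u\ge0$ makes $\psi_n$ a well-defined nonzero element of $\rmL^2$, the normalising constant $(\int_{\alpha_n}^{+\infty}\mathrm{Ai}(y)\,dy)^{-1/2}$ being a finite positive Airy constant.

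For the first bullet, orthogonality is a one-line integration by parts: for $n\ne m$, using $\psi_n''-x\psi_n=\alpha_n\psi_n$, $\psi_m''-x\psi_m=\alpha_m\psi_m$ and the boundary behaviour of the $\psi_j$ and $\psi_j'$,
\[
  (\alpha_n-\alpha_m)\crochet{\psi_n,\psi_m}=\int_0^{+\infty}\bigl(\psi_n''\psi_m-\psi_n\psi_m''\bigr)\,dx=\bigl[\psi_n'\psi_m-\psi_n\psi_m'\bigr]_0^{+\infty}=0,
\]
hence $\crochet{\psi_n,\psi_m}=0$. For completeness I would invoke the spectral theory of the Schrödinger operator $\calA=-\tfrac{d^2}{dx^2}+x$ on $\rmL^2([0,+\infty))$ with a Dirichlet condition at $0$: it is self-adjoint and bounded below, and since its potential tends to $+\infty$ it has compact resolvent, hence purely discrete spectrum and an orthonormal basis of $\rmL^2$ consisting of eigenfunctions; by the characterisation above each such eigenfunction is a scalar multiple of some $\psi_n$, so $(\psi_n,n\in\N)$ is complete, and being orthogonal it is an orthogonal basis.

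For the second bullet, I would quote the classical asymptotic expansion of the zeros of $\mathrm{Ai}$ from \cite{AbS64}, which gives that $\alpha_n$ is, as $n\to+\infty$, equivalent to a negative constant multiple of $n^{2/3}$, yielding the stated value of $\lim_{n\to+\infty}\alpha_n n^{-2/3}$; alternatively one can obtain it from a Sturm oscillation/WKB count of the zeros of the solutions of \eqref{eqn:sturmliouvilleOnesided}, but the direct reference is shorter.

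The only delicate point is the completeness step, namely that $\calA$ with its natural form domain is self-adjoint with compact resolvent. I expect this to be the main technical obstacle to write out carefully, although it is entirely standard for confining one-dimensional Schrödinger operators (the potential $x\mapsto x$ being bounded below and tending to $+\infty$); everything else reduces to elementary ODE analysis and the known facts about the Airy functions recalled in Section~\ref{subsec:introrw}.
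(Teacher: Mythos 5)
Your proposal is correct, and for the third bullet it is exactly the paper's argument: translate the equation into the Airy equation, write $\psi(x)=c_1\mathrm{Ai}(x+\lambda)+c_2\mathrm{Bi}(x+\lambda)$, kill $c_2$ with the decay at $+\infty$, and use $\psi(0)=0$ to force $c_1=0$ or $\mathrm{Ai}(\lambda)=0$. The difference lies in the first two bullets: the paper disposes of them in one line by citing Vall\'ee--Soares \cite{VaS04} for both the basis property and the asymptotic of the zeros, whereas you prove orthogonality directly by the Wronskian/integration-by-parts identity and sketch completeness via the spectral theory of the Dirichlet realization of $-\tfrac{d^2}{dx^2}+x$ on $[0,+\infty)$ (self-adjoint, confining potential, compact resolvent, discrete spectrum, eigenfunction basis), then identify each abstract eigenfunction with some $\psi_n$ through your bullet-three characterisation. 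This buys self-containedness at the cost of the technical step you yourself flag: one must justify self-adjointness and compactness of the resolvent, and also that the abstract eigenfunctions are classical $\calC^2$ solutions satisfying $\psi(0)=0$ and vanishing at infinity (one-dimensional elliptic regularity plus the observation that an $\rmL^2$ solution of the shifted Airy equation is necessarily a multiple of $\mathrm{Ai}(\cdot+\lambda)$, hence decays); these are standard but would need to be written out or referenced, which is precisely what the paper's citation avoids. For the second bullet your route (quoting the classical expansion of the Airy zeros from \cite{AbS64}) is the same in spirit as the paper's citation; just be aware that the classical expansion gives $\alpha_n\sim-\left(\tfrac{3\pi n}{2}\right)^{2/3}$, i.e. $\lim_{n\to+\infty}\alpha_n n^{-2/3}=-\left(\tfrac{3\pi}{2}\right)^{2/3}$, so the constant should be checked against the value stated in the lemma rather than asserted to coincide with it; this does not affect the way the lemma is used, since only the divergence $\alpha_n\to-\infty$ at polynomial speed matters.
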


\begin{proof}
The fact that $\lim_{n \to +\infty} \alpha_n n^{-2/3}=-\frac{3\pi}{2}$ and that $(\psi_n, n \in \N)$ is an orthogonal basis of $\rmL^2$ can be found in \cite{VaS04}. We now consider $(\lambda,\psi)$ a solution of \eqref{eqn:sturmliouvilleOnesided}. In particular $\psi$ verifies
\[
  \forall x > 0, \psi''(x) - (x+\lambda) \psi(x) = 0.
\]
By definition of $\mathrm{Ai}$ and $\mathrm{Bi}$, there exist $c_1,c_2$ such that $\psi(x) = c_1 \mathrm{Ai}(x+\lambda) + c_2 \mathrm{Bi}(x+\lambda).$ As $\lim_{x\to +\infty} \psi(x) = 0$, we have $c_2=0$, and as $\psi(0)=0$, either $c_1=0$, or $\mathrm{Ai}(\lambda)=0$. We conclude that either $\psi=0$, or $\lambda$ is a zero of $\mathrm{Ai}$, in which case $\psi(x) = c_1 \psi_n(x)$ for some $n \in \N$.
\end{proof}
As $\alpha_1$ is the largest zero of $\mathrm{Ai}$, note that the eigenfunction $\psi_1$ corresponding to the largest eigenvector $\alpha_1$ is non-negative on $[0,+\infty)$, and is positive on $(0,+\infty)$.

For $h>0$ and $n \in \N$, we define $\psi_n^h : x \mapsto (2h)^{1/6} \psi_n((2h)^{1/3}x)$. By Lemma \ref{lem:definepsin}, $(\psi_n^h, n \in \N)$ forms an orthonormal basis of $\rmL^2$. With this lemma, we can prove the following preliminary result.
\begin{lemma}
\label{lem:asymptoticOnesided}
Let $h>0$ and $u_0 \in \calC^2_0 \cap \rmL^2$, such that $u_0', u_0'' \in \rmL^2$ and $\norme{u_0''}_\infty < +\infty$. We define, for $t \geq 0$ and $x \geq 0$
\[
  u(t,x) = \E_x\left[ u_0(B_t) e^{-h \int_0^t B_s ds} ; B_s \geq 0, s \in [0,t] \right].
\]
We have
\begin{equation}
  \label{eqn:asymptoticOnesided}
  \lim_{t \to +\infty} \sup_{x \in \R} \left| e^{-\frac{h^{2/3}}{2^{1/3}} \alpha_1 t}u(t,x) - \crochet{u_0,\psi^h_1} \psi^h_1(x) \right| = 0.
\end{equation}
\end{lemma}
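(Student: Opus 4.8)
The plan is to recognise $u$ as the solution of a parabolic Dirichlet problem on $[0,+\infty)$ and to expand it in the Airy eigenbasis supplied by Lemma~\ref{lem:definepsin}; the convergence in \eqref{eqn:asymptoticOnesided} will then follow from the spectral gap between the top two eigenvalues.

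First I would set up the PDE side. By the Feynman--Kac formula, for $t>0$ the map $x\mapsto u(t,x)$ is smooth on $(0,+\infty)$, vanishes at $0$ and at $+\infty$, and solves $\partial_t u = \tfrac12\partial_x^2 u - hx u$ with initial datum $u_0$; equivalently $u(t,\cdot) = e^{-t\calG_h}u_0$, where $\calG_h = -\tfrac12\partial_x^2 + hx$ acts on $\rmL^2 = \rmL^2([0,+\infty))$ with a Dirichlet condition at $0$. (For $x\le 0$ both sides of \eqref{eqn:asymptoticOnesided} vanish, since then $\P_x(B_s\ge 0, s\le t)=0$ and $\psi_1^h(0)=0$, so it is enough to work on $[0,+\infty)$.) Because $x\mapsto hx$ is confining, $\calG_h$ is self-adjoint, bounded below, and has compact resolvent; the rescaling $x\mapsto (2h)^{1/3}x$ turns its eigenvalue equation $\calG_h\psi=-\mu\psi$ into \eqref{eqn:sturmliouvilleOnesided}, so by Lemma~\ref{lem:definepsin} its eigenvalues are $(-\mu_n)_{n\ge 1}$ with $\mu_n = \tfrac{h^{2/3}}{2^{1/3}}\alpha_n$ and normalised eigenfunctions $(\psi_n^h)_{n\ge 1}$. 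As $0>\alpha_1>\alpha_2>\cdots$, one has $\mu_1>\mu_2>\cdots$ with $\mu_1 = \tfrac{h^{2/3}}{2^{1/3}}\alpha_1$ the top of the spectrum.

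Next I would write the spectral expansion. On $\{B_s\ge 0, s\le t\}$ the weight $e^{-h\int_0^t B_s ds}$ is at most $1$, so the Feynman--Kac kernel $p_t^h(x,y)$ of $e^{-t\calG_h}$ is dominated by the Gaussian transition density of Brownian motion killed at $0$; in particular, by Chapman--Kolmogorov and symmetry, $\int_0^{+\infty}p_{t/2}^h(x,y)^2 dy = p_t^h(x,x)<+\infty$, so $p_{t/2}^h(x,\cdot) = \sum_{n\ge 1}e^{t\mu_n/2}\psi_n^h(x)\psi_n^h$ in $\rmL^2$. Since $u_0\in\rmL^2$ and $e^{-s\calG_h}$ is bounded on $\rmL^2$, also $u(t/2,\cdot)\in\rmL^2$, and the semigroup identity $u(t,x)=\int_0^{+\infty}p_{t/2}^h(x,y)\,u(t/2,y)\,dy$ yields, for every $t>0$ and $x\ge 0$,
\[
  u(t,x) = \sum_{n\ge 1}e^{t\mu_n}\crochet{u_0,\psi_n^h}\psi_n^h(x),
\]
hence
\[
  e^{-\mu_1 t}u(t,x) - \crochet{u_0,\psi_1^h}\psi_1^h(x) = \sum_{n\ge 2}e^{(\mu_n-\mu_1)t}\crochet{u_0,\psi_n^h}\psi_n^h(x).
\]
To conclude I would bound this tail uniformly in $x$ using: (i) $\norme{\psi_n^h}_\infty \le C_h\, n^{\gamma}$ for some finite $\gamma$, which follows from $\psi_n^h(x) = (2h)^{1/6}\psi_n((2h)^{1/3}x)$, the boundedness of $\mathrm{Ai}$ on $\R$, and the large-$n$ asymptotics of the normalising factor in \eqref{eqn:definepsin} recorded in \cite{VaS04}; (ii) $|\crochet{u_0,\psi_n^h}|\le\norme{u_0}_{\rmL^2}$ by Bessel's inequality; and (iii) $\mu_n-\mu_1 = \tfrac{h^{2/3}}{2^{1/3}}(\alpha_n-\alpha_1)\le -c_h\, n^{2/3}$ for all large $n$, since $\alpha_n n^{-2/3}\to -3\pi/2$ by Lemma~\ref{lem:definepsin}. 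For $t\ge 1$ these give
\[
  \sup_{x\ge 0}\Big|\sum_{n\ge 2}e^{(\mu_n-\mu_1)t}\crochet{u_0,\psi_n^h}\psi_n^h(x)\Big| \le C_h\norme{u_0}_{\rmL^2}\sum_{n\ge 2}n^{\gamma}e^{-c_h n^{2/3}t} \le C_h\norme{u_0}_{\rmL^2}\, e^{-c_h(t-1)}\sum_{n\ge 2}n^{\gamma}e^{-c_h n^{2/3}},
\]
and since the last series converges, this bound tends to $0$ as $t\to+\infty$, proving \eqref{eqn:asymptoticOnesided}.

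The hard part will be the ingredients (i) and (iii): the uniform-in-$x$, at-most-polynomial-in-$n$ sup-norm bound on the Airy eigenfunctions $\psi_n^h$, together with the spectral-gap asymptotics $\mu_n-\mu_1\to-\infty$. Once these are secured, the exponential decay in $t$ coming from the gap swamps the merely square-summable coefficients $\crochet{u_0,\psi_n^h}$, so essentially no regularity of $u_0$ beyond $u_0\in\rmL^2\cap\calC^2_0$ is needed for this particular lemma (the stronger hypotheses on $u_0$ will serve elsewhere). A secondary technical point to handle carefully is the passage from the $\rmL^2$-valid spectral expansion of $e^{-t\calG_h}u_0$ to a pointwise expansion that converges uniformly on $[0,+\infty)$ for each fixed $t>0$; this I would obtain, as above, from the Hilbert--Schmidt bound on $p_{t/2}^h$ and the semigroup identity.
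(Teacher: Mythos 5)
Your argument is correct in substance, but it is a genuinely different proof from the one in the paper. You diagonalise the Feynman--Kac semigroup outright: identify $u(t,\cdot)$ with the semigroup of the Dirichlet realization of $-\frac{1}{2}\partial_x^2+hx$ applied to $u_0$, expand the symmetric, Hilbert--Schmidt kernel in the Airy basis of Lemma \ref{lem:definepsin}, and kill the tail over $n\geq 2$ using Bessel's inequality, a sup-norm bound on $\psi_n^h$ and the growth $\alpha_n \asymp -n^{2/3}$. The paper never writes the full expansion: it takes the PDE given by Feynman--Kac, sets $v(t,\cdot)=e^{-\frac{h^{2/3}}{2^{1/3}}\alpha_1 t}u(t,\cdot)-\crochet{u_0,\psi_1^h}\psi_1^h$, checks $\crochet{v(t,\cdot),\psi_1^h}=0$, and runs an energy argument: the quadratic-form inequality $\crochet{\calG^h w,w}\leq \frac{h^{2/3}}{2^{1/3}}\alpha_2\crochet{w,w}$ on the orthogonal complement of $\psi_1^h$ plus Grönwall forces $\norme{v(t,\cdot)}_2\to 0$, monotonicity then forces $\norme{\partial_x v(t,\cdot)}_2\to 0$, and the one-dimensional Sobolev embedding upgrades this to the sup norm. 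Your route buys an explicit exponential rate in $t$ and, as you correctly note, needs essentially only $u_0\in\rmL^2$ (the paper's hypotheses on $u_0',u_0''$ feed its integrations by parts and are only removed later, in Corollary \ref{cor:asymptoticOnesided}, by a sandwiching argument); the paper's route avoids your two ``hard'' ingredients altogether, since it requires no kernel estimates and no control of higher eigenfunctions. The one step you should not leave implicit is the coefficient identity $\crochet{p^h_{t/2}(x,\cdot),\psi_n^h}=e^{\mu_n t/2}\psi_n^h(x)$, i.e.\ that the probabilistically defined kernel semigroup admits the $\psi_n^h$ as pointwise eigenfunctions: the Hilbert--Schmidt bound only places $p^h_{t/2}(x,\cdot)$ in $\rmL^2$, and deriving the coefficients this way is circular unless you first prove $\E_x\bigl[\psi_n^h(B_t)e^{-h\int_0^t B_s ds};B_s\geq 0, s\leq t\bigr]=e^{\mu_n t}\psi_n^h(x)$, e.g.\ by Itô's formula or the Feynman--Kac uniqueness theorem applied to $\psi_n^h$, using the Dirichlet condition and the rapid decay of $\mathrm{Ai}$ at $+\infty$ for the required integrability. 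Once that identification is secured, your remaining estimates go through: $\norme{\psi_n^h}_\infty$ is in fact bounded uniformly in $n$ (since $\mathrm{Ai}$ is bounded and the normalising constants $|\mathrm{Ai}'(\alpha_n)|$ do not vanish), and the spectral gap $\frac{h^{2/3}}{2^{1/3}}(\alpha_n-\alpha_1)\leq -c_h n^{2/3}$ makes the tail series converge and vanish as $t\to+\infty$.
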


\begin{proof}
Let $h>0$, by the Feynman-Kac formula (see e.g. \cite{KaS91}, Theorem 5.7.6), $u$ is the unique solution of the equation
\begin{equation}
  \label{eqn:fenymankacOnesided}
  \begin{cases}
    \forall t > 0, \forall x > 0, \partial_t u(t,x) = \frac{1}{2} \partial^2_{x} u(t,x) - h x u(t,x)\\
    \forall x \geq 0, u(0,x) = u_0(x)\\
    \forall t \geq 0, u(t,0) = \lim_{x \to +\infty} u(t,x) = 0.
  \end{cases}
\end{equation}

We define the operator
\[
  \calG^h : \begin{array}{rcl}
  \calC^2_0 & \longrightarrow & \calC\\
  w & \longmapsto & \left( x \mapsto \frac{1}{2} w''(x) - h x w(x), x \in [0,+\infty) \right),
  \end{array}
\]
By definition of $\mathrm{Ai}$ and of the $\psi_n^h$, we have $\calG^h \psi_n^h = \frac{h^{2/3}}{2^{1/3}} \alpha_n \psi_n^h$, thus $(\psi_n^h)$ forms an orthogonal basis of eigenfunctions of $\calG^h$.

We recall there exists $C>0$ such that for all $x \geq 0$, $\mathrm{Ai}(x) + \mathrm{Ai}'(x) \leq C z^{1/4} e^{-2x^{2/3}/3}$ (see e.g. \cite{VaS04}). For all $w \in \calC^2_0 \cap \rmL^2$ such that $w'$ and $w''$ are bounded, by integration by parts
\begin{align*}
  \crochet{\calG^h w, \psi_n^h} 
  &= \frac{1}{2} \int_0^{+\infty} w''(x) \psi_n^h(x) dx  - h \int_0^{+\infty} x w(x) \psi_n^h(x) dx\\
  &= \frac{1}{2} \int_0^{+\infty} w(x) (\psi_n^h)''(x) dx - h \int_0^{+\infty} x w(x) \psi_n^h(x) dx\\
  &= \int_0^{+\infty} w(x) (\calG^h \psi_n^h)(x) dx = \frac{h^{2/3}}{2^{1/3}} \alpha_n \crochet{w,\psi_n^h}.
\end{align*}
Therefore, decomposing $w$ with respect to the basis $(\psi_n^h)$, we have
\[
  \crochet{\calG^h w, w} =  \crochet{\calG^h w, \sum_{n = 1}^{+\infty} \crochet{\psi_n^h, w} \psi_n^h} = \sum_{n = 1}^{+\infty}   \crochet{w, \psi_n^h}\crochet{\calG^h w,\psi_n^h}
  = \frac{h^{2/3}}{2^{1/3}} \sum_{n = 1}^{+\infty} \alpha_n \crochet{w, \psi_n^h}^2.
\]
As $(\alpha_n)$ is a decreasing sequence, we have
\begin{equation}
  \label{eqn:prems}
  \crochet{\calG^h w, w} \leq \frac{h^{2/3}}{2^{1/3}}\sum_{n = 1}^{+\infty} \alpha_1 \crochet{w, \psi_n^h}^2 \leq \frac{h^{2/3}}{2^{1/3}} \alpha_1 \crochet{w,w}.
\end{equation}
If $\crochet{w,\psi_n^h}=0$, the inequality can be improved in
\begin{equation}
  \label{eqn:deuz}
  \crochet{\calG^h w, w} \leq \frac{h^{2/3}}{2^{1/3}} \sum_{n=2}^{+\infty}  \alpha_2 \crochet{w, \psi_n^h}^2 \leq \frac{h^{2/3}}{2^{1/3}} \alpha_2 \crochet{w,w}.
\end{equation}

Using these results, we now prove \eqref{eqn:asymptoticOnesided}. For $x \geq 0$ and $t \geq 0$, we define
\[
  v(t,x) = e^{- \frac{h^{2/3}}{2^{1/3}} \alpha_1 t} u(t,x) - \crochet{u_0,\psi^h_1} \psi_1^h.
\]
We observe first that for all $t \geq 0$, $\crochet{v(t,\cdot), \psi_1^h} = 0$. Indeed, we have $\crochet{v(0,\cdot),\psi_1^h} = 0$ by definition, and deriving with respect to $t$, we have
\begin{align*}
  \partial_t \crochet{v(t,\cdot),\psi_1^h}
  &= - \frac{h^{2/3}}{2^{1/3}} \alpha_1 e^{- \frac{h^{2/3}}{2^{1/3}} \alpha_1 t} \crochet{u(t,x), \psi^h_1} + e^{- \frac{h^{2/3}}{2^{1/3}} \alpha_1 t} \crochet{\partial_t u(t,x), \psi_1^h} \\
  &= - \frac{h^{2/3}}{2^{1/3}} \alpha_1 e^{- \frac{h^{2/3}}{2^{1/3}} \alpha_1 t} \crochet{u(t,x), \psi^h_1} + e^{- \frac{h^{2/3}}{2^{1/3}} \alpha_1 t} \crochet{\calG^h u(t,x), \psi_1^h}\\
  &= - \frac{h^{2/3}}{2^{1/3}} \alpha_1 e^{- \frac{h^{2/3}}{2^{1/3}} \alpha_1 t} \crochet{u(t,x), \psi^h_1} + \frac{h^{2/3}}{2^{1/3}} \alpha_1 e^{- \frac{h^{2/3}}{2^{1/3}} \alpha_1 t} \crochet{u(t,x), \psi_1^h} = 0.
\end{align*}

We now prove that the non-negative, finite functions
\[
  J_1(t) = \int_0^{+\infty} |v(t,x)|^2 dx \quad \mathrm{and} \quad J_2(t) = \int_0^{+\infty} |\partial_x v(t,x)|^2 dx,
\]
are decreasing, and converge to 0 as $t \to +\infty$. We observe first that
\begin{align*}
  \frac{\partial_t J_1(t)}{2} &= \int_0^{+\infty} v(t,x) \partial_t v(t,x) dx\\
  &= \int_0^{+\infty} v(t,x) \left[- \frac{h^{2/3}}{2^{1/3}} \alpha_1 e^{- \frac{h^{2/3}}{2^{1/3}} \alpha_1 t} u(t,x) + e^{- \frac{h^{2/3}}{2^{1/3}} \alpha_1 t} \partial_t u(t,x)  \right] dx\\
  &= \int_0^{+\infty} v(t,x) \left[- \frac{h^{2/3}}{2^{1/3}} \alpha_1 v(t,x) + \calG^h v(t,x)  \right] dx\\
  &= - \frac{h^{2/3}}{2^{1/3}} \alpha_1 \crochet{v(t,\cdot),v(t,\cdot)} + \crochet{v(t,\cdot),\calG^h v(t,\cdot)}.
\end{align*}
As $\crochet{v(t,\cdot),\psi_1^h} = 0$, by \eqref{eqn:deuz} we have $\partial_t J_1(t) \leq (2h)^{2/3} (\alpha_2 - \alpha_1) J_1(t) \leq -c J_1(t)$. By Grönwall inequality, $J_1(t)$ decreases to $0$ as $t \to +\infty$ exponentially fast. Similarly $J_2(0)<+\infty$ and
\begin{align*}
  \frac{\partial_t J_2(t)}{2}
  &= \int_0^{+\infty} \partial_x v(t,x) \partial_t \partial_x v(t,x) dx = \int_0^{+\infty} \partial_x v(t,x) \partial_x \partial_t v(t,x) dx\\
  &= \crochet{\partial_x v(t,\cdot), \calG^h \partial_x v(t,\cdot)} - \frac{h^{2/3}}{2^{1/3}}  \alpha_1  \crochet{\partial_x v(t,\cdot), \partial_x v(t,\cdot)}  - h \int_0^{+\infty} v(t,x) \partial_x v(t,x) dx \leq 0,
\end{align*}
by integration by parts and \eqref{eqn:prems}. As $J_2$ is non-negative and decreasing, this function converges, as $t \to +\infty$, to $J_2(+\infty) \geq 0$. Moreover, we can write the derivative of $J_1$ as follows
\begin{align*}
  \frac{\partial_t J_1(t)}{2}
  &= -\frac{h^{2/3}}{2^{1/3}} \alpha_1 \crochet{v(t,\cdot),v(t,\cdot)} + \crochet{v(t,\cdot),\calG^h v(t,\cdot)}\\
  &= - \frac{h^{2/3}}{2^{1/3}} \alpha_1 J_1(t) +  \int_0^{+\infty} v(t,x) \partial^2_{x} v(t,x) dx - \int_0^{+\infty} h x v(t,x)^2 dx\\
  &\leq -\frac{J_2(t)}{2} - \frac{h^{2/3}}{2^{1/3}} \alpha_1 J_1(t).
\end{align*}
As $J_1(t)$ decreases toward 0, its derivative cannot remain bounded away from $0$ for large $t$. Therefore, $\lim_{t \to + \infty} J_2(t) = 0$. We conclude that $\lim_{t \to +\infty} \int_0^{+\infty} |v(t,x)|^2 + |\partial_x v(t,x)|^2 dx = 0$, which means that $v(t,\cdot)$ converges to 0 in $H^1$ norm, as $t \to +\infty$. By Sobolev injection in dimension 1, there exists $C>0$ such that
\[
  \norme{v(t,\cdot)}_{\infty} \leq C \int_0^{+\infty} |v(t,x)|^2 + |\partial_x v(t,x)|^2 dx,
\]
which proves \eqref{eqn:asymptoticOnesided}.
\end{proof}

This lemma can be easily extended to authorize any bounded starting function $u_0$.
\begin{corollary}
\label{cor:asymptoticOnesided}
Let $h>0$ and $u_0$ be a measurable bounded function. Setting, for $x \geq 0$ and $t \geq 0$
\[
  u(t,x) = \E_x\left[ u_0(B_t) e^{-h \int_0^t B_s ds} ; B_s \geq 0, s \in [0,t] \right],
\]
we have
\begin{equation}
  \label{eqn:asymptoticOnesidedprime}
  \lim_{t \to +\infty} \sup_{x \in \R} \left| e^{-\frac{h^{2/3}}{2^{1/3}} \alpha_1  t}u(t,x) - \crochet{u_0,\psi_1^h} \psi^h_1(x) \right| = 0.
\end{equation}
\end{corollary}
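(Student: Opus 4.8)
The plan is to deduce the corollary from Lemma~\ref{lem:asymptoticOnesided} by exploiting the smoothing effect of one unit of time of the killed semigroup. Set $c = \frac{h^{2/3}}{2^{1/3}}\alpha_1$ and, for bounded measurable $f$ on $[0,+\infty)$, write
\[
  (P_t f)(x) = \E_x\left[ f(B_t) e^{-h\int_0^t B_s ds} ; B_s \geq 0, s \leq t\right],
\]
so that $u(t,\cdot) = P_t u_0$ and, by the Markov property at time $t$, $u(t+1,\cdot) = P_t v_0$ with $v_0 := P_1 u_0$. The first step is to check that $v_0$ satisfies the hypotheses of Lemma~\ref{lem:asymptoticOnesided}, namely $v_0 \in \calC^2_0 \cap \rmL^2$ with $v_0', v_0'' \in \rmL^2$ and $\norme{v_0''}_\infty < +\infty$. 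Writing $\int_0^1 B_s ds = x + \int_0^1(B_s - x)ds$ and using the Cameron--Martin shift gives $\sup_x e^{hx}\E_x\!\left[e^{-h\int_0^1 B_s ds}\right] < +\infty$, whence $|v_0(x)| \leq C\norme{u_0}_\infty e^{-hx}$; combined with the explicit killed heat kernel $q_1(x,y)$ -- the Gaussian reflection density $\tfrac{1}{\sqrt{2\pi}}(e^{-(x-y)^2/2}-e^{-(x+y)^2/2})$ times a bounded Feynman--Kac factor -- differentiation under the integral sign and the bound $q_1(x,y)=O(x)$ near $x=0$ yield all the required regularity and decay. These are routine Gaussian estimates, and constitute the main technical point of the proof.

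Next I would apply Lemma~\ref{lem:asymptoticOnesided} to $v_0$, obtaining
\[
  \lim_{t\to+\infty}\sup_{x\in\R}\left| e^{-ct} u(t+1,x) - \crochet{v_0,\psi_1^h}\psi_1^h(x)\right| = 0 .
\]
It remains to identify the constant as $\crochet{v_0,\psi_1^h} = e^{c}\crochet{u_0,\psi_1^h}$, where $\crochet{u_0,\psi_1^h}$ is well defined since $u_0$ is bounded and $\psi_1^h \in \rmL^1$. For this I would use that $P_1$ is self-adjoint -- the kernel $q_1(x,y)$ is symmetric because $\int_0^1 B_s\,ds$ is invariant under the time reversal $s\mapsto 1-s$ -- so that $\crochet{P_1 u_0,\psi_1^h} = \crochet{u_0, P_1\psi_1^h}$ (Fubini is legitimate because $\int q_1(x,y)\,dy \leq 1$ and $\psi_1^h \in \rmL^1$), together with $P_1\psi_1^h = e^{c}\psi_1^h$. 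The latter follows from the uniqueness of the Feynman--Kac solution of \eqref{eqn:fenymankacOnesided}: since $\calG^h\psi_1^h = c\,\psi_1^h$ and $\psi_1^h(0)=\lim_{x\to+\infty}\psi_1^h(x)=0$, the function $(t,x)\mapsto e^{ct}\psi_1^h(x)$ solves \eqref{eqn:fenymankacOnesided} with initial datum $\psi_1^h$, hence equals $\E_x[\psi_1^h(B_t)e^{-h\int_0^t B_s ds};B_s\geq0,s\leq t]$.

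Finally, replacing $t$ by $t-1$ in the displayed limit and absorbing the resulting factor $e^{-c}$ into $\crochet{v_0,\psi_1^h}=e^{c}\crochet{u_0,\psi_1^h}$ gives exactly \eqref{eqn:asymptoticOnesidedprime}. Thus the only real obstacle is the verification of Step~1 -- that the one-step convolution $v_0=P_1u_0$ genuinely lands in the function class required by Lemma~\ref{lem:asymptoticOnesided}, in particular the boundary value $v_0(0)=0$ and the facts $v_0''\in\rmL^2$, $\norme{v_0''}_\infty<+\infty$ -- everything else being bookkeeping with the semigroup identity and the eigenfunction equation.
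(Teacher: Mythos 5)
Your scheme for identifying the constant is correct and genuinely different from the paper's: symmetry of the killed Feynman--Kac kernel, Fubini, and $P_1\psi_1^h=e^{c}\psi_1^h$ with $c=\frac{h^{2/3}}{2^{1/3}}\alpha_1$ (which indeed follows from the same uniqueness statement for \eqref{eqn:fenymankacOnesided} that the paper already invokes, since $\psi_1^h$ is a legitimate initial datum), and the final shift $t\mapsto t-1$ is clean bookkeeping. The paper never uses self-adjointness or the semigroup eigenrelation: it smooths only for a time $\epsilon$, obtains the limit with constant $\crochet{u_\epsilon,\psi_1^h}$, and recovers $\crochet{u_0,\psi_1^h}$ by letting $\epsilon\to 0$ with dominated convergence. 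Your duality identity is arguably the tidier way to pin the constant.

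The gap is in your Step 1, and it is exactly the point the paper's proof is engineered to avoid. You need $v_0=P_1u_0$ to lie in $\calC^2_0\cap\rmL^2$ with $v_0',v_0''\in\rmL^2$ and $\norme{v_0''}_\infty<+\infty$, and you propose to get this by differentiating under the integral sign in $v_0(x)=\int_0^{+\infty}u_0(y)\,q_1(x,y)F(x,y)\,dy$, where $q_1$ is the reflection kernel and $F(x,y)$ the ``bounded Feynman--Kac factor''. But $F$ is the expectation of $e^{-h\int_0^1\mathfrak{b}_s ds}$ over the bridge from $x$ to $y$ constrained to stay non-negative: the $x$-dependence enters both through the drift $x(1-s)+ys$ and through the positivity constraint (an indicator), so twice-differentiability in $x$ of $q_1(x,y)F(x,y)$, with the domination needed to differentiate under the integral, is precisely what has to be proved and does not follow from the explicit formula for $q_1$; these are not routine Gaussian estimates. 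For merely bounded measurable $u_0$, an honest proof of the required regularity needs either interior parabolic regularity for \eqref{eqn:fenymankacOnesided} (after first showing $P_tu_0$ is a solution in a suitable weak sense, since the Feynman--Kac uniqueness theorem cited assumes continuous data), or an eigenfunction expansion of the kernel with derivative bounds on the $\psi_n^h$ --- considerably more machinery than the corollary warrants. The paper sidesteps all of this: it proves only the elementary bounds $u_\epsilon(x)\leq\norme{u_0}_\infty e^{-\epsilon h x+h^2\epsilon^3/6}$ and $u_\epsilon(x)\leq C\epsilon^{-1/2}x$, sandwiches $u_\epsilon$ within $1/n$ between functions of the class required by Lemma \ref{lem:asymptoticOnesided}, transfers the sandwich to $u_\epsilon(t,\cdot)$ by positivity of the expectation, and then lets $n\to+\infty$ and $\epsilon\to0$. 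If you keep that sandwich device in place of your Step 1, your duality computation still earns its keep: it gives $\crochet{u_\epsilon,\psi_1^h}=e^{c\epsilon}\crochet{u_0,\psi_1^h}$ exactly, which identifies the limiting constant without the final approximation argument. As written, however, Step 1 is asserted rather than proved, and the justification sketched for it would not go through.
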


\begin{proof}
Let $u_0$ be a measurable bounded function. We introduce, for $x \geq 0$ and $\epsilon>0$
\[
  u_\epsilon(x) = u(\epsilon,x) = \E_x \left[ u_0(B_\epsilon) e^{-h\int_0^{\epsilon} B_s ds} ; B_s \geq 0, s \in [0,1] \right].
\]
Observe that by the Markov property, for all $t \geq \epsilon$, we have
\begin{align*}
  u(t,x)
  &= \E_x \left[ \E_{B_{t-\epsilon}}\left[ u_0(B_\epsilon) e^{-h \int_0^\epsilon B_s ds} ; B_s \geq 0, s \in [0,\epsilon] \right] e^{-h\int_0^{t-\epsilon} B_s ds} ; B_s \geq 0, s \in [0,t-\epsilon] \right]\\
  &= \E_x \left[ u_\epsilon(B_{t-\epsilon}) e^{-h\int_0^{t-\epsilon} B_s ds} ; s \in [0,t-\epsilon] \right].
\end{align*}
Therefore, $u(t,x) = u_\epsilon(t-\epsilon,x)$, wher $u_\epsilon(t,x) = \E_x \left[ u_\epsilon(B_{t}) e^{-h\int_0^{t} B_s ds} ; s \in [0,t] \right]$. As $\int_0^\epsilon B_s ds$ is, under the law $\P_x$, a Gaussian random variable with mean $\epsilon x$ and variance $\epsilon^3/3$, we have
\[
  |u_\epsilon(x)| \leq \norme{u_0}_\infty \E_x \left[ e^{-h\int_0^{\epsilon} B_s ds} \right] \leq \norme{u_0}_\infty e^{-\epsilon h x} e^{\frac{h^2 \epsilon^3}{6}}.
\]
Moreover, as $h>0$, by the Ballot lemma, $|u_\epsilon(x)| \leq \norme{u_0}_\infty \P_x \left[ B_s \geq 0, s \in [0,\epsilon] \right] \leq C\epsilon^{-1/2} x$.

For any $\epsilon>0$, there exists $C>0$ such that for all $x \geq 0$,  $u_\epsilon(x) \leq C x\wedge e^{-hx\epsilon}$. Therefore, we can find sequences $(v^{(n)})$ and $(w^{(n)})$ of functions in $\calC^2_0 \cap \rmL^2$, such that $(v^{(n)})', (v^{(n)})'',(w^{(n)})'$ and $(w^{(n)})''$ are in $\rmL^2$, with bounded second derivatives verifying
\[
  w^{(n)} \leq u_\epsilon \leq w^{(n)} + \frac{1}{n} \quad \mathrm{and} \quad v^{(n)}-\frac{1}{n} \leq u_\epsilon \leq v^{(n)}.
\]

For $n \in \N$, $x \geq 0$ and $t \geq 0$, we set
\begin{multline*}
  v^{(n)}(t,x) = \E_x\left[ v^{(n)}(B_t)e^{-\int_0^t B_s ds} ; B_s \geq 0, s \in [0,t] \right] \quad \mathrm{and}\\
  w^{(n)}(t,x) = \E_x\left[ w^{(n)}(B_t)e^{-\int_0^t B_s ds} ; B_s \geq 0, s \in [0,t] \right].
\end{multline*}
Note that for all $x \geq 0$ and $t \geq 0$ we have $w^{(n)}(t,x) \leq u_\epsilon(t,x) \leq v^{(n)}(t,x)$. Moreover, by Lemma \ref{lem:asymptoticOnesided}, we have
\begin{align*}
  &\lim_{t \to +\infty} \sup_{x \in \R} \left| e^{-\frac{h^{2/3}}{2^{1/3}} \alpha_1 t}v^{(n)}(t,x) - \crochet{v^{(n)},\psi_1^h} \psi^h_1(x) \right| = 0,\\
  \mathrm{and} \quad & \lim_{t \to +\infty} \sup_{x \in \R} \left| e^{-\frac{h^{2/3}}{2^{1/3}} \alpha_1 t}w^{(n)}(t,x) - \crochet{w^{(n)},\psi_1^h} \psi^h_1(x) \right| = 0.
\end{align*}

By the dominated convergence theorem, we have
\[
  \lim_{n \to +\infty} \crochet{w^{(n)},\psi_1^h} = \lim_{n \to +\infty} \crochet{v^{(n)},\psi_1^h} = \crochet{u_\epsilon, \psi_1^h}.
\]
As a result, letting $t$, then $n \to +\infty$, this yields
\[
  \lim_{t \to +\infty} \sup_{x \in \R} \left| e^{-\frac{h^{2/3}}{2^{1/3}} \alpha_1 t}u_\epsilon(t,x) - \crochet{u_\epsilon,\psi_1^h} \psi^h_1(x) \right| = 0.
\]
Finally, for almost every $x \geq 0$, letting $\epsilon \to 0$,  we have $u_\epsilon(x) \to u_0(x)$, and thus by dominated convergence theorem again, $\lim_{\epsilon \to 0} \crochet{u_\epsilon,\psi_1^h} = \crochet{u_0,\psi_1^h}.$ Moreover, as $u(t,x) = u_\epsilon(t-\epsilon,x)$, we conclude that
\[
  \lim_{t \to +\infty} \sup_{x \in \R} \left| e^{-\frac{h^{2/3}}{2^{1/3}} \alpha_1 t}u(t,x) - \crochet{u_0,\psi_1^h} \psi^h_1(x) \right| = 0.
\]
\end{proof}

This last corollary is enough to prove the exponential decay of the Laplace transform of the area under a Brownian motion constrained to stay positive.
\begin{proof}[Proof of Lemma \ref{lem:bmOnesided}]
Let $h>0$, for $t, x \geq 0$ we set $u(t,x) = \E_x\left[ e^{-h\int_0^t B_s ds} ; B_s \geq 0, s \in [0,t] \right]$ and $\mu_h = \int_0^{+\infty} \psi_1^h(x)dx<+\infty$. By Corollary \ref{cor:asymptoticOnesided}, we have
\[
  \lim_{t \to +\infty} \sup_{x \in [0,+\infty)} \left| e^{-\frac{h^{2/3}}{2^{1/3}} \alpha_1 t}u(t,x) - \mu_h \psi^h_1(x) \right| = 0.
\]
As $\psi^h_1$ is bounded, we have
\begin{equation}
  \label{eqn:no1}
  \limsup_{t \to +\infty} \sup_{x \geq 0} \frac{1}{t} \log u(t,x) = \frac{h^{2/3}}{2^{1/3}} \alpha_1.
\end{equation}

Similarly, for $0<a<b$ and $0<a'<b'$, we set
\[
  \tilde{u}(t,x) = \E_x\left[ \ind{B_t \in [a',b']} e^{-h\int_0^t B_s ds} ; B_s \geq 0, s \in [0,t] \right]
\]
and $\tilde{\mu}_h = \int_{a'}^{b'} \psi_1^h(x) dx > 0$. By Corollary \ref{cor:asymptoticOnesided} again, we have
\[
  \lim_{t \to +\infty} \sup_{x \geq 0} \left| e^{-\frac{h^{2/3}}{2^{1/3}} \alpha_1 t}\tilde{u}(t,x) - \tilde{\mu}_h \psi^h_1(x) \right| = 0.
\]
In particular, as $\inf_{x \in [a,b]} \psi^h_1>0$, we have
\begin{equation}
  \label{eqn:no2}
  \liminf_{t \to +\infty} \inf_{x \in [a,b]} \frac{1}{t} \log \tilde{u}(t,x) = \frac{h^{2/3}}{2^{1/3}} \alpha_1.
\end{equation}
As $\tilde{u} \leq u$, mixing \eqref{eqn:no1} and \eqref{eqn:no2}, we conclude that
\[
  \lim_{t \to +\infty} \sup_{x \geq 0} \frac{1}{t} \log u(t,x) = \lim_{t \to +\infty} \inf_{x \in [a,b]} \frac{1}{t} \log \tilde{u}(t,x) = \frac{h^{2/3}}{2^{1/3}} \alpha_1.
\]
\end{proof}

\subsection{The area under a Brownian motion constrained to stay in an interval}
\label{subsec:twosided}

The main result of this section is that for all $h \in \R$, $0<a<b<1$ and $0<a'<b'<1$, we have
\begin{multline}
  \label{eqn:convergence}
  \lim_{t \to +\infty} \frac{1}{t} \sup_{x \in [0,1]} \log \E_x\left[ e^{-h \int_0^t B_s ds} ; B_s \in [0,1], s \leq t \right]\\ = 
  \lim_{t \to +\infty} \frac{1}{t} \inf_{x \in [a,b]} \log \E_x\left[ e^{-h \int_0^t B_s ds} \ind{B_t \in [a',b']} ; B_s \in [0,1], s \leq t \right] = \Psi(h),
\end{multline}
where $\Psi$ is the function defined in \eqref{eqn:definepsi}. The organisation, the results and techniques of the section are very similar to Section \ref{subsec:onesided}, with a few exceptions. First, to exhibit an orthonormal basis of eigenfunctions, we need some additional Sturm-Liouville theory, that can be found in \cite{Zet05}. Secondly, we work on $[0,1]$, which is a compact set. This lightens the analysis of the PDE obtained while proving Lemma \ref{lem:asymptoticTwosided}.

In this section, we write $\rmL^2 = \rmL^2([0,1])$ for the set of square-integrable measurable functions on $[0,1]$, equipped with the scalar product
\[
  \crochet{f,g} = \int_0^{1} f(x) g(x) dx.
\]
Moreover, we write $\calC^2_0 = \calC^2_0([0,1])$ for the set of continuous, twice differentiable functions $w$ on $[0,1]$ such that $w(0)=w(1)=0$. Finally, for any continuous function $w$, $\norme{w}_\infty = \sup_{x \in [0,1]} |w(x)|$ and $\norme{w}_2= \crochet{w,w}^{1/2}$. We introduce in a first time a new specific orthogonal basis of $[0,1]$.
\begin{lemma}
\label{lem:definephin}
Let $h>0$. The set of zeros of $\lambda \mapsto \mathrm{Ai}(\lambda) \mathrm{Bi}(\lambda + (2h)^{1/3}) - \mathrm{Ai}(\lambda + (2h)^{1/3})\mathrm{Bi}(\lambda)$ is countable and bounded from above by $0$, listed in the decreasing order $\lambda^h_1> \lambda^h_2> \cdots$. In particular, we have
\begin{equation}
  \label{eqn:defineLambda}
  \lambda^h_1 = \sup\left\{ \lambda \leq 0 : \mathrm{Ai}(\lambda) \mathrm{Bi}(\lambda + (2h)^{1/3}) = \mathrm{Ai}(\lambda + (2h)^{1/3})\mathrm{Bi}(\lambda) \right\}.
\end{equation}
Additionally, for $n \in \N$ and $x \in [0,1]$, we define
\begin{equation}
  \label{eqn:definephin}
  \phi_n^h(x) =  \frac{\mathrm{Ai}\left(\lambda^h_n\right) \mathrm{Bi}\left(\lambda^h_n + (2h)^{1/3}x\right) - \mathrm{Ai}\left(\lambda^h_n + (2h)^{1/3}x\right)\mathrm{Bi}\left(\lambda^h_n\right)}{\norme{\mathrm{Ai}\left(\lambda^h_n\right) \mathrm{Bi}\left(\lambda^h_n + \cdot\right) - \mathrm{Ai}\left(\lambda^h_n + \cdot\right)\mathrm{Bi}\left(\lambda^h_n\right)}_2}.
\end{equation}
The following properties are verified:
\begin{itemize}
  \item $(\phi^h_n, n \in \N)$ forms an orthogonal basis of $\rmL^2$;
  \item $\lim_{n \to +\infty} \lambda_n^h n^{-2} = - \frac{\pi^2}{2}$;
  \item for all $\mu \in \R$ and $\phi \in \calC^2_0$, if
\begin{equation}
  \label{eqn:sturmliouvilleTwosided}
  \begin{cases}
    \forall x \in (0,1), \frac{1}{2} \phi''(x) - h x \phi(x) = \mu \phi(x)\\
    \phi(0) = \phi(1) = 0,
  \end{cases}
\end{equation}
then either $\phi=0$, or there exist $n \in \N$ and $c \in \R$ such that $\mu = \frac{h^{2/3}}{2^{1/3}} \lambda^h_n$ and $\phi = c \phi^h_n$.
\end{itemize}
\end{lemma}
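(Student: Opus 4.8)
The plan is to turn the eigenvalue problem \eqref{eqn:sturmliouvilleTwosided} into the Airy equation by an affine change of variable, and then to extract all three properties from the classical theory of regular Sturm--Liouville operators on a compact interval.

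\emph{Reduction to the Airy equation.} Fix $h>0$ and $\mu\in\R$, and put $\lambda=\tfrac{2^{1/3}}{h^{2/3}}\mu$, equivalently $\mu=\tfrac{h^{2/3}}{2^{1/3}}\lambda$. If $\phi\in\calC^2_0$ solves $\tfrac12\phi''(x)-hx\phi(x)=\mu\phi(x)$ on $(0,1)$, then $w(\xi):=\phi\big((\xi-\lambda)(2h)^{-1/3}\big)$ solves $w''(\xi)=\xi\,w(\xi)$ for $\xi\in[\lambda,\lambda+(2h)^{1/3}]$: a direct computation, using the dilation factor $(2h)^{1/3}$ and the above choice of $\lambda$ to cancel the constant term, reduces the equation to Airy form. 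Hence $\phi(x)=c_1\mathrm{Ai}(\lambda+(2h)^{1/3}x)+c_2\mathrm{Bi}(\lambda+(2h)^{1/3}x)$, and the conditions $\phi(0)=\phi(1)=0$ become the homogeneous system
\begin{equation*}
  c_1\mathrm{Ai}(\lambda)+c_2\mathrm{Bi}(\lambda)=0,\qquad c_1\mathrm{Ai}(\lambda+(2h)^{1/3})+c_2\mathrm{Bi}(\lambda+(2h)^{1/3})=0,
\end{equation*}
which has a non-zero solution iff its determinant $W(\lambda):=\mathrm{Ai}(\lambda)\mathrm{Bi}(\lambda+(2h)^{1/3})-\mathrm{Ai}(\lambda+(2h)^{1/3})\mathrm{Bi}(\lambda)$ vanishes. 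Since the Wronskian $\mathrm{Ai}\,\mathrm{Bi}'-\mathrm{Ai}'\,\mathrm{Bi}\equiv\tfrac1\pi$ never vanishes, $\mathrm{Ai}(\lambda)$ and $\mathrm{Bi}(\lambda)$ are never simultaneously zero, so whenever $W(\lambda)=0$ the solution space is exactly one-dimensional, spanned by $\mathrm{Bi}(\lambda)\mathrm{Ai}(\lambda+(2h)^{1/3}\cdot)-\mathrm{Ai}(\lambda)\mathrm{Bi}(\lambda+(2h)^{1/3}\cdot)$, i.e.\ up to sign and normalisation by the function $\phi^h_n$ of \eqref{eqn:definephin}. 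This proves the third bullet, with the correspondence $\mu=\tfrac{h^{2/3}}{2^{1/3}}\lambda^h_n$, once the zeros of $W$ are labelled.

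\emph{Sturm--Liouville structure.} The operator $w\mapsto-\tfrac12 w''+hx\,w$ on $\rmL^2([0,1])$ with Dirichlet conditions at $0$ and $1$ is a regular Sturm--Liouville operator: its leading coefficient $\tfrac12$ is constant and positive, its potential $x\mapsto hx$ is continuous, the weight is $1$, and the boundary conditions are separated. By the classical theory (see e.g.\ \cite{Zet05}) it is self-adjoint and bounded below with compact resolvent; its spectrum is a strictly increasing sequence of \emph{simple} eigenvalues $\nu_1<\nu_2<\cdots\to+\infty$, and the corresponding eigenfunctions form an orthonormal basis of $\rmL^2([0,1])$. Pairing the equation $-\tfrac12\phi''+hx\phi=\nu_n\phi$ with a normalised eigenfunction $\phi$ and integrating by parts gives $\nu_n=\tfrac12\int_0^1(\phi')^2+h\int_0^1 x\phi^2>0$, the first term being positive since $\phi(0)=0$ forces $\phi'\not\equiv0$. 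By the reduction above, these $\nu_n$ are exactly the numbers $-\mu=-\tfrac{h^{2/3}}{2^{1/3}}\lambda$ with $W(\lambda)=0$; hence the real zeros of $W$ form the countable set $\{\lambda^h_n:=-\tfrac{2^{1/3}}{h^{2/3}}\nu_n:n\ge1\}$, they are strictly negative (so bounded above by $0$), they decrease to $-\infty$, and $\lambda^h_1=-\tfrac{2^{1/3}}{h^{2/3}}\nu_1=\sup\{\lambda\le0:W(\lambda)=0\}$, which is the ``in particular'' claim. The matching eigenfunctions are, up to sign and normalisation, the $\rmL^2$-normalised $\phi^h_n$; corresponding to distinct eigenvalues they are pairwise orthogonal, and by the Sturm--Liouville expansion theorem (equivalently the spectral theorem for operators with compact resolvent) they are complete, which gives the first bullet.

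\emph{Eigenvalue asymptotics and the main difficulty.} The second bullet is the standard eigenvalue asymptotics for a regular one-dimensional Sturm--Liouville problem with bounded potential on $[0,1]$, namely $\nu_n=\tfrac{n^2\pi^2}{2}+O(1)$; this is obtained via the Prüfer transformation, or read off directly from the large-negative-argument asymptotics of $\mathrm{Ai}$ and $\mathrm{Bi}$ recalled in \cite{AbS64,VaS04}, which turn $W(\lambda)=0$ into $\sin\!\big(\tfrac23\big((-\lambda)^{3/2}-(-\lambda-(2h)^{1/3})^{3/2}\big)\big)=0$ modulo lower-order terms. Combined with $\lambda^h_n=-\tfrac{2^{1/3}}{h^{2/3}}\nu_n$ this determines the leading asymptotics of $\lambda^h_n$, hence of $\lambda^h_n n^{-2}$. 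None of this is deep; the only points that need care are (i) fixing the affine substitution so that the parameter $\lambda$ in $W$ corresponds \emph{exactly} to the eigenvalue $\mu=\tfrac{h^{2/3}}{2^{1/3}}\lambda$ --- a routine but error-prone computation --- and (ii) checking that the operator genuinely falls under the \emph{regular} Sturm--Liouville theory, which is immediate here because $[0,1]$ is compact, the leading coefficient is constant, and $hx$ is continuous, so the textbook statements on self-adjointness, completeness of eigenfunctions and eigenvalue asymptotics apply verbatim.
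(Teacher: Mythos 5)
Your proposal follows essentially the same route as the paper's proof: recast \eqref{eqn:sturmliouvilleTwosided} as a regular Sturm--Liouville problem with separated Dirichlet conditions and invoke the classical theory for countability, simplicity and completeness of the eigenfunctions, identify the eigenfunctions through the Airy basis and the vanishing of the determinant $\mathrm{Ai}(\lambda)\mathrm{Bi}(\lambda+(2h)^{1/3})-\mathrm{Ai}(\lambda+(2h)^{1/3})\mathrm{Bi}(\lambda)$, and obtain negativity of the eigenvalues by the same integration-by-parts identity, so the argument is correct and matches the paper (your Wronskian remark guaranteeing one-dimensionality of the solution space is a welcome extra precision). One caveat: your explicit scaling $\lambda^h_n=-\tfrac{2^{1/3}}{h^{2/3}}\nu_n$ together with $\nu_n=\tfrac{n^2\pi^2}{2}+O(1)$ yields $\lambda^h_n n^{-2}\to-\pi^2(2h)^{-2/3}$ rather than the constant $-\tfrac{\pi^2}{2}$ stated in the second bullet; the paper transfers Zettl's asymptotics for the eigenvalues $\mu^h_n$ of \eqref{eqn:sturmliouvilleTwosided} to $\lambda^h_n$ without the rescaling factor, so your computation in fact makes the correct constant explicit (the limit $-\tfrac{\pi^2}{2}$ holds for $\mu^h_n n^{-2}$), a discrepancy that is harmless for the rest of the paper but worth stating precisely.
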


\begin{proof}
We consider equation \eqref{eqn:sturmliouvilleTwosided}. This is a Sturm-Liouville problem with separated boundary conditions, that satisfies the hypotheses of Theorem 4.6.2 of \cite{Zet05}. Therefore, there is an infinite but countable number of real numbers $(\mu^h_n, n \in \N)$ such that the differential equation
\[
  \begin{cases}
    \forall x \in (0,1), \frac{1}{2} \phi''(x) - h x \phi(x) = \mu^h_n \phi(x)\\
    \phi(0) = \phi(1) = 0,
  \end{cases}
\]
admit non-zero solutions. For all $n \in \N$, we write $\phi_n^h$ for one of such solutions normalized so that $\norme{\phi_n^h}_2=1$. For every solution $(\lambda,\phi)$ of \eqref{eqn:sturmliouvilleTwosided}, there exist $n \in \N$ and $c \in \R$ such that $\lambda = \mu_n^h$ and $\phi = c \phi_n^h$. Moreover, since $(\phi_n^h, n \in \N)$ forms an orthonormal basis of $\rmL^2$. By Theorem 4.3.1. of \cite{Zet05}, we have $\lim_{n \to +\infty} \lambda^h_n n^{-2} = - \frac{\pi^2}{2}$.

We identify $(\mu^h_n)$ and $(\phi^h_n)$. By the definition of Airy functions, given $\mu \in \R$, the solutions of
\[
  \begin{cases}
    \frac{1}{2} \phi'' (x) - h x \phi(x) = \mu \phi(x)\\
    \phi(0)=0,
  \end{cases}
\]
are, up to a multiplicative constant
\[
  x \mapsto \mathrm{Bi}\left(\tfrac{2^{1/3}}{h^{2/3}} \mu\right) \mathrm{Ai}\left(\tfrac{2^{1/3}}{h^{2/3}}\mu + (2h)^{1/3}x\right) - \mathrm{Ai}\left(\tfrac{2^{1/3}}{h^{2/3}} \mu\right) \mathrm{Bi}\left(\tfrac{2^{1/3}}{h^{2/3}}\mu + (2h)^{1/3}x\right).
\]
This function is null at point $x=1$ if and only if
\[
  \mathrm{Bi}\left(\tfrac{2^{1/3}}{h^{2/3}} \mu \right) \mathrm{Ai}\left(\tfrac{2^{1/3}}{h^{2/3}}\mu + (2h)^{1/3}\right) - \mathrm{Ai}\left(\tfrac{2^{1/3}}{h^{2/3}} \mu\right) \mathrm{Bi}\left(\tfrac{2^{1/3}}{h^{2/3}}\mu + (2h)^{1/3}\right) = 0.
\]
Therefore, the zeros of $\lambda \mapsto \mathrm{Ai}(\lambda) \mathrm{Bi}(\lambda + (2h)^{1/3}) - \mathrm{Ai}(\lambda + (2h)^{1/3})\mathrm{Bi}(\lambda)$, can be listed in the decreasing order as follows: $\lambda_1^h> \lambda_2^h > \ldots$, and we have $\lambda_n^h = \tfrac{2^{1/3}}{h^{2/3}}\mu_n^h$. Moreover, we conclude that the eigenfunction $\phi_n^h$ described above is proportional to
\[
  x \mapsto \mathrm{Ai}\left(\lambda^h_n\right) \mathrm{Bi}\left(\lambda^h_n + (2h)^{1/3}x\right) - \mathrm{Ai}\left(\lambda^h_n + (2h)^{1/3}x\right) \mathrm{Bi}\left(\lambda^h_n\right),
\]
and has $\rmL^2$ norm 1, which validates the formula \eqref{eqn:definephin}.

We have left to prove that for all $n \geq 1$, $\lambda^h_n < 0$. To do so, we observe that if $(\mu,\phi)$ is a solution of \eqref{eqn:sturmliouvilleTwosided}, we have
\begin{align*}
  \mu \int_0^1 \phi(x)^2 dx &= \int_0^1 \phi(x) \frac{1}{2} \partial^2_{x} \phi(x) - \int_0^1 x \phi(x)^2 dx\\
  &= - \frac{1}{2} \int_0^1 (\partial_x \phi(x))^2 dx - h \int_0^1 x \phi(x)^2 dx < 0,
\end{align*}
which proves that for all $n \in \N$, $\mu_n^h < 0$, so $\lambda^h_1 < 0$ which justifies \eqref{eqn:defineLambda}.
\end{proof}
We observe that once again, the eigenfunction $\phi^h_1$ corresponding to the largest eigenvector $\frac{h^{2/3}}{2^{1/3}} \lambda^h_n$ is a non-negative function on $[0,1]$, and positive on $(0,1)$.

Using this lemma, we can obtain a precise asymptotic of the Laplace transform of the area under a Brownian motion.
\begin{lemma}
\label{lem:asymptoticTwosided}
Let $h>0$ and $u_0 \in \calC^2([0,1])$ such that $u_0(0) = u_0(1) = 0$. We define, for $t,x \geq 0$
\[
  u(t,x) = \E_x\left[ u_0(B_t) e^{-h \int_0^t B_s ds} ; B_s \in [0,1], s \in [0,t] \right].
\]
We have
\begin{equation}
  \label{eqn:asymptoticTwosided}
  \lim_{t \to +\infty} \sup_{x \in \R} \left| e^{-\frac{h^{2/3}}{2^{1/3}} \lambda^h_1 t}u(t,x) - \crochet{u_0,\phi^h_1} \phi^h_1(x) \right| = 0.
\end{equation}
\end{lemma}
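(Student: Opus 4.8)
The plan is to follow verbatim the strategy used for Lemma \ref{lem:asymptoticOnesided}, now on the compact interval $[0,1]$ and with the orthonormal basis $(\phi^h_n)$ and eigenvalues $\tfrac{h^{2/3}}{2^{1/3}}\lambda^h_n$ furnished by Lemma \ref{lem:definephin} in place of $(\psi^h_n)$ and $\tfrac{h^{2/3}}{2^{1/3}}\alpha_n$. First I would invoke the Feynman--Kac formula to identify $u$ as the unique solution of the parabolic problem $\partial_t u = \tfrac12 \partial_x^2 u - hxu$ on $(0,+\infty)\times(0,1)$ with $u(0,\cdot)=u_0$ and Dirichlet boundary conditions $u(t,0)=u(t,1)=0$; parabolic smoothing guarantees $u(t,\cdot)\in\calC^2_0([0,1])$ for $t>0$, with the derivatives used below all lying in $\rmL^2([0,1])$ (and $u(t,x)=0=\phi^h_1(x)$ for $x\notin[0,1]$, so the $\sup$ over $\R$ reduces to a $\sup$ over $[0,1]$). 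I would then introduce the operator $\calG^h w = \tfrac12 w'' - hxw$ on $\calC^2_0([0,1])$, for which $\calG^h\phi^h_n = \tfrac{h^{2/3}}{2^{1/3}}\lambda^h_n\phi^h_n$ by Lemma \ref{lem:definephin}.

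The spectral step is to show, by two integrations by parts (the boundary terms vanishing because $w(0)=w(1)=0$), that $\crochet{\calG^h w,\phi^h_n} = \tfrac{h^{2/3}}{2^{1/3}}\lambda^h_n\crochet{w,\phi^h_n}$, hence $\crochet{\calG^h w,w} = \tfrac{h^{2/3}}{2^{1/3}}\sum_{n\geq1}\lambda^h_n\crochet{w,\phi^h_n}^2$; since $(\lambda^h_n)$ is decreasing this gives $\crochet{\calG^h w,w}\leq \tfrac{h^{2/3}}{2^{1/3}}\lambda^h_1\crochet{w,w}$, with the improvement $\crochet{\calG^h w,w}\leq \tfrac{h^{2/3}}{2^{1/3}}\lambda^h_2\crochet{w,w}$ when $\crochet{w,\phi^h_1}=0$. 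These are the exact analogues of \eqref{eqn:prems} and \eqref{eqn:deuz}.

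Next I would set $v(t,x) = e^{-\frac{h^{2/3}}{2^{1/3}}\lambda^h_1 t}u(t,x) - \crochet{u_0,\phi^h_1}\phi^h_1(x)$, check that $\crochet{v(t,\cdot),\phi^h_1}=0$ for all $t\geq0$ (it holds at $t=0$, and $\partial_t\crochet{v(t,\cdot),\phi^h_1}=0$ by the PDE and the eigenvalue equation), and study $J_1(t)=\norme{v(t,\cdot)}_2^2$ and $J_2(t)=\norme{\partial_x v(t,\cdot)}_2^2$. Using the improved spectral bound, $\tfrac12 J_1'(t) = -\tfrac{h^{2/3}}{2^{1/3}}\lambda^h_1 J_1(t) + \crochet{v(t,\cdot),\calG^h v(t,\cdot)} \leq \tfrac{h^{2/3}}{2^{1/3}}(\lambda^h_2-\lambda^h_1)J_1(t)$, so $J_1(t)\to 0$ exponentially by Grönwall; and, by integration by parts together with the spectral bound (and the vanishing of $\int_0^1 v\,\partial_x v\,dx$, since $v(t,0)=v(t,1)=0$), $\tfrac12 J_2'(t)\leq 0$, so $J_2$ is non-increasing, hence convergent; finally rewriting $\tfrac12 J_1'(t)\leq -\tfrac12 J_2(t) - \tfrac{h^{2/3}}{2^{1/3}}\lambda^h_1 J_1(t)$ and using that $J_1(t)\to0$ cannot have its derivative bounded away from $0$ forces $J_2(t)\to0$. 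Then $v(t,\cdot)\to0$ in $\rmH^1([0,1])$, and the Sobolev embedding $\rmH^1([0,1])\hookrightarrow\calC([0,1])$ gives $\norme{v(t,\cdot)}_\infty\leq C\bigl(J_1(t)+J_2(t)\bigr)\to 0$, which is \eqref{eqn:asymptoticTwosided}.

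Since this is essentially a transcription of the one-sided proof, I do not expect a genuinely new obstacle: the substantive input — the existence, simplicity, and ordering of the eigenpairs — has already been established in Lemma \ref{lem:definephin}, and working on the compact set $[0,1]$ with $u_0\in\calC^2_0$ actually removes the delicate tail/integrability bookkeeping of the one-sided case. The only points needing a little care are justifying the parabolic regularity that legitimizes the integrations by parts for every $t>0$, and keeping track of the signs of $h>0$ and of the $\lambda^h_n<0$ so that the Grönwall estimates decay in the right direction.
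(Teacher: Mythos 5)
Your proposal is correct and follows essentially the same route as the paper: Feynman--Kac identification of $u$, the spectral bounds for $\calG^h$ in the basis $(\phi^h_n)$ from Lemma \ref{lem:definephin}, orthogonality of $v(t,\cdot)$ to $\phi^h_1$, and the Gr\"onwall argument on $J_1$, $J_2$. The only cosmetic difference is the last step, where you invoke the Sobolev embedding $\rmH^1([0,1])\hookrightarrow\calC([0,1])$ while the paper bounds $\norme{v(t,\cdot)}_\infty$ directly by Cauchy--Schwarz using $v(t,0)=0$; both are valid.
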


\begin{proof}
This proof is very similar to the proof of Lemma \ref{lem:asymptoticOnesided}. For $h>0$, by the Feynman-Kac formula, $u$ is the unique solution of the equation
\begin{equation}
  \label{eqn:fenymankacTwosided}
  \begin{cases}
    \forall t > 0, \forall x \in (0,1), \partial_t u(t,x) = \frac{1}{2} \partial^2_{x} u(t,x) - h x u(t,x)\\
    \forall x \in [0,1], u(0,x) = u_0(x)\\
    \forall t \geq 0, u(t,0) = \lim_{x \to +\infty} u(t,x) = 0.
  \end{cases}
\end{equation}
We define the operator
\[
  \calG^h : \begin{array}{rcl}
  \calC^2_0 & \to & \calC\\
  w & \mapsto & \left( x \mapsto \frac{1}{2} w''(x) - h x w(x), x \in [0,1] \right),
  \end{array}
\]
By Lemma \ref{lem:definephin}, we know that $(\phi_n^h)$ forms an orthogonal basis of $\rmL^2$ consisting of eigenvectors of $\calG^h$. In particular, for all $n \in \N$, $\calG^h \phi_n^h = \frac{h^{2/3}}{2^{1/3}} \lambda^h_n \phi_n^h$.

For any $w \in \calC^2_0$, by integration by parts, we have $\crochet{\calG^h w, \phi_n^h} = \frac{h^{2/3}}{2^{1/3}} \lambda^h_n \crochet{w,\phi_n^h}$. Decomposing $w$ with respect to the basis $(\phi_n^h)$, we obtain
\[
  \crochet{\calG^h w, w} =  \crochet{\calG^h w, \sum_{n = 1}^{+\infty} \crochet{\phi_n^h, w} \phi_n^h} = \sum_{n = 1}^{+\infty} \crochet{w, \phi_n^h}\crochet{\calG^h w,\phi_n^h}
  = \sum_{n = 1}^{+\infty}\frac{h^{2/3}}{2^{1/3}} \lambda^h_n \crochet{w, \phi_n^h}^2.
\]
As $(\lambda^h_n)$ is a decreasing sequence, we get
\begin{equation}
  \label{eqn:prems2}
  \crochet{\calG^h w, w} \leq \sum_{n = 1}^{+\infty}\frac{h^{2/3}}{2^{1/3}} \lambda^h_1 \crochet{w, \phi_n^h}^2 \leq \frac{h^{2/3}}{2^{1/3}} \lambda^h_1 \crochet{w,w}.
\end{equation}
In addition, if $\crochet{w,\phi_n^h}=0$, the inequality can be strengthened in 
\begin{equation}
  \label{eqn:deuz2}
  \crochet{\calG^h w, w} \leq \sum_{n=2}^{+\infty} \frac{h^{2/3}}{2^{1/3}} \lambda^h_2 \crochet{w, \phi_n^h}^2 \leq\frac{h^{2/3}}{2^{1/3}} \lambda^h_2 \crochet{w,w}.
\end{equation}

Using these results, we prove \eqref{eqn:asymptoticTwosided}. For $x \in [0,1]$ and $t \geq 0$, we set
\[
  v(t,x) = e^{- \frac{h^{2/3}}{2^{1/3}} \lambda^h_1 t} u(t,x) - \crochet{u_0,\phi^h_1} \phi_1^h.
\]
By definition, $\crochet{v(0,\cdot),\phi_1^h} = 0$, and for any $t \geq 0$,
\[
  \partial_t \crochet{v(t,\cdot),\phi_1^h} = - \frac{h^{2/3}}{2^{1/3}} \lambda^h_1 e^{- \frac{h^{2/3}}{2^{1/3}} \lambda^h_1 t} \crochet{u(t,x), \phi^h_1} + e^{- \frac{h^{2/3}}{2^{1/3}} \lambda^h_1 t} \crochet{\partial_t u(t,x), \phi_1^h} = 0,
\]
which proves that $\crochet{v(t,\cdot), \phi_1^h} = 0$.

We now prove that the functions $J_1(t) = \int_0^1 |v(t,x)|^2 dx$ and $J_2(t) = \int_0^1 |\partial_x v(t,x)|^2 dx$ are non-negative, decreasing, and converge to 0 as $t \to +\infty$. Note that, similarly to the previous section,
\[
  \partial_t J_1(t) = \int_0^1 2 v(t,x) \partial_t v(t,x) dx = 2 \left[- \frac{h^{2/3}}{2^{1/3}} \lambda^h_1 \crochet{v(t,\cdot),v(t,\cdot)} + \crochet{v(t,\cdot),\calG^h v(t,\cdot)} \right].
\]
As $\crochet{v(t,\cdot),\phi_1^h} = 0$, we have $\partial_t J_1(t) \leq (2h)^{2/3} (\lambda^h_2 - \lambda^h_1) J_1(t)$. Therefore, $J_1(t)$ decreases to $0$ as $t \to +\infty$. With same computations
\[
  \partial_t J_2(t) = - \frac{h^{2/3}}{2^{1/3}} \lambda^h_1 \crochet{\partial_x v(t,\cdot), \partial_x v(t,\cdot)} + 2 \crochet{\partial_x v(t,\cdot), \calG^h \partial_x v(t,\cdot)} - 2 h \int_0^1 v(t,x) \partial_x v(t,x) dx\leq 0.
\]
Thus $J_2$ is non-increasing and non-negative, therefore convergent. Observing that
\[
  \partial_t J_1(t) \leq -J_2(t) - (2h)^{2/3} \lambda^h_1 J_1(t),
\]
we conclude, with the same reasoning as in the previous section that $J_2$ decreases toward $0$.

Finally, by Cauchy-Schwarz inequality, for any $x \in [0,1]$, we have
\[
  |v(t,x)| \leq \int_0^x |\partial_x v(t,x)| dx \leq x^{1/2} \left(\int_0^x |\partial_x v(t,x)|^2 dx \right)^{1/2} \leq J_2(t),
\]
so $\lim_{t \to +\infty} \norme{v(t,\cdot)}_\infty = 0$, which proves \eqref{eqn:asymptoticOnesided}.
\end{proof}

This lemma can be easily extended to authorize more general starting function $u_0$.
\begin{corollary}
\label{cor:asymptoticTwosided}
Let $h>0$ and $u_0$ be a measurable bounded function. Setting, for $x \geq 0$ and $t \geq 0$
\[
  u(t,x) = \E_x\left[ u_0(B_t) e^{-h \int_0^t B_s ds} ; B_s \in [0,1], s \in [0,t] \right],
\]
we have
\begin{equation}
  \label{eqn:asymptoticTwosidedprime}
  \lim_{t \to +\infty} \sup_{x \in \R} \left| e^{-\frac{h^{2/3}}{2^{1/3}} \lambda^h_1 t}u(t,x) - \crochet{u_0,\phi_1^h} \phi^h_1(x) \right| = 0.
\end{equation}
\end{corollary}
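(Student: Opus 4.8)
The plan is to mirror, almost verbatim, the proof of Corollary \ref{cor:asymptoticOnesided}: one uses the Markov property to trade the merely bounded measurable initial datum $u_0$ for the smooth, endpoint-vanishing function produced by letting the Brownian motion run for a short time $\epsilon$, then applies Lemma \ref{lem:asymptoticTwosided} to that smoothed datum, and finally lets $\epsilon \to 0$.

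More precisely, I would fix $\epsilon > 0$, set $u_\epsilon(x) = u(\epsilon,x)$, and observe by the Markov property at time $\epsilon$ that $u(t,x) = \E_x\left[ u_\epsilon(B_{t-\epsilon}) e^{-h\int_0^{t-\epsilon}B_s\,ds} ; B_s \in [0,1], s \in [0,t-\epsilon]\right]$ for every $t \ge \epsilon$, so it suffices to prove \eqref{eqn:asymptoticTwosidedprime} with $u_0$ replaced by $u_\epsilon$ and then pass to the limit. Next I would record the properties of $u_\epsilon$: it is continuous on $[0,1]$ by Gaussian smoothing, and a two-sided reflection estimate gives $\P_x(B_s \in [0,1], s \in [0,\epsilon]) \le C(\epsilon)(x\wedge(1-x))$, whence $|u_\epsilon(x)| \le \norme{u_0}_\infty C(\epsilon)(x\wedge(1-x))$; in particular $u_\epsilon(0)=u_\epsilon(1)=0$. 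This linear decay at both endpoints is exactly what allows one to sandwich $u_\epsilon$ between functions $w^{(n)} \le u_\epsilon \le v^{(n)}$ of $\calC^2_0([0,1])$ with $u_\epsilon \le w^{(n)}+\tfrac1n$ and $v^{(n)}-\tfrac1n \le u_\epsilon$.

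Then, writing $v^{(n)}(t,\cdot)$ and $w^{(n)}(t,\cdot)$ for the associated Feynman--Kac solutions, monotonicity of the expectation gives $w^{(n)}(t,x)\le u_\epsilon(t,x)\le v^{(n)}(t,x)$, and Lemma \ref{lem:asymptoticTwosided} applies to $v^{(n)}$ and $w^{(n)}$. Dominated convergence yields $\crochet{v^{(n)},\phi^h_1}\to\crochet{u_\epsilon,\phi^h_1}$ and $\crochet{w^{(n)},\phi^h_1}\to\crochet{u_\epsilon,\phi^h_1}$, so sending $t\to+\infty$ and then $n\to+\infty$ gives $\lim_{t\to+\infty}\sup_{x}\left| e^{-\frac{h^{2/3}}{2^{1/3}}\lambda^h_1 t}u_\epsilon(t,x) - \crochet{u_\epsilon,\phi^h_1}\phi^h_1(x)\right| = 0$. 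Finally $u_\epsilon(x)\to u_0(x)$ for almost every $x$ as $\epsilon\to 0$, so $\crochet{u_\epsilon,\phi^h_1}\to\crochet{u_0,\phi^h_1}$ by dominated convergence once more, and since $u(t,x)=u_\epsilon(t-\epsilon,x)$ the claim \eqref{eqn:asymptoticTwosidedprime} follows.

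The step I expect to be the main (mild) obstacle is the boundary estimate $\P_x(B_s\in[0,1], s\le\epsilon)\le C(\epsilon)(x\wedge(1-x))$, which now requires controlling two barriers rather than the single one handled by the Ballot lemma in Corollary \ref{cor:asymptoticOnesided}; but this is routine via the reflection principle — one bounds the probability by $\P_x(\min_{s\le\epsilon}B_s\ge 0)$ near $0$ and by $\P_x(\max_{s\le\epsilon}B_s\le 1)$ near $1$ — and everything else transcribes directly, indeed slightly more easily, since $[0,1]$ is compact, so no decay hypothesis at infinity is needed and the Cauchy--Schwarz bound $\norme{v(t,\cdot)}_\infty \le J_2(t)$ used in Lemma \ref{lem:asymptoticTwosided} replaces the Sobolev injection of the one-sided case.
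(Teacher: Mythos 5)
Your proposal is correct and follows essentially the same route as the paper: Markov property at time $\epsilon$ to replace $u_0$ by $u_\epsilon$, the boundary bound $u_\epsilon(x)\leq C\min(x,1-x)$ allowing a sandwich by $\calC^2_0$ functions, Lemma \ref{lem:asymptoticTwosided} applied to the approximants, and two applications of dominated convergence (in $n$ and then in $\epsilon$). The only cosmetic difference is that you justify the endpoint estimate by a two-sided reflection argument where the paper invokes the Ballot theorem, which changes nothing of substance.
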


\begin{proof}
Let $u_0$ be a measurable bounded function. Using the Ballot theorem, for any $\epsilon>0$, there exists $C>0$ such that for any $x \in [0,1]$, $u(\epsilon,x) \leq C \min(x,1-x)$. Consequently, $u_\epsilon=u(\epsilon,.)$ can be uniformly approached from above and from below by functions $v^{(n)},w^{(n)}$ in $\calC^2_0$. Writing
\begin{align*}
  &v^{(n)}(t,x) = \E_x\left[ v^{(n)}(x)e^{-\int_0^t B_s ds} ; B_s \in [0,1], s \in [0,t] \right]\\
  \text{and } &w^{(n)}(t,x) = \E_x\left[ w^{(n)}(x)e^{-\int_0^t B_s ds} ; B_s \in [0,1], s \in [0,t] \right],
\end{align*}
we have, for any $t \geq 0$ and $x \in [0,1]$, $v^{(n)}(t,x) \geq u(t+\epsilon,x) \geq w^{(n)}(t,x)$. Applying Lemma \ref{lem:asymptoticTwosided} and the dominated convergence theorem, we obtain
\[
  \lim_{t \to +\infty} \sup_{x \in \R} \left| e^{-\frac{h^{2/3}}{2^{1/3}} \lambda^h_1 t}u_\epsilon(t,x) - \crochet{u_\epsilon,\phi_1^h} \phi^h_1(x) \right| = 0.
\]
As for almost every $x \in [0,1]$, $\lim_{\epsilon \to 0} u_\epsilon(x) = u_0(x)$, by dominated convergence theorem again we conclude that
\[
  \lim_{t \to +\infty} \sup_{x \in \R} \left| e^{-\frac{h^{2/3}}{2^{1/3}}\lambda^h_1 t}u(t,x) - \crochet{u_0,\phi_1^h} \phi^h_1(x) \right| = 0.
\]
\end{proof}

\begin{proof}[Proof of Lemma \ref{lem:bmTwosided}]
Let $h>0$, we set $u(t,x) = \E_x\left[ e^{-h\int_0^t B_s ds} ; B_s \in [0,1], s \in [0,t] \right]$ and write $\mu_h = \int_0^{1} \phi_1^h(x)dx<+\infty$. By Corollary \ref{cor:asymptoticOnesided}, we have
\[
  \lim_{t \to +\infty} \sup_{x \geq 0} \left| e^{-\frac{h^{2/3}}{2^{1/3}} \lambda^h_1 t}u(t,x) - \mu_h \phi^h_1(x) \right| = 0.
\]
As $\phi^h_1$ is bounded,
\begin{equation}
  \label{eqn:majoBmTwosided}
  \limsup_{t \to +\infty} \sup_{x \geq 0} \frac{1}{t} \log u(t,x) = 2^{-1/3} h^{2/3} \lambda^h_1.
\end{equation}

Similarly, for $0<a<b<1$ and $0<a'<b'<1$, we set
\[
  \tilde{u}(t,x) = \E_x\left[ \ind{B_t \in [a',b']} e^{-h\int_0^t B_s ds} ; B_s \in [0,1], s \in [0,t] \right],
\]
and $\tilde{\mu}_h = \int_{a'}^{b'} \phi_1^h(x) dx > 0$. Corollary \ref{cor:asymptoticOnesided} implies that
\[
  \lim_{t \to +\infty} \sup_{x \geq 0} \left| e^{-\frac{h^{2/3}}{2^{1/3}} \lambda^h_1 t}\tilde{u}(t,x) - \tilde{\mu}_h \phi^h_1(x) \right| = 0.
\]
In particular, as $\inf_{x \in [a,b]} \phi^h_1>0$, we have
\begin{equation}
  \label{eqn:minoBmTwosided}
  \liminf_{t \to +\infty} \inf_{x \in [a,b]} \frac{1}{t} \log \tilde{u}(t,x) = \frac{h^{2/3}}{2^{1/3}} \lambda^h_1.
\end{equation}
Using the fact that $\tilde{u} \leq u$, \eqref{eqn:majoBmTwosided} and \eqref{eqn:minoBmTwosided} lead to
\[
  \lim_{t \to +\infty} \sup_{x \geq 0} \frac{1}{t} \log u(t,x) = \lim_{t \to +\infty} \inf_{x \in [a,b]} \frac{1}{t} \log \tilde{u}(t,x) = \frac{h^{2/3}}{2^{1/3}} \lambda^h_1.
\]

Moreover, by definition of $\Psi$, for all $h>0$ we have $\Psi(h) = \frac{h^{2/3}}{2^{1/3}} \lambda^h_1$, and \eqref{eqn:alternativeDefinition} is a consequence of the definition of $\lambda^h_1$. By the implicit function theorem, we observe immediately that $\Psi$ is differentiable on $(0,+\infty)$. Moreover,
\[
  \frac{\Psi(h)}{h^{2/3}} = 2^{-1/3} \lambda^h_1 = 2^{-1/3}\sup\left\{ x \in \R : \mathrm{Bi}\left(\lambda\right)\mathrm{Ai}\left(\lambda + (2h)^{1/3}\right) = \mathrm{Ai}\left( \lambda \right) \mathrm{Bi}\left( \lambda + (2h)^{1/3} \right) \right\}.
\]
Observe that $\lim_{h \to +\infty} \sup_{\lambda \geq \alpha_2} \frac{\mathrm{Bi}\left(\lambda\right)\mathrm{Ai}\left(\lambda + (2h)^{1/3}\right)}{\mathrm{Bi}\left( \lambda + (2h)^{1/3} \right)} = 0$. As $\mathrm{Ai}(\lambda^h_1) = \frac{\mathrm{Bi}(\lambda^h_1) \mathrm{Ai}(\lambda^h_1 + (2h)^{1/3})}{\mathrm{Bi}(\lambda^h_1 + (2h)^{1/3})}$, we have $\lim_{h \to +\infty} \frac{\Psi(h)}{h^{2/3}} = 2^{-1/3} \alpha_1$.

We now observe that if $h<0$, then
\begin{multline*}
  \E_x\left[ u(B_t) e^{-h\int_0^t B_s ds} ; B_s \in [0,1], s \in [0,t] \right]\\
  = e^{-ht}\E_{1-x} \left[ u(1-B_t) e^{h \int_0^t B_s ds} ; B_s \in [0,1], s \in [0,t] \right],
\end{multline*}
yielding, for any $0<a<b<1$ and $0<a'<b'<1$,
\begin{multline*}
  \lim_{t \to +\infty} \sup_{x \in [0,1]} \frac{1}{t} \log \E_x\left[ e^{-h\int_0^t B_s ds} ; B_s \in [0,1], s \in [0,t] \right]\\
  = \lim_{t \to +\infty} \inf_{x \in [a,b]} \frac{1}{t} \log \E_x\left[ \ind{B_t \in [a',b']} e^{-h\int_0^t B_s ds} ; B_s \in [0,1], s \in [0,t] \right] = -h + \Psi(-h).
\end{multline*}
Moreover, for $h<0$, $\Psi(h) = \Psi(-h)-h$.

Finally, we take care of the case $h=0$. By \cite{ItK74},
\[
 \P_x\left[ B_t \in [a,b], B_s \in [0,1], s \in [0,t] \right]
  = \int_a^b 2 \sum_{n=1}^{+\infty} e^{-n^2\frac{\pi^2}{2}t}\sin(n \pi x) \sin(n\pi z)dz
\]
As a consequence,
\begin{multline*}
 \lim_{t \to +\infty} \sup_{x \in [0,1]} \frac{1}{t} \log \P_x\left[B_s \in [0,1], s \in [0,t] \right]\\
  = \lim_{t \to +\infty} \inf_{x \in [a,b]} \frac{1}{t} \log \P_x\left[B_t \in [a',b'], B_s \in [0,1], s \in [0,t] \right] = \Psi(0) = -\frac{\pi^2}{2}.
\end{multline*}
\end{proof}

\section{Notation}
\label{app:notation}

\begin{itemize}
  \item {\em Point processes}
  \begin{itemize}
    \item $\calL_t$: law of a point process;
    \item $L_t$: point process with law $\calL_t$;
    \item $\kappa_t$: log-Laplace transform of $\calL_t$;
    \item $\kappa^*_t$: Fenchel-Legendre transform of $\calL_t$.
  \end{itemize}
  \item {\em Paths}
  \begin{itemize}
    \item $\calC$: set of continuous functions on $[0,1]$;
    \item $\calD$: set of càdlàg functions on $[0,1]$, continuous at point 1;
    \item $\bar{b}^{(n)}_k=\sum_{j=1}^k b_{j/n}$: path of speed profile $b \in \calD$;
    \item $K^*(b)_t =  \int_0^t \kappa^*_s(b_s) ds$: energy associated to the path of speed profile $b$;
    \item $\phi_t = \partial_a \kappa^*_t (b_t)$: parameter associated to the path of speed profile $b$;
    \item $E(\phi)_t = \int_0^t \phi_s \partial_\theta \kappa_s(\phi_s) - \kappa_s(\phi_s)ds$: quantity equal to $K^*(b)_t$, energy associated to the path of parameter function $\phi$;
    \item $\calR = \left\{ b \in \calD : \forall t \in [0,1], K^*(b)_t \leq 0 \right\}$: set of speed profiles $b$ such that $\bar{b}^{(n)}$ is followed until time $n$ by at least one individual with positive probability.
  \end{itemize}
  \item {\em Branching random walk}
  \begin{itemize}
    \item $\T$: genealogical tree of the process;
    \item $u \in \T$: individual in the process;
    \item $V(u)$: position of the individual $u$;
    \item $|u|$: generation at which $u$ belongs;
    \item $u_k$: ancestor of $u$ at generation $k$;
    \item $\emptyset$: initial ancestor of the process;
    \item if $u \neq \emptyset$, $\pi u$: parent of $u$;
    \item $\Omega(u)$: set of children of $u$;
    \item $L^u = (V(v)-V(u), v \in \Omega(u))$: point process of the displacement of the children;
    \item $M_n = \max_{|u|=n} V(u)$ maximal displacement at the $n^\text{th}$ generation in $(\T,V)$;
    \item $\Lambda_n = \min_{|u|=n} \max_{k \leq n} \bar{b}^{(n)}_k - V(u_k)$: consistent maximal displacement with respect to the path $\bar{b}^{(n)}$;
    \item $\calW^\phi_n = \{ u \in \T : \forall k \in F_n, V(u_k) \geq \bar{b}^{(n)}_k + f_{k/n} n^{1/3} \}$: tree of a BRWtie with selection above the curve $\bar{b}^{(n)}_. + n^{1/3}f_{./n}$ at times in $F_n$.
  \end{itemize}
  \item {\em The optimal path}
  \begin{itemize}
    \item $v^* = \sup_{b \in \calR} \int_0^1 b_s ds$: speed of the BRWtie;
    \item $a \in \calR$ such that $\int_0^1 a_s ds = v^*$: optimal speed profile;
    \item $\theta_t = \partial_a \kappa^*_s(a_s)$: parameter of the optimal path;
    \item $\sigma^2_t = \partial^2_\theta \kappa_t(\theta_t)$: variance of individuals following the optimal path;
    \item $\dot{\theta}$: Radon-Nikod\'ym derivative of $d\theta$ with respect to the Lebesgue measure;
    \item $l^*= \alpha_1 2^{-1/3}\int_0^1 \frac{\left(\dot{\theta}_s \sigma_s\right)^{2/3}}{\theta_s} ds$: $n^{1/3}$ correction of the maximal displacement;
    \item $v_t = \inf_{\theta > 0} \frac{\kappa_t(\theta)}{\theta}$: natural speed profile.
  \end{itemize}
  \item {\em Airy functions}
  \begin{itemize}
    \item $\mathrm{Ai}(x) = \frac{1}{\pi} \int_0^{+\infty} \cos\left( \tfrac{s^3}{3} + x s \right) ds$: Airy function of the first kind;
    \item $\mathrm{Bi}(x) = \frac{1}{\pi} \int_0^{+\infty} \exp\left( - \tfrac{s^3}{3} + x s \right) + \sin\left( \tfrac{s^3}{3} + x s \right) ds$: Airy function of the second kind;
    \item $(\alpha_n)$: zeros of $\mathrm{Ai}$, listed in the decreasing order;
    \item $\Psi(h) = \lim_{t \to +\infty} \frac{1}{t} \log \sup_{x \in [0,1]} \E_x\left[ e^{-h\int_0^t B_s ds} ; B_s \in [0,1], s \in [0,t] \right]$.
  \end{itemize}
  \item {\em Random walk estimates}
  \begin{itemize}
    \item $(X_{n,k}, n \in \N, k \leq n)$: array of independent random variables;
    \item $S^{(n)}_k = S^{(n)}_0 + \sum_{j=1}^k X_{n,j}$: time-inhomogeneous random walk, with $\P_x(S^{(n)}_0 = x) = 1$;
    \item Given $f,g \in \calC$, and $0 \leq j \leq n$,
\[
  I_n(j) =
  \begin{cases}
    \left[f_{j/n}n^{1/3}, g_{j/n}n^{1/3}\right] & \text{if } j \in F_n \cap G_n,\\
    \left[f_{j/n},+\infty\right[ & \text{if } j \in F_n \cap G_n^c,\\
    \left]-\infty, g_{j/n}n^{1/3}\right] & \text{if } j \in F_n^c\cap G_n,\\
    \R & \text{otherwise;}
  \end{cases}
\]
    \item $\tilde{I}^{(n)}_j = I^{(n)}_j \cap [-n^{2/3}, n^{2/3}]$.
  \end{itemize}
  \item {\em Many-to-one lemma}
  \begin{itemize}
    \item $\P_{k,x}$: law of the BRWtie of length $n-k$ with environment $(\calL_{(k+j)/n},j \leq n-k)$;
    \item $\calF_k = \sigma(u,V(u), |u| \leq k)$: filtration of the branching random walk;
    \item Many-to-one lemma: Lemma \ref{lem:manytoone}.
  \end{itemize}
  \item {\em Random walk estimates}
  \begin{itemize}
    \item $\calA_n^{F,G}(f,g) = \left\{ u \in \T, |u| \in G_n : V(u) - \bar{b}^{(n)}_{|u|} > g_{|u|/n}n^{1/3} , V(u_j) - \bar{b}^{(n)}_j \in I^{(n)}_j , j < |u|\right\}$: individuals staying in the path $\bar{b}^{(n)} + I^{(n)}$ until some time then exiting by the upper boundary;
    \item $\calB_n^{F,G} (f,g,x) = \left\{ |u|=n : V(u_j) - \bar{b}^{(n)}_j \in \tilde{I}^{(n)}_j, j \leq n, V(u)-\bar{b}^{(n)}_n\geq (g_1-x)n^{1/3}\right\}$: individuals staying in the path $\bar{b}^{(n)} + I^{(n)}$, and being above $\bar{b}^{(n)}_n + (g_1-x)n^{1/3}$ at time $n$.
  \end{itemize}
\end{itemize}

\nocite{*}

\bibliographystyle{plain}
\def\cprime{$'$}

\end{document}